\numberwithin{equation}{section}
\newcommand{\commentPROOF}[1]{#1}
\newcommand{\commentPROOFbis}[1]{}
\newenvironment{claimproof}[1]{\par\noindent\underline{Proof of claim.}\space#1}{\hfill $\blacksquare$}
\newtheorem{notation}{Notation}
\newtheorem{remark}{Remark}
\newtheorem{claim}{Claim}
\newcommand{\redbf}[1]{\textcolor{red}{\textbf{#1}}}
\newcommand{\mc}{\mathcal}
\newcommand{\mb}{\mathbb}
\renewcommand{\P}{\mc{P}}
\newcommand{\h}{\mathtt{h}}
\newcommand{\val}[1]{[\![  #1 ]\!]}
\newcommand{\Ag}{\mathsf{Ag}}
\newcommand{\pre}{\mathsf{pre}}
\newcommand{\SetSub}{Sub} 
\newcommand{\bbA}{\mathbb{A}}
\newcommand{\ls}{[}
\newcommand{\rs}{]}
\newcommand{\sub}{\mathbf{sub}}
\newcommand{\AtProp}{\mathsf{AtProp}}
\newcommand{\support}{\mathsf{support}}
\newcommand{\rmPhi}{\mathrm{\Phi}}
\newcommand{\bfPhi}{\mathbf{\Phi}}
\newcommand{\Profit}{\mathtt{B}_f}
\newcommand{\Gallery}{\mathtt{M}}
\newcommand{\Prob}{\mathtt{Pr}}
\newcommand{\eiKf}{epistemic intuitionistic Kripke frame}
\newcommand{\EiKf}{Epistemic intuitionistic Kripke frame}
\newcommand{\eiKm}{epistemic intuitionistic Kripke model}
\newcommand{\EiKm}{Epistemic intuitionistic Kripke model}
\renewcommand{\blacklozenge}{\lozenge}
\begin{document}
\title{Probabilistic Epistemic Updates on Algebras}


\author{Willem Conradie}
\orcid{}
\affiliation{%
  \institution{School of Mathematics, University of the Witwatersrand}
  \streetaddress{}
  \city{Johannesburg}
  \state{}
  \postcode{}
  \country{South Africa}}
\email{willem.conradie@wits.ac.za}
\author{Sabine Frittella}
\affiliation{%
  \institution{INSA Centre Val de Loire,
Univ. Orl\' eans, LIFO EA 4022}
  \city{Bourges}
  \country{France}
}
\email{sabine.frittella@insa-cvl.fr}
\author{Alessandra Palmigiano}
\affiliation{%
 \institution{Faculty of Technology, Policy and Management, Delft University of Technology}
 \streetaddress{}
 \city{Delft}
 \state{}
 \country{the Netherlands}}
\affiliation{%
 \institution{Department of Pure and Applied Mathematics, University of Johannesburg}
 \streetaddress{}
 \city{Johannesburg}
 \state{}
 \country{South Africa}}
\email{a.palmigiano@tudelft.nl}
\author{Apostolos Tzimoulis}
\authornote{This is the corresponding author}
\affiliation{%
 \institution{Faculty of Technology, Policy and Management, Delft University of Technology}
 \streetaddress{}
 \city{Delft}
 \state{}
 \country{the Netherlands}}
\email{a.tzimoulis-1@tudelft.nl}
\author{Nachoem Wijnberg}
\affiliation{%
  \institution{Amsterdam Business School, University of Amsterdam
  \city{Amsterdam}
  \country{the Netherlands}}
  \institution{College of Business and Economics, University of Johannesburg}
  \city{Johannesburg}
 \country{South Africa}}
\email{N.M.Wijnberg@uva.nl}

\begin{abstract}
The  present paper contributes to the development of the mathematical theory of epistemic updates using the tools of duality theory. Here we focus on Probabilistic Dynamic Epistemic Logic (PDEL).
We dually characterize the product update construction of PDEL-models as a certain construction transforming the complex algebras associated with the given model into the complex algebra associated with the updated model.
Thanks to this construction, an interpretation of the language of PDEL can be defined on algebraic models based on Heyting algebras. This justifies our proposal for the axiomatization of the intuitionistic counterpart of PDEL.
\\
{\em Keywords:} intuitionistic probabilistic dynamic epistemic logic, duality, intuitionistic modal logic, algebraic models, pointfree semantics.
\\
{\em Math. Subject Class.:} 03B42, 06D20, 06D50, 06E15.
\end{abstract}

%
%

 \begin{CCSXML}
	<ccs2012>
	<concept>
	<concept_id>10003752.10003790.10003793</concept_id>
	<concept_desc>Theory of computation~Modal and temporal logics</concept_desc>
	<concept_significance>500</concept_significance>
	</concept>
	<concept>
	<concept_id>10003752.10003790.10003796</concept_id>
	<concept_desc>Theory of computation~Constructive mathematics</concept_desc>
	<concept_significance>500</concept_significance>
	</concept>
	</ccs2012>
\end{CCSXML}

\ccsdesc[500]{Theory of computation~Modal and temporal logics}
\ccsdesc[500]{Theory of computation~Constructive mathematics}
%
%



\maketitle

\begin{acks}
The research of the second to fourth author has been funded by the \grantsponsor{vidi}{NWO Vidi grant}{} \grantnum{vidi}{016.138.314}, by the \grantsponsor{Aspasia}{NWO Aspasia grant}{} \grantnum{Aspasia}{015.008.054}, and by a Delft Technology Fellowship awarded in 2013.
\end{acks}
\renewcommand{\shortauthors}{W.\ Conradie,
S.\ Frittella,
A.\ Palmigiano,
A.\ Tzimoulis,
N.\ Wijnberg}

\newpage

\tableofcontents


\section{Introduction}
\label{sec:intro}
This paper pertains to a line of research aimed at exploring the notions of agency and information flow in situations in which truth is socially constructed. Such situations are ubiquitous in the real world. A prime example is the validity of contracts. 
Establishing that an agreement constitutes a valid contract  appeals to notions, such as legal competency and bona fide offers, which are inherently socially constructed. The ultimate way in which the validity of a contract can be ascertained is for it to be tested in a court of law. In this last instance, the validity of a contract is thus procedural, and may also admit of situations in which it is indeterminate, such as when the court declares itself incompetent. These are features at odds with standard classical logic. Accommodating these features within classical logic requires additional encoding mechanisms. The alternative is working with logics which are specifically designed to accommodate these characteristics of socially constructed truth.

Examples of situations where truth is socially constructed are certainly not confined to contract law, but are easy to find in many other contexts. These include establishing public opinion in a binding way like referendums, establishing whether a certain item of clothing is fashionable, and determining the value of products in a market.

There is a large literature on logics which very adequately capture  agency and information flow (see \cite{vanBLogicalDyn2011} and references therein), but assume a notion of truth that is classical. There is therefore a need for a uniform methodology for transferring these logics onto nonclassical bases. In \cite{MPS14,KP13},  a uniform methodology is introduced for
defining the nonclassical counterparts of dynamic epistemic logics. This methodology, further pursued in \cite{Ri14,BR15,BDF16}, is grounded on semantics, and is based on the dual characterizations of the transformations of models which interpret epistemic actions.

The present paper expands on \cite{CFPTLori} and applies the methodology of \cite{MPS14,KP13} to obtain nonclassical counterparts of probabilistic dynamic epistemic logic (PDEL) \cite{kooi2003probabilistic,vBGK09}. We will focus specifically on the intuitionistic environment as our case study. This environment allows for a finer-grained analysis when serving as a base for more expressive formalisms such as modal and dynamic logics. Indeed, the fact that the box-type and the diamond-type modalities are no longer interdefinable  makes several mutually independent choices possible which cannot be disentangled  in the classical setting. Moving to the intuitionistic environment also requires the use of intuitionistic probability theory (cf.\ \cite{aguzzoli2008finetti,flaminio2017states}) as the background framework for probabilistic reasoning. From the point of view of applications this generalization is needed to account for  situations in which the probability of a certain proposition $p$ is interpreted as an agent's propensity to bet on $p$ given some evidence for or against $p$. If there is little or no evidence for or against $p$, it should be reasonable to attribute  low probability values to both  $p$ and $\neg p$, which is forbidden by classical probability theory (cf.\  \cite{weatherson2003classical}).

Finally, these mathematical developments appear in tandem with interesting analyses on the philosophical side of formal logic (e.g.\ \cite{artemov2014intuitionistic}), exploring epistemic logic in an evidentialist key, which is congenial to the kind of social situations targeted by our research programme.

Our methodology is based on the dual characterization of the product update construction for standard PDEL-models as a certain construction transforming the complex algebras associated with a given model into the complex algebra associated with the updated model. This dual characterization
naturally generalizes to much wider classes of algebras, which include arbitrary classical S5 algebras and certain monadic Heyting algebras.  As an application of this
dual characterization, we introduce the axiomatization of the intuitionistic analogue of PDEL semantically arising from this construction, and prove its soundness and completeness with respect to the class of so called \emph{algebraic probabilistic epistemic models} (see  Definition \ref{def: alg probab epist model}).

\paragraph{Structure of the paper.}  
In Section \ref{sec:prelim}, we recall the definition of classical PDEL and its relational semantics. We give an alternative presentation of the product update construction which consists in two steps, as done in \cite{KP13}. The two-step construction highlights the elements which will be key in the dualization.  In Section \ref{sec:method}, we expand on the methodology making use of Stone duality.  Section \ref{sec:ha} is the main section, in which the construction of the PDEL-updates on epistemic Heyting algebras is introduced.  
In Section \ref{sec:semantics-IPDEL}, we define axiomatically the intuitionistic version of PDEL (IPDEL) and its interpretation on algebraic probabilistic epistemic models, and discuss the proof of its soundness. 
In Section \ref{sec:relsem}, we introduce the relational semantics of IPDEL.
In Section \ref{sec:ArtExample}, we discuss the case study of a decision-making under uncertainty. 
In Section \ref{sec:CCL}, we collect conclusions and further directions. 
Appendix \ref{app:section4} collects some proofs of Section \ref{sec:ha}.
Appendix \ref{Appendix:soundness} contains the proof of soundness of IPDEL with respect to algebraic probabilistic epistemic models.   
Appendix \ref{Appendix:completeness} contains the  proof of completeness of IPDEL with respect to algebraic probabilistic epistemic models.

\section{PDEL language and updates} 
\label{sec:prelim}




In the present section, we report on the language of PDEL, and give an alternative, two-step account of the product update construction on PDEL-models. This account is similar to the treatment of epistemic updates in \cite{MPS14,KP13}, and as explained in Section \ref{sec:method}, it lays the ground  to the dualization procedure which motivates the construction introduced in Section \ref{sec:ha}. The specific PDEL framework we report on shares common features with  those of  \cite{BCHS13,Achimescu14} and \cite{vBGK09}.

\paragraph{Structure of the section.}
In Section  \ref{ssec:PDEL:event:PESmodels}, we recall basic facts about probability theory, we present the syntax au PDEL, 
the classical models 
and the classical event structures. 
In Section  \ref{ssec:epist:update:classic}, we present the
alternative construction for epistemic update of a PES-model by a probabilistic event structure.
In Section  \ref{ssec:classic:semantics} and \ref{ssec:classic:axiomatization} respectively, we present the semantics and the axiomatisation of classical PDEL.

\subsection{PDEL-formulas, event structures, and PES-models}
\label{ssec:PDEL:event:PESmodels}

In this section, we first recall basic facts about probability distributions and probability measures, then we introduce the syntax and semantics of Probabilistic Dynamic Epistemic Logic (PDEL).
\begin{remark}	
Given a finite set $X$,
 a \emph{probability distribution} $P$ over  $X$ 
is a map 
$$P \ : \ X \rightarrow [0,1]$$ 
such that
$$\sum_{x\in X} P(x) = 1.$$
Recall that a \emph{probability measure} on $\mathcal{P} X$ can be defined as a map
$$\mu  \ : \ \mathcal{P} X \rightarrow [0,1]$$ satisfying the following properties: 
\begin{enumerate}
\item $\mu (\varnothing) = 0 $,
\item $\mu (X) = 1$,
\item for any $A,B \subseteq X$, we have
$\mu (A \cup B) = \mu (A) + \mu(B) - \mu(A \cap B).$
\end{enumerate}
The probability measure 
$\mu \ : \ \mathcal{P} X \rightarrow [0,1]$ 
determined by the probability distribution $P$ over $X$ is defined as follows: for any $S \subseteq X$,
$$ \mu(S) := \sum_{x\in S} P(x). $$
\end{remark}

In the remainder of the paper, we fix a countable set  $\mathsf{AtProp}$  of proposition letters $p, q$ and a non-empty finite set $\Ag$ of agents $i$. We let $\alpha_1, ..., \alpha_n, \beta $ denote rational numbers.

\begin{definition}[PDEL syntax] \label{def:proba event model Alexandru} The set $\mathcal{L}$ of  {\em PDEL-formulas} $\varphi$ and the class 
of {\em probabilistic event structures} $\mathcal{E}$ over $\mathcal{L}$
(see \autoref{def:proba-epist-event-struct}) are built by simultaneous recursion  as follows:
	\[\varphi ::= p 
	\mid \bot \mid \varphi \wedge \varphi \mid \varphi\vee \varphi \mid \varphi\rightarrow  \varphi \mid \lozenge_i\varphi \mid \Box_i \varphi \mid \langle\mathcal{E}, e\rangle \varphi \mid [\mathcal{E}, e] \varphi \mid (\sum_{k = 1}^n\alpha_k · \mu_i(\varphi)) \geq \beta,\]
	where $p \in \mathsf{AtProp}$, $i\in \Ag$, $\alpha_1,..., \alpha_n, \beta $ are rational numbers, and 
	the event structures $\mathcal{E}$ are such as in  \autoref{def:proba-epist-event-struct}.\\
	The connectives $\top$, $\neg$, and $\leftrightarrow$ are defined by the usual abbreviations. 
\end{definition}
	   
\begin{definition}[PES-model]
\label{def:Prob epis state model Alexandru}
A \emph{probabilistic epistemic state model (PES-model)}  is a structure 
$$\mb{M} = \left\langle S, (\sim_i)_{i\in \Ag}, (P_i)_{i\in \Ag}, \val{\cdot}\right\rangle$$
 such that
\begin{itemize}
\item 
$S$ is a finite set,
\item each binary relation $\sim_i$ is an equivalence relation on $S$, 
\item each map  $P_i: S\rightarrow\ ]0, 1]$ assigns a probability distribution
over each $\sim_i$-equivalence class, $($i.e.   $\sum \{P_i (s' ) : s' \sim_i s \} = 1)$, and 
\item the map $\val{\cdot} : \mathsf{AtProp} \rightarrow \P S$ is a valuation. 
\end{itemize}
\end{definition}

As usual,  the map $\val{\cdot}$ will be identified with its unique extension to $\mathcal{L}$, so that we will be able to write $\val{\varphi}$ for every $\varphi\in \mathcal{L}$.	   

\begin{remark}
\label{rk:proba:def}
%
The assumption that the probability of each state is strictly positive is needed for 
 the update defined in \autoref{def: update PES model} to be well-defined.
This is also the convention followed in \cite{ES14} where  subjective probabilities are identified with ``lotteries'' assigned to each agent.

\end{remark}

\begin{remark} 
In the present treatment, the syntactic $\mu_i$s 
(introduced in \autoref{def:proba event model Alexandru})
are  intended to correspond to probability measures rather than probability distributions, as is more common in the literature. 
 Indeed, usually, in the literature formulas talking about probabilities are defined by the following syntax $\alpha P_i(\varphi)\geq\beta$. But the $P_i$ maps are probability distributions defined over the models (i.e.\ in the semantics), hence the notation $P_i(\varphi)$ is ambiguous and neglects the fact that we need to use a probability measure to talk about the probability over the extension of $\varphi$.
\end{remark}

\begin{definition}[Substitution function] 
\label{def:substitution:function}
A \emph{substitution function}
$$\sigma \ : \ \mathsf{AtProp} \rightarrow \mathcal{L} $$ 
is a function that maps all but a finite
\footnote{This assumption guarantees that events affect only a finite number of facts.}
number of proposition letters to themselves.

We will call 
the set 
$$\{p\in \mathsf{AtProp} \mid \sigma(p)\neq p \}$$ 
the domain of $\sigma$ and denote it $dom(\sigma)$.

Let $\SetSub_\mathcal{L}$ denote the set of all  substitution functions and $\epsilon$ the identity substitution.
\end{definition}
	   
\begin{definition}[Probabilistic event structure over a language]	
\label{def:proba-epist-event-struct}   
A \emph{probabilistic event structure  over} $\mathcal{L}$ is a tuple 
$$\mathcal{E} = (E, (\sim_i)_{i\in\Ag}, (P_i)_{i\in \Ag}, \rmPhi, \pre, \sub),$$
such that
\begin{itemize}
\item $E$ is a non-empty finite set,
\item each $\sim_i$ is an equivalence relation on $E$, 
  
\item
each $P_i:E\to\ ]0,1]$ assigns a probability distribution over each $\sim_i$-equivalence class, i.e.\   $$\sum \left\{P_i (e' ) \mid e' \sim_i e \right\} = 1,$$
\item
$\rmPhi$ is a finite set of pairwise inconsistent $\mathcal{L}$-formulas, and 
\item
$\pre$ assigns a probability distribution $\pre(\bullet | \phi)$ over $E$ for every  $\phi \in \rmPhi$. 
\item $\sub : E \rightarrow \SetSub_\mathcal{L}$ assigns a substitution function to each event in $E$.
\end{itemize}
\end{definition}

\begin{remark}
The assumption that the probability of each event is strictly positive is needed for the update defined in \autoref{def: update PES model} to be well-defined.
This is also the convention followed in \cite{ES14,ABS16}.
\end{remark}

Informally, elements of  $E$ encode possible events, the relations $\sim_i$ encode as usual the epistemic uncertainty of the agent $i$, who  assigns probability $P_i(e)$ to  $e$ being the actually occurring event, formulas in $\rmPhi$ are intended as  the preconditions of the event, and $\pre(e|\phi)$ expresses the prior probability that the event $e\in E$ might occur in a(ny) state satisfying precondition $\phi$.
In addition, the substitution map $\sub(e)$ assigned to each event $e \in E$ describes how the event $e$ changes the atomic facts of the world as represented by the proposition letters.
In what follows, we will refer to the structures $\mathcal{E}$ defined above as {\em event structures over $\mathcal{L}$}.



\begin{notation}
\label{note:pre(e/s)}
For any 
probabilistic epistemic state model
$\mb{M} = \left\langle S, (\sim_i)_{i\in \Ag}, (P_i)_{i\in \Ag}, \val{\cdot}\right\rangle$, any probabilistic event structure $\mathcal{E}$, any $s\in S$ and $e\in E$, we let $\pre(e\mid s)$ denote the value $\pre(e\mid \phi)$, for the unique $\phi\in\rmPhi$ such that $\mb{M},s\Vdash\phi$ (recall that the formulas in $\rmPhi$ are pairwise inconsistent). If no such $\phi$ exists then we let $\pre(e\mid s)=0$.
\end{notation}

\subsection{Epistemic updates}
\label{ssec:epist:update:classic}
In this subsection, we introduce an alternative and equivalent presentation of the  update construction on PES-models.
This presentation is a variant of those introduced in \cite{MPS14,KP13} for models of public announcement logic and dynamic epistemic logic, and consists in  a two-step process, namely, 
a co-product-type construction followed by a suboject-type construction. 
This two-step presentation makes it possible to dualize the two  steps separately, and thus obtain the construction of (probabilistic) epistemic updates on algebras as the composition of the two dualized constructions.
%
The  two steps are given in \Cref{def: coproduct PES model} and \Cref{def: update PES model}, and \Cref{lem:prelim:PESmodel} 
proves that the updated model of a PES-model is a PES-model too.

\begin{definition}[Intermediate structure]
\label{def: coproduct PES model}
For any PES-model $\mb{M}= \left\langle S, (\sim_i)_{i\in \Ag}, (P_i)_{i\in \Ag}, \val{\cdot}\right\rangle$ and any probabilistic event structure $\mathcal{E}= (E, (\sim_i)_{i\in\Ag}, (P_i)_{i\in \Ag}, \rmPhi, \pre, \sub)$ over $\mathcal{L}$,
let the {\em intermediate structure} of $\mb{M}$ and $\mathcal{E}$ be the tuple
\begin{center}
	$\coprod_{\mathcal{E}}\mb{M}: = \left\langle \coprod_{|E|} S, (\sim_i^{\coprod} )_{i\in \Ag}, (P_i^{\coprod})_{i\in \Ag}, \val{\cdot}_{\coprod}\right\rangle$
\end{center}

where
\begin{itemize}
\item
$\coprod_{|E|} S\cong S\times E$ is the $|E|$-fold coproduct of $S$, 
\item
 each binary relation $\sim_i^{\coprod}$ on $\coprod_{|E|} S$ is defined  as follows:
 $$(s,e) \sim^{\coprod}_i (s',e')\ \quad \mbox{ iff }
\quad 
 s \sim_i s'\mbox{ and }e \sim_i e',$$
\item each  map $P^{\coprod}_i: \coprod_{|E|} S\to [0, 1]$ is defined by  
$$(s,e) \mapsto P_i(s)\cdot P_i(e)\cdot \pre(e\mid s),$$ 
\item and  the valuation $\val{\cdot}_{\coprod} : \mathsf{AtProp} \rightarrow \P S$ is defined by
$$ \val{p}_{\coprod} : = \left\{ (s,e)  \mid s\in \val{p}_{\mb{M}} \right\} = \val{p}_{\mb{M}}\times E $$ for every $p\in \mathsf{AtProp}$.
\end{itemize}
\end{definition}

\begin{remark}
In general, $P^{\coprod}_i$ does not induce probability distributions over the $\sim_i^{\coprod}$-equivalence classes. Hence, $\coprod_{\mathcal{E}}\mb{M}$ is not a PES-model.
However, the second step of the construction will yield a PES-model.
\end{remark}

Finally, in order to define the updated model, observe that the map $\pre : E \times \rmPhi \rightarrow [0,1]$ in $\mathcal{E}$ 
induces the  map $pre : E  \rightarrow \mathcal{L}$ defined below.
\begin{definition}
\label{def:pre-bis}
For any probabilistic event structure 
$\mathcal{E}= (E, (\sim_i)_{i\in\Ag}, (P_i)_{i\in \Ag}, \rmPhi, \pre, \sub)$ 
over $\mathcal{L}$, let the map $pre$ be defined as follows:
\begin{align*}
pre : E  &\rightarrow \mathcal{L} \\
e & \mapsto \bigvee 
\left\{ \phi \in \rmPhi \mid \pre(e\mid\phi)\neq 0 \right\}.
\end{align*}
\end{definition}




\begin{definition}[Updated model]
\label{def: update PES model}
For any PES-model $\mb{M}= \left\langle S, (\sim_i)_{i\in \Ag}, (P_i)_{i\in \Ag}, \val{\cdot}\right\rangle$ and any probabilistic event structure $\mathcal{E}= (E, (\sim_i)_{i\in\Ag}, (P_i)_{i\in \Ag}, \rmPhi, \pre, \sub)$ over $\mathcal{L}$, let
the {\em epistemic update} $\mb{M}^{\mathcal{E}}$ of the model $\mb{M} $ by the probabilistic event structure $\mathcal{E}$ be as follows:
$$\mb{M}^{\mathcal{E}}: = \left\langle  S^{\mathcal{E}}, (\sim_i^{\mathcal{E}} )_{i\in \Ag}, (P_i^{\mathcal{E}})_{i\in \Ag}, \val{\cdot}_{\mb{M}^{\mathcal{E}}}\right\rangle$$
with
\begin{enumerate}
\item $S^{\mathcal{E}} := \left\{ (s,e) \in \coprod_{|E|} S \;\; \middle| \;\; \mb{M}, s\Vdash pre(e)\right\} $;
\item $\sim_i^{\mathcal{E}}  \ =\  \sim_i^{\coprod}\cap \ (S^{\mathcal{E}}\times S^{\mathcal{E}})$ for any $i\in \Ag$; 
\item each map $P_i^{\mathcal{E}}:S^{\mathcal{E}}\to [0, 1]$ is defined by the assignment  \[(s,e) \mapsto \frac{P^{\coprod}_i(s,e)}{\sum \left\{P^{\coprod}_i(s',e') \; \middle| \; (s,e) \sim_i (s',e') \right\}}; \]
\item 
the map 
$\val{\cdot}_{\mb{M}^{\mathcal{E}}} : \mathsf{AtProp} \rightarrow \P( S^\mathcal{E})$ is defined as follows:

$$
\val{p}_{\mb{M}^{\mathcal{E}}} : = \val{sub(p)}_{\coprod} \cap S^{\mathcal{E}}
$$
where the map $sub(p) : E  \rightarrow \mathcal{L}$ is given by: 
\begin{align*}
sub(p)(e) :=  
\left\{
    \begin{array}{lll}
    	\sub(e)(p) & \quad\quad& \mbox{if } p\in dom(\sub(e))
        \\
        p && \mbox{otherwise. }  
    \end{array}
\right.
\end{align*}

\end{enumerate}

\end{definition}

\begin{lemma}
\label{lem:prelim:PESmodel}
For any PES-model $\mb{M}$ and any probabilistic event structure $\mathcal{E}$ over $\mathcal{L}$,
the {\em epistemic update} $\mb{M}^{\mathcal{E}}$ of the model $\mb{M} $ by the probabilistic event structure $\mathcal{E}$ is a PES-model.
\end{lemma}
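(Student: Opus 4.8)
The plan is to check, one by one, the four conditions in the definition of a PES-model (Definition~\ref{def:Prob epis state model Alexandru}) for the structure $\mb{M}^{\mathcal{E}}$: that $S^{\mathcal{E}}$ is non-empty, that each $\sim_i^{\mathcal{E}}$ is an equivalence relation on $S^{\mathcal{E}}$, that each $P_i^{\mathcal{E}}$ is a well-defined map $S^{\mathcal{E}}\to\ ]0,1]$ whose restriction to each $\sim_i^{\mathcal{E}}$-class is a probability distribution, and that $\val{\cdot}_{\mb{M}^{\mathcal{E}}}$ takes values in $\P(S^{\mathcal{E}})$. Three of these are routine. Non-emptiness holds under the standing assumption that $\mathcal{E}$ is applicable to $\mb{M}$, i.e.\ that $\mb{M},s\Vdash pre(e)$ for some $s\in S$ and $e\in E$. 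For the second condition, $\sim_i^{\coprod}$ is an equivalence relation on $\coprod_{|E|}S$ since it is the componentwise conjunction of the equivalence relations $\sim_i$ on $S$ and on $E$, and $\sim_i^{\mathcal{E}}$, being its restriction to $S^{\mathcal{E}}\times S^{\mathcal{E}}$, is then an equivalence relation on $S^{\mathcal{E}}$. Finally, $\val{p}_{\mb{M}^{\mathcal{E}}}=\val{sub(p)}_{\coprod}\cap S^{\mathcal{E}}\subseteq S^{\mathcal{E}}$ by construction. Thus the actual content of the lemma lies in the analysis of $P_i^{\mathcal{E}}$.

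For this I would first establish the positivity fact that $\pre(e\mid s)>0$ for every $(s,e)\in S^{\mathcal{E}}$: by definition of $S^{\mathcal{E}}$ (Definition~\ref{def: update PES model}) we have $\mb{M},s\Vdash pre(e)=\bigvee\{\phi\in\Phi\mid \pre(e\mid\phi)\neq 0\}$, so some $\phi\in\Phi$ with $\pre(e\mid\phi)\neq 0$ is satisfied at $s$; since the formulas in $\Phi$ are pairwise inconsistent (Definition~\ref{def:proba-epist-event-struct}), this $\phi$ is the unique one satisfied at $s$, whence $\pre(e\mid s)=\pre(e\mid\phi)\neq 0$ by Notation~\ref{note:pre(e/s)}. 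Together with $P_i(s),P_i(e)\in\ ]0,1]$, this gives $P_i^{\coprod}(s,e)=P_i(s)\cdot P_i(e)\cdot\pre(e\mid s)>0$ on $S^{\mathcal{E}}$; and the same computation shows $P_i^{\coprod}(s',e')=0$ whenever $(s',e')\notin S^{\mathcal{E}}$, so the denominator $\sum\{P_i^{\coprod}(s',e')\mid (s,e)\sim_i^{\coprod}(s',e')\}$ occurring in the definition of $P_i^{\mathcal{E}}(s,e)$ is unchanged if the summation is restricted to the $\sim_i^{\mathcal{E}}$-class of $(s,e)$. Next, since the $\sim_i^{\coprod}$-class of $(s,e)$ is $[s]_{\sim_i}\times[e]_{\sim_i}$ with $[e]_{\sim_i}$ finite, bounding $\pre(e'\mid s')\le 1$ and summing first over $[s]_{\sim_i}$ and then over $[e]_{\sim_i}$ (using that $P_i$ gives probability distributions on the $\sim_i$-classes of $S$ and of $E$) yields $0<\sum\{P_i^{\coprod}(s',e')\mid (s,e)\sim_i^{\coprod}(s',e')\}\le 1$, the strict inequality because the summand $(s',e')=(s,e)$ is positive. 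Hence $P_i^{\mathcal{E}}(s,e)$ is a well-defined positive real, and since its numerator is one of the non-negative summands in its denominator, $P_i^{\mathcal{E}}(s,e)\le 1$; so $P_i^{\mathcal{E}}\colon S^{\mathcal{E}}\to\ ]0,1]$. For the distribution property, fix $(s,e)\in S^{\mathcal{E}}$ and let $C$ be its $\sim_i^{\mathcal{E}}$-class; every element of $C$ has the same $\sim_i^{\coprod}$-class as $(s,e)$, hence the same denominator $D$, and $\sum_{(s',e')\in C}P_i^{\coprod}(s',e')=D$ by the remark above, so $\sum_{(s',e')\in C}P_i^{\mathcal{E}}(s',e')=D^{-1}\!\sum_{(s',e')\in C}P_i^{\coprod}(s',e')=1$.

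The one point deserving genuine care — and the only real obstacle — is the handling of this denominator: one must argue simultaneously that it is \emph{finite} (the class $[s]_{\sim_i}$ may be infinite, but the series involved converge, being dominated by $\sum_{s'\sim_i s}P_i(s')=1$), that it is \emph{strictly positive} (which hinges entirely on $\pre(e\mid s)>0$ for $(s,e)\in S^{\mathcal{E}}$, i.e.\ on the pairwise inconsistency of $\Phi$), and that it is \emph{insensitive} to whether the sum ranges over the $\sim_i^{\coprod}$-class in $\coprod_{|E|}S$ or over the $\sim_i^{\mathcal{E}}$-class in $S^{\mathcal{E}}$. Once these are in place, the normalization to $1$ is immediate and the lemma follows.
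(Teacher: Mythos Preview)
Your proof is correct and follows the same approach as the paper---verifying the defining conditions of a PES-model---but is considerably more thorough: the paper's proof is a two-line sketch that only asserts $\sim_i^{\mathcal{E}}$ is an equivalence relation and that $P_i^{\mathcal{E}}(s,e)>0$ (citing $P_i(s)>0$) with the distribution property ``by construction,'' whereas you carefully justify positivity of $\pre(e\mid s)$ on $S^{\mathcal{E}}$, finiteness and positivity of the normalizing denominator, and its insensitivity to summing over the $\sim_i^{\coprod}$-class versus the $\sim_i^{\mathcal{E}}$-class. Your observation that non-emptiness of $S^{\mathcal{E}}$ requires an applicability assumption is apt; the paper simply does not address this point.
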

\begin{proof}
To prove that $\mb{M}^{\mathcal{E}}$ is a PES-model (\autoref{def:Prob epis state model Alexandru}), we need to show that it satisfies the following properties:
\begin{enumerate}
\item \label{proof:update:PES:model:1}
the set $S^\mathcal{E}$ is finite,
\item \label{proof:update:PES:model:2}
each relation $\sim^\mathcal{E}_i$ is an equivalence relation on $S$,
\item \label{proof:update:PES:model:3}
each map  $P^\mathcal{E}_i: S^\mathcal{E}\rightarrow\ ]0, 1]$ assigns a probability distribution
over each $\sim^\mathcal{E}_i$-equivalence class,
\item  \label{proof:update:PES:model:4}
the map $\val{\cdot} : \mathsf{AtProp} \rightarrow \P S^\mathcal{E}$ is a valuation map.
\end{enumerate}
\texttt{Proof of item \ref{proof:update:PES:model:1}.} The product of finite sets is finite.

\texttt{Proof of items \ref{proof:update:PES:model:2} and \ref{proof:update:PES:model:4}.} Trivial.

\texttt{Proof of item \ref{proof:update:PES:model:3}.} The fact that  $P^{\mathcal{E}}_i(s,e)>0$ for every $(s,e)\in S^\mathcal{E}$ follows from $P_i(s)>0$ for every $s\in S$ and Definition \ref{def: coproduct PES model}.  Hence, by construction, $P^\mathcal{E}_i$ is a probability distribution over $\sim^\mathcal{E}_i$-equivalence classes.
\end{proof}

\subsection{Semantics}
\label{ssec:classic:semantics}
In this subsection, we provide the semantics of PDEL
over PES-models.
\begin{definition}[Probability measure]
\label{def:M-probability-measure}
Given a PES-model
$$\mb{M} = \left\langle S, (\sim_i)_{i\in \Ag}, (P_i)_{i\in \Ag}, \val{\cdot} \right\rangle,$$
let the \emph{probability measure} $\mu^{\mathbb{M}}_i \ : \ S \times \mathcal{L} \rightarrow [0,1]$ be defined as follows: for any $\phi \in \mathcal{L}$,
$$ \mu^{\mb{M}}_i (s,\phi) \quad := \quad \sum_{
\substack{
s \sim_i s'\\
s'\in  \val{\phi}
}} P_i(s')
 .$$
Notice that $\mu_i$ defines a probability measure on each $\sim_i$-equivalence class.
\end{definition}
\begin{definition}[Semantics of PDEL]
Given a PES-model
$$\mb{M} = \left\langle S, (\sim_i)_{i\in \Ag}, (P_i)_{i\in \Ag}, \val{\cdot} \right\rangle,$$
and the probability measures $\mu_i^{\mb{M}}$ defined as in \Cref{def:M-probability-measure},
the formulas of the language $\mathcal{L}$ are interpreted as follows:
\begin{align*}
\mb{M},s \models \bot & \textcolor{white}{\quad \text{iff} \quad} never
\\
\mb{M},s \models p & \quad \text{iff} \quad 
s \in \val{p}
\\
\mb{M},s \models \phi \wedge \psi & \quad \text{iff} \quad 
\mb{M},s \models \phi\quad \text{ and }\quad \mb{M},s \models \psi 
\\
\mb{M},s \models \phi \vee \psi & \quad \text{iff} \quad 
\mb{M},s \models \phi \quad \text{ or }\quad \mb{M},s \models \psi 
\\
\mb{M},s \models \phi \rightarrow \psi & \quad \text{iff} \quad 
\mb{M},s \models \phi \quad\text{ implies }\quad \mb{M},s \models \psi 
\\
\mb{M},s \models \lozenge_i \phi  & \quad \text{iff} \quad \text{there exists } s' \sim_i s \text{ such that } 
\mb{M},s' \models \phi 
\\
\mb{M},s \models \Box_i \phi  & \quad \text{iff} \quad \mb{M},s' \models \phi \quad \text{ for all } s' \sim_i s 
\\
\mb{M},s \models \langle\mathcal{E}, e\rangle \phi  
& \quad \text{iff} \quad \mb{M},s \models pre(e)
 \quad  
\text{ and } \quad \mb{M}^{\mathcal{E}}, (s,e) \models \phi 
\\
\mb{M},s \models [ \mathcal{E}, e ] \phi  
& \quad \text{iff} \quad 
 \mb{M},s \models pre(e)
 \quad  
\text{ implies } \quad \mb{M}^{\mathcal{E}}, (s,e) \models \phi
\\
\mb{M},s \models \left(\sum_{k = 1}^n\alpha_k · \mu_i(\varphi)\right) \geq \beta  
& \quad \text{iff} \quad \sum_{k=1}^{n} \alpha_k · \mu^{\mb{M}}_i(s,\varphi) \geq \beta  
\end{align*} 
\end{definition}


\subsection{Axiomatization}
\label{ssec:classic:axiomatization}

PDEL is a logical framework bringing together epistemics, dynamics,  and probabilities. Hence its axiomatization describes the behaviour of each of these components 
as well as their interactions. The full axiomatization of PDEL is given in  \Cref{table:PDEL} on page \pageref{table:PDEL}
and includes the axioms of classical multi-modal logic S5, understood as the basic epistemic logic, axioms capturing the theory of linear inequalities with rational coefficients (cf.~\cite[Theorem 4.3]{fagin1990logic}),  axioms capturing basic classical probability theory (cf.\ \cite{ABS16,vBGK09,fagin1990logic,ES14,FH94}), and axioms encoding the interaction between the dynamic modalities and the other logical connectives \cite{vBGK09,ABS16}, as well as the following inference rules: modus ponens, uniform substitution (see \cite{WanCao13}), necessitation for the static and dynamic modalities, and a substitution rule for the probabilistic operators $\mu_i$ (cf.~\cite{ABS16,vBGK09,ES14,FH94}).


\renewcommand{\arraystretch}{1.3}

\begin{table}
\caption{\textsc{Axioms of PDEL} 
}
\label{table:PDEL}
\begin{tabular}{|ll|ll|}
\hline
\multicolumn{2}{|c|}{\textbf{Axioms of classical modal logic S5}}
\\
\hline
& Tautologies of classical propositional logic
\\
k. & $ \Box_i ( \varphi \rightarrow \psi ) \rightarrow ( \Box_i \varphi \rightarrow \Box_i \psi)$
\\
dual. & $\Box_i \varphi \leftrightarrow \neg \lozenge_i \neg \varphi$
\\
t. & $\Box_i \varphi \rightarrow \varphi$ 
\\
iv.  & $\Box_i \varphi \rightarrow \Box_i \Box_i \varphi$
\\
v.  & $\neg \Box_i \varphi \rightarrow \Box_i \neg \Box_i \varphi$
\\
\hline
\multicolumn{2}{|c|}{\textbf{Axioms capturing the theory of linear inequalities with rational coefficients}}                                                                       \\
\hline
n0. & $t\geq t$\\
n1. & $(t\geq \beta) \leftrightarrow (t + 0 \cdot \mu_i(\varphi) \geq \beta)$ 
\\
n2. & $\left( \sum_{k=1}^n \alpha_k \cdot \mu_i(\varphi_k) \geq \beta \right) \rightarrow
\left( \sum_{k=1}^n \alpha_{\sigma(k)} \cdot \mu_i(\varphi_{\sigma(k)}) \geq \beta \right)
 $ for any permutation $\sigma$ over $\{ 1, ..., n\}$
\\
n3. & $\left(\left( \sum_{k=1}^n \alpha_k \cdot \mu_i(\varphi_k) \geq \beta \right) \wedge \left( \sum_{k=1}^n \alpha'_k \cdot \mu_i(\varphi_k) \geq \beta' \right)\right) \rightarrow 
\left( \sum_{k=1}^n (\alpha_k + \alpha'_k) \cdot \mu_i(\varphi_k) \geq (\beta + \beta') \right)$
\\
n4. & $(( t \geq \beta) \wedge (d \geq 0)) \rightarrow
( d \cdot t \geq d \cdot \beta  )$
\\
n5. & $ ( t \geq \beta ) \vee (\beta \geq t)$
\\
n6. & $ ((t \geq \beta) \wedge (\beta \geq \gamma))
\rightarrow (t \geq \gamma)   $
\\
\hline
\multicolumn{2}{|c|}{\textbf{Axioms capturing basic classical probability theory}}                                                                       
\\
\hline
p1. & $\mu_i(\bot) = 0$
\hspace{4.5cm}
p2. \quad $\mu_i(\top) = 1$
\\
p3. & $\mu_i(\varphi\land\psi)+\mu_i(\varphi\land\lnot\psi)=\mu_i(\varphi)$
\\
p4. & $\Box_i \varphi \leftrightarrow ( \mu_i(\varphi)=1 ) $
\\
p5. &
$\left( \sum_{k=1}^n \alpha_k \cdot \mu_i(\varphi_k) \geq \beta \right) \rightarrow \Box_i \left( \sum_{k=1}^n \alpha_k \cdot \mu_i(\varphi_k) \geq \beta \right)$
\\
\hline
\multicolumn{2}{|c|}{\textbf{Reduction Axioms}}                                                                       \\
\hline
i1. & $\ls \mathcal{E},e \rs p  \leftrightarrow 
\left( pre(e) \rightarrow  \sub(e)(p) \right)$  
\\
i2. & $\ls \mathcal{E}, e \rs \neg \varphi \leftrightarrow
\left(  pre(e) \rightarrow \neg \ls  \mathcal{E},e \rs \varphi \right)$ \\
i4. & $\ls \mathcal{E},e \rs (\varphi\wedge \psi) \leftrightarrow 
\left( \ls \mathcal{E},e \rs A \wedge \ls \mathcal{E},e \rs B \right)$                                  \\
i5.  & $\ls \mathcal{E},e \rs\Box_i A \leftrightarrow 
\left( pre(e)\rightarrow  \bigwedge\{\Box_i\ls\mathcal{E}, f\rs A \mid e\sim_i f\}\right)$ 
\\
i6. & 
$\ls \mathcal{E},e\rs
\left( \sum_{k=1}^n \alpha_k \cdot \mu_i(\psi_k) \geq \beta \right) 
\leftrightarrow ( pre(e) \rightarrow C \geq D)$ with
\\
& $C = \sum_{\phi \in \Phi} \sum_{e \sim_i f} \sum_{k=1}^n \alpha_k \cdot \pre(f\mid \phi) \cdot \mu_i(\phi \wedge [\mathcal{E},f ]\psi_k )$ and
\\
& $D = \sum_{\phi \in \Phi} \sum_{e \sim_i f} \beta \cdot \pre(f \mid \phi) \cdot \mu_i(\phi) $\\
   \hline
\multicolumn{2}{|c|}{\textbf{Inference Rules}}                                                                          \\
\hline
MP & if $\vdash A\rightarrow B$ and $\vdash A$, then $\vdash B$ 
\\
Nec$_i$        & if $\vdash A$, 
then $\vdash \Box_i  A$\\
Nec$_\alpha$   & if $\vdash A$, then $\vdash \ls \mathcal{E},e \rs A$\\
Sub$_\mu$ & if $\vdash A \rightarrow B$, then $\vdash \mu_i (A) \leq \mu_i(B)$\\
SubEq & if $\vdash A \leftrightarrow B$, then $\vdash \phi\leftrightarrow\phi[A/B]$\\
\hline
\end{tabular}

\end{table}


\renewcommand{\arraystretch}{1}

\begin{lemma}[Soundness and Completeness]
\label{lem:sound-complete-PDEL}
PDEL is sound and complete w.r.t. the axiomatization given in  \Cref{table:PDEL}.
\end{lemma}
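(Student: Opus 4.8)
The plan is to prove soundness and completeness separately, following the now-standard reduction-axiom strategy for probabilistic dynamic epistemic logics (as in \cite{kooi2003probabilistic,vBGK09}).

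\textbf{Soundness.} I would check, model by model, that every axiom of Table~\ref{table:PDEL} is valid on all PES-models and that every inference rule preserves validity. The S5 axioms m1--m3 hold because each $\sim_i$ is an equivalence relation; the inequality axioms n0--n6 reduce to elementary facts about the ordered field of the reals once $\sum_k \alpha_k\cdot\mu_i(\varphi_k)$ is read as a real number at each state; p1--p3 and Sub$_\mu$ follow from Definition~\ref{def:M-probability-measure} and the monotonicity of $\mu_i^{\mb{M}}$; and p4, p5 express exactly that $\mu_i^{\mb{M}}(s,\varphi)=1$ iff $\val{\varphi}$ contains the whole $\sim_i$-class of $s$, and that the truth value of an inequality formula is constant on $\sim_i$-classes. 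The only laborious cases are the reduction axioms i1--i6: i1, i2, i4 are immediate from Definition~\ref{def: update PES model}(1)--(2) and the semantic clauses for $\langle\mathcal{E},e\rangle$; i5 just unfolds the definition of $\sim_i^{\mathcal{E}}$ as the restriction of $\sim_i^{\coprod}$; and i6 is the genuinely computational one --- there one substitutes the definition of $P_i^{\mathcal{E}}$ from Definition~\ref{def: update PES model}(4), expands $\mu_i^{\mb{M}^{\mathcal{E}}}((s,e),\psi_k)$ as a sum over the $\sim_i^{\mathcal{E}}$-class of $(s,e)$, groups the states $s'$ according to which precondition $\phi\in\Phi$ they satisfy and which event $f\sim_i e$ they are paired with, and uses $P_i^{\coprod}(s',f)=P_i(s')\cdot P_i(f)\cdot\pre(f\mid s')$ together with $\pre(f\mid s')=\pre(f\mid\phi)$ (Notation~\ref{note:pre(e/s)}) to recognise the result as $C\geq D$ divided by a common positive normalising factor that cancels.

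\textbf{Completeness.} I would use the reduction axioms to eliminate the dynamic modalities. Let $\mathcal{L}_0\subseteq\mathcal{L}$ be the static fragment, generated without $\langle\mathcal{E},e\rangle$ and $[\mathcal{E},e]$. I would define a translation $t:\mathcal{L}\to\mathcal{L}_0$ that pushes each dynamic modality inward using i1, i2 (with i4 and the definability of $\vee,\rightarrow$), i5 and i6, on innermost occurrences first, and prove by induction on a suitable well-founded complexity measure that $\vdash_{\mathrm{PDEL}}\varphi\leftrightarrow t(\varphi)$. The measure must be chosen so that each rewrite strictly decreases it, even though i5 and i6 replace $\langle\mathcal{E},e\rangle\chi$ by a Boolean/arithmetical combination of formulas $\langle\mathcal{E},f\rangle\chi'$ with $\chi'$ a proper subformula of $\chi$; the standard device is to give $\langle\mathcal{E},e\rangle$ a weight exceeding that of any connective and take a lexicographic (or polynomial) measure in which the size of $\chi$ dominates. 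Once $\vdash\varphi\leftrightarrow t(\varphi)$ is in place, it suffices to prove completeness of the \emph{static} logic --- PDEL restricted to $\mathcal{L}_0$, i.e.\ S5 plus n0--n6 plus p1--p5 --- with respect to PES-models. Here I would invoke (and, if necessary, reprove) the Fagin--Halpern--Megiddo-style completeness result: from a maximal consistent set one builds a (filtrated, hence class-finite) canonical structure whose epistemic relations are the usual $\Box_i$-compatibility relations, and on each $\sim_i$-class one uses the linear inequalities occurring in the relevant maximal consistent sets --- jointly consistent by n0--n6 and p1--p3 --- to solve, by linear programming / a separating-hyperplane (Hahn--Banach) argument, for a probability distribution $P_i$ realising all of them; p4 and p5 guarantee that this distribution is well defined on the class and that the constructed model verifies exactly the inequality formulas of each world. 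Completeness of PDEL follows: if $\varphi$ is consistent so is $t(\varphi)\in\mathcal{L}_0$, hence $t(\varphi)$ is satisfiable in a PES-model, and by soundness and $\vdash\varphi\leftrightarrow t(\varphi)$ so is $\varphi$.

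\textbf{Main obstacle.} The computational heart is the verification of i6 --- reconciling the one-step definition of $\mu_i^{\mb{M}^{\mathcal{E}}}$ with the $\Phi$-indexed double sum on the right-hand side --- but this is essentially bookkeeping. The genuine obstacle is the static completeness step: realising a concrete probability distribution on each $\sim_i$-class from the finitely many linear-inequality constraints imported from a maximal consistent set, which requires the linear-programming/Hahn--Banach argument together with a careful finiteness (filtration) argument to keep each class finite. Since the statement concerns \emph{classical} PDEL, I would in practice cite \cite{kooi2003probabilistic,vBGK09} for this last step rather than carry it out in full here.
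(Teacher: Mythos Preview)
Your proposal is correct and follows the standard route for probabilistic DEL in the literature: verify soundness axiom-by-axiom on PES-models, reduce via the interaction axioms to the static fragment, and then invoke a Fagin--Halpern--Megiddo style canonical-model-plus-linear-programming argument for static completeness. This works.

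The paper, however, takes a different route. Its proof of Lemma~\ref{lem:sound-complete-PDEL} is a one-liner: the statement follows from the general soundness and completeness of \emph{IPDEL} with respect to APE-models (Proposition~\ref{lem:soundness-IPDEL} and Section~\ref{sec:completeness}) together with Stone-type duality. The idea is that classical PDEL is the Boolean instance of IPDEL, APE-models based on Boolean algebras are, via Proposition~\ref{prop:compatibility with duality} and the complex-algebra construction, dually equivalent to PES-models, and Lemma~\ref{lemma:axiomreplacement} shows that the only axiom that differs (p4 versus P4) collapses to the same thing over a Boolean base. So rather than proving the classical result directly, the paper obtains it as a corollary of the intuitionistic result that is the paper's main contribution.

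What each buys: your approach is self-contained and does not require the algebraic update machinery of Section~\ref{sec:ha} or the IPDEL completeness proof; it is exactly what one would write if Lemma~\ref{lem:sound-complete-PDEL} were the goal in isolation. The paper's approach is economical \emph{given} that Sections~\ref{sec:semantics-IPDEL}--\ref{sec:completeness} are already in place, and it makes the conceptual point that the classical theory is a genuine specialisation of the intuitionistic one --- which is precisely what the dualisation methodology is meant to deliver.
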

\begin{proof}
The statement follows from the general proof in Appendix \ref{Appendix:completeness} and Stone type duality.
\end{proof}

\section{Methodology}
\label{sec:method}
In the present section, we expand on the methodology of the paper. In the previous section, we gave a two-step account of the  {\em product update} construction which, for any  PES-model $\mb{M}$ and any event model $\mathcal{E}$ over $\mathcal{L}$, yields the updated model  $\mb{M}^\mathcal{E}$  as a certain {\em submodel} of a certain {\em intermediate model} $\coprod_{\mathcal{E}} \mb{M}$. This account is analogous to those given in \cite{MPS14,KP13} of the product updates of models of PAL and Baltag-Moss-Solecki's dynamic epistemic logic EAK. In each instance, the original product update construction can be 
illustrated by the following diagram (which uses the notation  introduced in the instance treated in the previous section):
\[
\mb{M} \hookrightarrow \coprod_{\mathcal{E}}\mb{M} \hookleftarrow \mb{M}^\mathcal{E}.
 \]
As is well known (see e.g.\ \cite{DaveyPriestley2002}) in duality theory, coproducts can be dually characterized as products, and subobjects as quotients. In the light of this fact, the construction of {\em product update}, regarded as a ``subobject after coproduct'' concatenation, can be dually characterized on the algebras dual to the relational structures of PES-models by means of a ``quotient after product'' concatenation, as illustrated in the following diagram:
\[
\bbA \twoheadleftarrow \prod_{\mathcal{E}}\bbA \twoheadrightarrow \bbA^\mathcal{E},
\]
resulting in the following two-step process. First, the coproduct $\coprod_{\mathcal{E}}M$ is dually characterized as a certain {\em product} $\prod_{\mathcal{E}}\bbA$, indexed as well by the states of $\mathcal{E}$, and such that $\bbA$ is the algebraic dual of $\mb{M}$; second, an appropriate {\em quotient} of $\prod_{\mathcal{E}}\bbA$ is then taken, which dually characterizes the submodel step.
On which algebras are we going to apply the ``quotient after product'' construction? The prime candidates are the algebras  associated with the PES-models via standard Stone-type duality:

\begin{definition}[Complex algebra]
	\label{def: complex algebra}
	For any PES-model $\mb{M} = \left\langle S, (\sim_i)_{i\in \Ag}, (P_i)_{i\in \Ag}, \val{\cdot}\right\rangle $, its \emph{complex algebra}  is the tuple
	$$\mb{M}^+ :=  \left( \P S, (\lozenge_i)_{i\in \Ag}, (\Box_i)_{i\in \Ag}, (P^+_i)_{i\in \Ag} \right) $$
where for each $i\in \Ag$ and $X\in \P S$,
\begin{align*}  
\lozenge_i X  & = \left\{ s\in S \mid \exists x \: ( s\sim_i x \text{ and } x\in X) \right\},
\\  
\Box_i X  & =    \left\{ s\in S \mid \forall x \: (s \sim_i x \implies x\in X) \right\},
\\
\mathsf{dom}(P^+_i) & =  \left\{X\in\P S \mid \exists y \ \forall x \: (x\in X\implies x\sim_i y) \right\},
\\	
P^+_i X &=\sum_{x\in X} P_i(x).
\end{align*}
Notice that the domain of $P_i^+$ consists of all the subsets of the equivalence classes of $\sim_i$.
		
\end{definition}
In this setting, the ``quotient after product'' construction behaves exactly in the desired way, in the sense that one can check {\em a posteriori} that the following holds:
\begin{proposition}
\label{prop:compatibility with duality}
For every PES-model $\mb{M}$ and any  event structure $\mathcal{E}$ over $\mathcal{L}$, the algebraic structures $(\mb{M}^+)^{\mathcal{E}}$ and $(\mb{M}^{\mathcal{E}})^+$ can be identified.
\end{proposition}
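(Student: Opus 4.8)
The plan is to verify the identification componentwise, following the two-step structure of both constructions. On the semantic side, the update $\mb{M}^{\mathcal{E}}$ is obtained as a submodel of the intermediate model $\coprod_{\mathcal{E}}\mb{M}$; on the algebraic side, $(\mb{M}^+)^{\mathcal{E}}$ is obtained as a quotient of the product $\prod_{\mathcal{E}}(\mb{M}^+)$. So the first step is to establish the base case of the duality: the complex algebra $(\coprod_{\mathcal{E}}\mb{M})^+$ of the intermediate model is isomorphic to the product $\prod_{|E|}(\mb{M}^+)$, i.e.\ to $\prod_{|E|}\P S \cong \P(S\times E)$. This requires checking that the four operations match up: that $\lozenge_i$ and $\Box_i$ computed from $\sim_i^{\coprod}$ agree, under the iso $\P(S\times E)\cong\prod_{e}\P S$, with the componentwise modal operations on the product (keeping track of the ``and $e\sim_i e'$'' clause in the definition of $\sim_i^{\coprod}$), and — crucially — that the partial map $P_i^{\coprod}$ dualizes correctly. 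The Remark after Definition~\ref{def: coproduct PES model} flags that $P_i^{\coprod}$ is not a genuine probability distribution over the $\sim_i^{\coprod}$-classes, and the footnote there points to Definition~\ref{def:measures} as the notion engineered to capture exactly this dual; so this part of the argument is really ``unfolding the definition of $\prod_{\mathcal{E}}\bbA$'' and checking it was set up to make the identity $P_i^{\coprod}(s,e)=P_i(s)\cdot P_i(e)\cdot\pre(e\mid s)$ come out of the algebraic product.

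The second step is to match the ``submodel'' operation on the left with the ``quotient'' operation on the right. The carrier of $\mb{M}^{\mathcal{E}}$ is the subset $S^{\mathcal{E}}=\{(s,e)\mid \mb{M},s\Vdash pre(e\mid s)\}\subseteq S\times E$. Under Stone-type duality, passing to this subobject corresponds to quotienting $\P(S\times E)$ by the congruence whose kernel is generated by the complement of $S^{\mathcal{E}}$ — equivalently, intersecting every subset with $S^{\mathcal{E}}$. I would show: (i) the Heyting/Boolean algebra reduct $\P(S^{\mathcal{E}})$ is the quotient of $\P(S\times E)$ by this congruence; (ii) the modal operations $\lozenge_i^{\mathcal{E}},\Box_i^{\mathcal{E}}$ induced by $\sim_i^{\mathcal{E}}=\sim_i^{\coprod}\cap(S^{\mathcal{E}}\times S^{\mathcal{E}})$ are exactly the operations induced on the quotient (this is the standard fact that restricting an accessibility relation to a subset dualizes to the quotient modal operations, provided the subset is chosen compatibly — here compatibility is built into the definition of the event structure); (iii) the probability map $P_i^{\mathcal{E}}$, which renormalizes $P_i^{\coprod}$ over the new equivalence classes, is precisely the map induced on the quotient algebra by the algebraic renormalization of the dual of $P_i^{\coprod}$; and (iv) the valuations agree, i.e.\ $\val{p}_{\mb{M}^{\mathcal{E}}}=\val{sub(p)}_{\coprod}\cap S^{\mathcal{E}}$ is the image of the appropriate element under the quotient map, which is where the substitution functions $\sub(e)$ enter on the algebraic side.

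Concretely, I would phrase the whole thing as a chain
\[
(\mb{M}^{\mathcal{E}})^+ \;\cong\; \bigl((\coprod_{\mathcal{E}}\mb{M})^+\bigr)^{\mathcal{E}} \;\cong\; \bigl(\prod_{\mathcal{E}}(\mb{M}^+)\bigr)^{\mathcal{E}} \;=\; (\mb{M}^+)^{\mathcal{E}},
\]
where the first iso is the ``subobject-dualizes-to-quotient'' fact applied to $S^{\mathcal{E}}\hookrightarrow S\times E$ together with the compatibility of $\sim_i^{\mathcal{E}}$, $P_i^{\mathcal{E}}$ and $\val{\cdot}_{\mb{M}^{\mathcal{E}}}$ with the quotient; the second iso is the base-case duality $(\coprod_{\mathcal{E}}\mb{M})^+\cong\prod_{\mathcal{E}}(\mb{M}^+)$ from step one; and the last equality is just the definition of the algebraic update $(-)^{\mathcal{E}}$ on the complex algebra. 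The caveat footnote in the excerpt warns that, stated this baldly, there is a notational abuse: the algebraic update $(\mb{M}^+)^{\mathcal{E}}$ is really defined relative to the event structure \emph{over the algebra} $\mb{M}^+$ induced by $\mathcal{E}$ via Definition~\ref{def: es over L induce es over A} and Fact~\ref{fact: es over L induce es over A}, not directly relative to $\mathcal{E}$; so a preliminary bookkeeping step is needed to confirm that the event structure over $\mb{M}^+$ used in the right-hand construction is the one induced by the event structure over $\mathcal{L}$ used in the left-hand construction, and that the valuation clause using $sub(p)$ transforms correctly under this passage.

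\textbf{Main obstacle.} The routine part is the Boolean-algebra-with-operators bookkeeping for $\lozenge_i,\Box_i$ and the valuations; the delicate part is the probability component. The map $P_i^{\coprod}$ is only a partial, unnormalized ``pre-measure'', and the content of Proposition~\ref{prop:compatibility with duality} for the probabilistic fragment is that (a) the algebraic product construction produces exactly this partial unnormalized map as the dual of $P_i^{\coprod}$ — which is why Definition~\ref{def:measures} has to be set up the way it is — and (b) the quotient/renormalization step on the algebra side reproduces the semantic renormalization in Definition~\ref{def: update PES model}(4), \emph{including} the fact that the denominator $\sum\{P_i^{\coprod}(s',e')\mid (s,e)\sim_i(s',e')\}$ is computed over the new $\sim_i^{\mathcal{E}}$-class rather than the old $\sim_i^{\coprod}$-class. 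Making sure these two renormalizations literally coincide — i.e.\ that restricting to $S^{\mathcal{E}}$ and then renormalizing equals renormalizing the induced algebraic pre-measure on the quotient — is where the real work lies, and it is the step I would expect to consume most of the proof.
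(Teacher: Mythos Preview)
Your proposal is correct and takes essentially the same approach as the paper. The paper's proof simply cites \cite[Fact~12]{KP13} for the entire non-probabilistic (BAO) part of the identification and defers the probabilistic part to Lemma~\ref{lem:duality:probabilities}, which shows $(P_i^+)^{\mb{E}_\mathcal{E}} \cong (P_i^\mathcal{E})^+$ by a direct computation --- exactly the renormalization check you flag as the ``main obstacle''. Your organization slices the argument along the intermediate/quotient axis rather than the paper's non-probabilistic/probabilistic axis, but the content is the same; in particular, the intermediate-step identification you describe is carried out separately in the paper as Proposition~\ref{prop: plus of coprod same as prod of plusses}.
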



\begin{proof}
This results follows from:
(1)
Fact 12 in \cite{KP13} that states that for any (non probabilistic) Kripke model $\mb{N}$, the structures 
$(\mb{N}^+)^{\mathcal{E}}$ and $(\mb{N}^{\mathcal{E}})^+$ can be identified,
and (2) Lemma \ref{lem:duality:probabilities} on page \pageref{lem:duality:probabilities} that states that the probability measures 
on the complex algebras $(\mb{M}^+)^{\mathcal{E}}$ and $(\mb{M}^{\mathcal{E}})^+$ are the same.
\end{proof}

Moreover, the ``quotient after product'' construction holds in much greater generality than the  class of complex algebras of PES-models, which is exactly its added value over the update on relational structures. In the following section, we are going to define it in detail in the setting of epistemic Heyting algebras. 


\section{Probabilistic dynamic epistemic updates on finite Heyting algebras}
\label{sec:ha}





The present section aims at introducing the algebraic counterpart of the event update construction presented in Section \ref{sec:prelim}. 
For the sake of enforcing a neat separation between syntax and semantics, throughout the present section,  we will disregard the logical language $\mathcal{L}$, and work  on {\em algebraic probabilistic epistemic structures} (APE-structures, see Definition \ref{def: alg probab epist structure}) rather than on APE-models (i.e.\ APE-structures endowed with valuations). To be able to define the update construction, we will need to  base our treatment on  a modified definition of  event structure over an algebra, rather than over $\mathcal{L}$.

\paragraph{Structure of the section.}In Section \ref{ssec:eha}, we introduce epistemic Heyting algebras. In Section \ref{ssec:measuresAPE}, we recall the definition of intuitionistic probability from \cite{weatherson2003classical} and endow epistemic Heyting algebras with measures to define algebraic probabilistic epistemic structures. In Section \ref{ssec:probevents}, we define probabilistic event structures over epistemic algebras, as the intuitionistic algebraic counterparts of classical probabilistic event structures. In Section \ref{ssec:intermediate}, we introduce the construction of intermediate pre-probabilistic event structure as the first step of the algebraic event update construction. Finally, in Section \ref{ssec: abstract charact i minimal els pseudo quotient}, we introduce the pseudo-quotient update construction and define the event update on algebraic probabilistic epistemic structures. 


\subsection{Epistemic Heyting algebras
}
\label{ssec:eha}

In this section we introduce epistemic Heyting algebras. We start by recalling the definition of monadic Heyting algebras, which provide algebraic semantics for the logic MIPC, the intuitionistic analogue of the classical modal logic S5 (cf.\ \cite{bezhanishvili1998varieties,bezhanishvili1999varieties,KP13}). Then, we introduce the concept of $i$-minimal elements of monadic Heyting algebras. Finally, we define epistemic Heyting algebras as those monadic Heyting algebras whose $i$-minimal elements are enough to describe certain subalgebras of interest for the developments of the next sections.


\begin{definition}[Monadic Heyting algebra (cf.\ \cite{bezhanishvili1998varieties})]
	\label{def: epist algebra}
	A {\em monadic Heyting algebra} is a tuple 
	$$  \mb{A}:=\left( \mb{L}, (\lozenge_i)_{i\in \Ag} , (\Box_i)_{i\in \Ag}\right) $$ such that
	 $\mb{L}$ is a Heyting algebra, 
		and each $\lozenge_i$ and $\Box_i$ is a monotone unary operation on $\mb{L}$
		such that for all $a, b\in \mb{L}$,
\begin{multicols}{2}
\begin{align}
& a\leq\lozenge_ia
\tag{M1}
\label{axiom:epist-alg:refl}
\\
& \Box_ia\leq a
\tag{M2}
\label{axiom:epist-alg:refl2}
\\
& \lozenge_i(a\lor b)\leq\lozenge_ia\lor\lozenge_ib
\tag{M3}
\label{axiom:epist-alg:distribd1}
\\
& \Box_i(a\to b)\leq\Box_ia\to\Box_ib
\tag{M4}
\label{axiom:epist-alg:distribb1}
\end{align}
\begin{align}
& \lozenge_i a \leq \Box_i\lozenge_i a
\tag{M5}
\label{axiom:epist-alg:sym}
\\
& \lozenge_i\Box_ia\leq\Box_ia
\tag{M6}
\label{axiom:epist-alg:trans}
\\
& \Box_i(a\to b)\leq\lozenge_ia\to\lozenge_ib
\tag{M7}
\label{axiom:epist-alg:subalgebra}
\\
& \lozenge_i\bot\leq\bot
\tag{M8}
\label{axiom:epist-alg:distribd2}
\\
& \top\leq\Box_i\top
\tag{M9}
\label{axiom:epist-alg:distribb2}
\end{align}
\end{multicols}


\end{definition}
%


\begin{remark}
	The algebraic and duality theoretic treatment of monadic Heyting algebras has been developed in \cite{bezhanishvili1998varieties} and \cite{bezhanishvili1999varieties}. 
	In particular, as mentioned in \cite[Lemma 2]{bezhanishvili1998varieties}, in the presence of \eqref{axiom:epist-alg:distribb2}, 
	axiom \eqref{axiom:epist-alg:distribb1} is equivalent to $\Box_ia\land\Box_ib\leq\Box_i(a\land b)$, so all modalities are normal, and  $\lozenge_i\lozenge_ia\leq\lozenge_ia$ and $\Box_ia\leq\Box_i\Box_ia$ are derivable from the axioms. These conditions correspond also in the best known intuitionistic settings to the transitivity of the associated accessibility relations (cf.\ \cite{ALBA}). This implies in particular that $\lozenge_i$ is a closure operator for each $i\in\Ag$.
\end{remark}

The next definition intends to capture algebraically the notion of equivalence cell in the epistemic space of agents. Notice that for any equivalence relation $R$ on a set $X$ and any $x\in X$, the equivalence cell $R[x]=R^{-1}[x]=\langle R \rangle\{x\}$ is a minimal nonempty fixed point of  $\langle R \rangle$.\footnote{
Recall that, for any binary relation $R\subseteq X \times X$, we define the maps $R$, $R^{-1}$ and $\langle R \rangle$ as follows:
\begin{align*}
R \ : \ X & \rightarrow \P X
& R^{-1} \ : \ X & \rightarrow \P X 
\\
x & \mapsto \{ x' \in X \mid (x,x') \in R \}
& x & \mapsto \{ x' \in X \mid (x',x)\in R \}
\\
&&&\\
\langle R \rangle \ : \ \P X & \rightarrow \P X
\\
S & \mapsto \{ x' \in X \mid \exists x \in S,\ (x',x)\in R \}.
\end{align*}
} 
This justifies the following definition.

\begin{definition}[$i$-minimal elements]
	\label{def:i-minimal}
	Let $\mb{A}$ be a monadic Heyting algebra.
	An element $a\in \mb{A}$ is  \emph{$i$-minimal} if
	\begin{enumerate}
		\item \label{def:i-minimal:item:bot} $a\neq \bot$,
		\item \label{def:i-minimal:item:fixpoint} $\lozenge_i a = a $ and
		\item \label{def:i-minimal:item:minimal} if $b \in \mb{A}$, $b < a$ and $\lozenge_i b = b $, then $b = \bot$.
	\end{enumerate}
Let $\mathsf{Min}_i(\mb{A})$ denote the set of the $i$-minimal elements of $\mb{A}$.
\end{definition}

\begin{remark}
Notice that, for any $b\in\mb{A}\setminus\{\bot\}$, 
there exists at most one $a\in\mathsf{Min}_i(\mb{A})$ such that $b\leq a$. Indeed every such $a$ must coincide with $\lozenge_ib$.\label{page:uniqueimin}
\end{remark}

\begin{definition}[Epistemic Heyting algebra]
	\label{def:epist-Heyting-algebra}
	An  {\em epistemic Heyting algebra} is a finite monadic Heyting algebra 
	$$  \mb{A}:=\left( \mb{L}, (\lozenge_i)_{i\in \Ag} , (\Box_i)_{i\in \Ag}\right) $$ 
	such that for every $i\in\Ag$ and every $a\in\mb{A}$ the following holds:
\begin{align}
& \lozenge_ia\lor\lnot\lozenge_ia=\top.
\tag{E}
\label{axiom:epist-alg:boolean}
\end{align}	

\end{definition}

\begin{remark}
\label{rk:eHA:axiomE}
The axiom above captures algebraically the requirement that $i$-minimal elements, re\-pre\-senting cells in the partition, cover the whole space. 
\end{remark}


In the remainder of the present section, $\mb{A}$ will denote  an  epistemic Heyting algebra.

\begin{lemma}
\label{lem:epist-Hey-alg:diamond-A}
	If $\mathbb{A}$ is an Epistemic Heyting algebra, then, for every agent $i$,
	$$\lozenge_i\mathbb{A}:=\{\lozenge_i a\in\mathbb{A}\mid a\in\mathbb{A}\}$$ 
	is a Boolean sub-algebra of $\mathbb{A}$. 
	Furthermore, if 
	$$\Box_i\mathbb{A}:=\{\Box_i a\in\mathbb{A}\mid a\in\mathbb{A}\},$$ 
	then  $\lozenge_i\mathbb{A}=\Box_i\mathbb{A}$.
\end{lemma}
\begin{proof}
That $\lozenge_i\mathbb{A}$ is a subalgebra of $\mathbb{A}$ follows from the fact that 
the equalities 
$$\lozenge_i(\lozenge_ia\land b)=\lozenge_ia\land\lozenge_ib
\quad\quad \text{ and } \quad\quad 
\lozenge_i(\lozenge_i a\to \lozenge_ib)=\lozenge_ia\to\lozenge_ib$$ hold in every monadic Heything algebra  (see for example \cite[Lemma 2]{bezhanishvili1998varieties}). 
That $\lozenge_i\mathbb{A}$ is a Boolean algebra follows from the axiom \eqref{axiom:epist-alg:boolean} : $\lozenge_i a\lor\lnot\lozenge_i a=\top$.
	
Finally, we can easily prove that $\lozenge_i\mathbb{A}=\Box_i\mathbb{A}$ using the axioms 
\eqref{axiom:epist-alg:refl},
\eqref{axiom:epist-alg:refl2},
\eqref{axiom:epist-alg:sym} and
\eqref{axiom:epist-alg:trans}.
\end{proof}

\begin{remark}\label{rk:basicEHA}Given the fact that Epistemic Heyting algebras are finite and since $\lozenge_i\mathbb{A}$ is a Boolean algebra, it is not hard to see that $i$-minimal elements are the atoms of $\lozenge_i\mathbb{A}$ and hence $\bigvee\mathsf{Min}_i(\mb{A})=\top$.
\end{remark}
\begin{notation}
\label{notation:downset}
For any poset (partially ordered set) $\mb{P}=(P,\leq)$, we let  
\begin{align*}
\downarrow_{\mb{P}} \ : \ \P \mb{P} & \rightarrow \P \mb{P} \\
X & \mapsto X\!\!\downarrow_{\mb{P}} \ := 
\{ x' \in \mb{P} \mid x' \leq x \text{ for some } x\in X \}.
\end{align*}
For the sake of readability, we drop the subscript and let  $X\!\! \downarrow$ denote the downset generated by $X$. In addition, if $X = \{x\}$, we let  $x\! \downarrow$ denote the downset generated by $\{x\}$.
\end{notation}


\subsection{Algebraic probabilistic epistemic structures 
\label{ssec:measuresAPE}
}
In this Section, we introduce $i$-premeasures and $i$-measures and define algebraic pre-probabilistic and probabilistic epistemic structures which will serve as the underlying structures of intuitionistic probabilistic epistemic logic.\\

The following definition is an adaptation of a proposal of Weatherson's (see \cite[page 2]{weatherson2003classical}) in which the notion of probability is generalised and made parametric in a given  consequence relation.
Even though there is no consensus on what an intuitionistic probability function should be%
, Weatherson's proposal captures necessary conditions for such a function and establishes a systematic link between logic and probability. 
The definition below has also been adopted by \cite{aguzzoli2008finetti,flaminio2017states}. 

\begin{definition}[Intuitionistic probability measures]
Let $\mathbb{H}$ be a Heyting algebra. A function $\Prob:\mathbb{H}\to[0,1]$ is \emph{an intuitionistic probability measure} if the following conditions are satisfied: for all $a,b\in\mathbb{H}$,
\begin{align*}
& (1) \; \Prob(\bot)=0, &&
 (3) \;  \text{if } a \leq_{\mb{H}}b, \text{ then } \Prob(a)\leq \Prob(b),
\\
&  (2) \;  \Prob(\top)=1,
&& (4) \;  \Prob(a) + \Prob(b) = \Prob(a\vee b) + \Prob(a\wedge b).
\end{align*}
\end{definition}

Notice that, for  intuitionistic probability measures, it does no longer hold that $\Pr(p \vee \neg p)=1$.

Given that, in classical PDEL, the probability functions range over equivalence classes instead of the whole model, we  need to mirror that fact by defining probability functions that are probability measures on the quotient algebras generated by $i$-minimal elements. 


\begin{definition}[$i$-premeasure \& $i$-measure]
\label{def:measures}
 A partial function $\mu:\mb{A}\to\mb{R}^+$ is an \emph{ $i$-premeasure} on $\mb{A}$, if it satisfies the following properties:
	\begin{enumerate}
	    \item \label{def:epAlg:two:domain}
	    $\mathsf{dom}(\mu)=\mathsf{Min}_i(\mb{A}){\downarrow}$;
		\item \label{def:epAlg:two:monotone}
$\mu$ is order-preserving; 
		\item \label{def:epAlg:two:join}
for every $a\in \mathsf{Min}_i(\mb{A})$ and all $b, c\in a{\downarrow}$, we have $\mu(b\vee c) = \mu(b)+ \mu(c) - \mu(b\wedge c)$;
		\item \label{def:epAlg:two:bot}  $\mu(\bot)=0$ if $\mathsf{dom}(\mu)\neq\varnothing$.
	\end{enumerate}
	An $i$-premeasure on $\mb{A}$ is an {\em $i$-measure}, if it satisfies the following properties:
	\begin{enumerate}
	\setcounter{enumi}{4}
		\item \label{def:epAlg:two:fixedpoints}
$\mu(a) = 1$ for every $a\in \mathsf{Min}_i(\mb{A})$.
	\item \label{def:epAlg:two:nonzero}for every $a\in \mathsf{Min}_i(\mb{A})$ and all $b, c\in a{\downarrow}$ such that $b<c$, it holds that $\mu(b)<\mu(c)$;
	\end{enumerate}
	\end{definition}
Condition \eqref{def:epAlg:two:domain} ensures that the probability measures are defined on the quotient algebras generated by $i$-minimal elements. 
Conditions \eqref{def:epAlg:two:monotone} to \eqref{def:epAlg:two:fixedpoints}  are imported from Wheatherson's definition of in\-tui\-tionistic probabilistic functions.
Condition \eqref{def:epAlg:two:nonzero} corresponds to the fact that in the classical case, the probability distributions over the elements of the equivalence classes do not take value $0$ (see \autoref{def:Prob epis state model Alexandru}, page \pageref{def:Prob epis state model Alexandru})

\begin{remark}
In the case when $\mathsf{Min}_i(\mb{A}){\downarrow}=\varnothing$, there exists a unique $i$-(pre)measure, the empty function. Throughout this section, all the results regarding $i$-minimal elements and $i$-(pre)measure hold vacuously in the case when $\mathsf{Min}_i(\mb{A}){\downarrow}=\varnothing$. 
\end{remark}

\begin{definition}[ApPE-structure \& APE-structure]
	\label{def: alg probab epist structure}
	An {\em algebraic pre-probabilistic epistemic structure (ApPE-structure)} is a tuple 
	$$ \mathcal{F}:=  \left( \mb{A}, (\mu_i)_{i\in \Ag} \right) $$ 
	such that 
\begin{enumerate}
\item $\mb{A}$ is an  epistemic Heyting algebra (see Definition \ref{def:epist-Heyting-algebra}), and
\item each  $\mu_i$ is an $i$-premeasure on $\mb{A}$.
\end{enumerate}	
	  An ApPE-structure $\mathcal{F}$ is an {\em algebraic probabilistic epistemic structure (APE-structure)} if  each $\mu_i$ is an $i$-measure on $\mb{A}$. 
	  
	  We refer to $\mb{A}$ as the {\em support} of $ \mathcal{F}$ and we denote it $\support (\mc{F})$.
	  
\end{definition}

\subsubsection{The algebraic epistemic structure associated to a classical model}
\label{sssec:complexe:algebra:calssical}

\begin{lemma}
	\label{lem:charact i minimal elements complex algebra}
	For any PES-model $\mb{M}$, the $i$-minimal elements of its  complex algebra $\mb{M}^+$ are exactly the equivalence classes of $\sim_i$.
\end{lemma}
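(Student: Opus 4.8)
The plan is to unpack the definition of $i$-minimal element in the complex algebra $\mb{M}^+ = \langle \P S, (\lozenge_i), (\Box_i), (P_i^+)\rangle$ and match the three defining conditions against the properties of $\sim_i$-equivalence classes. Recall that in $\mb{M}^+$ the operation $\lozenge_i$ acts on $X \subseteq S$ as $\lozenge_i X = \{s \in S \mid \exists x\, (s \sim_i x \text{ and } x \in X)\}$, i.e.\ $\lozenge_i X = \bigcup_{x \in X} [x]_{\sim_i}$, the $\sim_i$-saturation of $X$. So a subset $X$ satisfies $\lozenge_i X = X$ precisely when $X$ is $\sim_i$-saturated, i.e.\ a union of $\sim_i$-equivalence classes. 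Condition \ref{def:i-minimal:item:bot} rules out $X = \emptyset$, and condition \ref{def:i-minimal:item:minimal} says $X$ has no proper nonempty $\sim_i$-saturated subset.

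First I would show every $\sim_i$-equivalence class $C = [s]_{\sim_i}$ is $i$-minimal: it is nonempty; it is $\sim_i$-saturated, so $\lozenge_i C = C$; and if $Y \subsetneq C$ with $\lozenge_i Y = Y$ and $Y \neq \emptyset$, then picking $y \in Y$ we get $[y]_{\sim_i} \subseteq Y$; but $[y]_{\sim_i} = C$ since $y \sim_i s$, contradicting $Y \subsetneq C$. Hence $Y = \emptyset$, establishing minimality. Conversely, I would show any $i$-minimal $X$ is a single equivalence class: since $\lozenge_i X = X$, $X$ is a nonempty union of equivalence classes; pick $x \in X$, and note $[x]_{\sim_i} \subseteq X$ is nonempty and $\sim_i$-saturated, so by minimality of $X$ we cannot have $[x]_{\sim_i} \subsetneq X$, forcing $X = [x]_{\sim_i}$.

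This is essentially a routine verification; I do not expect a genuine obstacle. The only point requiring mild care is that the definition of $i$-minimal (Definition \ref{def: epist alebra}) is stated for an abstract epistemic Heyting algebra $\mb{A}$, whereas here it is being applied to $\mb{M}^+$, so one should first note that $\mb{M}^+$ is indeed (the relevant reduct of) a monadic Heyting algebra — this is standard Stone/J\'onsson--Tarski-type duality for $S5$-modalities and is implicit in the surrounding discussion — so that the notions of $\lozenge_i$-fixed point and $<$ used in the definition make sense. Beyond that, the argument is just the observation that the $\lozenge_i$-closed subsets of $\P S$ are exactly the $\sim_i$-saturated sets, whose minimal nonempty members are exactly the equivalence classes, which is precisely the motivating remark preceding Definition \ref{def: epist alebra}.
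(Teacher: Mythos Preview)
Your proposal is correct and follows essentially the same approach as the paper's proof: both directions rest on the observation that $\lozenge_i$-fixed subsets of $\mathcal{P}S$ are exactly the $\sim_i$-saturated sets, and then argue that the minimal nonempty such sets are precisely the equivalence classes. The paper's presentation is slightly more computational (e.g.\ writing $[s] = \lozenge_i\{s\} = \lozenge_i\lozenge_i\{s\} = \lozenge_i[s]$ to verify the fixed-point condition), while you state the saturation characterisation up front, but the logical content is the same.
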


\begin{proof}
See Appendix \ref{app:section4:0} page \pageref{app:section4:0}.
\end{proof}

\commentPROOFbis{
	\begin{proof} 
		Let $\mb{M} = \left\langle S, (\sim_i)_{i\in \Ag}, (P_i)_{i\in \Ag}, \val{\cdot}\right\rangle $ be a PES-model
		and  
		$\mb{M}^+ =  \left( \P S, (\lozenge_i)_{i\in \Ag}, (\Box_i)_{i\in \Ag}, (P^+_i)_{i\in \Ag} \right) $ be its complex algebra.
		For any $i\in\Ag$ and any $s\in S$, let $[s]_i$ be the $\sim_i$-equivalence  cell of $s$. Fix $i\in \Ag$.

\medskip

First, let us prove that any 	$\sim_i$-equivalence cell corresponds to an $i$-minimal element of $\mb{M}^+$.	
		Since $\sim_i$ is reflexive, $[ s ]_i \neq \varnothing$.
		Since $\sim_i$ is symmetric and transitive, $[s]_i = \lozenge_i \{ s\} = \lozenge_i \lozenge_i \{ s\} = \lozenge_i [s]_i$.
		This shows that $[s]_i$ is a fixed-point of $\lozenge_i$. It remains to show that $[s]_i$ is a minimal fixed-point $\lozenge_i$.
		Let $X \subseteq S$ be an $i$-minimal element of $\mb{M}^+$. By definition, we have that $X \subseteq [s]_i$, $X \neq \varnothing$ and $\lozenge_i X = X$.
		The assumption that $\lozenge_i X = X$ implies that $X =
		\bigcup_{x\in X}  \lozenge_i \{ x\} = \bigcup_{x\in X}  [ x ]_i $. The assumption that $X \subseteq [s]_i$ implies that all $x\in X$ must be $\sim_i$-equivalent to $s$, and hence to each other. Therefore, $X$ cannot be the union of more than one equivalence cell.
		Moreover, the assumption that $X \neq \varnothing$ implies that there exists at least one equivalence cell in $\bigcup_{x\in X}  [ x ]_i$. This concludes the proof that, for any $s\in S$, its $\sim_i$-equivalence cell $[s]_i$ corresponds to an $i$-minimal element of $\mb{M}^+$, as required.
	
	\medskip
		
Now, 
let us prove that any $i$-minimal  element of $\mb{M}^+$ correspond to the  $\sim_i$-equivalence cell of an element $s \in S$.
		Let $X$ be an $i$-minimal element of $\mb{M}^+$.
		The assumption that $X = \lozenge_i X$ implies that $X = \bigcup_{x\in X}  [ x ]_i $. The assumption that $X \neq \varnothing$ implies that there exists at least one equivalence cell $[s]_i$ in $\bigcup_{x\in X}  [ x ]_i$. Since $[s]_i$ is an $i$-minimal element of $\mb{M}^+$ and $[s]_i \subseteq X$, we have $X = [s]_i$ by minimality of $X$.
	\end{proof}
}
\begin{proposition}
	\label{prop: complex algebras are APE structures}
	For any PES-model $\mb{M}$, its complex algebra $\mb{M}^+$ (see  \Cref{def: complex algebra}) is an APE-structure (see \Cref{def: alg probab epist structure}).
\end{proposition}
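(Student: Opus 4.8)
The plan is to verify each of the defining conditions of an APE-structure (Definitions \ref{def: alg probab epist structure}, \ref{def:measures}, \ref{def: epist algebra}) for the tuple $\mb{M}^+ = \langle \P S, (\lozenge_i), (\Box_i), (P_i^+)\rangle$, mostly by unwinding the definition of the complex algebra and exploiting Lemma \ref{lem:charact i minimal elements complex algebra}. First I would check that the $\langle \P S, (\lozenge_i), (\Box_i)\rangle$-reduct is an epistemic Heyting algebra: $\P S$ is a finite Boolean, hence finite Heyting, algebra; axioms M1--M9 are the standard correspondents of reflexivity, symmetry and transitivity of the $\sim_i$ (each $\lozenge_i$, $\Box_i$ being the usual box/diamond of an equivalence relation), and are routine to verify directly on subsets; and axiom E holds because $\P S$ is Boolean, so $\lozenge_i X \lor \lnot \lozenge_i X = S = \top$ automatically.

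Next I would turn to the premeasure conditions for each $P_i^+$. By Lemma \ref{lem:charact i minimal elements complex algebra}, $\mathsf{Min}_i(\mb{M}^+)$ is exactly the set of $\sim_i$-equivalence classes, so $\mathsf{Min}_i(\mb{M}^+){\downarrow}$ is precisely the set of subsets contained in some single equivalence class, which by the remark following Definition \ref{def: complex algebra} is exactly $\mathsf{dom}(P_i^+)$; this gives condition \eqref{def:epAlg:two:domain}. Condition \eqref{def:epAlg:two:monotone} (order-preservation) and condition \eqref{def:epAlg:two:join} (inclusion--exclusion) are immediate from $P_i^+ X = \sum_{x\in X} P_i(x)$ with $P_i(x)>0$: monotonicity because all summands are nonnegative, and the join formula because $\sum_{x\in X\cup Y} = \sum_{x\in X}+\sum_{x\in Y}-\sum_{x\in X\cap Y}$. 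Condition \eqref{def:epAlg:two:bot} holds since $P_i^+(\varnothing)=0$. This already shows $\mb{M}^+$ is an ApPE-structure; to upgrade to an APE-structure I would verify the two $i$-measure conditions: \eqref{def:epAlg:two:nonzero} strict monotonicity on each $a{\downarrow}$ follows because $P_i(x)>0$ for every $x$, so adjoining a new point strictly increases the sum; and \eqref{def:epAlg:two:fixedpoints}, $P_i^+(a)=1$ for every $i$-minimal $a$, is exactly the defining property of $\mb M$ that $P_i$ restricts to a probability distribution on each $\sim_i$-equivalence class, i.e.\ $\sum\{P_i(s'):s'\sim_i s\}=1$, combined with the identification of $i$-minimal elements with equivalence classes.

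I do not expect a genuine obstacle here: the proposition is essentially a bookkeeping exercise confirming that the algebraic axioms were reverse-engineered from exactly these complex algebras. The only mildly delicate point is making sure the domain bookkeeping is right --- that $\mathsf{dom}(P_i^+)$, $\mathsf{Min}_i(\mb{M}^+){\downarrow}$ and "subsets of a single $\sim_i$-class" genuinely coincide --- but this is handled cleanly by Lemma \ref{lem:charact i minimal elements complex algebra} together with the observation recorded right after Definition \ref{def: complex algebra}. I would therefore present the proof as a short enumeration: (i) Heyting/Boolean structure, (ii) M1--M9 and E, (iii) the four premeasure clauses, (iv) the two measure clauses, citing the lemma for the minimal-element identification and the PES-model axioms for the normalization.
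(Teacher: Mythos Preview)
Your proposal is correct and follows essentially the same route as the paper: both invoke Lemma \ref{lem:charact i minimal elements complex algebra} to identify $i$-minimal elements with $\sim_i$-classes and then check the (pre)measure clauses directly from $P_i^+ X=\sum_{x\in X}P_i(x)$ with $P_i(x)>0$. You are in fact slightly more explicit than the paper about the domain condition and about axioms M1--M9 and E; the only caveat is your claim that $\P S$ is \emph{finite}, which is not guaranteed by Definition \ref{def:Prob epis state model Alexandru} --- the paper itself glosses over this point, so it is a shared loose end rather than a defect in your argument.
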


\begin{proof}
See Appendix \ref{app:section4:1} page \pageref{app:section4:1}.
\end{proof}

\commentPROOFbis{
	\begin{proof}
		Let $\mb{M} = \left\langle S, (\sim_i)_{i\in \Ag}, (P_i)_{i\in \Ag}, \val{\cdot}\right\rangle $ be a PES-model (see \Cref{def:Prob epis state model Alexandru}) and let   
		$\mb{M}^+ =  \left( \P S, (\lozenge_i)_{i\in \Ag}, (\Box_i)_{i\in \Ag}, (P^+_i)_{i\in \Ag} \right) $ be its complex algebra.
		$\mb{M}^+$ is an APE-structure if its support is an epistemic Heyting algebra and if  each $P^+_i$ is an $i$-measure over $\left\langle S, (\sim_i)_{i\in \Ag}, (P_i)_{i\in \Ag}\right\rangle$.
		Clearly, $\left( \P S ,  (\lozenge_i)_{i\in \Ag}, (\Box_i)_{i\in \Ag} \right)$ is an epistemic Heyting algebra (see \Cref{def:epist-Heyting-algebra}), since $\sim_i$ is an equivalence relation and $ \P S$ is a boolean algebra.
To finish the proof we need to show that each $P^+_i$ is an $i$-measure on $\support (\mb{M}^+)$. Hence, for every $i\in \Ag$, we need to prove the following properties:
\begin{enumerate}[(a)]
	    \item 
	    $\mathsf{dom}(P_i^+)=\mathsf{Min}_i(\support (\mb{M}^+)){\downarrow}$;
		\item 
$P_i^+$ is order-preserving; 
		\item 
for every $i$-minimal element $X\in \P S$ and all $Y_1, Y_2\in X{\downarrow}$,  we have $$P_i^+(Y_1\cup Y_2) = P_i^+(Y_1)+ P_i^+(Y_2)-P_i^+(Y_1\cap Y_2);$$
		\item   $P_i^+(\varnothing)=0$ if $\mathsf{dom}(P_i^+)\neq\varnothing$;
	\item for every $i$-minimal element $X\in \P S$, we have $P_i^+(X) = 1$.
		\item for every $i$-minimal element $X\in \P S$ and all $Y_1, Y_2\in X{\downarrow}$ such that $Y_1 \subset Y_2$, it holds that $$P_i^+(b)<P_i^+(c).$$

\end{enumerate}		
	
	Fix $i \in \Ag$.

\medskip
		
		\texttt{Proof of (a).} 
		By definition,
		$\mathsf{dom}(P^+_i)  =  \left\{X\in\P S \mid \exists y \ \forall x \: (x\in X\implies x\sim_i y) \right\}$.
		Notice that 
		$$ \left\{X\in\P S \mid \exists y \ \forall x \: (x\in X\implies x\sim_i y) \right\} = \left\{X \mid X \subseteq [s] \text{ and } s \in S \right\}.$$
		By \Cref{lem:charact i minimal elements complex algebra}, we deduce that $\mathsf{dom}(P^+_i)  = \mathsf{Min}_i(\support (\mb{M}^+)){\downarrow}$.

\medskip
		
		\texttt{Proof of (b).} 	Since $P_i(s) \geq 0$ for all $s\in S$, the maps $P_i^+$ are monotone.

\medskip
		
		\texttt{Proof of (c).} By Lemma \ref{lem:charact i minimal elements complex algebra}, if $X$ is an $i$-minimal element of $\mb{M}^+$, then $X = [s]$ for some $s\in S$. If $Y_1, Y_2\in X{\downarrow}$, then $Y_1\cup Y_2\subseteq [s]$. Hence, 
		\begin{align*}
		P^+_i(Y_1\cup Y_2) & = \sum_{x\in Y_1\cup Y_2} P_i(x) 
		\tag{Definition of $P_i^+$}
		\\
		& = \sum_{x\in Y_1} P_i(x) + \sum_{x\in Y_2} P_i(x) -\sum_{x\in Y_1\cap Y_2}P_i(x)
		\\
		& = P^+_i(Y_1) + P^+_i(Y_2) - P^+_i(Y_1\cap Y_2).
		\tag{Definition of $P_i^+$}
		\end{align*}
		
\medskip

		\texttt{Proof of (d).} 
		By definition, $P_i^+(\varnothing)=0$.  
		
\medskip

		\texttt{Proof of (e).} Let $X\in \P S$ be an $i$-minimal element. By Lemma \ref{lem:charact i minimal elements complex algebra}, there exists an $s\in S$  such that $[s]= X$. Hence, using the definition  of $P_i$ (see Definition \ref{def: complex algebra}), we have:
		\[
		P_i^+(X) =\sum_{x\in [s]} P_i(x) = 1.
		\]

\medskip
		
		\texttt{Proof of (f).} Let $X\in \P S$ be $i$-minimal element
		and  $Y_1, Y_2\in X{\downarrow}$ such that $Y_1 \subset Y_2$. 
		By definition, we have that 
		\begin{align*}
		P_i^+(Y_2) & = \sum_{x\in Y_2} P_i(x)
		 = \sum_{x\in Y_1} P_i(x) + \sum_{x\in Y_2 \smallsetminus Y_1} P_i(x)
		 = P_i^+(Y_1) + \sum_{x\in Y_2 \smallsetminus Y_1} P_i(x).
		\end{align*}
		Since $Y_1 \subset Y_2$, there exists $s \in Y_2 \smallsetminus Y_1$. Since $P_i : S \rightarrow \ ]0,1]$, we have $P_i(s)>0$ for all $s \in Y_2 \smallsetminus Y_1$.
		Hence $\sum_{x\in Y_2 \smallsetminus Y_1} P_i(x)>0$ and 
		$P_i^+(Y_1)<P_i^+(Y_2)$.
		
	\end{proof}
}

\subsection{Probabilistic event structures over epistemic Heyting algebras}
\label{ssec:probevents}

 In this section, we introduce intuitionistic event structures, which are needed to correctly generalise probabilistic epistemic updates to an intuitionistic metatheory.\\

We will find it useful to introduce the following auxiliary definitions.
Recall that a \emph{multiset} is a generalisation of the concept of set that allows multiple instances of the same element. Hence, $\{a,a,b\}$ and $\{a,b\}$ are the same set, but different multisets. However, order does not matter, so $\{a,a,b\}$ and $\{a,b,a\}$ are the same multiset.
Let $\rmPhi$ be a multiset on the set $X$ and $a,b\in \rmPhi$. We say that $a$ and $b$ \emph{arise from the same element} if $a$ and $b$ are copies of the same element from $X$. We denote it $a =_X b$. 

%

\begin{definition}[Ordered multiset on a lattice]
\label{def:ordered:multiset}
Let $\mb{L}=(L,\leq)$ be a finite lattice.
An \emph{ordered multiset} $\bfPhi = (\rmPhi,\prec)$ on $\mb{L}$ is a multiset $\rmPhi$ of elements of $L$ equipped with a strict order $\prec$  such that, for all pairwise distinct elements $x,y,z\in \rmPhi$,
\begin{enumerate}
\item \label{def:it:ord:multi:order} if $x \prec y$, then $x \leq_{\mb{L}} y$;
\item \label{def:it:ord:multi:not:bot}  if $x\neq\bot$ and 
$ x \leq_{\mb{L}} y$, then  $x \prec y$ or $y \prec x$;
\item \label{def:it:ord:multi:tree}  if $x\prec y$ and $x\prec z$, then 
$y \prec z$ or $z \prec y$.
\end{enumerate}

In the present paper, we  use the membership symbol $\in$ in the context of multisets on $\mb{L}$ always referring to the copies of a given element of $\mb{L}$. For instance, the variable $y$ in the symbol $y\in\rmPhi$ refers to one specific copy of some element of $\mathbb{L}$. 
\end{definition}

\begin{remark} In Section \ref{sec:semantics-IPDEL}, we will be working with event structures over logical languages rather than with event structures over algebras (see  Definition \ref{def:algebraic event}). Event structures over languages (see Definition \ref{def:intuitionistic-proba-epist-event-struct}) are tuples where $\Phi$ is a set of formulas each pair of which is made either of incompatible formulas or of formulas one of which implies the other. However, some of these formulas might be identified with each other under some valuations. In order to define updates on algebras independently from logic, in Definition \ref{def:algebraic event}  the ordered multisets above will play the same role played by the sets $\Phi$ in event structures over languages. Specifically, the multiset structure serves to keep track of the fact that some elements of the lattice might be the interpretation of more than one formula in the set $\Phi$, and the order on the multiset $\bfPhi$ helps to keep track of the logical structure of the set $\Phi$. Finally, condition 3 makes sure that the order structure of $\bfPhi$ is an upward forest, and conditions 1 and 2 together guarantee that, with the exception of formulas which are mapped to $\bot$, the logical structure of the set $\Phi$ is preserved and reflected by the order $\prec$.

\end{remark}

Now let us introduce probabilistic event structures in the intuitionistic setting: 
\begin{definition}[Probabilistic event structure over
an epistemic Heyting algebra]
	\label{def:algebraic event}
	For any epistemic Heyting algebra $\mb{A}$ (see Definition \ref{def:epist-Heyting-algebra}), a \emph{probabilistic event structure over} $\mb{A}$  is a tuple
	$$\mb{E} = (E, (\sim_i)_{i\in\Ag}, (P_i)_{i\in \Ag}, \bfPhi, \overline{\pre})$$
	such that 
\begin{enumerate}
\item $E$ is a non-empty finite set;
\item each $\sim_i$ is an equivalence relation on $E$;
\item
each $P_i:E\to\ ]0,1]$ assigns a probability distribution over each $\sim_i$-equivalence class, i.e.\   $$\sum \left\{P_i (e' ) \mid e' \sim_i e \right\} = 1;$$
\item $\bfPhi = (\rmPhi,\prec) $ is a finite ordered multiset on $\mb{A}$ such that,  
for all $a, b\in \rmPhi$ which arise from distinct elements in $\mb{A}$, either
\begin{center}
$a\wedge_{\text{{$ \mb{A}$}}} b = \bot \quad $ or $ \quad a <_{\text{$ \mb{A}$}} b \quad $ or $ \quad b <_{\text{{$ \mb{A}$}}} a$;
\end{center} 
\item the map $\overline{\pre}$ $ :  E \times \rmPhi \rightarrow [0,1]$  assigns a probability distribution $\overline{\pre}(\bullet | a)$ over $E$ for every  $a \in \rmPhi$;
\item \label{def:alg:event:item:6} for all $a\in\rmPhi$ and  $e\in E$, if $\overline{\pre}(e | a)=0$ then $\overline{\pre}(e | b)=0$  for all  $b\in\rmPhi$ such that $a\prec b$.
\end{enumerate}	
	\end{definition}
The definition above is a proper generalization of the analogous definition given in the classical setting (\autoref{def:proba-epist-event-struct}). The main generalization concerns the fact that the elements in  $\rmPhi$ (which are the potential interpretants of formulas) are no longer required to be mutually inconsistent but may also be   `logically dependent'. In this latter case, the precondition function is required to satisfy an additional compatibility condition which is similar to the one adopted in \cite{aguzzoli2008finetti}. 
For sake of readability, in what follows, we will simply refer to probabilistic event structures over epistemic Heyting algebras as
{\em event structures}.

\begin{remark}[The substitution map] 
\label{rk:substitution:map:algebra}
  Clearly, a purely algebraic counterpart of  
the substitution map which was part of the definition of \textit{probabilistic event structures over a language} (see Definition \ref{def:proba-epist-event-struct}) cannot be given.
\end{remark}

\begin{remark}[The order $\leq_{\mb{A}}$ on the set $\rmPhi$] 
\label{rk:Phi:forest}
The classical and the intuitionistic setting are distinguished by the fact that states are pairwise incomparable in the classical setting and (non-trivially) ordered in the intuitionistic setting. Thus,  in probabilistic event structures over a language (see Definition \ref{def:proba-epist-event-struct}) it is enough to require the set $\rmPhi$ to contain mutually inconsistent formulas in order to tell apart  states of the Kripke model. However, due to the order between states of   intuitionistic Kripke frames, mutually incompatible formulas are not enough to separate distinct but comparable states. To overcome this hurdle we require $\rmPhi$ to  satisfy the following condition:
for all $a_k, a_j\in \rmPhi$,
\begin{center}
$a_j\wedge a_k = \bot \quad $ or $ \quad a_j < a_k \quad $ or $ \quad a_k < a_j$.
\end{center} 
This condition makes it possible to compute the probabilities of a given  non-maximal state, even if there is no proposition uniquely identifying this state (cf.~ \Cref{def:mua-mba}).
\end{remark}



\subsection{The intermediate (pre-)probabilistic epistemic structure}\label{ssec:intermediate ha}
\label{ssec:intermediate}
In the present subsection, we define the intermediate ApPE-structure $\prod_{\mb{E}}\mathcal{F}$ associated with  any APE-structure $\mathcal{F}$ and any event structure $\mb{E}$ over the support of $\mathcal{F}$ (see Definition \ref{def: alg probab epist structure} for the definition of support):

\begin{equation}
\label{eq:def intermediate model}
\prod_{\mb{E}}\mathcal{F} := \left( \prod_{ \mb{E}} \mb{A} , (\mu'_i)_{i\in \Ag} \right) .
\end{equation}
\paragraph{Structure of the subsection.}
First, we define the intermediate algebra 
$\prod_{\mid E \mid} \mb{A}$ which will become the support of the intermediate ApPE-structure $\prod_{\mb{E}}\mathcal{F}$ (see Definition \ref{def:support intermediate APE} 
and Proposition \ref{prop:intermediate-ha:epist-alg}) and we identify its $i$-minimal elements (see Proposition \ref{prop: charact i minimal}).
Then, we introduce the $i$-premeasures on the intermediate algebra 
(see Definition \ref{def: prod F over E} and Proposition \ref{prop:intermediate-ha:i-premeasure}). 
Finally, we show that the definition ApPE-structure is coherent with the relational semantics in the classical case (see Proposition \ref{prop: plus of coprod same as prod of plusses}).

\subsubsection{The intermediate algebra and its $i$-minimal elements}
\label{sssec:intermediate-ha:inter-ha}
\begin{definition}[Intermediate algebra]
\label{def:support intermediate APE}
For every  epistemic Heyting algebra $\mb{A} = (\mb{L}, (\lozenge_i)_{i\in \Ag}, (\Box_i)_{i\in \Ag})$ and 
every event structure $\mb{E} = (E, (\sim_i)_{i\in\Ag}, (P_i)_{i\in \Ag}, \bfPhi, \overline{\pre})$ over $\mb{A}$, let {\em the intermediate algebra} be
\[\prod_{\mb{E}} \mb{A}: = (\prod_{\mid E\mid}\mb{L}, \{\lozenge'_i, \Box'_i\mid i\in \Ag\}),\]
where
\begin{enumerate}
\item $\prod_{\mid E\mid}\mb{L}$ is the $ |E|$-fold power of $\mb{L}$, the elements of which can be seen
 either as $| E |$-tuples of elements in $\mb{A}$, or as maps  $f: E \to \mb{A}$;

\item for any $f : E \rightarrow \mb{A} $, 
let us define $\lozenge'_i (f)$ as follows:
\begin{align*}
\lozenge'_i (f) : E  &\rightarrow \mb{A}\\
e &\mapsto \bigvee \{ \lozenge_i f(e') \mid e'\sim_i e \};
\end{align*}

\item for any $f : E \rightarrow \mb{A} $, 
let us define $\Box'_i (f)$ as follows:
\begin{align*}
\Box'_i (f) : E  &\rightarrow \mb{A}\\
e & \mapsto \bigwedge \{ \Box_i f(e') \mid e'\sim_i e \}.
\end{align*}
\end{enumerate}
Below, the algebra $\prod_{\mb{E}} \mb{A}$ will be sometimes abbreviated as $\mb{A}'$.
\end{definition}
We refer to \cite[Section 3.1]{KP13} for an extensive justification of the definition of the operations $\lozenge'_i$ and $\Box'_i$.

\begin{proposition} 
\label{prop:intermediate-ha:epist-alg}
For every  epistemic Heyting algebra $\mb{A}$ and every event structure $\mb{E}$ over $\mb{A}$, the algebra $\mb{A}'$ is an epistemic Heyting algebra.
\end{proposition}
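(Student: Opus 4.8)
The plan is to verify that $\mb{A}' = \prod_{\mb{E}}\mb{A}$ satisfies each of the defining conditions of an epistemic Heyting algebra, namely: (i) $\prod_{|E|}\mb{L}$ is a finite Heyting algebra; (ii) each $\lozenge'_i$ and $\Box'_i$ is a monotone unary operation satisfying axioms (M1)--(M9); and (iii) the Boolean-complementation axiom (E), $\lozenge'_i a \lor \lnot \lozenge'_i a = \top$, holds for every $a \in \mb{A}'$. Since $\prod_{|E|}\mb{L}$ is a finite product of finite Heyting algebras, (i) is immediate (finite products of Heyting algebras are Heyting algebras, and finiteness is clear as $E$ is finite and $\mb{L}$ is finite). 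All Heyting-algebra operations in $\mb{A}'$ are computed coordinatewise.

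For (ii), the strategy is to reduce each axiom on $\mb{A}'$ to the corresponding axiom on $\mb{A}$. The operators $\lozenge'_i$ and $\Box'_i$ are \emph{not} coordinatewise, but their definition at coordinate $e$ aggregates the values of $f$ over the $\sim_i$-equivalence class of $e$ via $\lozenge_i$ (resp.\ $\Box_i$) and then joins (resp.\ meets); this is formally the complex-algebra construction for the relation $\sim_i$ on $E$ composed with the monadic operators of $\mb{A}$. The key observations are: monotonicity of $\lozenge'_i,\Box'_i$ follows from monotonicity of $\lozenge_i,\Box_i$ together with monotonicity of $\bigvee$ and $\bigwedge$; and since $\sim_i$ is an equivalence relation on $E$, for $e \sim_i e'$ the sets $\{e'' \mid e'' \sim_i e\}$ and $\{e'' \mid e'' \sim_i e'\}$ coincide, which is what makes the "doubled" applications collapse. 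Concretely: (M1) $f \le \lozenge'_i f$ since $e \sim_i e$ and $f(e) \le \lozenge_i f(e)$; (M2) $\Box'_i f \le f$ similarly; (M3), (M4), (M8), (M9) reduce to the coordinatewise inequalities using that $\bigvee,\bigwedge$ distribute appropriately over the finite index set and that (M3), (M4), (M8), (M9) hold in $\mb{A}$; (M5) $\lozenge'_i f \le \Box'_i \lozenge'_i f$ and (M6) $\lozenge'_i \Box'_i f \le \Box'_i f$ follow from the corresponding axioms of $\mb{A}$ after using the equivalence-class identity to see that $\lozenge'_i$ and $\Box'_i$ applied in succession over an $\sim_i$-class reduce to single applications. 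Rather than re-deriving all of this, one may simply invoke \cite[Section 3.1]{KP13}, where precisely the operators $\lozenge'_i,\Box'_i$ are shown to turn a product of monadic (Heyting) algebras into a monadic (Heyting) algebra; the present $\mb{A}'$ is the Heyting-algebra instance of that construction, so (M1)--(M9) transfer verbatim.

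For (iii), the axiom (E) is the genuinely new point relative to \cite{KP13}, since it is the condition distinguishing epistemic Heyting algebras among monadic Heyting algebras. I would argue as follows: fix $f : E \to \mb{A}$ and $e \in E$. By definition $\lozenge'_i(f)(e) = \bigvee\{\lozenge_i f(e') \mid e' \sim_i e\}$, which is a finite join of elements of the form $\lozenge_i a$, i.e.\ an element of the Boolean subalgebra $\lozenge_i\mb{A}$ of $\mb{A}$ (Fact \ref{fact:basicEHA}); a finite join of elements of a Boolean subalgebra lies in that subalgebra, so $\lozenge'_i(f)(e) \in \lozenge_i\mb{A}$, hence it is complemented in $\mb{A}$, and therefore $\lozenge'_i(f)(e) \lor \lnot \lozenge'_i(f)(e) = \top$ holds in $\mb{A}$ at each coordinate $e$. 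Since $\lnot$ and $\lor$ in $\mb{A}'$ are computed coordinatewise, this gives $\lozenge'_i f \lor \lnot \lozenge'_i f = \top$ in $\mb{A}'$, which is exactly (E). Combined with (i) and (ii), and with the finiteness of $\mb{A}'$, this shows $\mb{A}'$ is an epistemic Heyting algebra.

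The main obstacle I anticipate is bookkeeping for the axioms involving iterated modalities, (M5) and (M6): one has to be careful that the inner $\lozenge'_i$ (a join over one $\sim_i$-class at coordinate $e'$) interacts correctly with the outer $\Box'_i$ or $\lozenge'_i$ (a meet/join over the $\sim_i$-class of $e$), and the argument genuinely uses transitivity and symmetry of $\sim_i$ to identify these classes; the cleanest route is to cite the general treatment in \cite[Section 3.1]{KP13} rather than to reproduce the computation. Axiom (E) is easy once Fact \ref{fact:basicEHA} is in hand, so the conceptual content of the proposition is almost entirely inherited from the classical construction.
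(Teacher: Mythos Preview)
Your proposal is correct and follows essentially the same approach as the paper: cite \cite{KP13} for the monadic Heyting algebra axioms (M1)--(M9), and verify axiom (E) coordinatewise. The only minor difference is in the verification of (E): the paper uses the normality of $\lozenge_i$ to rewrite $\bigvee\{\lozenge_i f(e')\mid e'\sim_i e\}$ as $\lozenge_i\bigvee\{f(e')\mid e'\sim_i e\}$ and then applies axiom (E) of $\mb{A}$ directly, whereas you invoke Fact~\ref{fact:basicEHA} to argue that the join lies in the Boolean subalgebra $\lozenge_i\mb{A}$ and is therefore complemented; both routes are valid and equally short.
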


\begin{proof}
To prove that $\mb{A}'$ is an epistemic Heyting algebra (\autoref{def:epist-Heyting-algebra}), we need to show that $\mb{A}'$ is a monadic Heyting algebra such that  for every $i\in\Ag$ and every $f\in\mb{A}'$, we have: 
$\lozenge_if\lor\lnot\lozenge_if=\top.$


The proof that $\mb{A}'$ is a monadic Heyting algebra can be found in \cite[Proposition 8.1]{KP13}. Let $i\in\Ag$, $f\in\mb{A}'$, and $e\in E$. We have:
\begin{align*}
(\lozenge'_if\lor\lnot\lozenge'_if)(e) & = (\lozenge'_if)(e)\lor\lnot(\lozenge'_if)(e)\\
& =\bigvee\{\lozenge_i(f(e'))\mid e'\sim e\}\lor\lnot\bigvee\{\lozenge_i(f(e'))\mid e'\sim e\} \tag{by definition of $\lozenge'_i$}\\
& =\lozenge_i\bigvee\{f(e')\mid e'\sim e\}\lor\lnot\lozenge_i\bigvee\{f(e')\mid e'\sim e\} \tag{by the normality of $\lozenge_i$} \\
& = \top \tag{since $\lozenge_ia\lor\lnot\lozenge_ia=\top$}.
\end{align*}
Hence,  $(\lozenge'_if\lor\lnot\lozenge'_if)(e)=\top$ for all $e\in E$, which by definition yields that $\lozenge'_if\lor\lnot\lozenge'_if=\top$.

 \end{proof}

\begin{proposition}
\label{prop: charact i minimal}
For every epistemic Heyting algebra  $\mb{A}$ and every agent   $i \in \Ag$, 
$$\mathsf{Min}_i(\mb{A}') = \{f_{e, a}\mid e\in E \mbox{ and } a\in \mathsf{Min}_i(\mb{A})\},$$
where for any $e\in E$ and  $a\in \mathsf{Min}_i(\mb{A})$, the map $f_{e,a}$ is defined as follows:
\begin{align*}
f_{e,a} : E &\rightarrow \mb{A}
\\
e'  &\mapsto  \left\{ \begin{array}{ll}
 a & \textrm{if $e' \sim_i e$}\\
 \bot & \textrm{otherwise.}
  \end{array} \right.
\end{align*}
\end{proposition}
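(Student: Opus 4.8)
The plan is to prove the two inclusions separately and, for the nontrivial direction, to exploit the pointwise description of $\lozenge'_i$ together with Fact~\ref{fact:basicEHA} (that $\lozenge_i\mb{A}$ is a finite Boolean algebra whose atoms are precisely the elements of $\mathsf{Min}_i(\mb{A})$) and the remark on page~\pageref{page:uniqueimin} (that below any nonzero element of $\mb{A}$ there is at most one $i$-minimal element).

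\textbf{Step 1: each $f_{e,a}$ is $i$-minimal in $\mb{A}'$.} Fix $e\in E$ and $a\in\mathsf{Min}_i(\mb{A})$. First $f_{e,a}\neq\bot$ since $f_{e,a}(e)=a\neq\bot$. Next I check $\lozenge'_i f_{e,a}=f_{e,a}$: for $e'\sim_i e$ we get $(\lozenge'_i f_{e,a})(e')=\bigvee\{\lozenge_i f_{e,a}(e'')\mid e''\sim_i e'\}=\lozenge_i a=a$ (using $e''\sim_i e'\iff e''\sim_i e$, transitivity/symmetry of $\sim_i$, and $\lozenge_i a=a$); for $e'\not\sim_i e$ every $e''\sim_i e'$ satisfies $e''\not\sim_i e$, so the join is over $\lozenge_i\bot=\bot$, giving $\bot$. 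Hence $\lozenge'_i f_{e,a}=f_{e,a}$. For minimality, suppose $g\in\mb{A}'$ with $\bot<g\leq f_{e,a}$ and $\lozenge'_i g=g$. Then $g(e')=\bot$ for $e'\not\sim_i e$, and for $e'\sim_i e$ we have $g(e')\leq a$; moreover the fixpoint condition forces $g(e')=(\lozenge'_i g)(e')=\bigvee\{\lozenge_i g(e'')\mid e''\sim_i e\}$, which is independent of $e'$ (call it $b$), and $\lozenge_i b=b$ since it is a join of $\lozenge_i$-images inside the Boolean algebra $\lozenge_i\mb{A}$ — more directly, $b=\lozenge_i b$ because applying $\lozenge'_i$ again returns $g$. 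Since $g\neq\bot$ we have $b\neq\bot$, and $b\leq a$ with $\lozenge_i b=b$ forces $b=a$ by $i$-minimality of $a$. Thus $g=f_{e,a}$.

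\textbf{Step 2: every $i$-minimal element of $\mb{A}'$ has this form.} Let $g\in\mathsf{Min}_i(\mb{A}')$. Since $\lozenge'_i g=g$, the pointwise formula gives, for all $e\sim_i e'$, that $g(e)=(\lozenge'_i g)(e)=\bigvee\{\lozenge_i g(e'')\mid e''\sim_i e\}=g(e')$; so $g$ is constant, equal to some $b\in\mb{A}$, on each $\sim_i$-class, and that common value satisfies $\lozenge_i b=b$ (same computation). Because $g\neq\bot$, there is a $\sim_i$-class $[e]$ on which $g$ takes a nonzero fixpoint value $b_0$; since $b_0$ lies below some atom of $\lozenge_i\mb{A}$, i.e.\ some $a\in\mathsf{Min}_i(\mb{A})$, and $b_0$ is itself a nonzero $\lozenge_i$-fixpoint, $i$-minimality of $a$ forces $b_0=a$. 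Now consider $f_{e,a}\leq ?$: define $h=g\wedge f_{e,a}$; then $h(e)=a\neq\bot$ so $h\neq\bot$, and $h\leq g$. One checks $\lozenge'_i h=h$ (either directly, or by noting $h=f_{e,a}$ since on $[e]$ both equal $a$ and off $[e]$ the meet is $\bot$, using that $f_{e,a}$ vanishes off $[e]$). So $\bot< f_{e,a}=h\leq g$, and $\lozenge'_i f_{e,a}=f_{e,a}$, whence by minimality of $g$ we get $g=f_{e,a}$.

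\textbf{Main obstacle.} The only real subtlety is showing that an $i$-minimal $g$ can be nonzero on exactly one $\sim_i$-class: a priori a fixpoint of $\lozenge'_i$ could be supported on several classes. This is handled by the Step~2 observation that intersecting $g$ with a single $f_{e,a}$ lying below it already produces a nonzero fixpoint, so minimality collapses $g$ to that single-class shape; the finiteness of $\mb{A}$ and the atom structure of $\lozenge_i\mb{A}$ (Fact~\ref{fact:basicEHA}) guarantee such an $a$ exists. The rest is the routine pointwise verification of the fixpoint equations already sketched above, and I would present it compactly rather than in full detail.
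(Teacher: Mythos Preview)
Your argument is correct and follows essentially the same route as the paper's: both directions rest on the pointwise description of $\lozenge'_i$, the observation that a $\lozenge'_i$-fixpoint is constant on each $\sim_i$-class with a $\lozenge_i$-fixpoint value there, and a minimality argument producing some $f_{e,a}\leq g$.

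One exposition slip in Step~2 is worth fixing: you write that ``$b_0$ lies \emph{below} some atom $a$ of $\lozenge_i\mb{A}$'' and that ``$i$-minimality of $a$ forces $b_0=a$'', but neither is right as stated. A nonzero element $b_0$ of the finite Boolean algebra $\lozenge_i\mb{A}$ lies \emph{above} some atom $a$ (i.e.\ $a\leq b_0$), and at that point nothing yet forces $b_0=a$. What actually carries the argument is exactly what you do next: with $a\leq b_0$ one gets $h=g\wedge f_{e,a}=f_{e,a}$, a nonzero $\lozenge'_i$-fixpoint below $g$, and minimality of $g$ then yields $g=f_{e,a}$ (whence $b_0=a$ a posteriori). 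The paper organizes this slightly differently, showing directly that $b_0:=g(e)$ is $i$-minimal in $\mb{A}$ by the same trick (any nonzero fixpoint $b\leq b_0$ gives $f_{e,b}\leq g$, hence $f_{e,b}=g$, hence $b=b_0$), but the content is the same.
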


\begin{proof}
See Appendix \ref{app:section4:2} page \pageref{app:section4:2}.
\end{proof}

\commentPROOFbis{
\begin{proof}
Recall that $f\in \mb{A}'$ is an $i$-minimal element (see \autoref{def:i-minimal}) if it satisfies the following conditions:
\eqref{def:i-minimal:item:bot} $f\neq \bot$, 
\eqref{def:i-minimal:item:fixpoint} $\lozenge_i f = f $ and
\eqref{def:i-minimal:item:minimal} if $g \in \mb{A}$, $g < f$ and $\lozenge_i g = g $, then $g = \bot$.

Let us first  prove that any map $f_{e,a}$ as above is an $i$-minimal element of $\mb{A}'$. By definition, $f_{e, a}(e) = a\neq \bot_\mb{A}$. Hence $f_{e, a}\neq \bot_{{\mb{A}'}}$. As to showing that $\blacklozenge'_i f_{e,a} = f_{e,a}$, fix $e'\in E$, and let us show that $(\blacklozenge'_i f_{e,a}) (e') = f_{e,a}(e')$. By definition,
$$\blacklozenge'_i f_{e,a}(e') =  \bigvee \{ \blacklozenge_i f_{e,a}(e'') \mid e'' \sim_i e' \}.$$

We proceed by cases: (a) If $e' \sim_i e$, then:
%
\begin{align*}
\blacklozenge'_i f_{e,a}(e') & =  \bigvee \{ \blacklozenge_i f_{e,a}(e'') \mid e'' \sim_i e' \} \tag{by definition} \\
& = \bigvee \{ \blacklozenge_i a \mid e'' \sim_i e' \} \tag{ $f_{e,a}(e'')=a$, since  $e\sim_i e'$ and $\sim_i$ symmetric and transitive} \\
& =  \blacklozenge_i a  \tag{the join is nonempty since $\sim_i$ is reflexive}\\
& = a \tag{$a$ is $i$-minimal, hence is a fixed point of $\lozenge_i$}\\
& = f_{e,a}(e'). \tag{definition of $f_{e,a}$ and $e' \sim_i e$}
\end{align*}
(b) If $e'\nsim e$, then:
\begin{align*}
\blacklozenge'_i f_{e,a}(e') & =  \bigvee \{ \blacklozenge_i f_{e,a}(e'') \mid e'' \sim_i e' \} \tag{by definition}\\
& = \bigvee \{ \blacklozenge_i \bot \mid e'' \sim_i e' \} \tag{$e\nsim_i e'$}\\
& =  \blacklozenge_i \bot  \\
& = \bot \tag{$\lozenge_i \bot = \bot$}\\
& = f_{e,a}(e').
\end{align*}
Finally, we need to show that $f_{e, a}$ is a minimal non-bottom fixed-point of $\blacklozenge'_i$. Notice preliminarily that if $g: E\to \mb{A}$  is a fixed point for $\blacklozenge'_i$ then
\begin{equation}
\label{eq:fixed points property}
 g(e) = g(e') \mbox{ whenever } e\sim_i e'.
 \end{equation}
 Indeed,
\[g(e) =(\blacklozenge'_i g)(e) = \bigvee \{ \blacklozenge_i g(e'')  \mid e'' \sim_i e \} = \bigvee \{ \blacklozenge_i g(e'')  \mid e'' \sim_i e' \} = (\blacklozenge'_i g)(e') = g(e').\]
Given that $\sim_i$ is reflexive, this  implies in particular that, for every $e'\in E$, \begin{equation}\label{eq:two diamonds}(\blacklozenge'_i g)(e') = \blacklozenge_i g(e').\end{equation}

Let $g$ be as above, assume that  $\bot \neq g\leq f_{e, a}$,  and let us show that $g = f_{e, a}$.  Clearly, the assumption $g\leq f_{e, a}$ implies that $g(e') = \bot$ for every $e'\in E$ such that $e'\not\sim_i e$. Let $e'\in E$ such that $g(e')\neq \bot$. Together with the assumption that $g\leq f_{e, a}$, this implies that $f_{e, a}(e')\neq \bot$, hence $e'\sim_i e$ and $\bot \neq g(e')\leq a$.
To prove that $g(e) = a$, by the $i$-minimality of $a$ it suffices to show that $g(e')$ is a fixed point of $\blacklozenge_i$.
Indeed, by \eqref{eq:two diamonds}:
$$\blacklozenge_i g(e') = (\blacklozenge'_i g)(e') = g(e'),$$
as required. Finally, the fact above and the preliminary observation \eqref{eq:fixed points property} imply that $g(e') = a$ for every $e'\in E$ such that $e'\sim_i e$.

This finishes the proof that $f_{e,a}$ is $i$-minimal.

\bigskip

Conversely, let $g: E\to \mb{A}$ be $i$-minimal in $\mb{A}'$, and let us show that $g = f_{e,a}$ for some $e\in E$ and some $i$-minimal element $a\in \mb{A}$. The assumption that $g\neq \bot$ implies that $g(e)\neq \bot$ for some $e\in E$. Let $g(e) = a\in \mb{A}$. Then, the assumption that $g = \blacklozenge'_i g$ and the observation \eqref{eq:fixed points property} imply that $g(e') = a$ for every $e'\in E$ such that $e'\sim_i e$. Then, the proof is finished if we show that $a$ is $i$-minimal in $\mb{A}$. Indeed, then, by construction we would have $\bot\neq f_{e, a}\leq g$, hence the minimality of $g$ would yield  $f_{e, a} =  g$.

By definition, we have that $a = g(e')\neq \bot$. By observation \eqref{eq:two diamonds}, \[\blacklozenge_i a = \blacklozenge_i g(e) = (\blacklozenge'_i g)(e) = g(e) = a,\]
which shows that $a$ is a fixed point of $\blacklozenge_i$. Finally, let $\bot\neq b\leq a$ such that $\blacklozenge_i b = b$. Then, with an argument analogous to the one given above,  the map $f_{e, b}: E\to \mb{A}$ would be proven to be a non-bottom fixed-point of $\blacklozenge'_i$. Moreover, $f_{e, b}\leq g$, and hence  the $i$-minimality of $g$ would yield $f_{e, b} = g$, hence $a = b$.
\end{proof}
}

\subsubsection{The $i$-premeasures on the intermediate algebra}
\label{sssec:intermediate-ha:i-premeasure.}

Before providing $i$-premeasures for the product epistemic algebra (Definition \ref{def: prod F over E} and Proposition \ref{prop:intermediate-ha:i-premeasure}), we present an auxiliary definition.

\begin{definition}
\label{def:mua-mba}
Let $\mathcal{F}=\left( \mb{A}, (\mu_i)_{i\in \Ag} \right) $ be an APE-structure 
and let $\mb{E} = (E, (\sim_i)_{i\in\Ag}, (P_i)_{i\in \Ag}, \bfPhi, \overline{\pre})$ 
be an event structure over $\mb{A}$. 
For all $a\in\rmPhi$ and $i\in\Ag$,
we define the partial function $\mu^a_i: \mb{A} \to\mb{R}^+$ by
\begin{align}
\label{eq:def:mua}
\mu_i^a(x):=\mu_i(x\land a)-\sum_{b\in\mathrm{mb}(a)}\mu_i(x\land b)
\end{align}
where  $\mathrm{mb}(a)$ denotes the multiset of the $\prec$-maximal elements of $\bfPhi$ $\prec$-below $a$.
\end{definition}

We make the following observations regarding $\mu^a_i$:

\begin{proposition}\label{prop: muai properties}
For every APE-structure  $\mathcal{F}=\left( \mb{A}, (\mu_i)_{i\in \Ag} \right) $ and every event structure $\mb{E}$ over $\mb{A}$, 
$\mu^a_i$ is an $i$-premeasure over $\mb{A}$. Furthermore,  if $a\leq y$ then $\mu^a_i(x)=\mu^a_i(x\land y)$.
\end{proposition}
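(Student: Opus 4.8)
The plan is to verify, one by one, the four clauses defining an $i$-premeasure in Definition~\ref{def:measures}, each time reducing the required property of $\mu^a_i$ to the corresponding property of the $i$-measure $\mu_i$ together with the ``forest'' condition on $\Phi$ from Definition~\ref{def:algebraic event}(2). I would start with a preliminary reformulation. Put $\beta_a:=\bigvee\mathrm{mb}(a)$. Since the members of $\mathrm{mb}(a)$ are pairwise incomparable (they are the maximal elements of $\Phi\cap({\downarrow}a\smallsetminus\{a\})$), Definition~\ref{def:algebraic event}(2) forces $b\wedge b'=\bot$ for distinct $b,b'\in\mathrm{mb}(a)$; hence for any $x$ the elements $x\wedge b$ ($b\in\mathrm{mb}(a)$) are pairwise disjoint and lie below $x\wedge a$ (because $b<a$). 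A short induction on $|\mathrm{mb}(a)|$, using distributivity of $\mb{L}$, modularity of $\mu_i$ and $\mu_i(\bot)=0$, then gives $\sum_{b\in\mathrm{mb}(a)}\mu_i(x\wedge b)=\mu_i(x\wedge\beta_a)$ whenever these terms are defined, so that
\[
\mu^a_i(x)=\mu_i(x\wedge a)-\mu_i(x\wedge\beta_a),\qquad\text{with }\ \beta_a\leq a,
\]
and I would work with this closed form from now on.

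From this the three easier clauses follow quickly. For the \emph{domain} clause, if $x\leq m$ with $m\in\mathsf{Min}_i(\mb{A})$ then $x\wedge a\leq m$ and $x\wedge\beta_a\leq m$, so both arguments lie in $\mathsf{dom}(\mu_i)=\mathsf{Min}_i(\mb{A}){\downarrow}$; hence the closed form is well posed precisely on $\mathsf{Min}_i(\mb{A}){\downarrow}$, which we take as $\mathsf{dom}(\mu^a_i)$. Evaluating at $\bot$ and using $\mu_i(\bot)=0$ gives $\mu^a_i(\bot)=0$, which is the \emph{bottom} clause. For \emph{modularity}, note that for fixed $c\in\mb{A}$ the map $u\mapsto u\wedge c$ preserves finite meets and joins (distributivity), so $u\mapsto\mu_i(u\wedge c)$ is modular on each cell $m{\downarrow}$ ($m\in\mathsf{Min}_i(\mb{A})$); therefore $\mu^a_i=\mu_i(\,\cdot\,\wedge a)-\mu_i(\,\cdot\,\wedge\beta_a)$, a difference of two modular functions, is modular on $m{\downarrow}$. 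The last assertion of the proposition also reads off the closed form: if $a\leq y$ then $y\wedge a=a$ and $y\wedge b=b$ for every $b\in\mathrm{mb}(a)$ (since $b<a\leq y$), whence $\mu^a_i(x\wedge y)=\mu_i(x\wedge y\wedge a)-\sum_b\mu_i(x\wedge y\wedge b)=\mu^a_i(x)$ (and $x\wedge y\in\mathsf{dom}(\mu^a_i)$ whenever $x$ is).

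The remaining clause, \emph{order-preservation}, needs an actual argument. Let $x\leq x'$ in $\mathsf{dom}(\mu^a_i)$. If $x=\bot$ then $\mu^a_i(x')=\mu_i(x'\wedge a)-\mu_i(x'\wedge\beta_a)\geq 0=\mu^a_i(\bot)$ because $x'\wedge\beta_a\leq x'\wedge a$ and $\mu_i$ is monotone; this also shows that $\mu^a_i$ takes values in $\mb{R}^+$. Otherwise, recalling that a non-bottom element of $\mb{A}$ lies below at most one $i$-minimal element (page~\pageref{page:uniqueimin}), $x$ and $x'$ lie below a common $m\in\mathsf{Min}_i(\mb{A})$. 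The plan is then to feed the pair $x\wedge a$ and $x'\wedge\beta_a$ into the modularity law of $\mu_i$: a routine lattice computation gives $(x\wedge a)\wedge(x'\wedge\beta_a)=x\wedge\beta_a$ (using $x\leq x'$ and $\beta_a\leq a$) and $(x\wedge a)\vee(x'\wedge\beta_a)\leq x'\wedge a$, so by modularity and monotonicity of $\mu_i$
\[
\mu_i(x\wedge a)+\mu_i(x'\wedge\beta_a)=\mu_i\big((x\wedge a)\vee(x'\wedge\beta_a)\big)+\mu_i(x\wedge\beta_a)\leq\mu_i(x'\wedge a)+\mu_i(x\wedge\beta_a),
\]
which rearranges to $\mu^a_i(x)\leq\mu^a_i(x')$.

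I expect order-preservation to be the only real obstacle. All other clauses are formal consequences of the matching properties of $\mu_i$, but monotonicity of a \emph{difference} of modular functions is not automatic — the cells $m{\downarrow}$ are merely Heyting, not Boolean, so there is no complement to exploit — and the crux is to hit on the right auxiliary join and meet, here $(x\wedge a)\vee(x'\wedge\beta_a)$ and $(x\wedge a)\wedge(x'\wedge\beta_a)$, that convert monotonicity of $\mu_i$ into monotonicity of $\mu^a_i$. Everything else is bookkeeping with distributivity, the inclusion–exclusion identity for pairwise disjoint elements, and the order condition on $\Phi$.
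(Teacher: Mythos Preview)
Your argument is correct and follows essentially the same route as the paper: you introduce the closed form $\mu^a_i(x)=\mu_i(x\wedge a)-\mu_i(x\wedge\beta_a)$ via inclusion--exclusion on the pairwise-disjoint family $\mathrm{mb}(a)$, dispose of the domain, bottom and modularity clauses formally, and for monotonicity apply the modularity of $\mu_i$ to the pair $x\wedge a$, $x'\wedge\beta_a$ --- exactly the auxiliary pair the paper uses (written there as $x\land a$ and $y\land\bigvee_{b\in\mathrm{mb}(a)}b$). The only cosmetic difference is that you name $\beta_a$ and organise the write-up around the closed form from the outset, which makes the monotonicity step slightly more transparent.
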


\begin{proof}
See Appendix \ref{app:section4:3} page \pageref{app:section4:3}.
\end{proof}

\commentPROOFbis{
\begin{proof}
For every $a\in\rmPhi$ and every $i\in \Ag$, we want to prove that $\mu_i^a$ is an $i$-premeasure over $\mb{A}$, 
hence we need to prove that $\mu_i^a$ is a partial function $\mb{A}\rightarrow \mb{R}^+$ that satisfies items (\ref{def:epAlg:two:domain} - \ref{def:epAlg:two:bot}) of Definition \ref{def:measures}. 
Fix $a\in\rmPhi$ and  $i\in \Ag$.

\medskip

\texttt{Proof of item \ref{def:epAlg:two:domain}.}
We want to prove that $\mathsf{dom}(\mu)=\mathsf{Min}_i(\mb{A}){\downarrow}$.
The map $\mu_i$ is an $i$-premeasure,
hence $\mathsf{dom}(\mu_i)=\mathsf{Min}_i(\mb{A}){\downarrow}$. Therefore the map $\mu_i^a$ is defined on every $x\in \mathsf{Min}_i(\mb{A}){\downarrow}$ 
and we can restrict its domain as follows: $\mathsf{dom}(\mu_i^a):=\mathsf{Min}_i(\mb{A}){\downarrow}$.

\medskip

\texttt{Proof that $\mu_i^a$ is well-defined.}
We need to prove that
$\mu^a_i(x)\geq 0$ for all $x\in \mathsf{Min}_i(\mb{A}){\downarrow}$. 
Recall that
$\bfPhi $ is a finite ordered multiset of elements of $\mb{A}$ such that,  for all distinct $b,c\in \rmPhi$, either
$b\wedge c = \bot  $ or $  b < c  $ or $  c < b$
(see Definition \ref{def:algebraic event} and Remark
\ref{rk:Phi:forest}).
Hence, for every  $b,c\in\mathrm{mb}(a)$ we have $b\land c=\bot$. Indeed, by item 2 of Definition \ref{def:ordered:multiset} and what was mentioned above, if $b\land c\neq\bot$, then either $b\prec c$ or $c\prec b$. Hence, they cannot both be maximal.

\smallskip

Fix $x\in \mathsf{Min}_i(\mb{A}){\downarrow}$.  
Let us prove by induction on the size of $S$ that 
for any  $S\subseteq \mathrm{mb}(a)$,
\begin{align}
\mu_i \left( \bigvee_{b\in S}x\land b \right) = \sum_{b\in S}\mu_i(x\land b).
\label{eq:proof:mu-i-a:1}
\end{align}

\texttt{Base case : $|S|=0$.}
Assume that $S=\varnothing$. Then, we trivially have that 
\begin{align*}
\mu_i(\bigvee_{b\in S}x\land b)=\mu_i(\bot)=0=\sum_{b\in S}\mu_i(x\land b).
\tag{$\mathtt{ IH_0}$}
\end{align*}

\texttt{Induction step : $\mathtt{IH_n} \Rightarrow \mathtt{IH_{n+1}}$.}
Assume that, for any set $S'$ that contains exactly $n$ elements, we have
\begin{align*}
\mu_i(\bigvee_{b'\in S'}x\land b') = \sum_{b'\in S'}\mu_i(x\land b').
\tag{$\mathtt{ IH_n}$}
\label{align:proof:muia:IHn}
\end{align*} 
 
Let $S$ contain exactly $n+1$ elements, 
$S' \subset S$  contain exactly $n$ elements, and
$S = S' \cup \{c\}$.
Let us prove $\mathtt{IH}_{n+1}$:
 \begin{align*}
 & \quad \; \mu_i \left( \bigvee_{b\in S}x\land b \right)
 \\
 & = \mu_i \left( (x \land c) \vee \bigvee_{b'\in S'}(x\land b') \right)
\tag{$S = S' \cup \{c\}$} 
 \\
 & = \mu_i(x \land c) + \mu_i \left( \bigvee_{b'\in S'}x\land b' \right) - \mu_i \left( (x \land c) \wedge \bigvee_{b'\in S'}(x\land b') \right)
 \tag{$\mu_i$ is an $i$-premeasure}
\\
 & = \mu_i(x \land c) + \mu_i \left( \bigvee_{b'\in S'}x\land b' \right) - \mu_i\left( \bigvee_{b'\in S'} x \land c \wedge x\land b' \right)
 \tag{$\wedge$ distributes over $\vee$}
\\
 & = \mu_i(x \land c) + \mu_i \left( \bigvee_{b'\in S'}x\land b' \right) - \mu_i( \bot )
 \tag{$c\neq b'$ implies $c\wedge b'= \bot$}
\\
 & = \mu_i(x \land c) + \sum_{b'\in S'}\mu_i(x\land b')
 \tag{$\mu_i( \bot )=0$ and \eqref{align:proof:muia:IHn}}
 \\
 & = \sum_{b\in S}\mu_i(x\land b)
 \tag{$S = S' \cup \{c\}$}
 \end{align*}
By induction, for any $x\in \mathsf{Min}_i(\mb{A}){\downarrow}$, 
we have
$
\mu_i\left(\bigvee_{b\in\mathrm{mb}(a)}x\land b \right)  =  \sum_{b\in\mathrm{mb}(a)}\mu_i(x\land b).
$

\medskip

Since $\mathrm{mb}(a)$ denotes the set of the $\prec$-maximal elements of $(\bfPhi\ \cap \downarrow\!\! a)\setminus\{a\}$,
we have that $\bigvee_{b\in\mathrm{mb}(a)}x\land b\leq x\land a$. 
By monotonicity of $\mu_i$, we get that 
$$\sum_{b\in\mathrm{mb}(a)}\mu_i(x\land b) =  \mu_i\left(\bigvee_{b\in\mathrm{mb}(a)}x\land b \right) \leq \mu_i(x\land a).$$ 
Hence, $\mu^a_i(x)\geq 0$ for any $x\in \mathsf{Min}_i(\mb{A}){\downarrow}$ as required.

\medskip

\texttt{Proof of item \ref{def:epAlg:two:monotone}.}
We want to  show that $\mu^a_i$ is order-preserving. 
Using $\eqref{eq:proof:mu-i-a:1}$ and the fact that $\wedge$ distributes over $\vee$, we get that: for any $x\in \mathsf{Min}_i(\mb{A}){\downarrow}$,
\begin{align}
\sum_{b\in\mathrm{mb}(a)}\mu_i \left( x\land b \right) =
\mu_i\left(\bigvee_{b\in\mathrm{mb}(a)}x\land b \right) =
\mu_i\left(x\land\bigvee_{b\in\mathrm{mb}(a)}b \right).
\label{eq:proof:mu-i-a:2}
\end{align}
Fix $x,y  \in \mathsf{Min}_i(\mb{A}){\downarrow}$
such that $x\leq y$. 
Notice that 
$\bigvee_{b\in\mathrm{mb}(a)}b \leq a$ and
$x\land a\land y =x$. 
Furthermore, $x\land a\leq y\land a$ and $y\land(\bigvee_{b\in\mathrm{mb}(a)} b)\leq y\land a$. Hence $(x\land a)\lor(y\land(\bigvee_{b\in\mathrm{mb}(a)} b))\leq y\land a$.  
From this we can deduce that: 
\begin{align*}
& \quad \quad \;\; (x\land a)\lor \left(y\land\left(\bigvee_{b\in\mathrm{mb}(a)} b\right) \right) \leq y\land a
\\
& \Rightarrow \quad \mu_i \left((x\land a)\lor \left( y\land\bigvee_{b\in\mathrm{mb}(a)} b \right) \right) \leq\mu_i(y\land a) 
\tag{$\mu_i$ is order-preserving}
\\
& \Leftrightarrow \quad
\mu_i(x\land a)+
\mu_i \left( y\land\bigvee_{b\in\mathrm{mb}(a)} b \right)
-\mu_i \left(x\land a\land y\land\bigvee_{b\in\mathrm{mb}(a)}  b\right)\leq\mu_i(y\land a) 
\tag{$\mu_i$ is an $i$-premeasure}
\\
& \Leftrightarrow\quad
\mu_i(x\land a)+
\mu_i \left( y\land\bigvee_{b\in\mathrm{mb}(a)} b \right)
-\mu_i \left( x \land\bigvee_{b\in\mathrm{mb}(a)} b \right)
\leq\mu_i(y\land a) 
\tag{$x\land a\land y=x$}
\\
& \Leftrightarrow\quad
\mu_i(x\land a)-
\mu_i \left( x \land\bigvee_{b\in\mathrm{mb}(a)} b \right)
\leq\mu_i(y\land a)-
\mu_i \left( y\land\bigvee_{b\in\mathrm{mb}(a)} b \right)
\\
& \Leftrightarrow\quad
\mu_i(x\land a)-
\sum_{b\in\mathrm{mb}(a)}\mu_i(x\land b) 
\leq\mu_i(y\land a)-\sum_{b\in\mathrm{mb}(a)}\mu_i(y\land b) 
\tag{by \eqref{eq:proof:mu-i-a:2}}
\\
& \Leftrightarrow\quad
\mu^a_i(x)\leq\mu^a_i(y) .
\end{align*}

\medskip

\texttt{Proof of item \ref{def:epAlg:two:join}.}
We need to show that  $\mu^a_i(x\lor y)=\mu^a_i(x)+\mu^a_i(y)-\mu^a_i(x\land y)$
for all $x,y\in \mathsf{Min}_i(\mb{A}){\downarrow}$. We have:

\begin{align*}
\mu^a_i(x\lor y) & =  \mu_i((x\lor y)\land a)-\sum_{b\in\mathrm{mb}(a)}\mu_i((x\lor y)\land b) \\
& =  \mu_i((x\land a)\lor(y\land a))-\sum_{b\in\mathrm{mb}(a)}\mu_i((x\land b)\lor(y\land b)) \tag{distributivity}\\
& =  (\mu_i(x\land a)+\mu_i(y\land a)-\mu_i(x\land y\land a))-\sum_{b\in\mathrm{mb}(a)}(\mu_i(x\land b)+\mu_i(y\land b)-\mu_i(x\land y\land b)) \tag{$\mu_i$ is an $i$-measure}\\
& = \mu^a_i(x)+\mu^a_i(y)-\mu^a_i(x\land y).
\end{align*}

\medskip

\texttt{Proof of item \ref{def:epAlg:two:bot}.}
If $\mathsf{Min}_i(\mb{A}){\downarrow} \neq \varnothing$, it follows from $\mu_i(\bot)=0$ (because $\mu_i$ is a $i$-premeasure) that
$\mu_i^a(\bot)=0$.
\end{proof}
}

\begin{remark}
Notice that if $a\leq y$, then for every $b\in\mathrm{mb}(a)$ we have $b\leq y$, thus $\mu_i(x\land y\land a)=\mu_i(x\land a)$ and $\mu_i(x\land y\land b)=\mu_i(x\land b)$, which implies that $\mu^a_i(x)=\mu^a_i(x\land y)$.
\end{remark}

\begin{definition}[Intermediate structure]
\label{def: prod F over E}
For any APE-structure $\mathcal{F}=\left( \mb{A}, (\mu_i)_{i\in \Ag} \right) $ and any event structure $\mb{E} = (E, (\sim_i)_{i\in\Ag}, (P_i)_{i\in \Ag}, \rmPhi, \overline{\pre})$  over $\mb{A}$, let
the {\em intermediate structure} be
\[
\prod_{\mb{E}}\mathcal{F} := \left( \prod_{\mb{E}} \mb{A} , (\mu'_i)_{i\in \Ag} \right)
\]
where
\begin{enumerate}
\item $\prod_{\mb{E}} \mb{A} = \mb{A}'$ is defined as in Definition \ref{def:support intermediate APE};
\item  each  $\mu'_i
$ is defined as follows:
\begin{align}
\mu'_i: \mathsf{Min}_i(\mb{A}'){\downarrow} 
& \to \mb{R}^+
\label{eq:def:mui'}
\\
f & \mapsto \sum_{e\in E}\sum_{a\in \rmPhi}P_i(e)\cdot\mu_i^a(f(e))\cdot \overline{\pre}(e\mid a).
\notag
\end{align}
\end{enumerate}
\end{definition}

\begin{proposition}
\label{prop:intermediate-ha:i-premeasure}
For every  APE-structure $\mathcal{F}$  and every event structure $\mb{E}$ over the support of $\mathcal{F}$, the intermediate structure $\prod_{ \mb{E}} \mathcal{F}$ is an ApPE-structure (see Definition \ref{def: alg probab epist structure}). 
Furthermore,  if $\bigvee_{a \in \rmPhi} a\leq y$ then $\mu'_i(x)=\mu'_i(x\land y)$.
\end{proposition}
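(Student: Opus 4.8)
The plan is to verify the two clauses of Definition~\ref{def: alg probab epist structure} directly. The first, that the support $\prod_{\mb{E}}\mb{A}=\mb{A}'$ is an epistemic Heyting algebra, is precisely Proposition~\ref{prop:intermediate-ha:epist-alg}, so nothing further is needed there. For the second clause I would fix $i\in\Ag$ and check that $\mu'_i$ satisfies items \ref{def:epAlg:two:domain}--\ref{def:epAlg:two:bot} of Definition~\ref{def:measures}. The structural fact that makes everything work is Proposition~\ref{prop: charact i minimal}: every $i$-minimal element of $\mb{A}'$ is some $f_{e_0,a_0}$ with $e_0\in E$ and $a_0\in\mathsf{Min}_i(\mb{A})$, so an arbitrary $f\in\mathsf{Min}_i(\mb{A}'){\downarrow}$ satisfies $f\leq f_{e_0,a_0}$ for some such $e_0,a_0$, and hence for every $e\in E$ the value $f(e)$ is $\bot$ if $e\not\sim_i e_0$ and lies in $a_0{\downarrow}\subseteq\mathsf{Min}_i(\mb{A}){\downarrow}$ if $e\sim_i e_0$. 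In either case $f(e)\in\mathsf{dom}(\mu^a_i)=\mathsf{Min}_i(\mb{A}){\downarrow}$ for all $a\in\Phi$, so the sum defining $\mu'_i(f)$ is meaningful; since each $\mu^a_i$ is an $i$-premeasure by Proposition~\ref{prop: muai properties}, hence nonnegative on its domain, and $P_i(e)>0$ and $\pre(e\mid a)\geq 0$, we get $\mu'_i(f)\in\mb{R}^+$. This settles well-definedness and item~\ref{def:epAlg:two:domain}.

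The remaining three conditions reduce to the corresponding facts about the $\mu^a_i$, transported through the sum. Since meets and joins in $\mb{A}'$ are computed coordinatewise, $f\leq g$ gives $f(e)\leq g(e)$ for all $e$; applying order-preservation of each $\mu^a_i$ at each coordinate and summing against the nonnegative weights $P_i(e)\cdot\pre(e\mid a)$ yields item~\ref{def:epAlg:two:monotone} for $\mu'_i$. For item~\ref{def:epAlg:two:join}, fix an $i$-minimal $F=f_{e_0,a_0}$ and $f,g\in F{\downarrow}$: for $e\sim_i e_0$ both $f(e),g(e)$ lie in $a_0{\downarrow}$ with $a_0\in\mathsf{Min}_i(\mb{A})$, so item~\ref{def:epAlg:two:join} for $\mu^a_i$ applies at coordinate $e$, while for $e\not\sim_i e_0$ all the relevant values equal $\mu^a_i(\bot)=0$ and the additivity identity is trivial; summing the coordinatewise identities against the weights gives item~\ref{def:epAlg:two:join} for $\mu'_i$. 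Finally, the bottom of $\mb{A}'$ is the constant map $\bot$, and $\mu^a_i(\bot)=0$, so $\mu'_i(\bot^{\mb{A}'})=0$ whenever $\mathsf{dom}(\mu'_i)=\mathsf{Min}_i(\mb{A}'){\downarrow}\neq\varnothing$, which is item~\ref{def:epAlg:two:bot}.

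For the last assertion, assume $\bigvee_{a\in\Phi}a\leq y$ and read $x\wedge y$ coordinatewise (identifying $y$ with the constant tuple where needed). Then $x\wedge y\leq x$ also lies in $\mathsf{Min}_i(\mb{A}'){\downarrow}$, so $\mu'_i(x\wedge y)$ is defined; and for each $e\in E$ and each $a\in\Phi$ we have $a\leq\bigvee_{a'\in\Phi}a'\leq y$, so the remark following Proposition~\ref{prop: muai properties} gives $\mu^a_i(x(e)\wedge y)=\mu^a_i(x(e))$. Hence the defining sums of $\mu'_i(x)$ and $\mu'_i(x\wedge y)$ coincide term by term, so $\mu'_i(x)=\mu'_i(x\wedge y)$. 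I do not expect a genuine obstacle here: all the real work has been done in Propositions~\ref{prop: charact i minimal} and \ref{prop: muai properties}, and the only point requiring care is to keep the coordinates with $e\sim_i e_0$ separate from those with $e\not\sim_i e_0$, since $\mu^a_i$ is guaranteed to be well behaved only on down-sets of $i$-minimal elements of $\mb{A}$.
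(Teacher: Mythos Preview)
Your proposal is correct and follows essentially the same route as the paper: cite Proposition~\ref{prop:intermediate-ha:epist-alg} for the support, then verify the $i$-premeasure conditions for $\mu'_i$ coordinatewise by reducing to the corresponding properties of the $\mu^a_i$ (Proposition~\ref{prop: muai properties}), using Proposition~\ref{prop: charact i minimal} to control which coordinates are nontrivial. If anything, you are a bit more explicit than the paper, which dismisses items~\ref{def:epAlg:two:domain}, \ref{def:epAlg:two:monotone}, \ref{def:epAlg:two:bot} as ``trivially true'' and only spells out the computation for item~\ref{def:epAlg:two:join} and the final clause.
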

\begin{proof}
 Proposition \ref{prop:intermediate-ha:epist-alg} states that $\prod_{\mb{E}}\mb{A}$ is an epistemic Heyting algebra. 
To prove that $\prod_{ \mb{E}} \mathcal{F}$ is an ApPE-structure, it remains to show that for every $i\in \Ag$, the map $\mu_i'$ is an $i$-premeasure (see items (\ref{def:epAlg:two:domain} - \ref{def:epAlg:two:bot}) of Definition \ref{def:measures}). Fix $i\in \Ag$.
The map $\mu_i'$ is clearly well-defined. Since the maps $\{ \mu_i^a \}_{a\in \rmPhi}$ are $i$-premeasures,  the items 
\ref{def:epAlg:two:domain}, \ref{def:epAlg:two:monotone}, and \ref{def:epAlg:two:bot}
are trivially true.

\medskip

\texttt{Proof of item \ref{def:epAlg:two:join}.}
%
%
%
%
By Proposition \ref{prop: charact i minimal}, $i$-minimal elements of $\mb{A}'$ are of the form $f_{e, b}: E\to \mb{A}$  for some $e\in E$ and some $i$-minimal element $b\in \mathsf{Min}_i(\mb{A})$. Fix one such element $f_{e, b} \in \mathsf{Min}_i(\mb{A}')$, and
let $g, h: E\to \mb{A}$ such that $g, h\leq f_{e, b}$.
By definition, $f \leq f_{e,b}$ can be rewritten as $f(e')\leq f_{e,b}(e')$
for any $e'\in E$.
Since $ f_{e,b}(e') = \bot $  for any $e' \nsim_i e$, we can deduce that $g(e') = h(e') = \bot$ for any $e'\nsim_i e$. Similarly, we can deduce that 
$g(e') \leq b$ and $ h(e') \leq b$ for any $e'\sim_i e$.
Hence,
\begin{align*}
\mu'_i(g\vee h) 
&  = \sum_{e'\in E}\sum_{a\in \rmPhi}P_i(e')\cdot\mu^a_i(g(e')\lor h(e'))\cdot \overline{\pre}(e'\mid a)  \tag{by definition}
\\
&  =
\sum_{e'\in E}\sum_{a\in \rmPhi}P_i(e')\cdot
\left( \mu^a_i(g(e'))+\mu^a_i(h(e'))-\mu^a_i(g(e')\land h(e')) \right) \cdot \overline{\pre}(e'\mid a)
\tag{$\mu_i^a$ is an $i$-premeasure, $b\in  \mathsf{Min}_i(\mb{A})$, 
 and $g(e')\leq b $ and $h(e') \leq b $ for any $e'\in E$}
\\
& = \mu'_i(g)  + \mu'_i(h) - \mu'_i(g\wedge h).
\tag{by definition}
\end{align*}

\medskip

Finally, the fact that  if $\left( \bigvee_{a \in \rmPhi} a  \right) \leq y$ then $\mu'_i(x)=\mu'_i(x\land y)$ follows from 
Proposition \ref{prop: muai properties}.
\end{proof}


\subsubsection{The intermediate algebra for the classical case}
\label{sssec:intermediate-ha:duality}

Here, we show that the construction described above, applied to the complex algebras of classical models, dualizes the construction of the intermediate model of Section \ref{ssec:epist:update:classic}. This is the first step towards the result stated in Proposition \ref{prop:compatibility with duality}.

\begin{definition}\label{def: es over L induce es over A}

For any PES-model $\mb{M} = \left\langle S, (\sim_i)_{i\in \Ag}, (P_i)_{i\in \Ag}, \val{\cdot}\right\rangle$
(see Definition \ref{def:Prob epis state model Alexandru}) 
and any probabilistic event structure
$\mathcal{E} = (E, (\sim_i)_{i\in\Ag}, (P_i)_{i\in \Ag}, \rmPhi, \pre)$ over $\mathcal{L}$ (see Definition \ref{def:proba-epist-event-struct}), 
let  the \emph{probabilistic event structure over $\mb{M}^+$} (see Definitions \ref{def: complex algebra} and \ref{def:algebraic event}) be	
$$\mb{E}_{\mathcal{E}}: = (E, (\sim_i)_{i\in\Ag}, (P_i)_{i\in \Ag}, \bfPhi_\mb{M}, \overline{\pre}_\mb{M}),$$
where 
\begin{itemize}
\item 
$\bfPhi_\mb{M} = (\rmPhi_\mb{M},\prec_\mb{M})$ is the ordered multiset such that $\rmPhi_\mb{M} := \{\val{\phi}_\mb{M}\mid \phi\in \rmPhi\}$ and the strict order
$\prec_\mb{M}$ is the empty relation;
\item  the map $\overline{\pre}_\mb{M} : \rmPhi_\mb{M} \rightarrow ( E \rightarrow [0,1])$ assigns a probability distribution $\overline{\pre}_\mb{M}(\bullet | \val{\phi}) : E \rightarrow [0,1]$ over $E$ for every  $\phi \in \rmPhi$ such that: 
\begin{align}
\overline{\pre}_\mb{M}(\bullet | \val{\phi}) : E & \rightarrow [0,1]
\label{eq:def:over:pre}
\\
e & \mapsto \pre(e \mid \phi).
\notag
\end{align}
\end{itemize}
\end{definition}

\begin{proposition}
\label{fact: es over L induce es over A}
For any PES-model $\mb{M}$ (see Definition \ref{def:Prob epis state model Alexandru}) and any event structure $\mathcal{E}$ over $\mathcal{L}$  (see Definition \ref{def:proba-epist-event-struct}), the tuple $\mb{E}_{\mathcal{E}}$ is an event structure over the epistemic Heyting algebra underlying $\mb{M}^+$.
\end{proposition}
\begin{proof}
We need to verify that the tuple $\mb{E}_{\mathcal{E}}$ satisfies 
\autoref{def:algebraic event}. Items 1 to 3 are trivially satisfied. 
Hence, we only need to prove that 
\begin{itemize}
\item[4.] $\bfPhi_\mb{M} = (\rmPhi_\mb{M},\prec_\mb{M}) $ is a finite ordered multiset on $\mb{M}^+$ such that,  for all $a, b\in \rmPhi_\mb{M}$ which arise from distinct elements in $\mb{M}^+$, either
\begin{center}
$a\wedge_{\text{{$\mb{M}^+$}}} b = \bot \quad $ or $ \quad a <_{\text{$ \mb{M}^+$}} b \quad $ or $ \quad b <_{\text{{$ \mb{M}^+$}}} a$;
\end{center} 
\item[5.] the map $\overline{\pre}_\mb{M}$ $ :  E \times \rmPhi_\mb{M} \rightarrow [0,1]$  assigns a probability distribution $\overline{\pre}_\mb{M}(\bullet | a)$ over $E$ for every  $a \in \rmPhi_\mb{M}$;
\item[6.] for all $a\in\rmPhi$ and  $e\in E$, if $\overline{\pre}_\mb{M}(e | a)=0$ then $\overline{\pre}_\mb{M}(e | b)=0$  for all  $b\in\rmPhi$ such that $a\prec b$.
\end{itemize}	
\texttt{Proof of 4.} 
First, we need to prove that $\bfPhi_\mb{M}$ is an ordered multiset (\autoref{def:ordered:multiset}). $\rmPhi_\mb{M}$ is clearly a multiset, hence we only need to prove that the empty relation $\prec_\mb{M}$ satisfies the following conditions: for all pairwise distinct elements $x,y,z\in \rmPhi_\mb{M}$,
\begin{enumerate}[(i)]
\item 
if $x \prec_\mb{M} y$, then $x \leq_{\mb{M}^+} y$;
\item 
 if $x\neq \bot_{\mb{M}^+}$ and 
$ x \leq_{\mb{M}^+} y$, then  $x \prec_\mb{M} y$ or $y \prec_\mb{M} x$;
\item 
if $x\prec_\mb{M} y$ and $x\prec_\mb{M} z$, then 
$y \prec_\mb{M} z$ or $z \prec_\mb{M} y$.
\end{enumerate}
Conditions (i) and (iii) are trivially satisfied.
Notice that, since $\mathcal{E}$ is a (classical) probabilistic event structure, $\rmPhi$ is a finite set of pairwise inconsistent $\mathcal{L}$-formulas. Assume that $\val{\phi},\val{\psi}\in \rmPhi_{\mb{M}}$ are pairwise distinct (i.e.\ $\phi \neq \psi$ in the language $\mathcal{L}$) and such that $\val{\phi} \leq_{\mb{M}^+} \val{\psi}$.
One can easily verify that  $\phi \wedge \psi = \bot$ implies that 
$\val{\phi}= \bot_{\mb{M}^+}$. Hence, $\prec_\mb{M}$ satisfies condition (ii).
This finishes the proof that the ordered multiset $\bfPhi_{\mb{M}}$ is well-defined.

\smallskip

Let $\val{\phi}, \val{\psi}\in \rmPhi_\mb{M}$  arise from distinct elements in $\mb{M}^+$. By definition, $\phi \wedge \psi = \bot$. Hence, 
$a\wedge_{{\mb{M}^+}} b = \bot $, which proves item 4.

\bigskip

\texttt{Proof of 5.} Since $\mathcal{E}$ is a (classical) probabilistic event structure, 
$\pre$ assigns a probability distribution $\pre(\bullet | \phi)$ over $E$ for every  $\phi \in \rmPhi$. Hence, the map $\overline{\pre}_\mb{M}$ is well-defined.

\bigskip

\texttt{Proof of 6.} Since $\prec_{\mb{M}}$ is the empty relation, this condition is trivially true.
\end{proof}

\begin{remark} 
\label{rk:mb:classic:empty}
Notice that, in the classical case, 
$\mathrm{mb}(a) = \varnothing$ for all $a\in \bfPhi$. 
Indeed, $\mathrm{mb}(a)$ denotes the multiset of the $\prec$-maximal elements of $\bfPhi$ $\prec$-below $a$. But, since in the classical case $\prec_{\mb{M}}$ is the empty relation, there is no element below $a$ in $\bfPhi$.
\end{remark}

\begin{proposition}
\label{prop: plus of coprod same as prod of plusses}
For every PES-model $\mb{M}$ and any event structure $\mathcal{E}$ over $\mathcal{L}$, \[(\coprod_{\mathcal{E}}\mb{M})^+\cong \prod_{\mb{E}_{\mathcal{E}}}\mb{M}^+.\]
\end{proposition}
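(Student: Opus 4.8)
The plan is to exhibit an explicit isomorphism of epistemic Heyting algebras between $(\coprod_{\mathcal{E}}\mb{M})^+$ and $\prod_{\mb{E}_{\mathcal{E}}}\mb{M}^+$ and then verify that it identifies the two families of $i$-premeasures. On the level of the underlying lattices, recall that $\coprod_{|E|}S\cong S\times E$, so an element of $(\coprod_{\mathcal{E}}\mb{M})^+$ is a subset $X\subseteq S\times E$, while an element of $\prod_{|E|}\mb{L}=\prod_{|E|}\P S$ is a map $f:E\to\P S$. The natural candidate is the ``currying'' bijection $\eta$ sending $X\subseteq S\times E$ to the map $f_X:E\to\P S$, $e\mapsto\{s\in S\mid(s,e)\in X\}$; this is a Heyting-algebra isomorphism because all Heyting operations on both sides are computed coordinatewise over $E$ (unions, intersections, and the implication $X\to Y$ corresponds fiberwise to $(X\to Y)_e = f_X(e)\to f_Y(e)$ inside each $\P S$).

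The first substantive step is to check that $\eta$ respects the modal operations, i.e.\ that $\eta(\lozenge_i^{\coprod}X)=\lozenge'_i(\eta(X))$ and dually for $\Box_i$. Unwinding the definitions, $\lozenge_i^{\coprod}X$ consists of those $(s,e)$ with $(s',e')\in X$ for some $(s',e')\sim_i^{\coprod}(s,e)$, that is, $s\sim_i s'$ and $e\sim_i e'$; evaluating the corresponding map at $e$ gives $\bigcup\{\lozenge_i f_X(e')\mid e'\sim_i e\}$, which is exactly $\lozenge'_i(\eta(X))(e)$ from Definition \ref{def:support intermediate APE}. The $\Box_i$ case is analogous with joins replaced by meets. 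Combined with the lattice part, this already shows $\eta$ is an isomorphism of epistemic Heyting algebras; alternatively, since Fact 12 of \cite{KP13} records exactly this fact for the non-probabilistic reducts, I can simply cite it and restrict attention to the probabilistic component.

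The second, and I expect the main, step is to show that $\eta$ transports the premeasures: for each agent $i$, the $i$-premeasure $(P_i^{\coprod})^+$ on $(\coprod_{\mathcal{E}}\mb{M})^+$ — which by Definition \ref{def: complex algebra} sends a subset $X$ of a single $\sim_i^{\coprod}$-cell to $\sum_{(s,e)\in X}P_i^{\coprod}(s,e)=\sum_{(s,e)\in X}P_i(s)P_i(e)\pre(e\mid s)$ — must agree under $\eta$ with the map $\mu'_i$ of Definition \ref{def: prod F over E}, namely $f\mapsto\sum_{e\in E}\sum_{a\in\Phi_\mb{M}}P_i(e)\cdot\mu^a_i(f(e))\cdot\pre_\mb{M}(e\mid a)$. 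First I would check the domains match: by Lemma \ref{lem:charact i minimal elements complex algebra} the $i$-minimal elements of $\mb{M}^+$ are the $\sim_i$-cells, by Proposition \ref{prop: charact i minimal} those of $\mb{A}'$ are the maps $f_{e,a}$ with $a$ a $\sim_i$-cell, and $\eta$ matches a $\sim_i^{\coprod}$-cell $\{(s',e')\mid s'\sim_i s,\ e'\sim_i e\}$ with $f_{e,[s]}$, so downsets correspond to downsets. For the values, fix $X$ inside a $\sim_i^{\coprod}$-cell and compute $\mu'_i(f_X)=\sum_{e\in E}\sum_{\phi\in\Phi}P_i(e)\cdot\mu^\phi_i(f_X(e))\cdot\pre(e\mid\phi)$; here the crucial point is to unpack $\mu^\phi_i=(P^+)^{\val{\phi}}_i$ from Definition \ref{def:mua-mba}, which by the inclusion–exclusion structure of a forest-ordered $\Phi$ collapses: the alternating sum $\mu_i(x\land\val{\phi})-\sum_{\psi\in\mathrm{mb}(\val{\phi})}\mu_i(x\land\val{\psi})$, summed against $\pre(e\mid\phi)$ over all $\phi$, telescopes (using Notation \ref{note:pre(e/s)}, i.e.\ $\pre(e\mid s)=\pre(e\mid\phi)$ for the unique $\phi$ with $\mb{M},s\Vdash\phi$) so that each point $s$ contributes $P_i(s)\cdot P_i(e)\cdot\pre(e\mid s)$ exactly once. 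Summing over $e$ then recovers $\sum_{(s,e)\in X}P_i(s)P_i(e)\pre(e\mid s)=(P_i^{\coprod})^+(X)$, as desired.

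The hard part is precisely this telescoping identity relating the fiberwise premeasures $\mu^\phi_i$ on $\mb{M}^+$ to the flat sum $\sum_{(s,e)\in X}P_i^{\coprod}(s,e)$: one must verify that, pointwise in $s\in\val{x}$, $\sum_{\phi\in\Phi}\big(\mathbf{1}[s\in\val{\phi}]-\sum_{\psi\in\mathrm{mb}(\val\phi)}\mathbf{1}[s\in\val{\psi}]\big)\pre(e\mid\phi)$ equals $\pre(e\mid s)$ — which holds because in the classical case the $\val{\phi}$ form a partition-like family and $s$ lies in a unique minimal one, while the alternating sum assigns $s$ the $\pre$-value of that minimal cell and cancels all others. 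Once this combinatorial lemma is in hand, everything else is bookkeeping, so I would isolate it as a sub-claim and prove it first. \qed
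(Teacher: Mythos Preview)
Your plan is correct and follows the same line as the paper: use the currying bijection $X\mapsto f_X$, $f_X(e)=\{s\mid (s,e)\in X\}$, cite \cite[Fact 23.3]{KP13} (or redo the easy check) for the epistemic-Heyting part, and then verify $(P_i^{\coprod})^+(X)=\mu'_i(f_X)$ by a direct calculation. The only place where you diverge from the paper is in what you flag as the ``hard part''. You set up a telescoping/inclusion--exclusion argument for the forest-ordered $\Phi$, but in the classical setting of a PES-model the formulas in $\Phi$ are \emph{pairwise inconsistent} (Definition~\ref{def:proba-epist-event-struct}), so the sets $\val{\phi}_\mb{M}$ are pairwise disjoint and $\mathrm{mb}(\val{\phi})=\varnothing$ for every $\phi\in\Phi$. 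Hence $\mu^\phi_i(x)=P^+_i(x\cap\val{\phi})$ with no subtracted terms at all, and the computation is just
\[
\sum_{(s,e)\in X}P_i(s)P_i(e)\pre(e\mid s)=\sum_{e\in E}P_i(e)\sum_{\phi\in\Phi}\pre(e\mid\phi)\sum_{s\in X_e\cap\val{\phi}}P_i(s)=\sum_{e\in E}\sum_{\phi\in\Phi}P_i(e)\,\mu^\phi_i(f_X(e))\,\pre(e\mid\phi),
\]
using that the $\val{\phi}$ partition (the relevant part of) $S$ and Notation~\ref{note:pre(e/s)}. This is exactly the paper's chain of equalities; the step you call ``crucial'' is the observation $\mathrm{mb}(\val{\phi})=\varnothing$, not a genuine telescoping. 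Your more general argument would also work and would in fact prove a bit more (it would cover the intuitionistic forest-ordered $\Phi$), but for the statement at hand it is unnecessary overhead.
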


\begin{proof}
See Appendix \ref{app:section4:4} page \pageref{app:section4:4}.
\end{proof}

\commentPROOFbis{
\begin{proof}
The proof that the supports of the two APE-structures (\autoref{def: alg probab epist structure}) can be identified is essentially the same as that of \cite[Fact 23.3]{KP13}, and is omitted. 
Recall that the basic identification between  $\mathcal{P}(\coprod_{|E|}S)$ and $\prod_{|E|}\mathcal{P}(S)$ associates every subset $X\subseteq \coprod_{|E|}S$ with the map 
\begin{align*}
g: E & \to \mathcal{P}(S)\\
e & \mapsto X_e: = \{s\in S\mid (s, e)\in X\}.
\end{align*}
Let us prove that this identification induces an identification between the maps\footnote{Refer to Definitions \ref{def: coproduct PES model} and \ref{def: complex algebra} for the definitions of the intermediate structure $\coprod_{\mathcal{E}}\mb{M}$ and of the complex algebra associated to a model.} 
\begin{align*}
(P^+_i)': \prod_{|E|} & \mathcal{P}(S)\to [0, 1] & \text{and} &&
(P_i^{\coprod})^+: \mathcal{P}(\coprod_{|E|}S)\to [0, 1].
\end{align*}

In what follows, we fix a subset $X\subseteq \coprod_{|E|}S$ in the domain of $P_i^{\coprod}$ and let $g\in \prod_{|E|}\mathcal{P}(S)$ be defined as its counterpart as discussed above. 
Recall that 
for any $s\in S$ and $e\in E$,
$\pre(e\mid s)$ denotes the value $\pre(e\mid \phi)$ for the unique $\phi\in\rmPhi$ such that $\mb{M},s\Vdash\phi$ (see Notation \ref{note:pre(e/s)}).
Then, we have:
\begin{align*}
(P_i^{\coprod})^+(X) &  = \sum_{(s, e)\in X} P_i^{\coprod}((s, e)) 
\tag{Definition \ref{def: complex algebra} on $P_i^{\coprod}$} 
\\
& = \sum_{(s, e)\in X} P_i(s)\cdot P_i( e)\cdot\pre(e\mid s) 
\tag{Definition \ref{def: coproduct PES model} } 
\\
& = \sum_{ e\in E} \sum_{s\in X_e} P_i(s)\cdot P_i( e)\cdot\pre(e\mid s) 
\tag{$X_e: = \{s\in S\mid (s, e)\in X\}$}  
\\
& = \sum_{ e\in E} P_i( e)\cdot \left( 
\sum_{s\in X_e} P_i(s)\cdot \pre(e\mid s) 
\right)
\\
& = \sum_{ e\in E} P_i( e)\cdot 
\sum_{\phi\in \rmPhi}
\left( 
\sum_{s\in X_e \cap \val{\phi}} P_i(s)\cdot \pre(e\mid s) 
\right)
\tag{$\rmPhi$ provides a partition of $\{ s\in S \mid \pre(e\mid s) \neq 0\}$}
\\
& = \sum_{ e\in E} P_i( e)\cdot \left(
\sum_{\phi\in \rmPhi}
\left( 
\sum_{s\in X_e \cap \val{\phi}} P_i(s)
\right) \cdot \pre(e\mid \phi)  
\right)
\tag{Notation \ref{note:pre(e/s)}}
\\
& = \sum_{ e\in E} P_i( e)\cdot \left(
\sum_{\phi\in \rmPhi}
 P^+_i( X_e \cap \val{\phi})
\cdot \overline{\pre}_\mb{M}(e\mid \val{\phi})  
\right)
\tag{Definition \ref{def: complex algebra}} 
\\
& = \sum_{ e\in E} P_i( e)\cdot \left(
\sum_{\phi\in \rmPhi}
 (P_i^+)^{\val{\phi}}( X_e )
\cdot \overline{\pre}_\mb{M}(e\mid \val{\phi})  
\right)
\tag{Remark \ref{rk:mb:classic:empty} : $\mathrm{mb}(\val{\phi})=\varnothing$}
\\
& = \sum_{ e\in E} \sum_{\phi\in \rmPhi} P_i( e)\cdot   (P_i^+)^{\val{\phi}} ( X_e )
\cdot \overline{\pre}_\mb{M}(e\mid \val{\phi})  
\\
& = \sum_{ e\in E} \sum_{\phi\in \rmPhi} P_i( e)\cdot   (P_i^+)^{\val{\phi}} ( g(e) )
\cdot \overline{\pre}_\mb{M}(e\mid \val{\phi})  
\\
& =  (P_i^+)' (g) \tag{Definition \ref{def: prod F over E} on $\mb{M}^+$}
\end{align*}
\end{proof}
}

\begin{corollary}
\label{cor:complex:alg:of:prod}
For every PES-model $\mb{M}$ and any event structure $\mathcal{E}$ over $\mathcal{L}$, the complex algebra $(\coprod_{\mathcal{E}}\mb{M})^+$ of 
the intermediate structure $\coprod_{\mathcal{E}}\mb{M}$ is an ApPE-structure.
\end{corollary}


\subsection{The pseudo-quotient and the updated APE-structure}
\label{ssec: abstract charact i minimal els pseudo quotient}

In the present subsection, we define the APE-structure $\mathcal{F}^{\mb{E}}$, 
resulting from the update of the APE-structure $\mathcal{F}$ with  the event structure $\mb{E}$ over the support of $\mathcal{F}$, 
by taking a suitable  pseudo-quotient of the intermediate APE-structure $\prod_{\mb{E}}\mathcal{F}$. 
Some of the results which are relevant for the ensuing treatment (such as the characterization of the $i$-minimal elements in the pseudo-quotient) 
are independent of the fact that we will be  working with the intermediate algebra. 
Therefore, in what follows, we will discuss them in the more general setting of arbitrary epistemic Heyting algebras $\mb{A}$.

\paragraph{Structure of the subsection.}
First, we define the pseudo-quotient algebra (Definition \ref{def:pseudo quotient general}) and prove that it is 
an epistemic Heyting algebra (Proposition \ref{prop:pseudoisep}).
Then, we characterize the $i$-minimal elements of 
the pseudo-quotient algebra (Proposition \ref{prop: charact i minimal elements of pseudo quotient gen}).
Finally,  we define the APE-structure $\mathcal{F}^{\mb{E}}$, 
resulting from the update of the APE-structure $\mathcal{F}$ with  the event structure $\mb{E}$
(Definition \ref{def: updated APE structure F of E} and Proposition \ref{prop:APEstruct-EoverF}) and show that this definition is compatible with the update on PES-models (Lemma \ref{lem:duality:probabilities}).

\subsubsection*{Pseudo-quotient algebra.}

\begin{definition}[Pseudo-quotient algebra]
\label{def:pseudo quotient general} (cf.\ \cite[Sections 3.2, 3.3]{MPS14})
 For any epistemic Heyting algebra 
$\mb{A} := (\mb{L}, (\lozenge_i)_{i\in \Ag}, (\Box_i)_{i\in \Ag}),$ 
 and any $a\in \mb{A}$, let the {\em pseudo-quotient algebra} be
 $$\mb{A}^a := (\mb{L}/{\cong_a}, (\lozenge_i^a)_{i\in \Ag}, (\Box_i^a)_{i\in \Ag}),$$ 
where 
\begin{itemize}
\item $\cong_a$ is defined as follows:  
for all $b, c\in \mb{L},$
$$b\cong_a c \quad \text{iff} \quad b\wedge a = c\wedge a,$$
\item for every $i\in\Ag$ the operations $\lozenge_i^a$
and $\Box_i^a$ are defined as follows:
\begin{align*}
\lozenge_i^a: \mb{A}^a & \to \mb{A}^a &
\text{and} &&
\Box_i^a : \mb{A}^a & \to \mb{A}^a
\\
b & \mapsto [\lozenge_i(b\wedge a)]
&&& b & \mapsto [\Box_i(a\to b)],
\end{align*}
where  $[c]$ denotes  the $\cong_a$-equivalence class of $c\in \mb{A}$.
\end{itemize}
 
\end{definition}
\begin{proposition}\label{prop:pseudoisep} (cf.\ \cite[Fact 12]{MPS14}) 
For any epistemic Heyting algebra $\mb{A}$,
the  pseudo-quotient algebra $\mb{A}^a$ (see Definition \ref{def:pseudo quotient general}) is an epistemic Heyting algebra.
\end{proposition}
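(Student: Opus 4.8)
The plan is to reduce everything except the Boolean-complementation axiom \eqref{axiom:epist-alg:boolean} to the cited reference, and then dispatch \eqref{axiom:epist-alg:boolean} by a one-line transfer argument. First I would observe that $\cong_a$ is precisely the Heyting congruence associated with the principal filter ${\uparrow}a$: indeed $b\wedge a=c\wedge a$ iff $a\to b=a\to c$, so $\cong_a$ is the kernel of the surjective Heyting homomorphism $\mb{L}\to a{\downarrow}$, $x\mapsto x\wedge a$, where $a{\downarrow}$ carries the Heyting structure with implication $x\to_a y:=a\wedge(x\to y)$, top $a$ and bottom $\bot$. Consequently the quotient map $q\colon\mb{L}\to\mb{L}/{\cong_a}$, $x\mapsto[x]$, is a surjective Heyting homomorphism, and $\mb{L}/{\cong_a}$ is a finite Heyting algebra since $\mb{L}$ is finite. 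Well-definedness of $\lozenge_i^a$ is immediate, since $b\wedge a=c\wedge a$ gives $\lozenge_i(b\wedge a)=\lozenge_i(c\wedge a)$; well-definedness of $\Box_i^a$ follows from $a\to b=a\to(a\wedge b)$. For the remaining monadic-Heyting-algebra requirements — monotonicity of $\lozenge_i^a$ and $\Box_i^a$ and axioms M1--M9 — I would invoke \cite[Fact 12]{MPS14}, exactly as Proposition \ref{prop:intermediate-ha:epist-alg} invokes \cite[Proposition 8]{KP13} for the analogous step.

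It then remains to verify \eqref{axiom:epist-alg:boolean} in $\mb{A}^a$, i.e.\ that $\lozenge_i^a[b]\lor\lnot\lozenge_i^a[b]=[\top]$ for every $i\in\Ag$ and every $[b]\in\mb{A}^a$. This is immediate from the preceding set-up: since $q$ preserves $\lor$, $\lnot$ and $\top$, and $\lozenge_i^a[b]=[\lozenge_i(b\wedge a)]=q\bigl(\lozenge_i(b\wedge a)\bigr)$, we get $\lozenge_i^a[b]\lor\lnot\lozenge_i^a[b]=q\bigl(\lozenge_i(b\wedge a)\lor\lnot\lozenge_i(b\wedge a)\bigr)=q(\top)=[\top]$, where the middle equality uses axiom \eqref{axiom:epist-alg:boolean} of $\mb{A}$ applied to the element $b\wedge a$. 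Hence $\mb{A}^a$ is an epistemic Heyting algebra.

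I expect the argument to be essentially routine; the only points that need care are (i) recognizing that $\cong_a$ is a genuine Heyting congruence, so that the Heyting operations of $\mb{A}^a$ are literally the $q$-images of those of $\mb{L}$ — this is exactly what makes \eqref{axiom:epist-alg:boolean} transfer for free — and (ii) checking that the appeal to \cite[Fact 12]{MPS14} really covers all of M1--M9 for $\lozenge_i^a$ and $\Box_i^a$ as defined here. If that reference yields only the intuitionistic modal fragment, the residual work is to verify the $S5$-type axioms M5 and M6 directly: M5 reduces, via the chain $\lozenge_i(b\wedge a)\leq\Box_i\lozenge_i(b\wedge a)\leq\Box_i\bigl(a\to\lozenge_i(b\wedge a)\bigr)$ in $\mb{A}$, to monotonicity of $\Box_i$ together with M5 of $\mb{A}$, and M6 is handled symmetrically using monotonicity of $\lozenge_i$ and M6 of $\mb{A}$.
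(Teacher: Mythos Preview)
Your proposal is correct and takes essentially the same approach as the paper: both invoke \cite[Fact 12]{MPS14} for the monadic-Heyting-algebra part and then verify axiom~E by the identical one-line transfer $\lozenge_i^a[b]\lor\lnot\lozenge_i^a[b]=[\lozenge_i(b\wedge a)\lor\lnot\lozenge_i(b\wedge a)]=[\top]$. Your extra discussion of $\cong_a$ as a Heyting congruence and the fallback for M5--M6 is sound but unnecessary, since the cited reference already covers the full monadic structure.
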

\commentPROOF{
\begin{proof}
The proof that $\mb{A}^a$ is a monadic Heyting algebra can be found in \cite[Fact 12]{MPS14}. To show that $\mb{A}^a$ is an epistemic Heyting algebra (see \autoref{def:epist-Heyting-algebra}), it remains to prove  that $\lozenge^a_i [b]\lor\lnot\lozenge^a_i [b]=[\top]$ for all $i\in\Ag$ and $b\in \mb{A}^a$. We have that $\lozenge^a_i[b]=[\lozenge_i(b\land a)]$ and that $\lnot\lozenge^a_i [b]=\lnot[\lozenge_i(b\land a)]=[\lnot\lozenge_i(b\land a)]$.  Hence, 
$$\lozenge^a_i [b]\lor\lnot\lozenge^a_i [b]=[\lozenge_i(b\land a)\lor\lnot\lozenge_i(b\land a)]=[\top],$$ 
since $\mb{A}$ is an epistemic Heyting algebra.
\end{proof}
}

\subsubsection*{The $i$-minimal elements of the pseudo-quotient algebra.}

\begin{lemma}
\label{lem:epistHA:i-minimal-1}
For any epistemic Heyting algebra $\mb{A}$ and any $a\in \mb{A}$, if $b\in \mathsf{Min}_i(\mb{A})$ and $b\wedge a\neq \bot$, then $[b]\in \mathsf{Min}_i(\mb{A}^a)$.
\end{lemma}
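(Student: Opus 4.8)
The plan is to verify directly the three clauses of the definition of $i$-minimality (Definition \ref{def: epist alebra}) for the element $[b]$ of the pseudo-quotient $\mb{A}^a$. Throughout I will use two standing facts about epistemic Heyting algebras: $\lozenge_i$ is a closure operator (monotone, inflationary by M1, and idempotent because $\lozenge_i\lozenge_i x\leq\lozenge_i x$ is derivable), so $\lozenge_i x$ is always a fixed point of $\lozenge_i$; and $\lozenge_i\bot=\bot$ (by M8). From the definition of $\cong_a$ I also record that $[c]=[\bot]$ iff $c\wedge a=\bot$, that $[c]\leq[d]$ iff $c\wedge a\leq d\wedge a$ (using that $\cong_a$ respects meets), and that $\lozenge_i^a[c]=[\lozenge_i(c\wedge a)]$ by definition.

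Clause 1, namely $[b]\neq[\bot]$, is immediate from the hypothesis $b\wedge a\neq\bot$. For clause 2 I will prove the sharper equality $\lozenge_i(b\wedge a)=b$ in $\mb{A}$, which instantly gives $\lozenge_i^a[b]=[\lozenge_i(b\wedge a)]=[b]$. To obtain it: monotonicity of $\lozenge_i$ together with $b\wedge a\leq b$ and $\lozenge_i b=b$ (since $b$ is $i$-minimal) yields $\lozenge_i(b\wedge a)\leq b$; moreover $\lozenge_i(b\wedge a)$ is a non-bottom fixed point of $\lozenge_i$, since $b\wedge a\leq\lozenge_i(b\wedge a)$ and $b\wedge a\neq\bot$. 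As $b$ is $i$-minimal, the only fixed points of $\lozenge_i$ below $b$ are $\bot$ and $b$, hence $\lozenge_i(b\wedge a)=b$.

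For clause 3, I assume $[d]<[b]$ with $\lozenge_i^a[d]=[d]$ and show $[d]=[\bot]$. From $[d]\leq[b]$ we get $d\wedge a\leq b\wedge a\leq b$, so $\lozenge_i(d\wedge a)\leq\lozenge_i b=b$; since $\lozenge_i(d\wedge a)$ is again a fixed point of $\lozenge_i$ below $b$, the $i$-minimality of $b$ gives $\lozenge_i(d\wedge a)\in\{\bot,b\}$. If $\lozenge_i(d\wedge a)=\bot$ then $d\wedge a\leq\lozenge_i(d\wedge a)=\bot$, whence $[d]=[\bot]$, as desired. If $\lozenge_i(d\wedge a)=b$, then the assumption $\lozenge_i^a[d]=[d]$, which unfolds to $\lozenge_i(d\wedge a)\wedge a=d\wedge a$, forces $b\wedge a=d\wedge a$, i.e.\ $[d]=[b]$, contradicting $[d]<[b]$; so this case does not occur. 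I expect the main obstacle to be clause 2: the key move is to leave the quotient and establish the exact identity $\lozenge_i(b\wedge a)=b$ in $\mb{A}$, invoking idempotency of $\lozenge_i$ so that $\lozenge_i(b\wedge a)$ is recognized as a fixed point and the minimality of $b$ applies; the remaining steps are routine manipulations of the congruence $\cong_a$.
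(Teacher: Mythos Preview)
Your proof is correct and follows essentially the same route as the paper: verify the three clauses of $i$-minimality directly, exploiting that $\lozenge_i(c\wedge a)$ is a fixed point of $\lozenge_i$ lying below $b$, so the minimality hypothesis on $b$ forces it to be $\bot$ or $b$. The one organizational difference is in clause~2: the paper only shows the quotient identity $\lozenge_i(b\wedge a)\wedge a=b\wedge a$ using nothing more than reflexivity (M1) and monotonicity, whereas you prove the stronger equality $\lozenge_i(b\wedge a)=b$ in $\mb{A}$ by already invoking the $i$-minimality of $b$ (an argument the paper defers to clause~3 and to the next lemma). Both are fine; the paper's clause~2 is slightly more economical, while your version yields a fact that is independently useful.
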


\begin{proof}
Fix some $b \in \mathsf{Min}_i(\mb{A})$ such that 
$b\wedge a\neq \bot$. We need to prove that $[b] \in \mb{A}^a$ satisfies items \ref{def:i-minimal:item:bot}, \ref{def:i-minimal:item:fixpoint}, and \ref{def:i-minimal:item:minimal} of 
Definition \ref{def:i-minimal}.\\

\texttt{Proof of item \ref{def:i-minimal:item:bot}.}
By assumption, $[b]\neq \bot$, hence $[b]$ satisfies item \ref{def:i-minimal:item:bot}.\\

\texttt{Proof of item \ref{def:i-minimal:item:fixpoint}.}
To show that $\blacklozenge_i^a[b] = [b]$, it is enough to show that $\blacklozenge_i(b\wedge a)\wedge a = b\wedge a.$
Clearly, $b\wedge a\leq b$ implies that $\blacklozenge_i(b\wedge a)\wedge a\leq \blacklozenge_i b \wedge a = b\wedge a$, making use that $\blacklozenge_i b = b$. Conversely, recalling that $\blacklozenge_i$ is reflexive (Definition \ref{def: epist algebra}, axiom \eqref{axiom:epist-alg:refl}), we have $b\wedge a = (b\wedge a)\wedge a\leq \blacklozenge_i(b\wedge a)\wedge a$. 
Hence, $\blacklozenge_i^a[b] = [b]$.\\

\texttt{Proof of item \ref{def:i-minimal:item:minimal}.}
We need to prove that  
$[b]$ is a minimal fixed point of $\lozenge_i^a$.
Let $[\bot]\neq [c]\leq [b]$ such that $\blacklozenge_i^a[c] = [c]$, and let us show that $[c] = [b]$. It is enough to show that $c\wedge a  = b\wedge a$.  The assumption that $[c]\leq [b]$ implies that $c\wedge a\leq b\wedge a\leq b$. 
Hence, $\blacklozenge_i(c\wedge a)\leq \blacklozenge_i b = b$. Notice that the assumption that $\blacklozenge_i$ is transitive (Definition \ref{def: epist algebra}, axiom \eqref{axiom:epist-alg:trans}) implies that $\blacklozenge_i\blacklozenge_i(c\wedge a) = \blacklozenge_i(c\wedge a)$, that is $\blacklozenge_i(c\wedge a)$ is a fixed point of $\blacklozenge_i$. 
Moreover, $\bot\neq c\wedge a\leq\blacklozenge_i(c\wedge a)$ implies that $\blacklozenge_i(c\wedge a)\neq \bot$. 
Hence, by the $i$-minimality of $b$ in $\mb{A}$, we conclude that $\blacklozenge_i(c\wedge a) = b$, and hence $\blacklozenge_i(c\wedge a)\wedge a = b\wedge a$. Moreover, the assumption that $\blacklozenge_i^a[c] = [c]$ implies  that $\blacklozenge_i(c\wedge a)\wedge a = c\wedge a.$ Thus, the following chain of identities holds
$c\wedge a = \blacklozenge_i(c\wedge a)\wedge a = b\wedge a$ as required.
\end{proof}

\begin{lemma}
\label{lem:epistHA:i-minimal-2}
For any  epistemic Heyting algebra  $\mb{A}$ and any $a\in \mb{A}$, if $[b]\in \mathsf{Min}_i(\mb{A}^a)$, then  $\blacklozenge_i(b\wedge a)$ is the unique $i$-minimal element of $\mb{A}$ which belongs to $[b]$.
\end{lemma}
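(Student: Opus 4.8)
The plan is to prove Lemma~\ref{lem:epistHA:i-minimal-2} in two parts: first establishing that $\blacklozenge_i(b\wedge a)$ is an $i$-minimal element of $\mb{A}$ which lies in the class $[b]$, and then establishing uniqueness. The key input throughout is Lemma~\ref{lem:epistHA:i-minimal-1} together with the observation on page~\pageref{page:uniqueimin} that any nonzero element of $\mb{A}$ sits below at most one $i$-minimal element.

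\textbf{Membership and $i$-minimality.} First I would observe that since $[b]\in\mathsf{Min}_i(\mb{A}^a)$, by item~\ref{def:i-minimal:item:bot} of Definition~\ref{def: epist alebra} we have $[b]\neq[\bot]$, i.e.\ $b\wedge a\neq\bot$. Next, the fixed-point condition $\blacklozenge^a_i[b]=[b]$ unfolds, by the definition of $\lozenge^a_i$, to $\blacklozenge_i(b\wedge a)\wedge a = b\wedge a$; this is precisely the statement that $\blacklozenge_i(b\wedge a)\cong_a b$, so $\blacklozenge_i(b\wedge a)\in[b]$. It remains to check that $c:=\blacklozenge_i(b\wedge a)$ is $i$-minimal in $\mb{A}$. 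Condition~\ref{def:i-minimal:item:bot}: $c\neq\bot$ because $\bot\neq b\wedge a\leq\blacklozenge_i(b\wedge a)=c$ by axiom~\eqref{axiom:epist-alg:refl}. Condition~\ref{def:i-minimal:item:fixpoint}: $\blacklozenge_i c = \blacklozenge_i\blacklozenge_i(b\wedge a)=\blacklozenge_i(b\wedge a)=c$, using that $\blacklozenge_i$ is idempotent (it is a closure operator, by the remark following Definition~\ref{def: epist algebra}, or directly from transitivity axiom~\eqref{axiom:epist-alg:trans} together with reflexivity). Condition~\ref{def:i-minimal:item:minimal}: suppose $\bot\neq d<c$ with $\blacklozenge_i d=d$; I want a contradiction. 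The cleanest route is to pass back to the pseudo-quotient: from $d\leq c=\blacklozenge_i(b\wedge a)$ I would argue that $[d\wedge a]\leq[b]$ in $\mb{A}^a$, that $[d\wedge a]$ is a fixed point of $\blacklozenge^a_i$ (using $\blacklozenge_i d=d$ and the computation $\blacklozenge^a_i[d]=[\blacklozenge_i(d\wedge a)]$, noting $\blacklozenge_i(d\wedge a)\wedge a\leq\blacklozenge_i d\wedge a=d\wedge a$ and conversely $d\wedge a\leq\blacklozenge_i(d\wedge a)\wedge a$), and then invoke the $i$-minimality of $[b]$ to force $[d\wedge a]=[b]$, i.e.\ $d\wedge a=b\wedge a$. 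Since both $d$ and $c$ are fixed points of $\blacklozenge_i$ lying above the nonzero element $b\wedge a=d\wedge a$, and $i$-minimal elements above a nonzero element are unique (page~\pageref{page:uniqueimin} — here $\blacklozenge_i(d\wedge a)$ must equal both the minimal element above $d$ and the minimal element above $c$), I would derive $d=c$, contradicting $d<c$. Actually it is simplest to note $d = \blacklozenge_i d = \blacklozenge_i(d\wedge a\vee\ldots)$; more carefully, $\blacklozenge_i(b\wedge a)\leq\blacklozenge_i d = d$ since $b\wedge a = d\wedge a\leq d$, giving $c\leq d$, contradiction.

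\textbf{Uniqueness.} Suppose $e\in\mathsf{Min}_i(\mb{A})$ with $e\in[b]$, i.e.\ $e\wedge a=b\wedge a$. I would show $e=\blacklozenge_i(b\wedge a)$. Since $e$ is $i$-minimal, $\blacklozenge_i e=e$, so $\blacklozenge_i(b\wedge a)=\blacklozenge_i(e\wedge a)\leq\blacklozenge_i e = e$. On the other hand $\blacklozenge_i(b\wedge a)$ is, by the first part, an $i$-minimal element, and it satisfies $\blacklozenge_i(b\wedge a)\neq\bot$; moreover $b\wedge a = e\wedge a\leq e$ lies below $e$, and $b\wedge a\leq\blacklozenge_i(b\wedge a)$, so the nonzero element $b\wedge a$ lies below both $i$-minimal elements $e$ and $\blacklozenge_i(b\wedge a)$. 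By the uniqueness of the $i$-minimal element above a given nonzero element (page~\pageref{page:uniqueimin}), $e=\blacklozenge_i(b\wedge a)$, as desired.

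\textbf{Main obstacle.} The routine parts are the unfoldings of $\cong_a$ and of $\lozenge^a_i$. The one point requiring genuine care is condition~\ref{def:i-minimal:item:minimal} in the first part — showing $\blacklozenge_i(b\wedge a)$ has no proper nonzero $\blacklozenge_i$-fixed point below it. The temptation is to argue entirely inside $\mb{A}^a$, but the cleanest argument (as sketched above) stays inside $\mb{A}$: from a hypothetical $d<\blacklozenge_i(b\wedge a)$ with $\blacklozenge_i d=d$, use $d\wedge a = b\wedge a$ (obtained via $[d\wedge a]\leq[b]$ and the minimality of $[b]$ in $\mb{A}^a$) to conclude $\blacklozenge_i(b\wedge a)\leq d$, a contradiction. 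Getting the $\blacklozenge^a_i$-fixed-point computations exactly right — in particular that $[d\wedge a]$ is indeed a fixed point of $\blacklozenge^a_i$ when $d$ is a fixed point of $\blacklozenge_i$ — is where I expect to spend most of the effort.
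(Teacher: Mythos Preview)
Your overall strategy matches the paper's: establish membership via the $\blacklozenge^a_i$-fixed-point identity, verify the three conditions of Definition~\ref{def: epist alebra} for $\blacklozenge_i(b\wedge a)$, and handle uniqueness separately. The membership argument, conditions~\ref{def:i-minimal:item:bot} and~\ref{def:i-minimal:item:fixpoint}, and your uniqueness argument are all fine.

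There is, however, a genuine gap in your verification of condition~\ref{def:i-minimal:item:minimal}. You take $\bot\neq d<\blacklozenge_i(b\wedge a)$ with $\blacklozenge_i d=d$, show $[d]\leq[b]$ is a $\blacklozenge^a_i$-fixed point, and then ``invoke the $i$-minimality of $[b]$ to force $[d\wedge a]=[b]$''. But $i$-minimality of $[b]$ only tells you that a proper fixed point below $[b]$ must be $[\bot]$; it does not by itself rule out $[d]=[\bot]$, i.e.\ $d\wedge a=\bot$. You never argue this, and it is not automatic: all you know is $d\leq\blacklozenge_i a$, which in a Heyting setting does not force $d\wedge a\neq\bot$. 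The paper closes this gap using the monadic Heyting algebra identity $\blacklozenge_i(\blacklozenge_i x\wedge y)=\blacklozenge_i x\wedge\blacklozenge_i y$ (cf.\ \cite[Definition~1]{bezhanishvili1998varieties}): from $d=\blacklozenge_i d$ and $d\leq\blacklozenge_i a$ one gets $d=d\wedge\blacklozenge_i a=\blacklozenge_i d\wedge\blacklozenge_i a=\blacklozenge_i(\blacklozenge_i d\wedge a)=\blacklozenge_i(d\wedge a)$, so $d\wedge a=\bot$ would give $d=\bot$, contradicting the hypothesis. Once you have $d\wedge a\neq\bot$, your argument goes through exactly as written.

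A minor stylistic difference: the paper takes $c\in\mathsf{Min}_i(\mb{A})$ with $c\leq\blacklozenge_i(b\wedge a)$ rather than an arbitrary nonzero fixed point, which lets it invoke Lemma~\ref{lem:epistHA:i-minimal-1} directly to get $[c]\in\mathsf{Min}_i(\mb{A}^a)$. This is equivalent to your approach in the finite setting (any nonzero fixed point dominates an $i$-minimal one), but it streamlines the passage to $[c]=[b]$.
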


\begin{proof}
See Appendix \ref{app:section4:4.2} page \pageref{app:section4:4.2}.
\end{proof}

\commentPROOFbis{
\begin{proof}
Let us first prove that $\blacklozenge_i(b\wedge a) \in [b]$. 
By assumption, $[b] \in \mathsf{Min}_i(\mb{A}^a)$,
hence
$ [b] = \blacklozenge_i^a[b] = b \wedge a = 
 \blacklozenge_i (b \wedge a ) \wedge a$.
This implies that $\blacklozenge_i (b \wedge a ) \in [b]$.

\medskip

Now, we need to show  that $\blacklozenge_i(b\wedge a)$ is an $i$-minimal element of $\mb{A}$. 
Hence, we need to prove that $\blacklozenge_i(b\wedge a)$ satisfies items \ref{def:i-minimal:item:bot}, \ref{def:i-minimal:item:fixpoint}, and \ref{def:i-minimal:item:minimal} of 
Definition \ref{def:i-minimal}.\\

\texttt{Proof of item \ref{def:i-minimal:item:bot}.}
By assumption, $[b] \in \mathsf{Min}_i(\mb{A}^a)$, hence $[b]\neq \bot$ and $b\wedge a \neq \bot$.
Since $\blacklozenge_i$ is reflexive (Definition \ref{def: epist algebra}, axiom \eqref{axiom:epist-alg:refl}),
$ \bot \neq b\wedge a\leq \blacklozenge_i(b\wedge a)$, which shows that $\blacklozenge_i(b\wedge a)\neq \bot$ as required.\\

\texttt{Proof of item \ref{def:i-minimal:item:fixpoint}.}
Since
$\blacklozenge_i$ is transitive (Definition \ref{def: epist algebra}, axiom \eqref{axiom:epist-alg:trans}), we have that
$\blacklozenge_i(b\wedge a) = \blacklozenge_i \blacklozenge_i(b\wedge a)$ as required. \\

\texttt{Proof of item \ref{def:i-minimal:item:minimal}.}
Let $c\in \mathsf{Min}_i(\mb{A})$ and $c\leq \blacklozenge_i(b\wedge a)$. 
We need to  prove that $c= \blacklozenge_i(b\wedge a)$.
To do so, we follow the following steps: 
\begin{enumerate}[(i)]
\item we prove that $[b]=[c]$,
\item we show that $c\wedge a \neq \bot$, 
\item we prove that $\blacklozenge_i(b\wedge a)$.
\end{enumerate}

\texttt{Step (i).} From the assumptions that $c\leq \blacklozenge_i(b\wedge a)$ and that $[b] = \blacklozenge_i^a[b]$, we get that $c\wedge a\leq \blacklozenge_i(b\wedge a)\wedge a = b\wedge a$, which proves that $[c]\leq [b]$. 

\smallskip

\texttt{Step (ii).}  Since $c\leq\lozenge_i(b\land a)$, we have that $c\leq\lozenge_i a$, that is $c=c\land\lozenge_i a$. This gives the following chain of equalities: 
$$c= c\land\lozenge_i a=\lozenge_ic\land\lozenge_ia=\lozenge_i(\lozenge_i c\land a).$$ 
The last equality is true in all monadic Heyting algebras (see e.g.\ \cite[Definition 1]{bezhanishvili1998varieties}). Now, since $\lozenge_i c=c$, we get that $c=\lozenge_i(c\land a)$, which implies
$\lozenge_i(c\land a)\neq\bot$ and $c\land a\neq\bot$. 

\smallskip

\texttt{Step (iii).}  By Lemma \ref{lem:epistHA:i-minimal-1},
$[c]\in \mathsf{Min}_i(\mb{A}^a)$.
By the $i$-minimality of $[b]$, we get $[b] = [c]$, that is $b\wedge a = c\wedge a$. Hence $\blacklozenge_i(b\wedge a) = \blacklozenge_i(c\wedge a)\leq \blacklozenge_i(c) = c$, which, together with the assumption that $c\leq \blacklozenge_i(b\wedge a)$, proves that $\blacklozenge_i(b\wedge a) = c$, as required. This finishes the proof that $\blacklozenge_i(b\wedge a)$ is an $i$-minimal element of $\mb{A}$. 

\bigskip

To show the uniqueness, let $c_1, c_2\in [b]$ and assume that both $c_1$ and $c_2$ are $i$-minimal elements of $\mb{A}$. Then $c_1\wedge a = c_2\wedge a$, and hence $\blacklozenge_i(c_1\wedge a) = \blacklozenge_i(c_2\wedge a)$. Reasoning as above, one can show that $\bot\neq \blacklozenge_i(c_j\wedge a)\leq c_j$ and $\blacklozenge_i(c_j\wedge a)$ is a fixed point of $\blacklozenge_i$ for $1\leq j\leq 2$. Hence,  the $i$-minimality of $c_j$ implies that $\blacklozenge_i(c_j\wedge a) =  c_j$. Thus, the following chain of identities holds:\[c_1 = \blacklozenge_i(c_1\wedge a) = \blacklozenge_i(c_2\wedge a) = c_2.\]
\end{proof}
}
\noindent Combining the two lemmas above, we obtain the following result.

\begin{proposition}
\label{prop: charact i minimal elements of pseudo quotient gen}
The following are equivalent for any $\mb{A}$ and any $a\in \mb{A}$:
\begin{enumerate}
\item $[b]\in \mathsf{Min}_i(\mb{A}^a)$;
\item $[b] = [b']$ for a unique  $b'\in \mathsf{Min}_i(\mb{A})$ such that $b'\wedge a\neq \bot$.
\end{enumerate}
\end{proposition}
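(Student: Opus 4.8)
The plan is to derive this proposition almost immediately from the two preceding lemmas, treating it as a packaging of Lemma \ref{lem:epistHA:i-minimal-1} and Lemma \ref{lem:epistHA:i-minimal-2}. I would prove the two implications separately.

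For the direction $(2)\Rightarrow(1)$, I would assume that $[b]=[b']$ for some $b'\in\mathsf{Min}_i(\mb{A})$ with $b'\wedge a\neq\bot$. Then Lemma \ref{lem:epistHA:i-minimal-1}, applied to $b'$, yields $[b']\in\mathsf{Min}_i(\mb{A}^a)$, and since $[b]=[b']$ as elements of $\mb{A}^a$, we conclude $[b]\in\mathsf{Min}_i(\mb{A}^a)$. Here the uniqueness hypothesis in (2) plays no role; it is only needed to make statement (2) a genuine equivalence with (1) via the other direction.

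For the direction $(1)\Rightarrow(2)$, I would assume $[b]\in\mathsf{Min}_i(\mb{A}^a)$ and set $b':=\blacklozenge_i(b\wedge a)$. By Lemma \ref{lem:epistHA:i-minimal-2}, $b'$ is an $i$-minimal element of $\mb{A}$ and $b'\in[b]$, so in particular $[b]=[b']$, i.e.\ $b'\wedge a=b\wedge a$. Since $[b]\in\mathsf{Min}_i(\mb{A}^a)$ we have $[b]\neq[\bot]$, hence $b\wedge a\neq\bot$, and therefore $b'\wedge a\neq\bot$. The uniqueness of such a $b'$ is exactly the uniqueness clause of Lemma \ref{lem:epistHA:i-minimal-2}: any $i$-minimal element of $\mb{A}$ lying in the class $[b]$ must equal $b'$, and since $[b']=[b]$, any $b''\in\mathsf{Min}_i(\mb{A})$ with $[b'']=[b]$ lies in $[b]$ and so coincides with $b'$.

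I do not expect a genuine obstacle here, since the substantive work is already done in the two lemmas; the only point requiring a little care is to check that the "uniqueness" asserted in item (2) matches precisely the uniqueness delivered by Lemma \ref{lem:epistHA:i-minimal-2} (uniqueness of the $i$-minimal representative inside a fixed $\cong_a$-class), and that the condition $b'\wedge a\neq\bot$ is automatic from $i$-minimality of $[b]$ in $\mb{A}^a$ rather than something that must be imposed separately.
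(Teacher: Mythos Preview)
Your proposal is correct and matches the paper's approach exactly: the paper states the proposition with no further proof beyond the remark that it follows by ``combining the two lemmas above,'' and your write-up simply spells out that combination. The details you supply (using Lemma~\ref{lem:epistHA:i-minimal-1} for $(2)\Rightarrow(1)$, and Lemma~\ref{lem:epistHA:i-minimal-2} with $b':=\blacklozenge_i(b\wedge a)$ for $(1)\Rightarrow(2)$, deriving $b'\wedge a\neq\bot$ from $[b]\neq[\bot]$) are all sound.
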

\begin{notation}
\label{notation:i:min:Aa}
In what follows, whenever $[b]\in \mathsf{Min}_i(\mb{A}^a)$, we will assume w.l.o.g.\ that $b\in \mathsf{Min}_i(\mb{A})$ is the ``canonical'' (in the sense of Proposition \ref{prop: charact i minimal elements of pseudo quotient gen})  representant of $[b]$.
\end{notation}

\subsubsection*{The updated APE-structure.}

 For any APE-structure $\mathcal{F}$ and any event structure $\mb{E}$ over the support $\mb{A}$ of $\mathcal{F}$,  the map $\overline{\pre}$ in $\mb{E}$ induces the map $\overline{pre}$ defined as follows:
\begin{align}
\overline{pre}: E & \to \mb{A} \notag\\
e & \mapsto \bigvee_{
	\begin{smallmatrix}
		a \in \rmPhi \\
		 \overline{\pre}(e\mid a)\neq 0
	\end{smallmatrix}
}
a
\label{def:overlinePRE}
\end{align} 

It immediately follows from Propositions \ref{prop: charact i minimal} and \ref{prop: charact i minimal elements of pseudo quotient gen} that the $i$-minimal elements of $\mb{A}^{\mb{E}}$ are exactly the elements $[f_{e, b}]$ for $e\in E$ and $b\in \mathsf{Min}_i(\mb{A})$ such that $b\wedge \overline{pre}(e')\neq \bot$ for some $e'\sim_i e$.

\begin{definition}[Updated APE-structure]
\label{def: updated APE structure F of E}
For any APE-structure $\mathcal{F}$ and any  event structure $\mb{E}$ over the support of $\mathcal{F}$, the {\em updated APE-structure}  is the tuple
$$\mathcal{F}^{\mb{E}}: = (\mb{A}^{\mb{E}}, (\mu_i^{\mb{E}})_{i\in \Ag})$$
such that:
\begin{enumerate}
\item $\mb{A}^{\mb{E}}$ is obtained by instantiating Definition  \ref{def:pseudo quotient general} to $\prod_{\mb{E}}\mb{A}$ and $\overline{pre}\in \prod_{\mb{E}}\mb{A}$, i.e.\
$$\mb{A}^{\mb{E}}: = (\prod_{\mb{E}}\mb{A})^{\overline{pre}};$$
\item 
The maps $\mu_i^\mb{E}$ are defined as follows:
\begin{align*}
\mu_i^\mb{E} : 
\mathsf{Min}_i(\mb{A}^\mb{E}){\downarrow}
& \to [0,1] \\
[g] & \mapsto
\left\{ \begin{array}{ll}
0 & \text{if } [g]=\bot,\\
\frac{\mu'_i(g)}{\mu'_i(f)} \quad\quad & \text{otherwise,}
\end{array} 
 \right. 
\end{align*}
where [f] is the only element in $\mathsf{Min}_i(\mb{A}^\mb{E})$ such that $[g]\leq[f]$.\footnote{See Definition \ref{def: prod F over E} for the definition of the maps $(\mu_i')_{i\in \Ag}$.}
\end{enumerate}
\end{definition}

\begin{lemma}\label{lemma:pseudomuwelldefined} For any APE-structure $\mathcal{F}$ and any  event structure $\mb{E}$ over the support of $\mathcal{F}$, 
the maps $(\mu_i^\mb{E} )_{i\in\Ag}$ of
the updated APE-structure
$\mathcal{F}^{\mb{E}}: = (\mb{A}^{\mb{E}}, (\mu_i^{\mb{E}})_{i\in \Ag})$ are well-defined.
\end{lemma}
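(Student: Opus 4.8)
The goal is to show that for each agent $i$, the map $\mu_i^{\mb{E}}$ of Definition \ref{def: updated APE structure F of E} is well-defined, which amounts to verifying two things: (1) for any $[g]\in\mathsf{Min}_i(\mb{A}^{\mb{E}}){\downarrow}$ with $[g]\neq\bot$, there really is a \emph{unique} $[f]\in\mathsf{Min}_i(\mb{A}^{\mb{E}})$ with $[g]\leq[f]$, so the denominator $\mu'_i(f)$ is unambiguously determined; and (2) the value $\frac{\mu'_i(g)}{\mu'_i(f)}$ does not depend on the choice of representatives $g$ and $f$ of the classes $[g]$ and $[f]$, and in particular the denominator is nonzero. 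The plan is to treat these in turn.

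For (1), I would invoke the remark on page \pageref{page:uniqueimin} that in any epistemic Heyting algebra every nonzero element lies below at most one $i$-minimal element; applying this to $\prod_{\mb{E}}\mb{A}$ — which is an epistemic Heyting algebra by Proposition \ref{prop:intermediate-ha:epist-alg}, hence so is $\mb{A}^{\mb{E}}$ by Proposition \ref{prop:pseudoisep} — gives at most one such $[f]$. Existence follows from the fact that $\bigvee\mathsf{Min}_i(\mb{A}^{\mb{E}})=[\top]$ (as noted after Fact \ref{fact:basicEHA}, since $\lozenge_i^{\mb{E}}\mb{A}^{\mb{E}}$ is a finite Boolean algebra): any nonzero $[g]$ below $[\top]=\bigvee\mathsf{Min}_i(\mb{A}^{\mb{E}})$ must meet, hence lie below, exactly one atom of this Boolean algebra, i.e.\ one $i$-minimal element. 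Here I will use that $[g]\leq[f]$ in the pseudo-quotient unwinds, via Definition \ref{def:pseudo quotient general}, to $g\wedge\overline{\pre}\leq f\wedge\overline{\pre}$ in $\prod_{\mb{E}}\mb{A}$.

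For (2), the key observation is that by Proposition \ref{prop:intermediate-ha:i-premeasure} (its ``furthermore'' clause) we have $\mu'_i(x)=\mu'_i(x\wedge\overline{\pre})$, since $\overline{\pre}(e)=\bigvee\{a\in\Phi\mid\pre(e\mid a)\neq 0\}\leq\bigvee_{a\in\Phi}a$ componentwise — wait, more precisely I need $\bigvee_{a\in\Phi}a$ in the sense used there, but the clause in fact gives exactly what is needed once one checks $\overline{\pre}\leq(\bigvee_{a\in\Phi}a)$-componentwise, which is immediate. Consequently, if $g\cong_{\overline{\pre}}g'$, i.e.\ $g\wedge\overline{\pre}=g'\wedge\overline{\pre}$, then $\mu'_i(g)=\mu'_i(g\wedge\overline{\pre})=\mu'_i(g'\wedge\overline{\pre})=\mu'_i(g')$, so $\mu'_i$ is constant on $\cong_{\overline{\pre}}$-classes and both numerator and denominator are representative-independent. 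Finally I must check $\mu'_i(f)\neq 0$ for the chosen $i$-minimal $[f]$: taking $f=f_{e,a}$ to be the canonical representative with $a\in\mathsf{Min}_i(\mb{A})$ and $a\wedge\overline{\pre}(e')\neq\bot$ for some $e'\sim_i e$ (per the remark before Definition \ref{def: updated APE structure F of E}), I expand $\mu'_i(f_{e,a})=\sum_{e'\in E}\sum_{b\in\Phi}P_i(e')\cdot\mu_i^b(f_{e,a}(e'))\cdot\pre(e'\mid b)$ and isolate a strictly positive summand — using that the $\mu_i^b$ are $i$-measures-derived (strictly monotone on $a{\downarrow}$ by item \ref{def:epAlg:two:nonzero}, so $\mu_i^b(a)>\mu_i^b(\bot)=0$ whenever $a\wedge b\neq\bot$ in the appropriate sense), that $P_i(e')>0$ always, and that $\pre(e'\mid b)>0$ for a suitable $b$ with $\pre(e'\mid b)\neq 0$ and $a\wedge b\neq\bot$.

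The main obstacle I expect is pinning down nonvanishing of the denominator: one must argue that the condition ``$a\wedge\overline{\pre}(e')\neq\bot$ for some $e'\sim_i e$'' genuinely produces a summand in which $\mu_i^b(a)$ (more precisely $\mu_i^b(a)$ after the $\mathrm{mb}(b)$-correction) is strictly positive and is paired with a strictly positive $\pre(e'\mid b)$ — i.e.\ that the event/precondition witnessing $\overline{\pre}$ is the same one making $\mu_i^b$ positive. This requires carefully combining the definition of $\overline{\pre}$ in \eqref{def:overlinePRE}, the forest structure of $\Phi$ (Remark \ref{rk:Phi:forest}), and the strict-monotonicity clause of Definition \ref{def:measures}; everything else is routine unwinding of definitions and appeals to Propositions \ref{prop:intermediate-ha:i-premeasure} and \ref{prop: charact i minimal elements of pseudo quotient gen}.
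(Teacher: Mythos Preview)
Your overall decomposition matches the paper's: uniqueness of $[f]$, representative-independence of $\mu'_i$ on $\cong_{\overline{\pre}}$-classes, and nonvanishing of the denominator. Two points deserve comment.

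First, a small correction on representative-independence. The ``furthermore'' clause of Proposition~\ref{prop:intermediate-ha:i-premeasure} requires $\bigvee_{a\in\Phi}a\leq y$, but for $y=\overline{\pre}$ the inequality goes the \emph{wrong way}: $\overline{\pre}(e)=\bigvee\{a\in\Phi\mid\pre(e\mid a)\neq 0\}\leq\bigvee_{a\in\Phi}a$, not the reverse. The equality $\mu'_i(g)=\mu'_i(g\wedge\overline{\pre})$ is nonetheless true, but you need the finer termwise argument: in the sum defining $\mu'_i$, each nonzero summand has $\pre(e\mid a)\neq 0$, hence $a\leq\overline{\pre}(e)$, so Proposition~\ref{prop: muai properties} gives $\mu_i^a(g(e))=\mu_i^a(g(e)\wedge\overline{\pre}(e))$. (This is exactly how the paper argues in the proof of Proposition~\ref{prop:APEstruct-EoverF}.)

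Second, and more substantively, your treatment of nonvanishing has a genuine gap. You write that $\mu_i^b(a)>0$ follows from strict monotonicity ``whenever $a\wedge b\neq\bot$'', but $\mu_i^b$ is only an $i$-\emph{pre}measure (Proposition~\ref{prop: muai properties}), not an $i$-measure; it need not be strictly monotone, and indeed $\mu_i^b(a)$ can vanish when the $\mathrm{mb}(b)$-correction cancels $\mu_i(a\wedge b)$. You correctly flag this as the main obstacle, but the ingredient you are missing is the \emph{minimality} argument the paper uses: pick $e$ with $(g\wedge\overline{\pre})(e)\neq\bot$, consider the (nonempty, by strict monotonicity of the original $\mu_i$) set $\{a\in\Phi\mid \mu_i(g(e)\wedge a)>0\text{ and }\pre(e\mid a)>0\}$, and take a $\leq$-minimal element $a_0$. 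Condition~4 of Definition~\ref{def:algebraic event} (contrapositively) forces $\pre(e\mid b)>0$ for every $b<a_0$ in $\Phi$, so by minimality of $a_0$ each such $b$ has $\mu_i(g(e)\wedge b)=0$; hence the correction terms in $\mu_i^{a_0}(g(e))$ all vanish and $\mu_i^{a_0}(g(e))=\mu_i(g(e)\wedge a_0)>0$. This gives a strictly positive summand $P_i(e)\cdot\mu_i^{a_0}(g(e))\cdot\pre(e\mid a_0)$ in $\mu'_i(g)$. Note the paper actually proves $\mu'_i(g)>0$ for every nonzero $[g]$, not just for the $i$-minimal $[f]$, which is slightly stronger and is reused later for item~\ref{def:epAlg:two:nonzero} in Proposition~\ref{prop:APEstruct-EoverF}.
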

\begin{proof}
Let us first prove the following claim.
\begin{claim}
For each $[h]\in\mathsf{Min}_i(\mb{A}^\mb{E}){\downarrow}$ such that $[h]\neq\bot$, we have $\mu_i'([h]) \neq 0 $. 
\end{claim}
\begin{claimproof}
Let $e\in E$ be such that $(h\land \overline{pre})(e)\neq\bot$. 
Notice that 
\begin{align*}
& (h\land \overline{pre})(e)\neq\bot \quad \quad
\text{iff} \quad \quad  h(e) \wedge 
 \bigvee_{
	\begin{smallmatrix}
		a \in \rmPhi \\
		 \overline{\pre}(e\mid a)\neq 0
	\end{smallmatrix}} a \quad \neq \quad \bot.
\end{align*}
This implies that there is $a \in \bfPhi$ such that 
$$\overline{\pre}(e \mid a)   >  0 \quad \quad \text{ and } \quad \quad h(e) \wedge a \neq \bot.$$
Since $\mu_i$ is an $i$-measure (see \autoref{def:measures}), 
we have $\mu_i((h\land a)(e))>0$. Then, the following set is non-empty
$$\{a\in\rmPhi  \ \mid \  \mu_i((h\land a)(e))>0\textrm{ and }\overline{\pre}(e | a)>0\}.$$ 

Since $\bfPhi$ is finite, it is well-founded with respect to the order of the multiset $\prec$, hence it contains at least one minimal element. Let $a_0$ be such a minimal element. 
From item \eqref{def:alg:event:item:6} of Definition \ref{def:algebraic event}, we deduce that,  for every $b\in\rmPhi$ such that  $b \prec a_0$, it is the case that $\overline{\pre}(e | b)>0$. The minimality of $a_0$ implies that, for every $b\in \rmPhi$  such that $b\prec a_0$,   we have
$\mu_i((h\land b)(e))=0$. This implies that, for all $b \in \mathrm{mb}(a)$, we have $\mu_i((h\land b)(e))=0$.
Hence,  
\begin{align*}
\mu_i^{a_0}(h(e)) & =\mu_i( g(e) \land a_0)-\sum_{b\in\mathrm{mb}(a_0)}\mu_i(h(e)\land b) 
\tag{see \autoref{def:mua-mba}}
\\
& =\mu_i( h(e) \land a_0).
\end{align*}

Therefore $\mu_i^{a_0}(h(e))>0$ and $P_i(e)\cdot\mu_i^{a_0}(h(e))\cdot\overline{\pre}(e | a_0)>0$. This guarantees that $\mu_i'([h]) \neq 0 $. 
This finishes the proof of the claim.
\end{claimproof}
\bigskip

Now, let us prove that the map $\mu_i^\mb{E}$ is well-defined.
Recall that, if $[g]\neq\bot$, then $[f]$ is unique (see 
Remark \ref{page:uniqueimin}).  
From the claim above, it follows that  the division $\frac{\mu'_i(g)}{\mu'_i(f)}$ is defined.
Finally, let us verify that
$\mu_i^\mb{E}$ assigns exactly one value to every $[g] \in \mathsf{Min}_i(\mb{A}^\mb{E}){\downarrow}$. Let $g_1,g_2 \in [g]$. Then we have
$\mu_i'(g_1) = \mu_i'(g_1\wedge \overline{pre}) =
\mu_i'(g_2\wedge \overline{pre}) = \mu_i'(g_2) $ (see Proposition \ref{prop:intermediate-ha:i-premeasure}). 
Since $\mu_i'$ is order-preserving, strictly positive for $[f] \neq \bot$  and $\mu_i^\mb{E}([g])=\frac{\mu'_i(g)}{\mu'_i(f)}$ with $0 < [g] \leq [f]$, we have that the division $\frac{\mu'_i(g)}{\mu'_i(f)}$ is defined and $\frac{\mu'_i(g)}{\mu'_i(f)}\leq 1$.
Hence, $\mu_i^\mb{E}$ is well-defined for any $i\in\Ag$.
\end{proof}

\begin{proposition} 
\label{prop:APEstruct-EoverF}
For any APE-structure $\mathcal{F}$ and any event structure $\mb{E}$ over the support of $\mathcal{F}$, the tuple
$\mathcal{F}^{\mb{E}}= (\mb{A}^{\mb{E}}, (\mu_i^{\mb{E}})_{i\in \Ag})$ is an APE-structure.
\end{proposition}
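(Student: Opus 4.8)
The plan is to verify that $\mathcal{F}^{\mb{E}} = (\mb{A}^{\mb{E}}, (\mu_i^{\mb{E}})_{i\in\Ag})$ satisfies Definition \ref{def: alg probab epist structure}, i.e.\ that $\mb{A}^{\mb{E}}$ is an epistemic Heyting algebra and each $\mu_i^{\mb{E}}$ is an $i$-measure. The first conjunct is essentially free: by Proposition \ref{prop:intermediate-ha:i-premeasure} the intermediate structure $\prod_{\mb{E}}\mathcal{F}$ is an ApPE-structure, so in particular $\prod_{\mb{E}}\mb{A}$ is an epistemic Heyting algebra; then $\mb{A}^{\mb{E}} = (\prod_{\mb{E}}\mb{A})^{\overline{\pre}}$ is an epistemic Heyting algebra by Proposition \ref{prop:pseudoisep}. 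Well-definedness of the $\mu_i^{\mb{E}}$ has already been dispatched in Lemma \ref{lemma:pseudomuwelldefined}, so what remains is to check the six defining clauses of Definition \ref{def:measures} for each $\mu_i^{\mb{E}}$, with the $i$-measure clauses (5) and (6) being the substantive additions.

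First I would settle the $i$-premeasure clauses (1)--(4). Clause (1) (domain is $\mathsf{Min}_i(\mb{A}^{\mb{E}}){\downarrow}$) holds by the very definition of $\mu_i^{\mb{E}}$, using the characterization of $\mathsf{Min}_i(\mb{A}^{\mb{E}})$ recalled just before Definition \ref{def: updated APE structure F of E} (combining Propositions \ref{prop: charact i minimal} and \ref{prop: charact i minimal elements of pseudo quotient gen}). Clause (4) ($\mu_i^{\mb{E}}(\bot)=0$) is immediate from the case split in the definition. For clauses (2) (monotonicity) and (3) (inclusion--exclusion), the key observation is that all the action happens inside a single $i$-minimal cell $[f]\in\mathsf{Min}_i(\mb{A}^{\mb{E}})$: if $[g_1],[g_2]\in [f]{\downarrow}$ then by Proposition \ref{prop:pseudoisep}-style computations in the pseudo-quotient, $[g_1]\vee[g_2]$ and $[g_1]\wedge[g_2]$ still lie in $[f]{\downarrow}$, and $\mu_i^{\mb{E}}$ on that cell is just $\mu_i'$ rescaled by the positive constant $1/\mu_i'(f)$. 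So clauses (2) and (3) for $\mu_i^{\mb{E}}$ follow directly from the corresponding properties of $\mu_i'$, which hold since $\prod_{\mb{E}}\mathcal{F}$ is an ApPE-structure by Proposition \ref{prop:intermediate-ha:i-premeasure}; one only has to be mildly careful that the representatives chosen for $[g_1]\wedge[g_2]$ etc.\ can be taken to be $g_1\wedge g_2$, using the lattice homomorphism property of the quotient map $b\mapsto[b]$ and the fact (Proposition \ref{prop:intermediate-ha:i-premeasure}) that $\mu_i'(g) = \mu_i'(g\wedge\overline{\pre})$, which makes $\mu_i'$ factor through $\cong_{\overline{\pre}}$.

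For the two $i$-measure clauses, clause (6) asks that $\mu_i^{\mb{E}}([f]) = 1$ for every $[f]\in\mathsf{Min}_i(\mb{A}^{\mb{E}})$, which is immediate: by definition $\mu_i^{\mb{E}}([f]) = \mu_i'(f)/\mu_i'(f) = 1$, legitimate because $\mu_i'(f)\neq 0$ was shown in Lemma \ref{lemma:pseudomuwelldefined}. Clause (5) (strict monotonicity within a cell: $[g_1]<[g_2]\leq[f]$ implies $\mu_i^{\mb{E}}([g_1])<\mu_i^{\mb{E}}([g_2])$) is where I expect the only real work. Since $\mu_i^{\mb{E}}$ on a cell is $\mu_i'$ rescaled by a positive constant, this reduces to: if $g_1\wedge\overline{\pre} < g_2\wedge\overline{\pre}$ (the condition $[g_1]<[g_2]$ unpacked) and both are below a representative $f$ of an $i$-minimal element, then $\mu_i'(g_1) < \mu_i'(g_2)$. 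Now $\mu_i'$ itself is only an $i$-premeasure, not an $i$-measure, so I cannot invoke strictness for $\mu_i'$ directly; instead I would trace through Definition \ref{def: prod F over E}: $\mu_i'(g) = \sum_{e\in E}\sum_{a\in\Phi} P_i(e)\cdot\mu_i^a(g(e))\cdot\pre(e\mid a)$, with $P_i(e)>0$. It suffices to exhibit one pair $(e,a)$ with $\pre(e\mid a)>0$ and $\mu_i^a(g_1(e)) < \mu_i^a(g_2(e))$ while $\mu_i^a(g_1(e))\leq\mu_i^a(g_2(e))$ for all pairs (the latter is clause (2), already proved). Pick $e$ where $g_1(e)\wedge\overline{\pre}(e) \neq g_2(e)\wedge\overline{\pre}(e)$, then argue as in Lemma \ref{lemma:pseudomuwelldefined}: choose a minimal $a_0\in\Phi$ below which the relevant $\mu_i$-values on $g_2(e)$ but not $g_1(e)$ are strictly positive, using the compatibility condition (4) of Definition \ref{def:algebraic event} on $\pre$ and the well-foundedness of the finite forest $\Phi$; at that $a_0$, $\mu_i^{a_0}$ collapses to $\mu_i(\,\cdot\,\wedge a_0)$ on the relevant elements, and the strictness of the underlying $i$-measure $\mu_i$ on $\mb{A}$ (clause (5) for $\mu_i$, available since $\mathcal{F}$ is an APE-structure) gives the strict inequality. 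Stitching these choices together to guarantee a strictly-increasing summand while no summand decreases is the delicate combinatorial point, and is the main obstacle I anticipate.
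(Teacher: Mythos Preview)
Your proposal is correct and follows essentially the same route as the paper's proof: reduce to Proposition \ref{prop:pseudoisep} for the algebra, read off clauses (1), (4), (6) directly, and derive (2), (3) from the corresponding $i$-premeasure properties of $\mu'_i$ via the identity $\mu'_i(g)=\mu'_i(g\wedge\overline{\pre})$; for clause (5) both you and the paper pick an $e$ witnessing the strict inequality, then a minimal $a_0\in\Phi$ with $\pre(e\mid a_0)>0$ and $g_1(e)\wedge a_0< g_2(e)\wedge a_0$, so that the correction terms $\sum_{b\in\mathrm{mb}(a_0)}\mu_i(\cdot\wedge b)$ agree for $g_1$ and $g_2$ and strict monotonicity of $\mu_i$ on $\mb{A}$ yields $\mu_i^{a_0}(g_1(e))<\mu_i^{a_0}(g_2(e))$. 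Your phrasing ``$\mu_i^{a_0}$ collapses to $\mu_i(\,\cdot\,\wedge a_0)$'' is slightly loose---what actually happens is that the subtracted sums coincide, not that they vanish---but the mechanism you describe is exactly the paper's, and the ``delicate combinatorial point'' you anticipate is in fact no harder than what you sketched.
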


\begin{proof}
See Appendix \ref{app:section4:5} page \pageref{app:section4:5}.
\end{proof}

\commentPROOFbis{
\begin{proof}
Let $\mb{E} = (E, (\sim_i)_{i\in\Ag}, (P_i)_{i\in \Ag}, \bfPhi, \overline{\pre})$ be an event structure and 
$ \mathcal{F}:=  \left( \mb{A}, (\mu_i)_{i\in \Ag} \right) $ be an APE-structure.
To prove that $\mathcal{F}^{\mb{E}}$ is an APE-structure (see Definition \ref{def: alg probab epist structure}), we need to prove that 
$\mb{A}^{\mb{E}}$ is an  epistemic Heyting algebra (see Definition
\ref{def:epist-Heyting-algebra}), and that
each map $\mu_i^\mb{E}$ is an $i$-measure on $\mb{A}^\mb{E}$. 
By Proposition \ref{prop:pseudoisep}, $\mb{A}^{\mb{E}}$ is an epistemic Heyting algebra.
Hence, it remains to prove that, for each $i\in \Ag$, the map $\mu_i^\mb{E}$ is an $i$-measure (see Definition \ref{def:measures}), i.e.\ we need to prove that:
\begin{enumerate}
	    \item \label{proof:APEstruct-EoverF:epAlg:two:domain}
	    $\mathsf{dom}(\mu_i^\mb{E})=\mathsf{Min}_i(\mb{A}^\mb{E}){\downarrow}$;
		\item \label{proof:APEstruct-EoverF:epAlg:two:monotone}
$\mu_i^\mb{E}$ is order-preserving; 
		\item \label{proof:APEstruct-EoverF:epAlg:two:join}
for every $a\in \mathsf{Min}_i(\mb{A}^\mb{E})$ and all $b, c\in a{\downarrow}$, it holds that $\mu_i^\mb{E}(b\vee c) = \mu_i^\mb{E}(b)+ \mu_i^\mb{E}(c) - \mu_i^\mb{E}(b\wedge c)$;
		\item \label{proof:APEstruct-EoverF:epAlg:two:bot}  $\mu_i^\mb{E}(\bot)=0$ if $\mathsf{dom}(\mu_i^\mb{E})\neq\varnothing$;
		\item \label{proof:APEstruct-EoverF:epAlg:two:fixedpoints}
$\mu_i^\mb{E}(a) = 1$ for every $a\in \mathsf{Min}_i(\mb{A}^\mb{E})$;
		\item \label{proof:APEstruct-EoverF:epAlg:two:nonzero} for every $a\in \mathsf{Min}_i(\mb{A}^\mb{E})$ and all $b, c\in a{\downarrow}$ such that $b<c$, it holds that $\mu_i^\mb{E}(b)<\mu_i^\mb{E}(c)$.
	\end{enumerate}

\texttt{Proof of \eqref{proof:APEstruct-EoverF:epAlg:two:domain}.}
This condition is satisfied by definition.

\medskip

The remaining items, are trivially satisfied if 
the domain of $\mu_i^\mb{E}$ is empty. For the remaining of the proof,
let us assume that the domain of $\mu_i^\mb{E}$ is non-empty.

\medskip

\texttt{Proof of item \eqref{proof:APEstruct-EoverF:epAlg:two:monotone}.}
The definition of $\mu'_i$ (see Definition \ref{def: prod F over E}), the Proposition \ref{prop: muai properties} and the fact that, if $\overline{\pre}(e\mid a)\neq 0$, then $a\leq\overline{pre}(e)$ (see Definition of $\overline{pre}$ \eqref{def:overlinePRE}), imply that  $\mu'_i(g)=\mu'_i(g\land\overline{pre})$.
Assume that $[g_1]\leq[g_2]\leq[f_{e,a}]$. This means  that $g_1\land\overline{pre}\leq g_2\land \overline{pre}$. Since $\mu'_i$ is
an $i$-premeasure (\autoref{prop:intermediate-ha:i-premeasure}), it 
is monotone. Hence,   $\mu'_i(g_1)=\mu'_i(g_1\land\overline{pre})\leq\mu'_i(g_2\land \overline{pre})=\mu'_i(g_2)$. This implies that $$\frac{\mu'_i(g_1)}{\mu'_i(f_{e,a})}\leq\frac{\mu'_i(g_2)}{\mu'_i(f_{e,a})}$$ that is, $\mu_i^{\mb{E}}([g_1])\leq\mu_i^{\mb{E}}([g_2])$. 

\medskip

\texttt{Proof of item \eqref{proof:APEstruct-EoverF:epAlg:two:join}.}
Let $[g_1]$ and $[g_2]$ in $\mathcal{F}^{\mb{E}}$ such that $[g_1] \leq [f_{e,a}]$ and $[g_2] \leq [f_{e,a}]$. We have:
\begin{align*}
& \quad \;\: \mu_i^{\mb{E}}([g_1]\lor[g_2]) \\
&=\frac{\mu'_i((g_1\land \overline{pre})\lor(g_2\land \overline{pre}))}{\mu'_i(f_{e,a})}\\
& = \frac{\mu'_i(g_1\land \overline{pre})+\mu'_i(g_2\land \overline{pre})-\mu'_i((g_1\land g_2)\land\overline{pre})}{\mu'_i(f_{e,a})}
\tag{Proposition \ref{prop:intermediate-ha:i-premeasure}. $\mu_i'$ is an $i$-premeasure}
\\
& = \frac{\mu'_i(g_1\land \overline{pre})}{\mu'_i(f_{e,a})}+\frac{\mu'_i(g_2\land \overline{pre})}{\mu'_i(f_{e,a})}-\frac{\mu'_i((g_1\land g_2)\land \overline{pre})}{\mu'_i(f_{e,a})}\\
& = \frac{\mu'_i(g_1)}{\mu'_i(f_{e,a})}+\frac{\mu'_i(g_2)}{\mu'_i(f_{e,a})}-\frac{\mu'_i(g_1\land g_2)}{\mu'_i(f_{e,a})}\\
& = \mu_i^{\mb{E}}([g_1])+\mu_i^{\mb{E}}([g_2])-\mu_i^{\mb{E}}([g_1\land g_2]).
\end{align*}

\medskip

\texttt{Proof of Items \eqref{proof:APEstruct-EoverF:epAlg:two:bot} and
\eqref{proof:APEstruct-EoverF:epAlg:two:fixedpoints}.}
Trivial.

\medskip

\texttt{Proof of item \eqref{proof:APEstruct-EoverF:epAlg:two:nonzero}.}
Recall that,  if $[g]\neq\bot$, then $\mu_i^\mb{E}([g])>0$ (see Claim in Lemma \ref{lemma:pseudomuwelldefined}). Let $\bot\neq[g]<[h]$. 
The monotonicity of  the $\mu^a_i$ guarantees that, for all $e\in E$ and $a\in\rmPhi$, we have  
$$P_i(e)\cdot\mu^a_i(g(e))\cdot\overline{\pre}(e | a) \quad \leq \quad  P_i(e)\cdot\mu^a_i(h(e))\cdot\overline{\pre}(e | a).$$ 
Furthermore, since $[g]<[h]$, there  exists an $e\in E$ such that the set $$\{\ a\in\rmPhi\quad \mid\quad \overline{\pre}(e | a)>0\text{ and }g(e)\land a< h(e)\land a\ \}$$ is non-empty. 
Since $\rmPhi$ is finite, the order $\prec$ is well-founded and the aforementioned set contains at least one minimal element. Let $a_0$ be such a minimal element. From Definition \ref{def:algebraic event}, we have that, $\overline{\pre}(e | b)>0$ for all $b\in\rmPhi$ with  $b \prec a_0$. 
By the minimality of $a_0$, we have that $g(e)\land b=h(e)\land b$  for all such $b\prec a_0$. 
Hence, 
$$\sum_{b\in\mathrm{mb}(a_0)}\mu_i(g(e)\land b)  = \sum_{b\in\mathrm{mb}(a_0)}\mu_i(h(e)\land b) $$
where  $\mathrm{mb}(a)$ denotes the multiset of the $\prec$-maximal elements of $\bfPhi$ $\prec$-below $a$ (see \autoref{def:mua-mba}).
Since  $\mathcal{F}$ is an APE-structure, $\mu_i$ is strictly monotone. 
Hence, 
$g(e)\land a_0 < h(e)\land a_0$ implies that
\begin{align*}
\mu_i^{a_0}(g(e))  & =  \mu_i( g(e) \land a_0)-\sum_{b\in\mathrm{mb}(a_0)}\mu_i(g(e)\land b)
\\
& < \mu_i( h(e) \land a_0)-\sum_{b\in\mathrm{mb}(a_0)}\mu_i(h(e)\land b)\\
 &  = 
\mu_i^{a_0}(h(e)) .
\end{align*}
Hence, for some $e\in E$ and $a\in\rmPhi$, we have  
$$P_i(e)\cdot\mu^a_i(g(e))\cdot\overline{\pre}(e | a)< P_i(e)\cdot\mu^a_i(h(e))\cdot\overline{\pre}(e | a).$$ 
The inequality above, the definition of $\mu_i'$ (see \autoref{def: prod F over E}) and the monotonicity of $\mu_i'$ (see \autoref{prop:intermediate-ha:i-premeasure}) imply that 
$\mu'_i([g])<\mu'_i([h])$, which in turn implies that $\mu_i^\mb{E}([g])<\mu_i^\mb{E}([h])$.

\end{proof}
}

\subsubsection*{The updated algebra for the classical case.}

In this section, we conclude the proof of Proposition \ref{prop:compatibility with duality} by showing that the pseudo-quotient construction described above, applied to the complex algebras of the intermediate classical models, dualizes the submodel construction in Section \ref{ssec:epist:update:classic}.\\

The definition of the complex algebra of a PES-model (Definition \ref{def: complex algebra}) can be equivalently reformulated as follows.
\begin{definition}[Complex algebra]
	\label{def:Intuitionist complex algebra}
	For any PES-model $\mb{M} = \left\langle S, (\sim_i)_{i\in \Ag}, (P_i)_{i\in \Ag}, \val{\cdot}\right\rangle$, its  complex algebra  is the tuple
	$$\mb{M}^+ :=  \left(  \P S, (\lozenge_i)_{i\in \Ag}, (\Box_i)_{i\in \Ag}, (P^+_i)_{i\in \Ag} \right) $$
where 
\begin{enumerate}
\item 
for each $i\in \Ag$ and $X\in \P S$,
\begin{align*}  
\lozenge_i X  & = \{ s\in S \mid \exists x \: ( s\sim_i x \text{ and } x\in X) \},
\\  
\Box_i X  & =   \{ s\in S \mid \forall x \: (s \sim_i x \implies x\in X)\},
\end{align*}
\item $\mb{A}:=\langle  \P S, (\lozenge_i)_{i\in \Ag}, (\Box_i)_{i\in \Ag} \rangle  $ is an epistemic Heyting algebra,
\item for each $i\in \Ag$ and $X\in \P S$,
\begin{align*}
P^+_i : \mathsf{Min}_i(\mb{A}){\downarrow} & \to \mb{A}
\\	
X  & \mapsto \sum_{x\in X} P_i(x).
\end{align*}
\end{enumerate}
Notice that the domain of $P_i^+$ consists of all the subsets of the equivalence classes of $\sim_i$.
		
\end{definition}

\begin{lemma}
\label{lem:duality:probabilities}
For any PES-model $\mb{M}$ and any event structure $\mathcal{E}$ over $\mathcal{L}$, $$(P_i^+)^{\mb{E}_\mathcal{E}} \cong (P_i^\mathcal{E})^+.$$
\end{lemma}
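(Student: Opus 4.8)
The plan is to lift to the probabilistic setting the identification already available at the level of the non-probabilistic reducts (Fact~12 of \cite{KP13}, as used in the proof of Proposition~\ref{prop:compatibility with duality}) and at the level of the intermediate structures (Proposition~\ref{prop: plus of coprod same as prod of plusses}), and then to check that under this identification the two probability components agree pointwise on the $i$-minimal downsets. Concretely, by Fact~12 of \cite{KP13} the underlying epistemic Heyting algebras of the update of $\mb{M}^+$ by $\mb{E}_{\mathcal{E}}$, namely $(\prod_{\mb{E}_{\mathcal{E}}}\mb{M}^+)^{\overline{\pre}}$, and of $(\mb{M}^{\mathcal{E}})^+$, namely $\mathcal{P}(S^{\mathcal{E}})$, are canonically identified, the underlying correspondence sending a subset $X\subseteq S^{\mathcal{E}}\subseteq\coprod_{|E|}S$ to the $\cong_{\overline{\pre}}$-class $[g_X]$ of the map $g_X\colon e\mapsto\{s\in S\mid (s,e)\in X\}$; here one uses that for $\mb{E}_{\mathcal{E}}$ one has $\overline{\pre}(e)=\bigvee\{\val{\phi}\mid\pre(e\mid\phi)\neq 0\}=\val{pre(e)}$, so that $\overline{\pre}$, viewed as an element of $\prod_{|E|}\mathcal{P}(S)=\mathcal{P}(\coprod_{|E|}S)$, is exactly $S^{\mathcal{E}}$. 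Since this correspondence is an order-isomorphism it restricts to a bijection between the respective sets of $i$-minimal elements, hence between the domains $\mathsf{Min}_i(\cdot){\downarrow}$ of $(P_i^+)^{\mb{E}_{\mathcal{E}}}$ and of $(P_i^{\mathcal{E}})^+$. It therefore suffices to fix a nonempty $X\subseteq S^{\mathcal{E}}$ contained in a single $\sim_i^{\mathcal{E}}$-class, pick $(s,e)\in X$, and prove $(P_i^{\mathcal{E}})^+(X)=\mu_i^{\mb{E}_{\mathcal{E}}}([g_X])$, where $\mu_i^{\mb{E}_{\mathcal{E}}}=(P_i^+)^{\mb{E}_{\mathcal{E}}}$ is the updated measure of Definition~\ref{def: updated APE structure F of E} instantiated at $\mathcal{F}=\langle\mb{M}^+,(P_i^+)_{i\in\Ag}\rangle$; the case $X=\varnothing$ is immediate.

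Second I would compute the left-hand side directly. Put $a:=\{s'\in S\mid s'\sim_i s\}\in\mathsf{Min}_i(\mb{M}^+)$ (Lemma~\ref{lem:charact i minimal elements complex algebra}) and let $C:=a\times\{e'\in E\mid e'\sim_i e\}$ be the $\sim_i^{\coprod}$-class of $(s,e)$ in $\coprod_{|E|}S$. Since $\sim_i^{\mathcal{E}}=\sim_i^{\coprod}\cap(S^{\mathcal{E}}\times S^{\mathcal{E}})$ we have $X\subseteq C$, so $X$ and $C$ both lie in the domain of $(P_i^{\coprod})^+$. Because $P_i^{\coprod}(s',e')=P_i(s')\cdot P_i(e')\cdot\pre(e'\mid s')=0$ whenever $(s',e')\notin S^{\mathcal{E}}$, the denominator occurring in Definition~\ref{def: update PES model}(4) equals $(P_i^{\coprod})^+(C)$ irrespective of whether the defining sum is read over the full $\sim_i^{\coprod}$-class or over its intersection with $S^{\mathcal{E}}$, and it is the same for every point of $X$, whence
\[
(P_i^{\mathcal{E}})^+(X)=\sum_{(s',e')\in X}P_i^{\mathcal{E}}(s',e')=\frac{(P_i^{\coprod})^+(X)}{(P_i^{\coprod})^+(C)}.
\]

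Third I would pass to the algebraic side via Proposition~\ref{prop: plus of coprod same as prod of plusses}, which identifies $(\coprod_{\mathcal{E}}\mb{M})^+$ with $\prod_{\mb{E}_{\mathcal{E}}}\mb{M}^+$ so that $(P_i^{\coprod})^+$ corresponds to $\mu'_i$ (Definition~\ref{def: prod F over E}). Under the underlying identification of carriers, $X$ corresponds to $g_X$ and $C$ to the $i$-minimal element $f_{e,a}$ of $\prod_{\mb{E}_{\mathcal{E}}}\mb{M}^+$ (Proposition~\ref{prop: charact i minimal}), so $(P_i^{\coprod})^+(X)=\mu'_i(g_X)$ and $(P_i^{\coprod})^+(C)=\mu'_i(f_{e,a})$. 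Moreover $g_X\leq f_{e,a}$ and $[f_{e,a}]$ is the unique $i$-minimal element of $(\prod_{\mb{E}_{\mathcal{E}}}\mb{M}^+)^{\overline{\pre}}$ lying above $[g_X]$ (using $(s,e)\in X\subseteq S^{\mathcal{E}}$ to see that $a\wedge\overline{\pre}(e)\neq\bot$, together with Propositions~\ref{prop: charact i minimal} and \ref{prop: charact i minimal elements of pseudo quotient gen}), so Definition~\ref{def: updated APE structure F of E}(2) gives $\mu_i^{\mb{E}_{\mathcal{E}}}([g_X])=\mu'_i(g_X)/\mu'_i(f_{e,a})$, this quotient being independent of the chosen representatives by Lemma~\ref{lemma:pseudomuwelldefined}. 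Combining the last display with these equalities yields $(P_i^{\mathcal{E}})^+(X)=\mu'_i(g_X)/\mu'_i(f_{e,a})=\mu_i^{\mb{E}_{\mathcal{E}}}([g_X])$, as required.

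I expect the main obstacle to be organizational rather than conceptual: keeping the three identifications compatible — the non-probabilistic reduct via \cite{KP13}, the intermediate level via Proposition~\ref{prop: plus of coprod same as prod of plusses}, and the submodel-versus-pseudo-quotient step — and in particular checking the two small but load-bearing facts that make the two denominators literally the same number, namely that $\overline{\pre}$ represents $S^{\mathcal{E}}$ as a subset of $\coprod_{|E|}S$, and that the denominator of Definition~\ref{def: update PES model}(4), nominally ranging over the whole $\sim_i^{\coprod}$-class, is unaffected by deleting the points outside $S^{\mathcal{E}}$ (which carry zero $P_i^{\coprod}$-weight) and therefore coincides with $(P_i^{\coprod})^+(C)=\mu'_i(f_{e,a})$.
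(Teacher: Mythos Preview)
Your proof is correct and takes a genuinely different route from the paper's. The paper unfolds both $(P_i^{\mathcal{E}})^+(X)$ and $(P_i^+)^{\mb{E}_{\mathcal{E}}}([g])$ all the way down to explicit sums over $e\in E$, $s\in X_e$, $\phi\in\Phi$, and then matches the two expressions term by term, re-deriving along the way the content of Proposition~\ref{prop: plus of coprod same as prod of plusses} (the passage from $P_i^{\coprod}$-sums to $\mu_i^{\val{\phi}}$-sums via the partition $\Phi$). You instead factor the computation through that proposition: once you write $(P_i^{\mathcal{E}})^+(X)$ as the ratio $(P_i^{\coprod})^+(X)/(P_i^{\coprod})^+(C)$ and invoke Proposition~\ref{prop: plus of coprod same as prod of plusses} to identify $(P_i^{\coprod})^+$ with $\mu'_i$, the result drops out of Definition~\ref{def: updated APE structure F of E} without any further expansion. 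Your argument is thus more modular and makes the two-step structure of the duality (intermediate level, then quotient level) visible, at the price of depending on Proposition~\ref{prop: plus of coprod same as prod of plusses} as a black box; the paper's direct calculation is self-contained but partially duplicates that proposition's content. The two ``load-bearing facts'' you flag (that $\overline{\pre}$ corresponds to $S^{\mathcal{E}}$, and that the normalizing denominator is unaffected by points outside $S^{\mathcal{E}}$) are exactly what is needed and are handled correctly.
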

%

\begin{proof}
See Appendix \ref{app:section4:6} page \pageref{app:section4:6}.
\end{proof}

\commentPROOFbis{
\begin{proof}
Using Definitions \ref{def: update PES model}
and \ref{def: complex algebra}, we get that:
for any $X \in \mathsf{Min}_i((\mb{M}^\mathcal{E})^+){\downarrow} $,
$$(P_i^\mathcal{E})^+ (X) =
\sum_{(s,e)\in X}
\frac{P_i(e) \cdot P_i(s) \cdot \pre(e\mid s)}{
\sum_{(s',e') \sim_i (s,e)} 
P_i(e') \cdot P_i(s') \cdot \pre(e'\mid s')}.
$$
By using Definitions \ref{def: complex algebra} and 
\ref{def: updated APE structure F of E}, we get that:
for any $[g]\in \mathsf{Min}_i((\mb{M}^+)^\mb{E}){\downarrow}$ ,
$$(P_i^+)^{\mb{E}_\mathcal{E}} ( [g] )
= \frac{\sum_{e\in E} \sum_{\phi \in \rmPhi} P_i(e) \cdot 
(P_i^+)^{\val{\phi}}(g(e)) \cdot \overline{\pre}(e\mid \val{\phi})}{
\sum_{e\in E} \sum_{\phi\in \rmPhi} P_i(e) \cdot 
(P_i^+)^{\val{\phi}}(f(e)) \cdot \overline{\pre}(e\mid \val{\phi})}.
$$
Let $X \in \mathsf{Min}_i((\mb{M}^\mathcal{E})^+){\downarrow} $.
Following the notation introduced in the proof of \autoref{prop: plus of coprod same as prod of plusses}, let 
$[g]\in \mathsf{Min}_i((\mb{M}^+)^\mb{E}){\downarrow}$ be the map  such that
\begin{align*}
g: E & \to \mathcal{P}(S)\\
e & \mapsto X_e: = \{s\in S\mid (s, e)\in X\}.
\end{align*} 
Notice that $X $ is a subset of one of the $i$-equivalence classes of $(\mb{M}^\mathcal{E})^+$,
hence $g = g \wedge \overline{pre}$ and $[g] \leq [f]$ for some $[f] \in \mathsf{Min}_i((\mb{M}^+)^\mb{E}){\downarrow}$.
Let 
$$[X]_i := \{ (s,e) \mid \exists (s',e')\in X, \ (s,e)\sim_i (s',e')\}.$$ 
We can easily see that $([X]_i)_e = f(e)$ where $f$ is the canonical representative of $[f]$.
We have:
\begin{align*}
& \quad \; \: (P_i^\mathcal{E})^+ (X) 
\\
& =
\sum_{(s,e)\in X}
\frac{P_i(e) \cdot P_i(s) \cdot \pre(e\mid s)}{
\sum_{(s',e') \sim_i (s,e)} 
P_i(e') \cdot P_i(s') \cdot \pre(e'\mid s')}
\\
& =
\frac{\sum_{(s,e)\in X} P_i(e) \cdot P_i(s) \cdot \pre(e\mid s)}{
\sum_{(s',e') \in [X]_i} 
P_i(e') \cdot P_i(s') \cdot \pre(e'\mid s')}
\tag{$X $ is a subset of the equivalence classes $ [X]_i$}
\\
& = 
\frac{\sum_{e\in E}  P_i(e) \cdot \sum_{s\in X_e} P_i(s) \cdot \pre(e\mid s)}{
\sum_{e'\in E} P_i(e') \cdot 
\sum_{s' \in f(e')} P_i(s') 
\cdot \pre(e'\mid s')}
\tag{$([X]_i)_e = f(e)$}
\\
& = \frac{\sum_{e\in E} P_i(e) \cdot
\sum_{\phi \in \rmPhi} \pre(e\mid \phi) \cdot
\sum_{s\in g(e) \cap \val{\phi}}   P_i(s)  }{
\sum_{e'\in E} P_i(e') \cdot 
\sum_{\phi\in\rmPhi} \pre(e'\mid \phi) \cdot
\sum_{s' \in f(e') \cap \val{\phi}} 
 P_i(s')}
 \tag{In the classical case, $\rmPhi$ gives a partition of $S^\mathcal{E}$}
\\
& = \frac{\sum_{e\in E} P_i(e) \cdot
\sum_{\phi \in \rmPhi} \pre(e\mid \phi) \cdot 
(P_i^+)(g(e)\cap\val{\phi})  }{
\sum_{e'\in E} P_i(e') \cdot 
\sum_{\phi\in\rmPhi} \pre(e'\mid \phi) \cdot
(P_i^+)(f(e) \cap\val{\phi})}
\tag{\autoref{def:Intuitionist complex algebra}}
\\
& = \frac{\sum_{e\in E} P_i(e) \cdot
\sum_{\phi \in \rmPhi} \pre(e\mid \phi) \cdot (P_i^+)^{\val{\phi}}(g(e))  }{
\sum_{e'\in E} P_i(e') \cdot 
\sum_{\phi\in\rmPhi} \pre(e'\mid \phi) \cdot
(P_i^+)^{\val{\phi}}(f(e) )}
\tag{Remark \ref{rk:mb:classic:empty} : $\mathrm{mb}(\val{\phi})=\varnothing$}
\\
& = \frac{\sum_{e\in E} \sum_{\phi \in \rmPhi} P_i(e) \cdot 
(P_i^+)^{\val{\phi}}(g(e)) \cdot \overline{\pre}(e\mid \val{\phi})}{
\sum_{e\in E} \sum_{\phi\in \rmPhi} P_i(e) \cdot 
(P_i^+)^{\val{\phi}}(f(e)) \cdot \overline{\pre}(e\mid \val{\phi})}
\\
& = 
(P_i^+)^{\mb{E}_\mathcal{E}} ( [g] ).
\end{align*}

\end{proof}
}


\section{Intuitionistic PDEL}
\label{sec:semantics-IPDEL}

In this section, we introduce the Intuitionistic Probabilistic Dynamic Epistemic Logic (IPDEL). We define its syntax in Section \ref{ssec:syntaxe:IPDEL}, and its algebraic semantics (\autoref{def:semantics:IPDEL}) in Section \ref{ssec:IPDEL:semantics}. Then,   in Section \ref{ssec:IPDEL:axiomatisation}, we introduce the axiomatisation of IPDEL (\autoref{table:IPDEL}) and state its soundness and completeness. For the  proofs, see Appendices \ref{Appendix:soundness} and  \ref{Appendix:completeness}. 

\subsection{The language  of IPDEL}
\label{ssec:syntaxe:IPDEL} 
\begin{definition}[IPDEL language] \label{def:IPDEL:syntax}
The set $\mathcal{L}$ of  {\em IPDEL-formulas} $\varphi$ and the class 
of {\em intuitionistic probabilistic event structures} $\mathbb{E}$ over $\mathcal{L}$  are built by simultaneous recursion  as follows:
	\[\varphi ::= p 
	\mid \bot \mid \varphi \wedge \varphi \mid \varphi\vee \varphi \mid \varphi\rightarrow  \varphi \mid \lozenge_i\varphi \mid \Box_i \varphi \mid \langle\mathcal{E}, e\rangle \varphi \mid [\mathcal{E}, e] \varphi \mid (\sum_{k = 1}^n\alpha_k · \mu_i(\varphi)) \geq \beta,\]
	where $i\in\Ag$, and, following \cite{FH94}, we let $\alpha_k, \beta$ be rational numbers,  and the event structures $\mathcal{E}$ are as in Definition \ref{def:intuitionistic-proba-epist-event-struct}.\\
	The connectives $\top$, $\neg$, and $\leftrightarrow$ are defined by the usual abbreviations. 
\end{definition}

\begin{definition}[Intuitionistic probabilistic event structure]	
\label{def:intuitionistic-proba-epist-event-struct}   
An \emph{intuitionistic probabilistic event structure  over} $\mathcal{L}$ is a tuple 
$$\mathcal{E} = (E, (\sim_i)_{i\in\Ag}, (P_i)_{i\in \Ag}, \rmPhi, \pre, \sub),$$
such that
\begin{itemize}
\item $E$ is a non-empty finite set;
\item each $\sim_i$ is an equivalence relation on $E$; 
  
\item
each $P_i:E\to\ ]0,1]$ assigns a probability distribution over each $\sim_i$-equivalence class, i.e.\   $$\sum \left\{P_i (e' ) \mid e' \sim_i e \right\} = 1;$$ 
\item
$\rmPhi $ is a finite set of formulas in $\mathcal{L}$  such that,  for all $\phi_k, \phi_j\in \rmPhi$, one and only one of the following conditions is true: 
\begin{itemize}
\item $\vdash (\phi_j \wedge \phi_k) \rightarrow \bot$,
\item $\vdash \phi_k\to\phi_j$,
\item $\vdash \phi_j\to\phi_k$;
\end{itemize}

\item the map $\pre$ $ :  E \times \rmPhi \rightarrow [0,1]$  assigns a probability distribution  $\pre(\bullet | \phi)$ over $E$ for every  $\phi \in \rmPhi$; 
\item the map $\sub : E \rightarrow \SetSub_\mathcal{L}$ assigns a substitution function (see \autoref{def:substitution:function}) to each event in $E$;
\item  for all $\phi_j \in\rmPhi$ and  $e\in E$, if $\pre(e | \phi_j)=0$ then $\pre(e | \phi_k)=0$  for all  $\phi_k \in\rmPhi$ such that $\vdash \phi_j\to\phi_k$.
\end{itemize}
\end{definition}
\begin{remark} The conditions on $\rmPhi$ match the conditions of $\bfPhi$ given in  \autoref{def:algebraic event} (cf.~\Cref{def:induced:event:structure}). The requirement in \autoref{def:algebraic event} that $\bfPhi$ is a multiset stems from the fact that the interpretation of distinct formulas $\phi_k,\phi_j$ such that $\phi_k\to\phi_j$ might coincide in a model.
\end{remark}
\begin{remark} The conditions on the preconditions are given using $\vdash$. One should refer to Section \ref{ssec:IPDEL:axiomatisation} and Table \ref{table:IPDEL} for the axiomatisation of IPDEL.
\end{remark}

\subsection{Algebraic semantics}
\label{ssec:IPDEL:semantics}
In what follows, we define the models, the event structures on the language, the event structures on the model, the updated models and the semantics. Notice that the definition of the event structure on the model relies on the definition of the event structure on the language, and that the definitions of the event structure on the model, the updated models and the semantics are given by simultaneous induction.

\begin{definition}[APE-models]
	\label{def: alg probab epist model}
	{\em Algebraic probabilistic epistemic models (APE-models)} are tuples $ \mathcal{M}=  \langle \mc{F}, v\rangle $ such that
$\mc{F} =  \left( \mb{A}, (\mu_i)_{i\in \Ag} \right)$ is an  APE-structure
and $v:\mathsf{AtProp}\to\mb{A}$.
\end{definition}
The update construction of Section \ref{sec:ha} extends from APE-structures to APE-models. Indeed, for any APE-model $\mathcal{M} =  \langle \mb{A}, (\mu_i)_{i\in \Ag} , v\rangle$ and any event structure  
$\mathcal{E}$ (see Definition \ref{def:intuitionistic-proba-epist-event-struct}), the event structure $\mathcal{E}$ induces an event structure over the algebra $\mb{A}$ (see Definition \ref{def:algebraic event}) as follows.

\begin{definition}
\label{def:induced:event:structure}
For any APE-model $\mathcal{M}=  \langle \mb{A}, (\mu_i)_{i\in \Ag} , v\rangle$ and any event structure  
$$\mathcal{E} = (E, (\sim_i)_{i\in\Ag}, (P_i)_{i\in \Ag}, \rmPhi, \pre, \sub),$$ 
over $\mathcal{L}$, the following tuple is an event structure over $\mb{A}$ 
$$\mb{E}_{\mathcal{E}}: = (E, (\sim_i)_{i\in\Ag}, (P_i)_{i\in \Ag}, \bfPhi_\mathcal{M}, \overline{\pre}_\mathcal{M}),$$  
where 
\begin{itemize}
\item $\bfPhi := (\rmPhi_\mathcal{M}, \prec_{\mc{M}})$ with
$\rmPhi_\mathcal{M}: = \{\val{\phi}_\mathcal{M}\mid \phi\in \rmPhi\}$
and 
$\prec_{\mathcal{M}} := \{ (\val{\phi_j},\val{\phi_k})\: \mid \quad  \vdash \phi_j \rightarrow \phi_k\}$, and
\item $\overline{\pre}_\mathcal{M}$ assigns a probability distribution $\overline{\pre}_\mathcal{M}(\bullet | a)$ over $E$ for every  $a \in \rmPhi_\mathcal{M}$.
\end{itemize}
\end{definition}
It is straightfoward to verify that $\mb{E}_{\mathcal{E}}$ defined above is an event structure.

\begin{definition}[Updated model] The update of the APE-model  $ \mathcal{M}=  \langle \mc{F}, v\rangle $ 
by the intuitionistic probabilistic event structure 
$\mathcal{E} = (E, (\sim_i)_{i\in\Ag}, (P_i)_{i\in \Ag}, \rmPhi, \pre, \sub)$ is given by the APE-model
$$\mathcal{M}^\mathcal{E}: = \langle\mathcal{F}^{\mathcal{E}}, v^{\mathcal{E}}\rangle, $$
where
\begin{itemize}
\item $\mathcal{F}^\mathcal{E}: = \mathcal{F}^{\mb{E}_{\mathcal{E}}}$  as in Definition \ref{def: updated APE structure F of E}, 
\item and the map $v^{\mathcal{E}}$ is defined as follows: 
\begin{align*}
v^{\mathcal{E}} : \AtProp & \rightarrow \mb{A}^{\mb{E}_{\mathcal{E}}} 
\\
p & \mapsto  
\left\{
    \begin{array}{lll}
        [v^{\prod}(\sub(e)(p))] 
        & \qquad \quad
        & \mbox{if } p\in dom(\sub(e))
        \\
        {[v^{\prod}(p) ]}
        && \mbox{otherwise }  
    \end{array}
\right.
\end{align*}
where
\begin{align*}
v^{\prod}(p): E & \rightarrow \prod_{\mb{E}_{\mathcal{E}}}\mb{A}
&& \text{and} 
& v^{\prod}(\sub(e)(p)): E & \rightarrow \prod_{\mb{E}_{\mathcal{E}}}\mb{A}
\\
e & \mapsto v(p)
&&
& e & \mapsto v(\sub(e)(p)).
\end{align*}
\end{itemize}
\end{definition}
\begin{notation} We define the $e$-th projection $\pi_e$ 
for every $e\in E$, 
the quotient map $\pi$ and the map $\iota$ as follows: 
\begin{align*}
\pi_e: \prod_{\mb{E}_{\mathcal{E}}}\mb{A}
& \rightarrow \mb{A}
&& \text{and} 
& \pi: \prod_{\mb{E}_{\mathcal{E}}}\mb{A}
& \rightarrow \mb{A}^{\mb{E}_{\mathcal{E}}}
&& \text{and} 
& \iota : \mb{A}^{\mb{E}_{\mathcal{E}}}
& \rightarrow \prod_{\mb{E}_{\mathcal{E}}}\mb{A}
\\
g & \mapsto g(e)
&&
& g & \mapsto [g]
&&
&[g] & \mapsto g\wedge \overline{pre}.
\end{align*}
As explained in \cite[Section 3.2]{MPS14}, the map
$\iota$
is well-defined.
\end{notation}
\begin{definition}[Semantics]
\label{def:semantics:IPDEL}
The interpretation of $\mathcal{L}$-formulas  on any APE-model $\mathcal{M}$ is defined recursively as follows:

		\begin{align*}
\val{\bot}_{\mathcal{M}}   & =   \bot^\mb{A}   
&
\val{\top}_{\mathcal{M}}   & =   \top^\mb{A}
\\
\val{p}_{\mathcal{M}}  & =   v(p)
&
\val{\varphi\rightarrow\psi}_{\mathcal{M}}   & =  
\val{\varphi}_{\mathcal{M}}\rightarrow^\mb{A}\val{\psi}_{\mathcal{M}}
\\
\val{\varphi\wedge\psi}_{\mathcal{M}}  & =   
\val{\varphi}_{\mathcal{M}}\wedge^\mb{A}\val{\psi}_{\mathcal{M}}  
&
\val{\varphi\vee\psi}_{\mathcal{M}}   & =   
\val{\varphi}_{\mathcal{M}}\vee^\mb{A}\val{\psi}_{\mathcal{M}}
\\
\val{\lozenge_i\varphi}_{\mathcal{M}} &  =   
\lozenge_i\val{\varphi}_{\mathcal{M}}  
&
\val{\Box_i\varphi}_{\mathcal{M}} &  =   \Box_i\val{\varphi}_{\mathcal{M}}
\\
\val{\langle\mathcal{E}, e\rangle\varphi}_{\mathcal{M}} & = 
\val{pre(e)}_{\mathcal{M}}\wedge^\mb{A} \pi_e\circ \iota(\val{\varphi}_{\mathcal{M}^{\mb{E}_\mathcal{E}}})  
&
\val{[\mathcal{E}, e]\varphi}_{\mathcal{M}}  & =   \val{pre(e)}_{\mathcal{M}}\rightarrow^\mb{A} \pi_e\circ \iota(\val{\varphi}_{\mathcal{M}^{\mb{E}_\mathcal{E}}})
		\end{align*}
		\begin{align*}
\left[\!\!\left[\left(\sum_{k = 1}^n\alpha_k · \mu_i(\varphi_k)\right)  \geq \beta \right]\!\!\right]_{\mathcal{M}} & = \bigvee
\left\{a\in \mb{A} \; \middle\vert \; a\in \mathsf{Min}_i(\mb{A}) \mbox{ and }
 \left(\sum_{k = 1}^n\alpha_k\mu_i(\val{\varphi_k}_{\mathcal{M}}\wedge a) \right)\geq \beta \right\}
		\end{align*}
\end{definition}

\subsection{Axiomatisation}
\label{ssec:IPDEL:axiomatisation} 
IPDEL is intended as the intuitionistic counterpart of classical PDEL.  The full axiomatisation of IPDEL is given in Table \ref{table:IPDEL} (see page \pageref{table:IPDEL}). This axiomatisation differs from the one of classical PDEL (cf.\ \Cref{table:PDEL}) in that the axioms for S5 are replaced by the axioms of intuitionistic modal logic MIPC and axiom E (see \Cref{def:epist-Heyting-algebra}), and  the axioms capturing classical probability theory are replaced by axioms capturing intuitionistic probability theory. In particular, axioms p3 and p4 in \Cref{table:PDEL} are different from the axioms P3 and P4 in \Cref{table:IPDEL}. It is not hard to see that axiom p3 implies P3 and $\mu_i(\varphi)+\mu_i(\lnot\varphi)=1$ in the presence of p1 and p2. Axiom P3 is strictly weaker that p3, since the aforementioned equality is generally false for intuitionistic probabilities. Axioms p4 and P4 are classically equivalent. In intuitionistic logic,  P4 is strictly stronger than p4. Indeed, as \Cref{lemma:axiomreplacement}  shows, p4 is not strong enough to express the strict monotonicity of $i$-measures.
Finally, notice that axioms \ref{proof:axiom:epist-alg:distribd2} and \ref{proof:axiom:epist-alg:distribb2} from \autoref{def: epist algebra} are not in 
\autoref{table:IPDEL}. Indeed, they follow from the remaining axioms and the necessitation rules (see \Cref{lem:m8m9} and also compare with \cite{bezhanishvili1998varieties}).

\renewcommand{\arraystretch}{1.07}

\begin{table}
\caption{\textsc{Axioms of IPDEL}}
\label{table:IPDEL}
\begin{tabular}{|ll|ll|}
\hline
\multicolumn{4}{|c|}{\textbf{Axioms of IPL}}
\\
\hline
H1.    & $A\rightarrow (B\rightarrow A)$& 
H2. &
      $(A\rightarrow (B\rightarrow C))\rightarrow ((A\rightarrow B)\rightarrow (A\rightarrow C))$                   \\

H3.     & $A\rightarrow (B\rightarrow A \wedge B)$                                           
     &  H4. & $(A\rightarrow C)\rightarrow ((B\rightarrow C)\rightarrow (A\vee B\rightarrow C))$                     \\
H5.    & $A \wedge B \rightarrow A$                                                   & H6. & $A \wedge B \rightarrow B$                                                   \\
H7. & $A\rightarrow A \vee B$                                                    & H8.
    & $B\rightarrow A \vee B$                                                    \\
H9.
      & $\bot\rightarrow A$                                               && \\
      
\hline
\multicolumn{4}{|c|}{\textbf{Axioms for static modalities} 
}
\\
\hline
M1. & $p\to\lozenge_i p$ & M2.
& $\Box_i p\to p$
\\
M3. & $\lozenge_i(p\lor q)\to\lozenge_ip\lor\lozenge_iq$ & M4. 
& $\Box_i(p\to q)\to (\Box_ip\to\Box_iq )$
\\
M5. & $\lozenge_i p \to \Box_i\lozenge_i p$ &M6.& $\lozenge_i\Box_ip\to\Box_ip$
\\
M7. & $\Box_i(p\to q)\to(\lozenge_ip\to\lozenge_iq)$ &
E. &$\lozenge_ip\lor\lnot\lozenge_ip$\\
\hline
\multicolumn{4}{|c|}{\textbf{Axioms for inequalities}}                                                                       \\
\hline
N0. & $t\geq t$
&
N1. & $(t\geq \beta) \leftrightarrow (t + 0 \cdot \mu_i(\varphi) \geq \beta)$
\\
N2. & \multicolumn{3}{l|}{$\left( \sum_{k=1}^n \alpha_k \cdot \mu_i(\varphi_k) \geq \beta \right) \rightarrow
\left( \sum_{k=1}^n \alpha_{\sigma(k)} \cdot \mu_i(\varphi_{\sigma(k)}) \geq \beta \right)
 $ for any permutation $\sigma$ over $\{ 1, ..., n\}$}
\\
N3. & \multicolumn{3}{l|}{$\left( \sum_{k=1}^n \alpha_k \cdot \mu_i(\varphi_k) \geq \beta \right) \wedge \left( \sum_{k=1}^n \alpha'_k \cdot \mu_i(\varphi_k) \geq \beta' \right) \rightarrow 
\left( \sum_{k=1}^n (\alpha_k + \alpha'_k) \cdot \mu_i(\varphi_k) \geq (\beta + \beta') \right)$}
\\
N4. & $(( t \geq \beta) \wedge (d \geq 0)) \rightarrow
( d \cdot t \geq d \cdot \beta  )$
&
N5. & $ ( t \geq \beta ) \vee (\beta \geq t)$
\\
N6. & $ (t \geq \beta)
\rightarrow (t > \gamma)  \text{ for all }\gamma<\beta $
&&
\\
\hline
\multicolumn{4}{|c|}{\textbf{Axioms for Intuitionistic Probabilities}}                                                                       \\
\hline
P1. & $\mu_i(\bot) = 0$
&
P2. & $\mu_i(\top) = 1$
\\
P3. &  $\mu_i(\varphi) + \mu_i(\psi) = \mu_i(\varphi \vee \psi) + \mu_i(\varphi \wedge \psi) $
&
P4. & $((\Box_i(\varphi\to\psi))\land(\mu_i(\varphi)=\mu_i(\psi)))\leftrightarrow\Box_i(\psi\leftrightarrow\varphi)$
\\
P5. & \multicolumn{3}{l|}{
$\left( \sum_{k=1}^n \alpha_k \cdot \mu_i(\varphi_k) \geq \beta \right) \rightarrow \Box_i \left( \sum_{k=1}^n \alpha_k \cdot \mu_i(\varphi_k) \geq \beta \right)$}
\\
\hline
\multicolumn{4}{|c|}{\textbf{Reduction Axioms}}                                                                       \\
\hline
I1. & $\left[\mathcal{E},e\right] p  \leftrightarrow 
 pre(e)
\rightarrow \sub(e,p)$
& I2. & $\langle\mathcal{E},e\rangle p  \leftrightarrow pre(e)
\land \sub(e,p)$
\\
I3. & $\left[\mathcal{E},e\right] \top \leftrightarrow \top$
& I4. &$\langle\mathcal{E},e\rangle \top  \leftrightarrow pre(e)$ \\
I5. & $\left[\mathcal{E},e\right] \bot  
\leftrightarrow \lnot pre(e)$
& I6. & $\langle\mathcal{E},e\rangle \bot \leftrightarrow \bot$ \\
I7. & $\left[\mathcal{E},e\right](\psi_1\land\psi_2)
\leftrightarrow \left[\mathcal{E},e\right]\psi_1
\land \left[\mathcal{E},e\right]\psi_2$                                  & I8.
 & $\langle\mathcal{E},e\rangle
(\psi_1\land\psi_2) \leftrightarrow \langle\mathcal{E},e\rangle\psi_1
\land \langle\mathcal{E},e\rangle\psi_2$                                  \\
I9.   & $\left[\mathcal{E},e\right](\psi_1\lor\psi_2)
 \leftrightarrow
pre(e)\rightarrow
\langle\mathcal{E},e\rangle\psi_1
\lor \langle\mathcal{E},e\rangle\psi_2$
& I10. & $\langle\mathcal{E},e\rangle(\psi_1\lor\psi_2) \leftrightarrow \langle\mathcal{E},e\rangle\psi_1
\lor \langle\mathcal{E},e\rangle\psi_2$\\
I11.  & $\left[\mathcal{E},e\right](\psi_1\rightarrow\psi_2) \leftrightarrow \langle\mathcal{E},e\rangle\psi_1
\rightarrow \langle\mathcal{E},e\rangle\psi_2$                                    
& I12. & 
$\langle\mathcal{E},e\rangle
(\psi_1\rightarrow\psi_2) \leftrightarrow pre(e)
\land( \langle\mathcal{E},e\rangle\psi_1
\rightarrow
\langle\mathcal{E},e\rangle\psi_2)$
\\
I13.   & $\left[\mathcal{E},e\right]\lozenge_i\psi \leftrightarrow pre(e)\rightarrow \bigvee_{e'\sim_i e}\lozenge_i(\langle\mathcal{E},e'\rangle\psi)
$
& I14.
  & $\langle\mathcal{E},e\rangle\lozenge_i\psi \leftrightarrow pre(e)
\land \bigvee_{e'\sim_i e}\lozenge_i(\langle\mathcal{E},e'
\rangle\psi)
$   \\
I15.  & 
$\left[\mathcal{E},e\right]\Box_i\psi \leftrightarrow pre(e)\rightarrow \bigwedge_{e'\sim_i e}\Box_i([\mathcal{E},e']\psi)$ 
& I16.
   & $\langle\mathcal{E},e\rangle\Box_i\psi \leftrightarrow pre(e)
\land \bigwedge_{e'\sim_i e}\Box_i([\mathcal{E},e'
]\psi)
$\\
I17. & 
$\left[\mathcal{E},e\right](\alpha\mu_i(\psi)\geq\beta)
\leftrightarrow pre(e)\rightarrow (C + D\geq 0)$
& I18. 
& $\langle\mathcal{E},e\rangle(\alpha\mu_i(\psi)
\geq\beta)
\leftrightarrow pre(e) \land (C' + D \geq 0)$
\\
& \multicolumn{3}{l|}{where \quad}
\\
 & 
\multicolumn{3}{c|}{$C := \sum_{
	\begin{smallmatrix}
        e'\sim_i e\\
		\phi \in \rmPhi
	\end{smallmatrix}
}
\alpha P_i(e') \pre(e'\mid\phi)\mu^\phi_i(\left[ \mathcal{E}, e'\right] \psi)$ \quad \quad \quad
$C' := \sum_{
	\begin{smallmatrix}
	e'\sim_i e\\
	\phi \in \rmPhi
	\end{smallmatrix}
}
\alpha P_i(e') \pre(e'\mid\phi)\mu^\phi_i(\langle \mathcal{E}, e'\rangle \psi)$ }
\\

& 
\multicolumn{3}{c|}{
$D := \sum_{
	\begin{smallmatrix}
        e'\sim_i e\\
		\phi \in \rmPhi
	\end{smallmatrix}
}
-\beta P_i(e') \pre(e'\mid\phi) \mu^\phi_i(\top)$,}
\\
& \multicolumn{3}{l|}{with}
\\
& \multicolumn{3}{c|}{ $\mu^\phi_i(\psi) := \mu_i(\psi\land\phi)-\sum_{\sigma\in\mathrm{mb}(\phi)}\mu_i(\psi\land\sigma)$ \quad
and \quad
$\mathrm{mb}(\phi) := \max_{\rightarrow} \rmPhi\cap\!\!\downarrow\!\! \phi$.}
\\
\hline
\multicolumn{4}{|c|}{\textbf{Inference Rules}}                                                                          \\
\hline
MP. &  
if $\vdash A\rightarrow B$ and $\vdash A$, then $\vdash B$  &&
\\
Nec$_i$        & 
if $\vdash A$, 
then $\vdash \Box_i  A$ 
&
Nec$_\alpha$   & 
if $\vdash A$, then $\vdash \ls \mathcal{E},e \rs A$ \\
Sub$_\mu$ & 
if $\vdash A \rightarrow B$, then $\vdash \mu_i (A) \leq \mu_i(B)$
&
SubEq & 
if $\vdash A \leftrightarrow B$, then $\vdash \phi\leftrightarrow\phi[A/B]$\\
\hline
\multicolumn{4}{c}{\redbf{}}
\end{tabular}

\end{table}

\renewcommand{\arraystretch}{1}


\begin{lemma}\label{lem:m8m9} Axioms  \ref{proof:axiom:epist-alg:distribd2} and \ref{proof:axiom:epist-alg:distribb2} from \autoref{def: epist algebra} are derivable from 
rules and axioms in  \autoref{table:IPDEL}.
\end{lemma}
\begin{proof}
Axiom  \ref{proof:axiom:epist-alg:distribb2} (i.e.\ $\top\leq\Box_i\top$)   is a direct consequence of the necessitation rule.
Axiom \ref{proof:axiom:epist-alg:distribd2} (i.e.\ $\lozenge_i\bot\leq\bot$)  can be derived as follows: 
by instantiating axiom \ref{proof:axiom:epist-alg:trans} with $\bot$, one gets 
 $\lozenge_i \Box_i \bot \rightarrow \Box_i \bot$;
by instantiating axiom 
 \ref{proof:axiom:epist-alg:refl2} with $\bot$, one gets 
 $\Box_i \bot \rightarrow \bot$; since, in addition, $\bot \rightarrow \Box_i \bot$ (axiom H9), one gets that
 $\Box_i \bot \leftrightarrow \bot$;
by substitution of logical equivalence (rule  SubEq) in 
$\lozenge_i \Box_i \bot \rightarrow \Box_i \bot$, one gets 
$\lozenge_i  \bot \rightarrow \bot$ as required.
\end{proof}

\begin{lemma}\label{lemma:axiomreplacement}
Axiom P4 in Table \ref{table:IPDEL} implies axiom p4 in Table \ref{table:PDEL}. In classical logic the two formulas are equivalent in the context of the rest of the axioms. Finally, there exists an ApPE-structure that validates axiom p4 but doesn't validate axiom P4.
\end{lemma}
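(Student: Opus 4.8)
I would handle the two claims in turn. \textbf{P4 $\Rightarrow$ p4.} The plan is to read p4 off a single instance of P4. Instantiating axiom P4 of Table~\ref{table:IPDEL} at $\phi:=\varphi$ and $\psi:=\top$ gives
\[
[\,\Box_i(\varphi\to\top)\wedge(\mu_i(\varphi)=\mu_i(\top))\,]\leftrightarrow\Box_i(\top\leftrightarrow\varphi).
\]
Since $\varphi\to\top$ is an intuitionistic theorem, Nec$_i$ yields $\vdash\Box_i(\varphi\to\top)$, so the bracketed conjunction on the left is provably equivalent to the conjunct $\mu_i(\varphi)=\mu_i(\top)$; and by axiom P2 ($\mu_i(\top)=1$) together with the inequality axioms N0--N6 (which allow the term $\mu_i(\top)$ to be rewritten to the constant $1$ inside the $\geq$-atoms abbreviated by ``$=$''), this is in turn provably equivalent to $\mu_i(\varphi)=1$. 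On the right-hand side, $(\top\leftrightarrow\varphi)\leftrightarrow\varphi$ is an intuitionistic tautology, so SubEq gives $\vdash\Box_i(\top\leftrightarrow\varphi)\leftrightarrow\Box_i\varphi$. Chaining these equivalences yields $\vdash(\mu_i(\varphi)=1)\leftrightarrow\Box_i\varphi$, which is exactly axiom p4 of Table~\ref{table:PDEL}. This derivation is intuitionistically valid, hence also valid classically, so for the second assertion only the converse implication remains.

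\textbf{p4 $\Rightarrow$ P4 classically.} I would derive both directions of the biconditional in P4 from p4, classical propositional logic, and the probability axioms. For the easy direction: $\psi\leftrightarrow\phi$ classically entails $\phi\to\psi$, so Nec$_i$ and the normality of $\Box_i$ (derivable from M4 and Nec$_i$) give $\Box_i(\psi\leftrightarrow\phi)\to\Box_i(\phi\to\psi)$; moreover $\psi\leftrightarrow\phi$ entails $\phi\leftrightarrow(\phi\wedge\psi)$ and $\psi\leftrightarrow(\phi\wedge\psi)$, so Sub$_\mu$, together with p4 (which turns $\Box_i(\psi\leftrightarrow\phi)$ into $\mu_i(\psi\leftrightarrow\phi)=1$) and the inclusion--exclusion law for $\mu_i$, forces $\mu_i(\phi)=\mu_i(\phi\wedge\psi)=\mu_i(\psi)$. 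For the substantial direction, assume $\Box_i(\phi\to\psi)$ and $\mu_i(\phi)=\mu_i(\psi)$; then p4 gives $\mu_i(\phi\to\psi)=1$, and since $(\phi\to\psi)\vee\phi\leftrightarrow\top$ and $(\phi\to\psi)\wedge\phi\leftrightarrow\phi\wedge\psi$ classically, inclusion--exclusion yields $\mu_i(\phi\wedge\psi)=\mu_i(\phi)$. Combining this with $\mu_i(\phi)=\mu_i(\psi)$ and a further application of inclusion--exclusion to $\psi\leftrightarrow(\psi\wedge\phi)\vee(\psi\wedge\neg\phi)$ gives $\mu_i(\psi\wedge\neg\phi)=0$, whence $\mu_i(\psi\to\phi)=\mu_i(\neg(\psi\wedge\neg\phi))=1-\mu_i(\psi\wedge\neg\phi)=1$, and a second application of p4 gives $\Box_i(\psi\to\phi)$. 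Finally the normality of $\Box_i$ packages $\Box_i(\phi\to\psi)\wedge\Box_i(\psi\to\phi)$ into $\Box_i(\psi\leftrightarrow\phi)$, completing the biconditional.

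The step I expect to be the main obstacle is the interaction between the $\mu_i$-inequality fragment and the modal fragment in the classical direction: one must repeatedly exploit that a statement $\Box_i(\alpha\to\beta)$ pins down $\mu_i(\alpha)$ relative to $\mu_i(\beta)$, and the only lever available for this is p4 combined with the additivity/modularity of $\mu_i$. Thus the classical argument genuinely relies on the inclusion--exclusion law being part of (or derivable within) the classical probability subsystem bundled under ``axioms for classical probabilities'' — an ingredient which is entirely avoided intuitionistically, which is precisely why the first half needs only the single clean instance above. A secondary, purely bookkeeping, issue is the treatment of ``$t=\beta$'' as a conjunction of two $\geq$-atoms and the substitution of provably equal terms (such as $\mu_i(\top)$ and $1$) inside such atoms; this is discharged once and for all via N0--N6 and Sub$_\mu$.
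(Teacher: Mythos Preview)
Your proposal is correct and follows essentially the same route as the paper: the implication P4 $\Rightarrow$ p4 is obtained in both by the single instance $\psi:=\top$, and the classical converse is derived in both from p4 together with inclusion--exclusion (axiom P3) and the classical complement identity $\mu_i(\lnot\sigma)=1-\mu_i(\sigma)$. The only difference is bookkeeping: the paper reaches $\mu_i(\psi\land\lnot\phi)=0$ directly via De~Morgan, whereas you pass through $\mu_i(\phi\land\psi)=\mu_i(\phi)=\mu_i(\psi)$ first --- same ingredients, slightly different order.
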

\begin{proof} Recall that 
\begin{align*}
\text{(P4)} \quad & ((\Box_i(\phi\to\psi))\land(\mu_i(\phi)=\mu_i(\psi))])\leftrightarrow\Box_i(\psi\leftrightarrow\phi), \\
\text{(p4)} \quad & \Box_i \varphi \leftrightarrow ( \mu_i(\varphi)=1 ).
\end{align*}
	That P4 implies  p4 follows immediately by replacing $\psi$ with $\top$. Now, let us prove  that p4 implies P4 in classical logic.
	We first show that p4 implies 
	$\Box_i(\psi\leftrightarrow\phi) \rightarrow ((\Box_i(\phi\to\psi))\land(\mu_i(\phi)=\mu_i(\psi))])$ as follows.

\begin{align*}
 \Box_i(\psi\leftrightarrow\phi) 
 \Leftrightarrow \quad &
 \mu_i(\psi\leftrightarrow\phi)=1 
\tag{Axiom p4}
\\
 \Leftrightarrow \quad  & 
\mu_i((\lnot\psi\lor\phi)\land(\lnot\phi\lor\psi))=1 
\tag{classical logic equivalence}
\end{align*}
Notice that 
\begin{align}
(\lnot\psi\lor\phi)\land(\lnot\phi\lor\psi) \quad \rightarrow \quad (\lnot\psi\lor\phi)
\label{proof:axiom:p4:1}
\\
(\lnot\psi\lor\phi)\land(\lnot\phi\lor\psi) \quad \rightarrow \quad (\lnot\phi\lor\psi)
\label{proof:axiom:p4:2}
\end{align}
Hence, using the rule 
Sub$_\mu$ : if $\vdash A \rightarrow B$, then $\vdash \mu_i (A) \leq \mu_i(B)$, the equality $\mu_i((\lnot\psi\lor\phi)\land(\lnot\phi\lor\psi))=1$ and the equations  \eqref{proof:axiom:p4:1} and \eqref{proof:axiom:p4:2}, one can prove that
$$\left(\mu_i(\lnot\psi\lor\phi)=1\right)\land\left(\mu_i(\lnot\phi\lor\psi)=1\right) $$
Using p4, we can derive that $\Box_i (\phi \rightarrow \phi)$.
It remains to derive that $\mu_i(\psi)=\mu_i(\phi)$ as follows.
\begin{align*}
 \quad & 
\left(\mu_i(\lnot\psi\lor\phi)=1\right)\land\left(\mu_i(\lnot\phi\lor\psi)=1\right) 
\\
\Rightarrow \quad & 
\left(\mu_i(\lnot(\lnot\psi\lor\phi))=0\right)\land\left(\mu_i(\lnot\phi\lor\psi)=1\right) 
\tag{$\mu_i(\varphi)=1-\mu_i(\lnot \varphi)$ in PDEL, see \autoref{table:PDEL}}
\\
\Rightarrow \quad  & 
\left(\mu_i(\psi\land\lnot\phi)=0\right)\land\left(\mu_i(\lnot\phi\lor\psi)=1\right) 
\tag{De Morgan laws}
\\
\Rightarrow \quad  & 
\left(\mu_i(\psi\land\lnot\phi)=0\right)\land\left(\mu_i(\lnot\phi)+\mu_i(\psi)-\mu_i(\psi\land\lnot\phi)=1\right) 
\tag{$\mu_i(\varphi) + \mu_i(\psi) = \mu_i(\varphi \vee \psi) + \mu_i(\varphi \wedge \psi) $ in PDEL, see \autoref{table:PDEL}}
\\
\Rightarrow \quad  & 
\mu_i(\lnot\phi)+\mu_i(\psi)=1
\\
\Rightarrow \quad  & 
\mu_i(\lnot\phi)+\mu_i(\psi)=\mu_i(\phi)+\mu_i(\lnot\phi) 
\tag{$\mu_i(\phi)+\mu_i(\lnot \phi)=1$ in PDEL, by axioms p2 and p3}
\\
\Rightarrow \quad  & \mu_i(\psi)=\mu_i(\phi).
\end{align*}

Now, we show that p4 implies 
	$ ((\Box_i(\phi\to\psi))\land(\mu_i(\phi)=\mu_i(\psi))])\rightarrow 
	 \Box_i(\psi\leftrightarrow\phi) $ as follows.
\begin{align*}
 &  \Box_i(\phi\to\psi)\land\left(\mu_i(\phi)=\mu_i(\psi)\right)\\ 
\Rightarrow \quad  & \left(\mu_i(\lnot\phi\lor\psi)=1\right)\land\left(\mu_i(\phi)=\mu_i(\psi)\right)
\tag{Axiom p4}
\\
\Rightarrow \quad   & \left(\mu_i(\lnot\phi)+\mu_i(\psi)-\mu_i(\lnot\phi\land\psi)=1\right)\land\left(\mu_i(\phi)=\mu_i(\psi)\right)
\tag{$\mu_i(\varphi) + \mu_i(\psi) = \mu_i(\varphi \vee \psi) + \mu_i(\varphi \wedge \psi) $ in PDEL}
\\
\Rightarrow \quad   & \left(\mu_i(\lnot\phi)+\mu_i(\psi)-\mu_i(\lnot\phi\land\psi)=1\right)\land\left(\mu_i(\lnot\phi)=\mu_i(\lnot\psi)\right)
\tag{$\mu_i(\phi)+\mu_i(\lnot \phi)=1$ in PDEL}
\\
\Rightarrow \quad   & \left(\mu_i(\lnot\psi)+\mu_i(\psi)-\mu_i(\lnot\phi\land\psi)=1\right)
\\
\Rightarrow \quad   & \left(1-\mu_i(\lnot\phi\land\psi)=1\right)
\tag{$\mu_i(\phi)+\mu_i(\lnot \phi)=1$ in PDEL}
\\
\Rightarrow \quad   &
\left(\mu_i(\lnot\phi\land\psi)=0\right)
\tag{$\mu_i(\phi)+\mu_i(\lnot \phi)=1$ in PDEL}
\\
\Rightarrow \quad   &
\left(\mu_i(\phi\lor\lnot\psi)=1\right)
\tag{$\mu_i(\phi)+\mu_i(\lnot \phi)=1$ in PDEL}
\\
\Rightarrow \quad   &
\left(\mu_i(\psi\to\phi)=1\right)
\\
\Rightarrow \quad   &
\Box_i(\psi\to\phi)
\tag{Axiom p4}
\end{align*}

This concludes the proof that in classical logic p4 and P4 are equivalent. Finally, consider the  Heyting algebra $\mathbb{H}$ in Figure \ref{fig:countermodelforequivalence}
with 
\begin{align*}
\lozenge x :=
\left\{ \begin{array}{ll}
	\top & \text{if } x\neq\bot,\\
	\bot \quad\quad & \text{if } x=\bot
\end{array} 
\right.  &\qquad &\Box x :=
\left\{ \begin{array}{ll}
	\bot & \text{if } x\neq\top,\\
	\top \quad\quad & \text{if } x=\top
\end{array} 
\right.
\end{align*}
and $\mu(\bot)=0$, $\mu(a)=0.5$, $\mu(b)=0.5$ and $\mu(\top)=1$. 

\begin{figure}[h]
\begin{center}
	\begin{tikzpicture}[-,>=stealth',shorten >=1pt,thick]
	\SetGraphUnit{3} 
	\tikzset{VertexStyle/.style = {draw=white,circle,thick,
			minimum size=.1cm,
			font=\Large\bfseries},thick}
	\Vertex[L={\footnotesize $\top$},x=0,y=0]{A};
	\Vertex[L={\footnotesize $b$},x=0,y=-1]{B};
	\Vertex[L={\footnotesize $a$},x=0,y=-2]{D};
	\Vertex[L={\footnotesize $\bot$},x=0,y=-3]{F};
	\Edge(A)(B);
	\Edge(B)(D);
	\Edge(D)(F);
	\end{tikzpicture} 
\end{center}
\caption{Heyting algebra $\mathbb{H}$}
\label{fig:countermodelforequivalence}
\end{figure}

It is easy to see that the  Heyting algebra in \autoref{fig:countermodelforequivalence} satisfies all axioms of IPDEL except for $P4$ and it satisfies p4. It falsifies P4 because $(\Box(a\to b))\land(\mu(a)=\mu(b))=\top$, while $\Box(a\leftrightarrow b)=\bot$. 
\end{proof}

\begin{theorem}[Soundness]
\label{lem:soundness-IPDEL}
The axiomatization for IPDEL given in Table \ref{table:IPDEL} is sound w.r.t.\ APE-models.
\end{theorem}


\begin{theorem}[Completeness]
\label{th:completeness}
	The axiomatisation for IPDEL given in Table \ref{table:IPDEL} is weakly complete w.r.t.\ APE-models.
\end{theorem}

The proof of soundness is given in Appendix \ref{Appendix:soundness} and the proof of completeness is given in Appendix \ref{Appendix:completeness}.

\section{Relational semantics}
\label{sec:relsem}

In this section, we introduce the finite relational semantics of IPDEL, as the dual structures of epistemic Heyting algebras within the duality between monadic Heyting algebras and MIPC-frames (cf.\ \cite{bezhanishvili1999varieties,KP13}). Specifically, we specialize this duality\footnote{Because we consider only finite algebras and finite relational structures we can dispense with the topology.} by identifying the condition correspon\-ding to axiom E. Moreover, we present a dual correspondence between the probability distributions on intuitionistic Kripke frames and measures on epistemic Heyting algebras. This correspondence appears in \cite{flaminio2017states} in the context of finite GBL-algebras. 
Furthermore, we generalize the model-theoretic constructions presented in \Cref{ssec:epist:update:classic} for the Boolean setting and show that they dually correspond to the constructions presented in \Cref{sec:ha}.
Finally, notice that these results readily imply the completeness and the finite model property of IPDEL with respect to this class of relational structures via the algebraic completeness presented in  \Cref{Appendix:completeness}.

\paragraph{Structure of this section.} In Section \ref{ssec:dualityframes}, we introduce the \emph{epistemic intuitionistic Kripke frames} as the class of relational structures dually corresponding to epistemic Heyting algebras. In Section \ref{ssec:dualityprob}, we introduce the probability distributions associated with any agent $i$ and prove that each dually corresponds to an $i$-measure. In Section \ref{ssec:dualityinterm}, we introduce the construction of intermediate epistemic intuitionistic Kripke frames and prove that it dually corresponds to the construction of intermediate epistemic Heyting algebras presented in Section \ref{ssec:intermediate}. In Section \ref{ssec:dualityupdate}, we define the dual construction to the pseudo-quotient defined in \ref{ssec: abstract charact i minimal els pseudo quotient}. Finally, in Section \ref{ssec:relsemipdel} we use this construction to define the interpretation of IPDEL-formulas on IPDEL-models.

\subsection{Epistemic Heyting algebras and epistemic intuitionistic Kripke frames}
\label{ssec:dualityframes}
We first recall the definition on the objects of the duality between finite monadic Heyting algebras and MIPC-frames\footnote{A complete exposition can be found in \cite{bezhanishvili1999varieties}.}. We then identify the MIPC-frames corresponding to epistemic intuitionistic Kripke frames and show that their dual algebras exactly correspond to epistemic Heyting algebras.

\begin{definition}[Finite MIPC-frames] A \emph{finite MIPC-frame} is a tuple
$$\mb{F}= \langle S,\leq,(R_i)_{i\in\Ag} \rangle $$
such that 
 $(S,\leq)$ is a finite poset and  each $R_i$ is an equivalence relation on $S$ such that
 $$(R_i\circ\geq) \ \subseteq \ ({\geq}\circ  R_i)\qquad\qquad R_i \ =\ ({\geq}\circ  R_i)\cap(R_i\circ\leq).$$ 
\end{definition}

\begin{notation} For any poset $(S,\leq)$ and any set $X\subseteq S$, we define the downset and the upset generated by $X$ as
$$X \!\! \downarrow \ =\ \{w\in S \mid \exists v\in X, w\leq v\}\qquad\text{ and }\qquad 
X \!\! \uparrow \ =\ \{w\in S\mid \exists v\in X w\geq v\}$$ respectively. 
We let $\mathcal{P}^\downarrow(S)=\{X\!\! \downarrow \ \mid X\subseteq S\}$ be the set of all downsets of $S$.
\end{notation}

\begin{definition}[Complex algebra of a finite MIPC-frame]
\label{def:MIPC:comp:alg}
For any finite MIPC-frame $\mb{F}= \langle S,\leq,(R_i)_{i\in\Ag} \rangle $, let its \emph{complex algebra} be:  $$\mb{F}^+=(\mathcal{P}^\downarrow(S),\land,\lor,\to,(\lozenge_i)_{i\in\Ag},(\Box_i)_{i\in\Ag},\bot)$$ where
\begin{align}
	X\land Y  & :=X\cap Y,
\label{MIPC:and}
\\
	X\lor Y & := X\cup Y,
\label{MIPC:or}
\\
	X\to Y  & := S\setminus ((X\cap (S\setminus Y))\uparrow),
\label{MIPC:imply}
\\
	\lozenge_iX & :=R_i^{-1}[X],
\label{MIPC:diamond}
\\
	\Box_i X &  :=S\setminus ({\geq}\circ  R_i)^{-1}[S\setminus X],
\label{MIPC:box}
\\
\bot &  := \emptyset.
\label{MIPC:bot}
\end{align}
We also use the standard notation
\begin{align}
\top & := S ,
\label{MIPC:top}
\\
\neg X & := X \rightarrow \bot =  S \setminus X\!\! \uparrow.
\label{MIPC:neg}
\end{align}
\end{definition}

 
\begin{definition}[MIPC frame associated to a finite monadic Heyting algebra]
\label{def:MIPC:ass:frame}
For any finite monadic Heyting algebra\footnote{see \autoref{def: epist algebra}, page \pageref{def: epist algebra}.} $\mathbb{A}= ( \mb{L}, (\lozenge_i)_{i\in \Ag} , (\Box_i)_{i\in \Ag} )$, let its \emph{associated frame} be: 
$$\mathbb{A}_+= \langle \mathcal{J}(\mathbb{A}),\leq,(R_i)_{i\in\Ag} \rangle $$ 
where
\begin{itemize}
	\item $\mathcal{J}(\mathbb{A})$ is the set of join-irreducible elements of $\mathbb{A}$;
	\item $\leq \ \subseteq  \mathcal{J}(\mathbb{A}) \times \mathcal{J}(\mathbb{A})$ is the order inherited from $\mathbb{A}$, i.e.~$j \leq j'$ iff $j \leq_\mathbb{A} j'$ for all $j,j' \in \mathcal{J}(\mathbb{A})$;
	\item  $R_i \ \subseteq  \mathcal{J}(\mathbb{A}) \times \mathcal{J}(\mathbb{A})$
	is defined as follows: 
	$jR_i j'$ if and only if $\lozenge_i j=\lozenge_i j'$
	for all $j,j'\in \mathcal{J}(\mathbb{A})$ and every $i\in \Ag$.
\end{itemize}
\end{definition}

The following lemma is  stated in \cite[Fact 20,Proposition 21]{KP13} and \cite{bezhanishvili1999varieties}:
\begin{lemma}\label{duality:basiclemma1}
	If $\mb{F}$ is a finite MIPC-frame, then $\mb{F}^+$ is a finite monadic Heyting algebra. If $\mathbb{A}$ is a finite monadic Heyting algebra then $\mathbb{A}_+$ is a finite MIPC-frame. Furthermore $(\mb{F}^+)_+\cong \mb{F}$ and $(\mathbb{A}_+)^+\cong\mathbb{A}$.
\end{lemma}

\begin{notation} 
\label{notation:eta}
Let  $\eta:\bbA\rightarrow (\bbA_{+})^+$ and $\epsilon: \mb{F} \rightarrow ( \mb{F}^+)_{+}$ denote the natural isomorphisms inherited from the object dualities $(\mathbb{A}_+)^+\cong\mathbb{A}$ and $(\mb{F}^+)_+\cong \mb{F}$. (see \cite{DaveyPriestley2002} for more details on $\eta$ and $\epsilon$.)
\end{notation}
	
	

\begin{definition}[\EiKf] 
\label{def:epist:intui;kripke:frame}
An \emph{\eiKf} is a finite $MIPC$-frame $\mb{F}= \langle S,\leq,(R_i)_{i\in\Ag} \rangle $
such that, for every $i \in \Ag$, the equivalence relation $R_i$ is upwards and downwards closed w.r.t.\ the order relation $\leq$. 
\end{definition}

The following lemma characterises the dual spaces of epistemic Heyting algebras\footnote{see \autoref{def:epist-Heyting-algebra}, page \pageref{def:epist-Heyting-algebra}.}:

\begin{lemma}\label{duality:keylemma}
	If $\mb{A}$ is an epistemic Heyting algebra, then $\mb{A}_+$ is an \eiKf . If $\mb{F}$ is an \eiKf,  then $\mb{F}^+$ is an epistemic Heyting algebra.
\end{lemma}  
\begin{proof}
	Since, by definition, all epistemic Heyting algebras are finite monadic Heyting algebras, it follows from \autoref{duality:basiclemma1} that their dual spaces are  finite MIPC-frames. 
	
	\medskip
Let $\mathbb{A}=( \mb{L}, (\lozenge_i)_{i\in \Ag} , (\Box_i)_{i\in \Ag})$  and $\mb{A}_+=\langle S,\leq,(R_i)_{i\in\Ag} \rangle$. By \autoref{duality:basiclemma1}, it is enough to show that the equivalence relations $R_i$ are upwards and downwards closed. Since $R_i$ is symmetric it is enough to show that $R_i$ is upwards closed.
%

\smallskip

	Assume, for contradiction, that the equivalence relation $R_i$ is not upwards closed for some $i\in \Ag$. Hence, there is at least one equivalence class defined by the relation $R_i$ that is not upwards closed.
	Since the empty set is upwards and downwards closed, this equivalence class is non-empty.
	Let $w\in S$ be an element of that class, 
	let $v\in S$ be such that $v\geq w$ and $v\notin R_i[w]$, and let $a$ be the element of the dual algebra corresponding to the downset generated by $w$. Then $\lozenge_ia=R_i^{-1}[w{\downarrow}]$. 
	
	First, let us show that $v\notin R_i^{-1}[w{\downarrow}]$. Heading towards a contradiction, let us assume that  $v\in R_i^{-1}[w{\downarrow}]$. This means that there exists $z\in S$ such that $z\leq w$ and $(v,z)\in R_i$, therefore $(v,w)\in (R_i\circ \leq)$. Furthermore, we have that $(v,w)\in({\geq}\circ  R_i)$, because $(w,w)\in R_i$ and $v\geq w$. Since $R_i=({\geq}\circ  R_i)\cap(R_i\circ\leq)$, we deduce that $(v,w)\in R_i$, which is a contradiction. 
	This proves that $v\notin R_i^{-1}[w{\downarrow}]$. 
	
	From \eqref{MIPC:neg}, we have that $\lnot\lozenge_ia=S\setminus((R^{-1}[w{\downarrow}]){\uparrow})$. By assumption, $w \leq v$, hence $v\in (R^{-1}[w{\downarrow}]){\uparrow}$ and  $v\notin\lnot\lozenge_ia$. Hence $v\notin\lozenge_ia\lor\lnot\lozenge_ia$, and therefore	 axiom \ref{proof:axiom:epist-alg:boolean} does not hold,  contradicting the assumption that $\mb{A}$ is an epistemic Heyting algebra. Hence, $R_i$ is upwards closed.

\medskip

	As to the second part of the statement, let 
	 $\mb{F}= \langle S,\leq,(R_i)_{i\in\Ag} \rangle $  and  $\mathbb{F}^+=( \mb{L}, (\lozenge_i)_{i\in \Ag} , (\Box_i)_{i\in \Ag})$. By \autoref{duality:basiclemma1}, it remains to prove that $\mb{F}^+$ satisfies axiom \ref{axiom:epist-alg:boolean} (i.e.\ $\lozenge_ia\lor\lnot\lozenge_ia=\top$) for every $i\in\Ag$. Since $R_i$ is upwards closed for every $i\in\Ag$, it follows that $(R_i^{-1}[X{\downarrow}]){\uparrow}=R_i^{-1}[X{\downarrow}]$. Therefore $R_i^{-1}[X{\downarrow}]\cup (S\setminus((R_i^{-1}[X{\downarrow}]){\uparrow})=S$, i.e.\ axiom $E$ holds in $\mathbb{F}^+$, as required. 	
\end{proof}

\begin{definition}[\EiKm]
\label{def:epit:int:kripke:model}
 An \emph{\eiKm} is a tuple $\mb{M}= \langle \mb{F}, V \rangle$ such that $\mb{F}$ is an 
\eiKf \
 and $V:\mathsf{AtProp}\to\mathcal{P}^\downarrow(S)$.
 \end{definition}

\begin{corollary}
\label{cor:eiKf:i:minimal:eq:cell}
	For any \eiKf \ $\mb{F}= \langle S,\leq,(R_i)_{i\in\Ag} \rangle $, the $i$-minimal elements of $\mb{F}^+$ are exactly the equivalence cells of $R_i$.
\end{corollary}

\begin{proof}
Recall (cf.\ \autoref{def:i-minimal}) that an element  $a\in \mb{F}^+$ is  $i$-minimal if
	\begin{enumerate}
		\item \label{proof:i-minimal:item:bot} $a\neq \bot$,
		\item \label{proof:i-minimal:item:fixpoint} $\lozenge_i a = a $ and
		\item \label{proof:i-minimal:item:minimal} if $b \in \mb{F}^+$, $b < a$ and $\lozenge_i b = b $, then $b = \bot$.
	\end{enumerate}

Let $X\subseteq S$ be an $R_i$-equivalence cell of $\mb{F}$. Hence,
$X$ is a non-empty set, which proves item \eqref{proof:i-minimal:item:bot}. Moreover, by definition of $\lozenge$ (see \eqref{MIPC:diamond}), we have $\lozenge_i X := R_i^{-1} [X] = X$, which proves item \eqref{proof:i-minimal:item:fixpoint}. Finally, if $\varnothing\neq Y\subseteq X$ then $\lozenge_i Y= R_i^{-1}[Y]=X$, which proves item \eqref{proof:i-minimal:item:minimal}. 

\medskip

  Let $a\in\mb{F}^+=\mathcal{P}^\downarrow(S)$ be an $i$-minimal element. To prove that $a$ is an equivalence cell of $R_i$, we need to show that  $a = R_i^{-1}[w]$ for some $w\in S$.
  By item \ref{proof:i-minimal:item:bot}, $a \neq \varnothing$; hence, there exists $w\in a$.
  Recall that $\lozenge_iX  :=R_i^{-1}[X]$ (see
\eqref{MIPC:diamond}). By item \ref{proof:i-minimal:item:fixpoint}, $a = \lozenge_i a  =R_i^{-1}[a]$; hence, $a$ is the union of equivalence cells. By item \ref{proof:i-minimal:item:minimal}, the only equivalence cell or union of equivalence cells  smaller than $a$ is the empty set; hence, $a$ contains exactly one equivalence cell.
\end{proof}


\begin{corollary}
\label{cor:i:minimal:below}
	For every \eiKf \ $\mb{F}= \langle S,\leq,(R_i)_{i\in\Ag} \rangle $, and
	every join-prime element $j$ of $\mb{F}^+$, there exists some $i$-minimal element $a$ such that $j\leq a$.
\end{corollary}
\begin{proof}
If $j$ is a join-prime element of $\mb{F}^+$, then $j=w{\downarrow}$ for some $w \in S$. Let $a=R_i^{-1}[w]$, which is an $i$-minimal element by \Cref{cor:eiKf:i:minimal:eq:cell}. Since the equivalence relation $R_i$ is upwards and downwards closed  for every $i\in \Ag$, we have $w{\downarrow} \subseteq R_i^{-1}[w]$, as required.
\end{proof}

\subsection{Epistemic intuitionistic Kripke frames and probabilities}
\label{ssec:dualityprob}
In this section, we define $i$-probability distributions. Applying ideas of \cite{flaminio2017states} to the setting of epistemic Heyting algebras, we define a correspondence between maps from epistemic intuitionistic Kripke frames to non-negative reals and premeasures on epistemic Heyting algebras (see \Cref{def:measures}).

\begin{definition}[$i$-probability distribution]
\label{def:i:prob:distr}
Let $\mb{F}=\langle S,\leq, (R_i)_{i\in\Ag} \rangle$ be an \eiKf. An \emph{$i$-probability distribution} over $S$ is a map $P_i:S\to\ ]0,1]$ such that  $\sum_{w\in X}P_i(w)=1$ for each equivalence cell $X$ of $R_i$.
\end{definition}

\begin{lemma}
\label{lem:rel:sem:mass:measure}
For any \eiKf \  $\mb{F}=\langle S,\leq,(R_i)_{i\in\Ag}\rangle$, any
map $f:S\to\mathbb{R}^+$ defines the $i$-premeasure  $f^+$ on $\mb{F}^+$ as follows:
\begin{align}
\label{eq:masstomeasure}
f^+ \ : \ \mathsf{Min}_i(\mb{A}){\downarrow} & \rightarrow \mb{R}^+\\
 a & \mapsto \sum_{x\in a}P_i(x).
 \notag
\end{align}
Moreover, if $f$ is an $i$-probability distribution, then the map $f^+$ is an $i$-measure (see \autoref{def:measures}) on $\mb{F}^+$.
\end{lemma}

\begin{proof}
This result directly follows from the definition of $f^+$ and \autoref{cor:eiKf:i:minimal:eq:cell}.
\end{proof}

\begin{definition}
\label{def:mui:minus}
For any finite monadic Heyting algebra $\mathbb{A}= ( \mb{L}, (\lozenge_i)_{i\in \Ag} , (\Box_i)_{i\in \Ag} )$
and any $i$-premeasure $\mu_i$ on $\mb{A}$, let
\begin{align}
(\mu_i)_{+} \ : \ \mathcal{J}(\mb{A}) & \rightarrow \mb{R}^+
\\
b & \mapsto  \mu_i(b)-\mu_i\left(\bigvee_{c<b}c\right) .
\notag
\end{align}

\end{definition}

It follows from the monotonicity of $\mu_i$ that $(\mu_i)_{+}$  is well-defined.

\begin{lemma}\label{theo:finiteduality1}
	Let $\bbA$ be an epistemic Heyting algebra equipped with an $i$-premeasure $\mu_i$. Let the map $\eta:\bbA\rightarrow (\bbA_{+})^+$ be the natural isomorphism (see \autoref{notation:eta}). 
	Then, $((\mu_i)_{+})^+(\eta(a))=\mu_i(a)$ for every $a\in\bbA$.
\end{lemma}
\begin{proof}
Notice that, by definition, 
\begin{align}
((\mu_i)_{+})^+ \circ \eta \ : \ \mathsf{Min}_i(\mb{A}){\downarrow} & \rightarrow \mb{R}^+
\notag
\\
b & \mapsto \sum_{x\in b}(\mu_i)_{+}(x) 
= \sum_{x\in b} \left( \mu_i(x)-\mu_i\left(\bigvee_{c<x}c\right) \right).
\notag
\end{align}
	Since $\bbA$ is a finite poset, we can define the \emph{height}  of its elements as follows: for every $a \in \bbA$,
	$$ \h(a) := 
	\begin{cases} 
	0 & \text{if } a = \bot\\
	\max\{\h(b)\mid b<a\}+1 & \text{otherwise.}
	\end{cases}$$ 
Notice that the only element of height 0 is $\bot$.
 The proof will proceed by induction on the height of the elements of $\bbA$ below the $i$-minimal elements.

As to the base case, it is immediate to see that $((\mu_i)_{+})^+(\eta(\bot))=((\mu_i)_{+})^+(\varnothing)=\mu_i(\bot)=0$.
	
As to the induction step, assume that $\mu_i(a)=((\mu_i)_{+})^+(\eta(a))$ for all $a \in \mathsf{Min}_i(\mb{A})$ such that $\h(a)\leq n$. Now, let $b$ be such that $\h(b)=n+1$. 
	If $b$ is a join prime element of $\bbA$, 
	then $\eta(b)=b{\downarrow}$ and by definition $\left(\bigvee_{c<b}c\right) <b$. 
	This implies that $\h\left(\bigvee_{c<b}c\right)<\h(b)$.
	Hence, by  induction hypothesis, 
	$$\mu_i\left(\bigvee_{c<b}c\right)=((\mu_i)_{+})^+\left(\eta\left(\bigvee_{c<b}c\right)\right)=((\mu_i)_{+})^+\left(b{\downarrow}\setminus\{b\}\right).$$ 
	Therefore, 
	\begin{align*}
	((\mu_i)_{+})^+(b{\downarrow})
	&=\sum_{x\in b\downarrow}((\mu_i)_{+})(x)
	\\
	&=((\mu_i)_{+})(b)+\sum_{x\in b\downarrow\setminus\{b\}}((\mu_i)_{+})(x)
	\\
	& =((\mu_i)_{+})(b)+((\mu_i)_{+})^+(b\downarrow\setminus\{b\})\\
	& =\mu_i(b)-\mu_i\left(\bigvee_{c<b}c\right)+((\mu_i)_{+})^+\left(\eta\left(\bigvee_{c<b}c\right)\right)
	\\
	& =\mu_i(b). \tag{by induction hypothesis}
	\end{align*}		
	
	If $b$ is not a join prime element then it can be written as the union of elements strictly below it. Since both $\mu_i$ and $((\mu_i)_{+})^+$ satisfy condition 3 of Definition \ref{def:measures} and have the same values on elements of height strictly smaller than $n+1$, it follows that $\mu_i(b)=((\mu_i)_{+})^+(\eta(b))$.\end{proof}

\begin{corollary}
 Let $\bbA$ be an epistemic Heyting algebra equiped with an $i$-measure $\mu_i : \mathsf{Min}_i(\mb{A}){\downarrow}  \rightarrow \mb{R}^+$.
Then the map 
\begin{align}\label{eq:measuretomass}
(\mu_i)_{+} \ : \ \mathcal{J}(\mathbb{A}) & \rightarrow \ ]0,1] \\
 a & \mapsto \mu_i(a)-\mu_i \left(\bigvee_{b<a}b \right)
 \notag
\end{align}
is an  $i$-probability distribution over $\bbA_{+}$.
\end{corollary}
\begin{proof}
The map $(\mu_i)_{+}$ is well-defined.
Indeed, $(\mu_i)_{+}(b)$ is strictly positive for any $b \in \mathcal{J}(\mathbb{A}) $, because $\mu_i$ is strictly monotone (see \autoref{def:measures} \autoref{def:epAlg:two:nonzero}) and
$(\mu_i)_{+}(b) \leq 1$, because there exists an $i$-minimal element $a$ such that $b \leq a $ (see \autoref{cor:i:minimal:below}) and because $\mu_i(a)=1$ (see \autoref{def:measures} \autoref{def:epAlg:two:fixedpoints}).
	  \autoref{theo:finiteduality1} implies that $1=\mu_i(a)=((\mu_i)_{+})^+(a)=\sum_{x\in X}(\mu_i)_{+}(x)$ for every $i$-minimal element $a$, which shows that  $(\mu_i)_{+}$ is an $i$-probability distribution over $\mb{A}_{+}$, as required.
\end{proof}

\begin{lemma}\label{theo:finiteduality2}
	Let $\mb{F}$ be an \eiKf \ equipped with a probability distribution $P_i$. Let the map $\epsilon: \mb{F} \rightarrow ( \mb{F}^+)_{+}$ be the natural isomorphism (see \autoref{notation:eta}). Then $((P_i)^+)_{+}(\epsilon(w))=P_i(w)$  for every $w\in \mb{F}$.
\end{lemma}
\begin{proof}
	For every join prime element $w{\downarrow}$ of $\mb{F}^+$, we have that $v\in w{\downarrow}$ if and only if $v\leq w$. Thus we obtain: 
	$$((P_i)^+)_{+}(\epsilon (w))=(P_i)^+(w{\downarrow})-(P_i)^+\left(\bigvee_{b<w{\downarrow}}b\right)=\sum_{v\leq w}P_i(v)-\sum_{v<w}P_i(v)=P_i(w).$$
\end{proof}

\subsection{Dualizing the product updates of APE structures}
\label{ssec:dualityinterm}
In this section, we introduce the generalization of the construction of the intermediate structure presented in Section \ref{ssec:epist:update:classic}, and show that it dualizes  the intermediate construction on algebras presented in Section \ref{ssec:intermediate}.

\begin{definition}[Intermediate intuitionistic structure]
\label{def:inter:intui:struct}
For any   \eiKm \ $\mb{M}=\langle S,\leq,(R_i)_{i\in\Ag},\val{\cdot} \rangle$ and any
intuitionistic probabilistic event structure  
$\mathcal{E}=(E, (\sim_i)_{i\in\Ag}, (P_i)_{i\in \Ag}, \rmPhi, \pre, \sub)$
over $\mathcal{L}$ 
 (see \autoref{def:intuitionistic-proba-epist-event-struct}), let the intermediate intuitionistic structure of $\mb{M}$ and $\mc{E}$ be the tuple:

$${\coprod}_\mathcal{E}\mb{M}:=\langle \coprod_{|E|} S,\leq^{\coprod},(R^{\coprod}_i)_{i\in\Ag},\val{\cdot}_{\coprod}\rangle$$ 
where 

\begin{itemize}
\item
$\coprod_{|E|} S\cong S\times E$ is the $|E|$-fold coproduct of $S$, 
\item the order relation $\leq^{\coprod}$ on $\coprod_{|E|} S$ is defined  as follows:
$$(s,e) \leq^{\coprod}_i (s',e')\ \quad \mbox{ iff }
\quad 
 s \leq_i s'\mbox{ and }e = e',$$
\item 
 each binary relation $R_i^{\coprod}$ on $\coprod_{|E|} S$ is defined  as follows:
 $$(s,e) R^{\coprod}_i (s',e')\ \quad \mbox{ iff }
\quad 
 s R_i s'\mbox{ and }e \sim_i e',$$
\item and  the valuation $\val{\cdot}_{\coprod} : \mathsf{AtProp} \rightarrow \P S$ is defined by
$$ \val{p}_{\coprod} : = \left\{ (s,e)  \mid s\in \val{p}_{\mb{M}} \right\} = \val{p}_{\mb{M}}\times E $$ for every $p\in \mathsf{AtProp}$.
\end{itemize}
For any \eiKm \ $\mb{M}=\langle\mb{F},\val{\cdot}\rangle$,
let $${\coprod}_\mathcal{E}\mb{F}:=\langle\coprod_{|E|} S,\leq^{\coprod},(R^{\coprod}_i)_{i\in\Ag}\rangle.$$

\end{definition}

\begin{lemma}\label{lem:duality1}
	Let $\mb{M}=\langle \mb{F},\val{\cdot}\rangle$ be an \eiKm. Then $({\coprod}_\mathcal{E}\mb{F},\val{\cdot}_{\coprod})$ is also an \eiKm. Moreover, $({\coprod}_\mathcal{E}\mb{F})^+={\prod}_{\mathbb{E}_{\mathcal{E}}}(\mb{F}^+)$.
\end{lemma}
\begin{proof}
	Given \cite[Fact 23]{KP13},  \autoref{duality:basiclemma1} and  \autoref{duality:keylemma}, it remains to show that each $R^{\coprod}_i$ is upwards closed. This follows from each $R_i$ being upwards closed and the definition of $\leq^{\coprod}$.
\end{proof}

\begin{definition}
\label{def:eiKm:i:prob:distr:inter:struct}
For any \eiKf\ $\mb{F}=\langle S,\leq,(R_i)_{i\in\Ag}\rangle$, 
 any \eiKm\ $\mb{M}=\langle\mb{F},\val{\cdot}\rangle$, any $i$-probability distribution $P_i$ on $\mb{F}$ (see \autoref{def:i:prob:distr}), and any intuitionistic probabilistic event structure $\mathcal{E}=(E, (\sim_i)_{i\in\Ag}, (P_i)_{i\in \Ag}, \rmPhi, \pre, \sub)$ over $\mathcal{L}$, let us define the function $P^{\coprod}_i \ : \ S \times E \rightarrow \ \mathbb{R}^+$ by recursion on the order $\leq^{\coprod}$ as follows:
\begin{align}
P^{\coprod}_i(w,e) 
= 
\left(
\sum_{\varphi\in\rmPhi}P_i(e)\cdot P^\varphi_i(w)\cdot \pre(e\mid \varphi)
\right) 
- \sum_{v<w}
P_i^{\coprod}(v,e)
\end{align}
where 
\begin{align}
P_i^\varphi(w)=\sum_{
\begin{smallmatrix}
v\leq w
\end{smallmatrix}
}
\Big\{
P_i(v)
\ \Big | \
 \mb{M},v\models\varphi\text{ and }\ \mb{M},v\nvDash\psi 
\ \text{ for all } \psi\in\mathrm{mb}(\val{\varphi})\Big\}.
\end{align}
Recall that $\mathrm{mb}(a)$ denotes the multiset of the $\prec$-maximal elements of $\bfPhi$ $\prec$-below $a$ (see \autoref{def:mua-mba}).
\end{definition}

\begin{lemma}\label{lem:duality:product}
For every $\mb{M}$, $P_i$ and $\mathcal{E}$ as in \autoref{def:eiKm:i:prob:distr:inter:struct} and for every $w\in S$,
	\begin{align}
	P^\varphi_i(w)=((P_i)^+)^{\val{\varphi}}(w\!\downarrow).
	\end{align}
\end{lemma}
\begin{proof}                                                                                                                                                                                                                                                                                                                                                                                                                                                                                                                                                                                                                                                                                                                                                                                                                                                                                                                                                                                                                                                                                                                                                                                                                                                                                                                                                                                                                                                                                                                                                                                                                                                                                                                                                                                                                                                                                                                                                                                                                                                                                                                                                                                                                                                                                                                                                                                                                                                                                                                                                                                                                                                                                                                                                                                                                                                                                                                                                                                                                                                                                                                                                                                                                                                                                                                                                                                                                                                                                                                                                                                                                                                                                                                                                                                                                                                                                                                                                                                                                                                                                                                                                                                                                                                                                                                                                                                                                                                                                                             
	\begin{align*}
		 P^\varphi_i(w) 
		& = \sum_{
\begin{smallmatrix}
v\leq w
\end{smallmatrix}
}
\Big\{
P_i(v)
\ \Big | \
 \mb{M},v\models\varphi\text{ and }\ \mb{M},v\nvDash\psi 
\ \text{ for all } \psi\in\mathrm{mb}(\val{\varphi})\Big\}\\
		& = 
		\sum_{\begin{smallmatrix}
v\leq w
\end{smallmatrix}} \Big\{
		P_i(v)
		\ \Big| \ M,v\models \varphi
		\Big\}
		- 
		\sum_{\begin{smallmatrix}
v\leq w
\end{smallmatrix}}
		\Big\{
		P_i(v)
		\ \Big| \ M,v\models \bigvee_{\val{\psi}\in\mathrm{mb}(\val{\varphi})}\psi
		\Big\} 
		\\
		& = (P_i)^+(w\!\downarrow\land \val{\varphi}) - (P_i)^+(w\!\downarrow\land \bigvee_{\val{\psi}\in\mathrm{mb}(\val{\varphi})}\val{\psi})
		\tag{see \autoref{lem:rel:sem:mass:measure} and equation \eqref{eq:masstomeasure}}
		\\
		& = ((P_i)^+)^{\val{\varphi}}(w\!\downarrow).
		\tag{see \autoref{def:mua-mba} and equation \eqref{eq:def:mua}}
	\end{align*}
\end{proof}

\begin{lemma}\label{cor:probforprod}
For every $\mb{M}$, $P_i$ and $\mathcal{E}$ as in \autoref{def:eiKm:i:prob:distr:inter:struct},	$$(P_i^{\coprod})^+=((P_i)^+)'.$$
\end{lemma}
\begin{proof}
Recall that 
\begin{align}
((P_i)^+)': \mathsf{Min}_i(\prod_{\mb{E}} \mb{A}){\downarrow} 
& \to \mb{R}^+
\tag{see \autoref{def: prod F over E}}
\\
f & \mapsto \sum_{e\in E}\sum_{a\in \rmPhi}P_i(e)\cdot\mu_i^a(f(e))\cdot \overline{\pre}(e\mid a).
\notag
\end{align}
 By  \autoref{theo:finiteduality1} and \autoref{theo:finiteduality2}, it is enough to show that $P_i^{\coprod}=(((P_i)^+)')_{+}$. We show this by induction on the well-founded order $\leq^{\coprod}$. 

The induction hypothesis, for an element $f \in \mathsf{Min}_i(\prod_{\mb{E}} \mb{A}){\downarrow}  $ is 
\begin{align*}
((P_i)^+)'(f)=
 (P_i^{\coprod})^+(f)=
 \sum_{v<w}P_i^{\coprod}(v,e).\tag{\texttt{IH}$_f$}
\end{align*}
The case where $f=\bot$ is trivially true.
Notice that the element $(w,e){\downarrow}$ corresponds to the map $g_{(w,e)} \ : \ E \rightarrow S $ such that $g_{(w,e)}(e)=w\!\downarrow$ and $g_{(w,e)}(e')=\varnothing$ for every $e'\neq e$. Hence, we have:
\begin{align*}
 ((P_i)^+)'(g_{(w,e)})
 & =\sum_{\varphi\in\rmPhi}P_i(e)\cdot (P_i^+)^{\val{\varphi}}(w\!\downarrow)\cdot \overline{\pre}(e\mid \val{\varphi})\\ 
 \tag{\autoref{lem:duality:product} and \eqref{eq:def:over:pre}} 
 & =\sum_{\varphi\in\rmPhi}P_i(e)\cdot P_i^\varphi(w)\cdot \pre(e\mid \varphi)
\end{align*}
 
Notice that 
\begin{align}
(((P_i)^+)')_{+}((w,e))=((P_i)^+)'(g_{(w,e)})-((P_i)^+)'(f)
\tag{see \autoref{def:mui:minus}}
\end{align}
with
 $f(e)=w{\downarrow}\setminus\{w\}$ and $f(e')=\varnothing$ for $e'\neq e$. 
 
Notice that $f < g$. Hence, by the induction hypothesis on $f$, we have 
 $$((P_i)^+)'(f)=
 (P_i^{\coprod})^+(f)=
 \sum_{v<w}P_i^{\coprod}(v,e).$$ 
 Hence, we get
 \begin{align*}
 (((P_i)^+)')_{+}((w,e))&=((P_i)^+)'(g_{(w,e)})-\sum_{v<w}P_i^{\coprod}(v,e)
 \\
 &=\sum_{\varphi\in\rmPhi}P_i(e)\cdot P_i^\varphi(w)\cdot \pre(e\mid \varphi)-\sum_{v<w}P_i^{\coprod}(v,e)
 \\
 &=P_i^{\coprod}((w,e)).
 \tag{see \autoref{def:eiKm:i:prob:distr:inter:struct}}
 \end{align*}
 
\end{proof}

\subsection{Dualizing the updated APE structures}
\label{ssec:dualityupdate}
In the present section, we introduce the generalization of the construction of the update model presented in Section \ref{ssec:epist:update:classic} and show that it dualizes the construction of the updated APE structure presented in Section \ref{ssec: abstract charact i minimal els pseudo quotient}.

\begin{definition}
For any \eiKf\ $\mb{F}=\langle S,\leq,(R_i)_{i\in\Ag}\rangle$, any \eiKm\ 
$\mb{M}=\langle\mb{F},\val{\cdot}\rangle$ and any intuitionistic probabilistic event structure
 $\mathcal{E}=(E, (\sim_i)_{i\in\Ag}, (P_i)_{i\in \Ag}, \rmPhi, \pre, \sub)$  over $\mathcal{L}$, let
\begin{align*}
pre : E  &\rightarrow \mathcal{L} \\
e & \mapsto \bigvee 
\left\{ \phi \in \rmPhi \mid \pre(e\mid\phi)\neq 0 \right\}.
\end{align*}
\end{definition}

\begin{definition}[Updated intuitionistic structure]
	\label{def:upd:intui:struct}
	For any \eiKm \ $\mb{M}=\langle S,\leq,(R_i)_{i\in\Ag},\val{\cdot} \rangle$ and any
	intuitionistic probabilistic event structure  
	$\mathcal{E}=(E, (\sim_i)_{i\in\Ag}, (P_i)_{i\in \Ag}, \rmPhi, \pre, \sub)$
	over $\mathcal{L}$ 
	(see \autoref{def:i:prob:distr}), let the updated intuitionistic structure of $\mb{M}$ and $\mc{E}$ be the tuple:
	
	$$\mb{M}^\mathcal{E}:=\langle S^\mathcal{E},\leq^{\mathcal{E}},(R^{\mathcal{E}}_i)_{i\in\Ag},\val{\cdot}^{\mathcal{E}}\rangle$$ 
	where 
	
	\begin{itemize}
		\item
		$S^\mathcal{E}=\{(w,e)\in \coprod_{|E|}S\mid \mathbb{M},w\models pre(e)\}$, 
		\item $\leq^\mathcal{E}=\leq^{\coprod}\ \cap\ (S^\mathcal{E}\times S^\mathcal{E})$,
		\item 
		  $R^\mathcal{E}_i=R^{\coprod}_i\ \cap\ (S^\mathcal{E}\times S^\mathcal{E})$ for each $i\in\Ag$,
		\item $\val{\cdot}_{\mathcal{E}} : \mathsf{AtProp} \rightarrow \P S$ is defined by
		$$ \val{p}^{\mathcal{E}} : = \left\{(w,e)\in S^\mathcal{E}\mid \mathbb{M},w\models\sub(e)(p)\right\}$$ for every $p\in \mathsf{AtProp}$.
	\end{itemize}
	For any \eiKm \ $\mb{M}=\langle\mb{F},\val{\cdot}\rangle$,
	let $$\mb{F}^\mathcal{E}:=\langle S^\mathcal{E},\leq^{\mathcal{E}},(R^{\mathcal{E}}_i)_{i\in\Ag}\rangle.$$
	
\end{definition}

\begin{lemma}
	 If $\mb{M}=\langle\mb{F},\val{\cdot}\rangle$ is an  \eiKm, then so is  $\mathbb{M}_{\mathcal{E}}$. Moreover, $(F^\mathcal{E})^+=(F^+)^{\mathbb{E}^{\mathcal{E}}}$.
\end{lemma}
\begin{proof}
	It follows from \cite[Definition 22,Fact 23]{KP13} and Lemma \ref{lem:duality1}.
\end{proof}

\begin{definition}
	\label{def:eiKm:i:prob:distr:upda:struct}
	For any \eiKf\ $\mb{F}=\langle S,\leq,(R_i)_{i\in\Ag}\rangle$, 
	any \eiKm\ $\mb{M}=\langle\mb{F},\val{\cdot}\rangle$, any $i$-probability distribution $P_i$ on $\mb{F}$ (see \autoref{def:i:prob:distr}), and any intuitionistic probabilistic event structure $\mathcal{E}=(E, (\sim_i)_{i\in\Ag}, (P_i)_{i\in \Ag}, \rmPhi, \pre, \sub)$ over $\mathcal{L}$, the updated $i$-probability distribution $P^{\mathcal{E}}_i \ : \ S^\mathcal{E} \rightarrow \ ]0,1]$  is defined as follows:
	\begin{align}
	P_i^\mathcal{E}(w,e):=\frac{P^{\coprod}_i(w,e)}{\sum\{P^{\coprod}_i(w',e')\mid (w',e')R^\mathcal{E}_i(w,e)\}}\end{align}
	where $P^{\coprod}_i$ is as for \autoref{def:eiKm:i:prob:distr:inter:struct}.
\end{definition}

\begin{lemma}
For every $\mb{M}$, $P_i$ and $\mathcal{E}$ as in \autoref{def:eiKm:i:prob:distr:upda:struct},	$$(P_i^{\mathcal{E}})^+=((P_i)^+)^{\mathbb{E}_\mathcal{E}}.$$
\end{lemma}
\begin{proof}
By \autoref{cor:eiKf:i:minimal:eq:cell} and \autoref{lem:duality1} the $i$-minimal elements of $(\mathbb{M}^\mathcal{E})^+$ are the  equivalence cells of $R_i$. Now, let $g\in(\mathbb{M}^\mathcal{E})^+$, $f$ the $i$-minimal element above $g$ and  $(w,e)\in g$. By \autoref{cor:probforprod} $\sum\{P^{\coprod}_i(w',e')\mid (w',e')R^\mathcal{E}_i(w,e)\}=(P_i^{\coprod})^+(f)$ and $\sum_{(w',e')\in g}P^{\coprod}_i(w',e')=(P_i^{\coprod})^+(g)$. Therefore:
\begin{align*}
((P_i)^+)^{\mathbb{E}_\mathcal{E}}(g) &=\frac{(P_i^{\coprod})^+(g)}{(P_i^{\coprod})^+(f)}\\&=\frac{\sum_{(w',e')\in g}P^{\coprod}_i(w',e')}{\sum\{P^{\coprod}_i(w',e')\mid (w',e')R^\mathcal{E}_i(w,e)\}}\\&=\sum_{(w',e')\in g}\frac{P^{\coprod}_i(w',e')}{\sum\{P^{\coprod}_i(w',e')\mid (w',e')R^\mathcal{E}_i(w,e)\}}\\&= \sum_{(w',e')\in g}P_i^{\mathcal{E}}(w,e)\\&=(P_i^{\mathcal{E}})^+(g).
\end{align*}
\end{proof}

\subsection{Relational semantics for IPDEL}
\label{ssec:relsemipdel}

\begin{definition}
	An IPDEL-model is a structure 
	$\mb{N} = \left\langle \mb{M}, (P_i)_{i\in \Ag}\right\rangle$ such that $\mb{M} = \left\langle S,\leq,(R_i)_{i\in\Ag}, \val{\cdot}\right\rangle$ is an epistemic intuitionistc Kripke model, and $P_i$ is a probability distribution over $S$ for every $i\in \mathsf{Ag}$. For every IPDEL-model $\mathbb{N}$ and every event structure $\mathcal{E}$, we let $\mathbb{N}^\mathcal{E}=\left\langle \mb{M}^\mathcal{E}, (P^\mathcal{E}_i)_{i\in \Ag}\right\rangle$ (cf.\ Definitions \ref{def:upd:intui:struct} and \ref{def:eiKm:i:prob:distr:upda:struct}).
\end{definition}

It can be verified straightforwardly that for every IPDEL-model $\mathbb{N}$ and every event structure $\mathcal{E}$, the structure $\mathbb{N}^\mathcal{E}$ is an IPDEL-model.

\begin{definition}[Semantics of IPDEL]
For every IPDEL-model
$\mb{N} = \left\langle \mb{M}, (P_i)_{i\in \Ag}\right\rangle$ where $\mb{M} = \left\langle S,\leq,(R_i)_{i\in\Ag}, \val{\cdot}\right\rangle$ the IPDEL-formulas are interpreted on $\mathbb{N}$ as follows:
\begin{align*}
\mb{N},s \models \bot & \textcolor{white}{\quad \text{iff} \quad} never
\\
\mb{N},s \models p & \quad \text{iff} \quad 
s \in \val{p}
\\
\mb{N},s \models \phi \wedge \psi & \quad \text{iff} \quad 
\mb{N},s \models \phi\quad \text{ and }\quad \mb{N},s \models \psi 
\\
\mb{N},s \models \phi \vee \psi & \quad \text{iff} \quad 
\mb{N},s \models \phi \quad \text{ or }\quad \mb{N},s \models \psi 
\\
\mb{N},s \models \phi \rightarrow \psi & \quad \text{iff} \quad 
\mb{N},s' \models \phi \quad\text{ implies }\quad \mb{N},s' \models \psi\text{ for every } s'\leq s 
\\
\mb{N},s \models \lozenge_i \phi  & \quad \text{iff} \quad \text{there exists } s' R_i s \text{ such that } 
\mb{N},s' \models \phi 
\\
\mb{N},s \models \Box_i \phi  & \quad \text{iff} \quad \mb{N},s' \models \phi \quad \text{ for all } s' ({\geq}\circ R_i) s 
\\
\mb{N},s \models \langle\mathcal{E}, e\rangle \phi  
& \quad \text{iff} \quad \mb{N},s \models pre(e)
\quad  
\text{ and } \quad \mb{N}^{\mathcal{E}}, (s,e) \models \phi 
\\
\mb{N},s \models [ \mathcal{E}, e ] \phi  
& \quad \text{iff} \quad 
\mb{N},s \models pre(e)
\quad  
\text{ implies } \quad \mb{N}^{\mathcal{E}}, (s,e) \models \phi
\\
\mb{N},s \models \left(\sum_{k = 1}^n\alpha_k · \mu_i(\varphi)\right) \geq \beta  
& \quad \text{iff} \quad \sum_{k=1}^{n} \alpha_k · (P_i)^+(\val{\varphi}\cap R_i[s]) \geq \beta.  
\end{align*} 
\end{definition}

Recalling that in epistemic intuitionistic Kripke frames, and hence on IPDEL-models, the relations $R_i$ are both upwards and downwards closed, this implies that the seventh clause in the definition above can be simplified as follows:

$$\mb{N},s \models \Box_i \phi   \quad \text{iff} \quad \mb{N},s' \models \phi \quad \text{ for all } s' R_i s. $$

\section{Case study: Decision-making under uncertainty}
\label{sec:ArtExample}
In the present section, we illustrate the relational semantic update process described in Section \ref{sec:relsem} by means of a case study that involves the assessment of the likelihood of a socially constructed event (a bankruptcy), taking place at some point in the future.

The focal feature of the case study is that this assessment depends to a greater extent  on  the actions, beliefs and expectations of the agents than on factual information.


In what follows, we first present the case study informally, and then we introduce a simplified formalization of the problem using probabilistic epistemic intuitionistic Kripke models and probabilistic intuitionistic epistemic event structures. 

\subsection{Informal presentation}
Around 1950, there was a small businessman $w$ in Amsterdam whose main business was to sell the products of foreign textile manufacturers to Dutch clothing firms. Like most small businessmen in Amsterdam at the time, he banked with the Amsterdamsche Bank (which later became the present ABN AMRO). 

One day, $w$ received an invitation to lunch with one of the directors of that bank. This invitation puzzled him a great deal, because he did not know this director personally, and a small businessman like him usually only dealt with bank employees at much lower levels.
However, he accepted the invitation and showed up for the lunch at the top floor of the bank's headquarters, in the city centre.

During the copious lunch, the bank director talked about all kinds of general subjects and asked $w$'s opinion about the economic climate in Amsterdam. Rather than being flattered, $w$ found it hard to imagine he was invited to provide opinions about matters the bank knew better than he. When the dessert was served, the banker mentioned aside some other matter the name of a certain Amsterdam firm $f$, which was an important client of $w$. This firm, the bank director said, was doing very well under the present solid leadership.

The small businessman realised that this must have been the point of the whole lunch. And if this large bank went to so much effort to increase the confidence of one small businessman in this firm, it must have been very important to the bank that $w$ believed that $f$ was doing well.

The small businessman said he wanted to wash his hands, although coffee still needed to be served, but instead of walking to the bathroom he ran down the stairs and on the street to find a telephone booth and call to the office to stop all deliveries to $f$ and also claim back any supplies that had already been delivered.

Two weeks later, $f$ went bankrupt and it turned out that the bank not only was its major creditor but also had preferential right to sell off any stocks in the possession of $f$ to pay back the debt to the bank before other creditors would be satisfied.

\subsection{Analysis of the situation}

Let $\Profit$ be the following proposition:
\begin{center}
	\textit{`Firm $f$ will bankrupt within a month.'}
\end{center}
Notice that, while being two-valued, intuitionistic logic allows for   $\Profit$ to be either true, or false, or undecided in a model, and the availability of the third option seems to  adequately reflect real-life situations. Indeed, there is a strict judicial procedure which establishes the truth of  $\Profit$, and when this procedure is not (yet) in place it seems reasonable to not assign it a truth value.

Accordingly, the sum of the probability attributed to $\Profit$ by $w$ and the probability attributed to $\lnot\Profit$ by $w$ does not need to be $1$.

For simplicity we regard everything which happened from the invitation to the banker's utterance about firm $f$ as one single event. We also propose that the uncertainty of $w$ concerns how to interpret this event, and very much simplifying this story, the two mutually inconsistent interpretations of this event are  
\begin{center}
	\textit{$e_1$:`The banker is trying to manipulate my opinions.'}
\end{center}
\begin{center}
	\textit{$e_2$:`The banker only wants to exchange information.'}
\end{center}
The uncertainty of $w$ about how to interpret the event is encoded in the shape of the event structure, which consists of two states, corresponding to $e_1$ and $e_2$ above respectively, to each of which $w$ assigns his (subjective) probability.

For the sake of illustrating how the substitution map works and to simplify the subsequent treatment we also include the following atomic proposition $\Gallery$ in our language, the intended meaning of which is:
\begin{center}
	\textit{`The banker is manipulative.'}
\end{center}

\subsection{Formalization: initial model and event structure}

Let the set of atomic propositions be $\AtProp := \{\Profit, \Gallery\}$ as discussed above.

\paragraph*{Initial model.}
In the formalization discussed below, we only consider the viewpoint of agent $w$; hence, in the model and the event structure we specify only the subjective probabilities of agent $w$. 
The initial model is
$$\mb{M} :=\left\langle S, \leq, \sim_w, P_w, \val{\cdot}\right\rangle$$ 
with:
\begin{itemize}
\item $S := \{ s_0,s_1,s_2\}$,
\item $\leq \ := \{(s,s) \mid s\in S\} \cup \{ (s_1,s_0), (s_2,s_0)\}$,
\item $\sim_w \ := S \times S$,
\item $P_w : S \rightarrow \ ]0,1]$ with 

$$P_w(s_0) := 0.1,\qquad P_w(s_1):= 0.1, \qquad P_w(s_2):= 0.8,$$

\item $\val{\cdot} : \AtProp \rightarrow \P S $ is such that $\val{\Profit} := \{ s_1\}$  and $\val{\Gallery} := \bot$.
\end{itemize}

This model represents a situation in which $w$ has no additional information about the financial health of firm $f$. Hence, we assume that the probability assigned by $w$ to each state of the model reflects the average risk of bankruptcy of firms in that industry during that period. For $w$ to be willing to do business with $f$ it is not just enough that $f$ does not have a higher probability of bankruptcy than the average firm, but also the probability of being in an uncertain state should be low. The model $\mb{M}$ is drawn in Figure \ref{fig:ex:initial-model-MArt}. 



\begin{figure}
\begin{center}
	\begin{tikzpicture}
	[minimum size=6mm, inner sep=0mm,
	place/.style={rectangle,thick},
	transition/.style={rectangle,draw=black!50,fill=black!20,thick}]
	\node at (8,-2) [place]{$s_2, 0.8:\ \neg \Profit$};
	\node at (4,-2) [place]{$s_1,\ 0.1:\ \Profit$};
	\node at (6, 0)[place]{$s_0,\ 0.1 $};
	\draw [-,thick] (6,-0.3) -- (4,-1.7);
	\draw [-, thick] (6,-0.3) -- (8,-1.7);
	\draw [thick] (2,-2.5) rectangle (10,0.5);
	\end{tikzpicture}
\end{center}
\caption{Initial model $\mb{M}$}
\label{fig:ex:initial-model-MArt}
\end{figure}

\paragraph*{Event structure.}
We consider the following pointed event structure:
$$(\mathcal{E},e_1) := (E, \sim_w, P_w, \rmPhi, \pre, \sub) $$
where 
\begin{itemize}
\item $E := \{e_1, e_2\}$,
\item $\sim_w \ := E\times E$,
\item $P_w(e_1)=0.95$ and $P_w(e_2)=0.05$,

\item $\rmPhi = \{ \top, \Profit, \neg \Profit \}$,
\item $\pre : E \times \rmPhi \rightarrow [0,1]$ is given in Figure \ref{array:mapPRE}.

\item the definition of the map $\sub : E \times \{\Gallery\} \rightarrow \mathcal{L}$ is given in Figure \ref{array:mapSUB},
\end{itemize}
where $e_1$ and $e_2$ correspond to the two interpretations of the event discussed in the previous section.  The event structure $\mb{E}$ is partially represented in Figure \ref{fig:ex:event-struct-MArt}.

\begin{figure}
\begin{center}
	\begin{tikzpicture}
	[minimum size=6mm, inner sep=0mm,
	place/.style={rectangle,thick},
	transition/.style={rectangle,draw=black!50,fill=black!20,thick}]
	\node at (8,0) [place]{$e_1$};
	\filldraw  (8, 0.3) circle (2pt);
	\filldraw  (10, 0.3) circle (2pt);
	\node at (8,0.6) [place]{$0.95$};
	\node at (10, 0.6)[place]{$0.05 $};
	\node at (10,0) [place]{$e_2$};
	\draw [thick] (7,-0.5) rectangle (11,1);
	\end{tikzpicture}
\end{center}
\caption{Event structure $\mb{E}$}
\label{fig:ex:event-struct-MArt}
\end{figure}

By stipulating that $P_w(e_1)=0.95$ and $P_w(e_2)=0.05$, we indicate that $w$ believes that it is far more likely that the banker is trying to manipulate his opinion on $f$.

The map $\pre$ provides the objective probability $\pre(e\mid \phi)$ of each event $e\in E$ happening when one assumes that  the formula $\phi\in \rmPhi$ holds. Each line of Figure \ref{array:mapPRE} gives the probability distribution $\pre(\bullet \mid \phi) : E \times [0,1]$ for each $\phi\in\rmPhi$.
The values in Figure \ref{array:mapPRE} are based on the following assumptions:
\begin{itemize}
\item If we consider the row where $\phi = \top$, which corresponds to the state in which the bankruptcy of $f$ is undetermined, it is reasonable to assume that the probability of $e_1$, namely the banker trying to manipulate $w$'s opinion on $f$, is significantly higher than that of $e_2$.
\item If we consider the row where  $\phi = \Profit$, which corresponds to the state in which $f$ is going to be bankrupt within a month, it is reasonable to regard $e_1$ as almost certain.
\item If we consider the row where $\phi = \neg \Profit$, which corresponds to the state in which $f$ is financially healthy then it is reasonable to assign a very low probability to the event in which the banker wants to manipulate $w$'s opinion about $f$, since the banker has nothing to gain from it. 
\end{itemize}

 \begin{figure}
 \begin{minipage}[b]{.46\linewidth}
	\centering 
	\renewcommand{\arraystretch}{1.3}
\begin{tabular}{|p{1.5cm}<{\centering}|p{1cm}<{\centering}|p{1cm}<{\centering}|}
\cline{2-3}

\multicolumn{1}{c|}{}
& $e_1$ & $e_2$ 
\\
\hline
$\top$ & $0.8$ & $0.2$ 
\\  
\hline
$\Profit$ & $0.99$ & $0.01$
\\ 
\hline 
$\neg \Profit$ & $0.05$ & $0.95$ 
\\  
\hline
\end{tabular}
\caption{The map $\pre$}
\label{array:mapPRE}
\end{minipage} \hfill
 \begin{minipage}[b]{.46\linewidth}
\centering 
\renewcommand{\arraystretch}{1.3}
\begin{tabular}{|p{1.5cm}<{\centering}|p{1cm}<{\centering}|p{1cm}<{\centering}|}
\cline{2-3}

\multicolumn{1}{c|}{}
& $e_1$ & $e_2$ 
\\
\hline
$\Gallery$ & $\top$ & $\bot$
\\  
\hline
\end{tabular}

\caption{The map $\sub$}
\label{array:mapSUB}
\renewcommand{\arraystretch}{1}
\end{minipage}
\end{figure}

\begin{remark}
The poset $\rmPhi$ ordered by logical implication is a tree and is drawn in Figure \ref{fig:Phi:exampleArt}.


\begin{figure}
	\begin{center}
		\begin{tikzpicture}
		[minimum size=6mm, inner sep=0mm,
		place/.style={rectangle,thick},
		transition/.style={rectangle,draw=black!50,fill=black!20,thick}]
		\node at (6.5,-2) [place]{$\neg \Profit$};
		\node at (4,-2) [place]{$\Profit$};
		\node at (5.25, 0)[place]{$\top$};
		\draw [->, thick] (4,-1.7) -- (5.15,-0.3);
		\draw [->, thick] (6.5,-1.7) -- (5.35,-0.3);
		\draw [thick] (2.5,-2.5) rectangle (8,0.5);
		\end{tikzpicture}
	\end{center}
	\caption{The partial order given by $(\rmPhi, \rightarrow)$}
	\label{fig:Phi:exampleArt}
\end{figure}

\end{remark}


\subsection{Updated model}
\label{ssec:updated:model}

In this section, we show how the  initial model described in the section above is updated with the event structure. The updated  model 
$$\mb{M}^{(\mathcal{E},e_1)} :=\left\langle S', \leq', \sim'_w, P'_w, \val{\cdot}'\right\rangle$$  
is defined as follows:
\begin{itemize}
\item $S' := S\times E$,
\item  $(s,e) \leq' (s',e')$ iff $s \leq s' $ and $e=e'$ for all $(s,e),(s',e')\in S'$,
\item  $(s,e) \sim_w' (s',e')$ iff $s \sim_w s' $ and $e\sim_w e'$ for all $(s,e),(s',e')\in S'$,
\item the map
$P'_w $ is shown in \Cref{fig:ex:update-model-MArt}, where the actual values are rounded off,
\item the map $\val{\cdot}' : \AtProp \rightarrow \P S'$ is defined as follows:
\begin{align*}
\val{\Profit}'  :\! &=  \val{\Profit} \times E;
\\
\val{\Gallery}' :\! &=   
\left( \val{\sub (e_1,\Gallery)}\times \{ e_1 \}\right) 
\cup \left( \val{\sub (e_2,\Gallery)}\times \{e_2\}\right)\\
& = \{ (s_0,e_1), (s_1,e_1), (s_2,e_1) \}.
\end{align*}

\end{itemize}

The updated model $\mb{M}^{(\mb{E},\mathtt{e_1})}$ is drawn in Figure \ref{fig:ex:update-model-MArt}.





\begin{figure}
\begin{center}
	\begin{tikzpicture}
	[minimum size=6mm, inner sep=0mm,
	place/.style={rectangle,thick},
	transition/.style={rectangle,draw=black!50,fill=black!20,thick}]
	\node at ( 8,-2) [place]{\scriptsize$(s_2, e_1), 0.15 : \neg \Profit,\Gallery$};
	\node at (4,-2) [place]{\scriptsize$(s_1, e_1), 0.3848 : \Profit,\Gallery$};
	\node at (6, 0)[place]{\scriptsize$( s_0, e_1), 0.31 :\Gallery$};
	\node at (15,-2) [place]{\scriptsize$(s_2, e_2), 0.15 : \neg \Profit,\lnot\Gallery$};
	\node at (11,-2) [place]{\scriptsize$( s_1, e_2), 0.0002 : \Profit,\lnot\Gallery$};
	\node at (13, 0)[place]{\scriptsize$( s_0, e_2), 0.005 :\lnot\Gallery$};
	\draw [-, thick] (6,-0.3) -- (4,-1.7);
	\draw [-, thick] (6,-0.3) -- (8,-1.7);
	\draw [-, thick] (13,-0.3) -- (11,-1.7);
	\draw [-, thick] (13,-0.3) -- (15,-1.7);
	\draw [thick] (2,-2.5) rectangle (17,0.5);
	\end{tikzpicture}
\end{center}
\caption{Updated model $\mb{M}^\mb{E} $ }
\label{fig:ex:update-model-MArt}
\end{figure}

As expected, the fact that $w$ assigns a much greater probability to $e_1$ than $e_2$ implies that the probabilistic weight of the model above is concentrated among the three leftmost states. Of these three states, the weight is shared almost equally between the two in which $\Profit$ is either true or undecided, which reverses the subjective probability assigned in the initial model. This reversal captures $w$'s decision to abruptly stop all deliveries to $f$.

\subsection{Syntactic inference of a property of the afternoon event}

In the present section, we will use the Hilbert style presentation of IPDEL to derive the formula \eqref{eq:thres}. This formula  gives the threshold of reasonable optimism  which enables $w$ to revise his subjective probability about $\Profit$ after the afternoon event $(\mathcal{E},e_1)$ takes place. Specifically, the probability $w$ assigns to $\Profit$ should not be less than $19.8$ times that he assigns to $\lnot\Profit$ in order for the event $(\mathcal{E},e_1)$ as specified in the sections above to be enough for $w$ to revert his judgment about $\Profit$.

\begin{proposition}\label{prop:proofex} The formula
\begin{equation}\label{eq:thres}
(19.8\mu_w(\Profit)>\mu_w(\lnot\Profit))\leftrightarrow[\mathcal{E},e_1](\mu_w(\Gallery\land\Profit)>\mu_w(\lnot\Gallery\land\lnot\Profit)),
\end{equation}
where $\alpha\mu_i(\varphi)>\beta\mu_i(\psi)$ is shorthand for $(\beta\mu_i(\psi)\geq\alpha\mu_i(\varphi))\to\bot$, is derivable in IPDEL.
\end{proposition}
\begin{proof}
In order to show the equivalence \eqref{eq:thres}, we will use the IPDEL axioms to equivalently rewrite its right-hand side into its left-hand side.

\begin{align*}
	& [\mathcal{E},e_1](\mu_w(\Gallery\land\Profit)>\mu_w(\lnot\Gallery\land\lnot\Profit))
	\\
	\text{iff } \quad
	& [\mathcal{E},e_1]\left((\mu_w(\lnot\Gallery\land\lnot\Profit)\geq\mu_w(\Gallery\land\Profit))\to\bot\right)  \tag{notation for $>$}\\
	\text{iff } \quad
	& \langle \mathcal{E},e_1\rangle(\mu_w(\lnot\Gallery\land\lnot\Profit)\geq\mu_w(\Gallery\land\Profit))\to\langle \mathcal{E},e_1\rangle\bot  \tag{I11 in Table \ref{table:IPDEL}}
	\\
	\text{iff } \quad
	& \langle \mathcal{E},e_1\rangle(\mu_w(\lnot\Gallery\land\lnot\Profit)\geq\mu_w(\Gallery\land\Profit))\to\bot 
	\tag{I6 in Table \ref{table:IPDEL}}\\
\end{align*}


In what follows we focus on equivalently rewriting the antecedent of the implication above.

\begin{align*}
&\langle \mathcal{E},e_1\rangle(\mu_w(\lnot\Gallery\land\lnot\Profit)\geq\mu_w(\Gallery\land\Profit))
\\
\text{iff } \quad
&  \sum_{\begin{smallmatrix}
				e'\in E\\
				\phi \in \rmPhi
		\end{smallmatrix}}P_w(e')\cdot\pre(e'\mid\phi)\cdot\mu_w^\phi(\langle \mathcal{E},e'\rangle(\lnot\Gallery\land\lnot\Profit))
  \geq\sum_{\begin{smallmatrix}
		e'\in E\\
		\phi \in \rmPhi
		\end{smallmatrix}}P_w(e')\cdot\pre(e'\mid\phi)\cdot\mu_w^\phi(\langle \mathcal{E},e'\rangle(\Gallery\land\Profit))  \tag{I18 in Table \ref{table:IPDEL}}
\\
\text{iff } \quad
&   P_w(e_2)\cdot\pre(e_2\mid\lnot\Profit)\cdot\mu_w(\lnot\Profit)\geq P_w(e_1)\cdot\pre(e_1\mid\Profit)\cdot\mu_w(\Profit) \tag{by Lemma \ref{lem:proof:example}}
\\
\text{iff } \quad
&  0.05\cdot0.95\cdot\mu_w(\lnot\Profit)\geq 0.95\cdot 0.99\cdot \mu_w(\Profit) \tag{Definition of $(\mathcal{E},e_1)$}
\\
\text{iff } \quad
&   0.05\cdot\mu_w(\lnot\Profit)\geq0.99\cdot \mu_w(\Profit)
\tag{by Lemma \ref{lemma:measures:basic}}
\\
\text{iff } \quad
&  \mu_w(\lnot\Profit)\geq 19.8\mu_w(\Profit). 
\tag{by Lemma \ref{lemma:measures:basic}}
\end{align*}


Hence,
\begin{align*}
		 & \langle \mathcal{E},e_1\rangle(\mu_w(\lnot\Gallery\land\lnot\Profit)\geq\mu_w(\Gallery\land\Profit))\to\bot
\\
\text{iff } \quad
&   (\mu_w(\lnot\Profit)\geq 19.8\mu_w(\Profit))\to\bot
\\
\text{iff } \quad
&   19.8\mu_w(\Profit)>\mu_w(\lnot\Profit),
\end{align*}
as required. 
\end{proof}

\begin{lemma}
\label{lem:proof:example}
The following propositions are provable in IPDEL.
	\begin{enumerate}
	\item \label{lem:item:1}
	$\langle \mathcal{E},e_1\rangle(\Gallery
	\land\Profit)\leftrightarrow\Profit$ and $\langle \mathcal{E},e_1\rangle(\lnot\Gallery
	\land\lnot\Profit)\leftrightarrow\bot$;
	\item \label{lem:item:2}
	$\langle \mathcal{E},e_2\rangle(\lnot\Gallery
	\land\lnot\Profit)\leftrightarrow\lnot\Profit$ and $\langle \mathcal{E},e_2\rangle(\Gallery
	\land\Profit)\leftrightarrow\bot$;
	\item \label{lem:item:3}
	$\mu_w^\top(\Profit)=0$ and $\mu_w^\top(\lnot\Profit)=0$;
	\item \label{lem:item:4}
	$\mu_w^{\Profit}(\lnot\Profit)=0$ and $\mu_w^{\lnot\Profit}(\Profit)=0$;
	\item \label{lem:item:5}
	$\mu_w^{\Profit}(\Profit)=\mu_w(\Profit)$ and $\mu_w^{\lnot\Profit}(\lnot\Profit)=\mu_w(\lnot\Profit)$.
\end{enumerate}
\end{lemma}
\begin{proof}
\texttt{Proof of item \eqref{lem:item:1}.}
\begin{align*}
 & 	\langle \mathcal{E},e_1\rangle(\Gallery
 \land\Profit) 
\\
\text{iff } \quad
& \langle \mathcal{E},e_1\rangle\Gallery
 \land\langle \mathcal{E},e_1\rangle\Profit \tag{I8 in Table \ref{table:IPDEL}}
\\
\text{iff } \quad
& pre(e_1)\land\sub(e_1,\Gallery)\land pre(e_1)\land\sub(e_1,\Profit) \tag{I2 in Table \ref{table:IPDEL}}
\\
\text{iff } \quad
& \sub(e_1,\Gallery)\land\sub(e_1,\Profit) 
\tag{$pre(e_1)$ is $\top\lor\Profit\lor\lnot\Profit$}
\\
\text{iff } \quad
& \top\land\Profit 
\tag{Definition of $\sub$}
\\
\text{iff } \quad
& \Profit
\end{align*}
and
\begin{align*}
		& 	\langle \mathcal{E},e_1\rangle(\lnot\Gallery
		\land\lnot\Profit)
\\
\text{iff } \quad
& \langle \mathcal{E},e_1\rangle\lnot\Gallery
		\land\langle \mathcal{E},e_1\rangle\lnot\Profit 
		\tag{I8 in Table \ref{table:IPDEL}}
\\
\text{iff } \quad
& (pre(e_1)\land(\lnot\langle \mathcal{E},e_1\rangle\Gallery))
		\land(pre(e_1)\land(\lnot\langle \mathcal{E},e_1\rangle\Profit))
		\tag{I12 and I6 in Table \ref{table:IPDEL}}
\\
\text{iff } \quad
& pre(e_1)\land\lnot(pre(e_1)\land\sub(e_1,\Gallery))\land pre(e_1)\land\lnot(pre(e_1)\land\sub(e_1,\Profit)) 
\tag{I2 in Table \ref{table:IPDEL}}
\\
\text{iff } \quad
& \lnot\sub(e_1,\Gallery)\land\lnot\sub(e_1,\Profit)
\tag{$pre(e_1)$ is $\top\lor\Profit\lor\lnot\Profit$}
\\
\text{iff } \quad
& \lnot\top\land\lnot\Profit 
\tag{Definition of $\sub$}
\\
\text{iff } \quad
& \bot & \tag{$\lnot\top\leftrightarrow\bot$}
\end{align*}

\texttt{Proof of item \eqref{lem:item:2}.} The proof is similar to that of item 1.\\

\texttt{Proof of item \eqref{lem:item:3}.} Notice that $\mu_w^\top(\Profit)$ is shorthand for $\mu_w(\top\land\Profit)-(\mu_w(\Profit\land\Profit)+\mu_w(\lnot\Profit\land\Profit))$ (cf.\ \autoref{def:mua-mba}). 
Therefore:
\begin{align*}
		& \mu_w^\top(\Profit)=0
\\
\text{iff } \quad
& \mu_w(\top\land\Profit)-(\mu_w(\Profit\land\Profit)+\mu_w(\lnot\Profit\land\Profit))=0
\\
\text{iff } \quad
& \mu_w(\top\land\Profit)-\mu_w(\Profit\land\Profit)=0
\tag{P1 in Table \ref{table:IPDEL} and Lemma \ref{lemma:measures:basic}}
\\
\text{iff } \quad
& \mu_w(\Profit)-\mu_w(\Profit)=0.
\end{align*}
The last equality follows by N0 in Table \ref{table:IPDEL}. The proof of the second inequality is similar.\\

\texttt{Proof of item \eqref{lem:item:4}.} Notice that $\mu_w^{\Profit}(\lnot\Profit)$ is shorthand for $\mu_w(\Profit\land\lnot\Profit)$ and $\mu_w^{\lnot\Profit}(\Profit)$ is shorthand for $\mu_w(\lnot\Profit\land\Profit)$. Hence, the equality follows from Axiom P1 in Table \ref{table:IPDEL}. \\

\texttt{Proof of item \eqref{lem:item:5}.} Notice that $\mu_w^{\Profit}(\Profit)$ is shorthand for $\mu_w(\Profit\land\Profit)$ and $\mu_w^{\lnot\Profit}(\lnot\Profit)$ is shorthand for $\mu_w(\lnot\Profit\land\lnot\Profit)$. Hence, the required equality is straightforwardly true.\\
\end{proof}

Since, as discussed in Section \ref{sec:relsem}, IPDEL is sound and complete with respect to the class of relational models, Proposition \ref{prop:proofex} implies that  every IPDEL model $\mathcal{M}$ which supports the left-hand side of the equivalence \eqref{eq:thres} will be updated by the event $(E,e_1)$ to a model that satisfies $\mu_w(\Gallery\land\Profit)>\mu_w(\lnot\Gallery\land\lnot\Profit)$. Hence, in each such model agent $w$ will update his subjective probabilities concerning $\Profit$ analogously to the model in the example above (see Section \ref{ssec:updated:model} and Figure \ref{fig:ex:update-model-MArt}).

\section{Conclusion}
\label{sec:CCL}
\paragraph{Present contributions.} In this paper, we have introduced the logic IPDEL, the intuitionistic counterpart of classical PDEL, as an instance of a general methodology, based on the mathematical construction of updates on algebras, which makes it possible to define non-classical counterparts of DEL-type logics on different propositional bases. This methodology makes it possible to also obtain the update construction on relational and topological models via appropriate (extended) dualities, and hence define relational semantics for the defined logics. In this way we have shown that IPDEL, which is sound by construction with respect to the class of algebraic probabilistic epistemic  models (cf.\ Definition \ref{def: alg probab epist model}), is also complete with respect to  APE-models and hence also with respect to their dual relational structures. Since these structures are finite by definition, this result immediately implies that IPDEL has the finite model property. The logic IPDEL is intended as a tool to analyze decision-making under uncertainty in situations in which truth is socially constructed and hence decisions are taken in  contexts in which the truth value of certain states of affair might be undetermined. To show IPDEL at work, we partially formalize one such situation.

\paragraph{Generalizing APE-structures.} APE-structures are based on epistemic Heyting algebras (cf.\ Definition \ref{def:epist-Heyting-algebra}), the definition of which requires the image of each diamond operator to have a Boolean algebra structure. Thus, epistemic Heyting algebras are a proper subclass of monadic Heyting algebras. This additional condition guarantees that the $i$-minimal elements induce a partition on the dual structure of each epistemic Heyting algebra, and hence that axioms such as $\mu(\top)=1$ or $(\mu(\varphi)\geq\alpha)\lor(\mu(\varphi)<\alpha)$  are valid. 
One natural question that presents itself is whether this condition can be dropped and hence base APE-structures on general monadic Heyting algebras. 
Addressing this question requires solving issues of technical and conceptual nature. 
On the technical side, the additional requirement plays a role in the completeness theorem, and specifically makes sure that, in the finite lattice that we extract from the Lindenbaum-Tarski algebra, a sublattice can be defined out of the image of each diamond (cf.\ Lemma \ref{lemma2:completeness}). 
This issue would partially be addressed by relaxing the condition that APE-structures be finite (see paragraph below). On the conceptual side, we would need to restructure the definition of probabilistic measure. 
The axiom $(\mu(\varphi)\geq\alpha)\lor(\mu(\varphi)<\alpha)$ is tightly linked to the metatheory of the real numbers and in particular to the validity of trichotomy. Hence, in the context of a different metatheory in which trichotomy does not hold such as the constructive metatheory of real numbers, it seems reasonable that this axiom might be dropped. However the condition $\mu(\top)=1$ expresses the link between probability and the underlying logic. For this reason this axiom should arguably be kept.

\paragraph{Finite to infinite models.} Another natural question is whether we can drop the condition that APE-structures be finite. A first step would be to investigate the case of APE-structures based on perfect Heyting algebras, i.e.\ those Heyting algebras which are isomorphic to algebras of upsets or downsets of given posets. Does every probability measure on such a Heyting algebra correspond to a discrete probability distribution on the corresponding dual poset? More generally, possibly infinite APE-structures would dually correspond to relational Esakia spaces endowed with probability distributions. Are there purely algebraic conditions on probability measures guaranteeing that the corresponding probability distribution be discrete?


 \paragraph{Proof theory for probabilistic logics.} As mentioned in the introduction, the present paper pertains to a line of research aimed at studying the phenomenon of dynamic (probabilistic epistemic) updates in contexts at odds with classical truth. The language and semantics of the formal settings previously studied (i.e.\ those of the nonclassical versions of PAL and EAK) have served as a basis for a research program in structural proof theory aimed at developing a uniform methodology for endowing dynamic logics with so-called {\em analytic calculi} (see \cite{CiRa14,GMPTZ}). This research program has successfully addressed PAL and DEL \cite{GKPLori,TrendsXIII,FGKPS14b,FGKPS14a}, and PDL \cite{PDL}, and has been further generalized into the proof-theoretic framework of {\em multi-type calculi} \cite{TrendsXIII}. This methodology has been successfully deployed to introduce analytic calculi for logics particularly impervious to the standard treatment 
 \cite{Inquisitive,GP:lattice,GP:linear,LoRC}, and is now ready to be applied to the issue of endowing PDEL and its non-classical versions with analytic calculi.
 
 

\bibliographystyle{alpha}
\bibliography{Ref}

\newpage
\appendix
\addcontentsline{toc}{section}{Appendices}

\section{Proofs of Section \ref{sec:ha}}
\label{app:section4}
\subsection*{Proof of Lemma \ref{lem:charact i minimal elements complex algebra}} 
\label{app:section4:0}

\paragraph{Lemma \ref{lem:charact i minimal elements complex algebra}.}
For any PES-model $\mb{M}$, the $i$-minimal elements of its  complex algebra $\mb{M}^+$ are exactly the equivalence classes of $\sim_i$.

	\begin{proof} 
		Let $\mb{M} = \left\langle S, (\sim_i)_{i\in \Ag}, (P_i)_{i\in \Ag}, \val{\cdot}\right\rangle $ be a PES-model
		and  
		$\mb{M}^+ =  \left( \P S, (\lozenge_i)_{i\in \Ag}, (\Box_i)_{i\in \Ag}, (P^+_i)_{i\in \Ag} \right) $ be its complex algebra.
		For any $i\in\Ag$ and any $s\in S$, let $[s]_i$ be the $\sim_i$-equivalence  cell of $s$. Fix $i\in \Ag$.

\medskip

First, let us prove that any 	$\sim_i$-equivalence cell corresponds to an $i$-minimal element of $\mb{M}^+$.	
		Since $\sim_i$ is reflexive, $[ s ]_i \neq \varnothing$.
		Since $\sim_i$ is symmetric and transitive, $[s]_i = \lozenge_i \{ s\} = \lozenge_i \lozenge_i \{ s\} = \lozenge_i [s]_i$.
		This shows that $[s]_i$ is a fixed-point of $\lozenge_i$. It remains to show that $[s]_i$ is a minimal fixed-point $\lozenge_i$.
		Let $X \subseteq S$ be an $i$-minimal element of $\mb{M}^+$. By definition, we have that $X \subseteq [s]_i$, $X \neq \varnothing$ and $\lozenge_i X = X$.
		The assumption that $\lozenge_i X = X$ implies that $X =
		\bigcup_{x\in X}  \lozenge_i \{ x\} = \bigcup_{x\in X}  [ x ]_i $. The assumption that $X \subseteq [s]_i$ implies that all $x\in X$ must be $\sim_i$-equivalent to $s$, and hence to each other. Therefore, $X$ cannot be the union of more than one equivalence cell.
		Moreover, the assumption that $X \neq \varnothing$ implies that there exists at least one equivalence cell in $\bigcup_{x\in X}  [ x ]_i$. This concludes the proof that, for any $s\in S$, its $\sim_i$-equivalence cell $[s]_i$ corresponds to an $i$-minimal element of $\mb{M}^+$, as required.
	
	\medskip
		
Now, 
let us prove that any $i$-minimal  element of $\mb{M}^+$ correspond to the  $\sim_i$-equivalence cell of an element $s \in S$.
		Let $X$ be an $i$-minimal element of $\mb{M}^+$.
		The assumption that $X = \lozenge_i X$ implies that $X = \bigcup_{x\in X}  [ x ]_i $. The assumption that $X \neq \varnothing$ implies that there exists at least one equivalence cell $[s]_i$ in $\bigcup_{x\in X}  [ x ]_i$. Since $[s]_i$ is an $i$-minimal element of $\mb{M}^+$ and $[s]_i \subseteq X$, we have $X = [s]_i$ by minimality of $X$.
	\end{proof}

\subsection*{Proof of Proposition \ref{prop: complex algebras are APE structures}} 
\label{app:section4:1}

\paragraph{Proposition \ref{prop: complex algebras are APE structures}.}
For any PES-model $\mb{M}$, its complex algebra $\mb{M}^+$ (see  \Cref{def: complex algebra}) is an APE-structure (see \Cref{def: alg probab epist structure}).

\begin{proof}
		Let $\mb{M} = \left\langle S, (\sim_i)_{i\in \Ag}, (P_i)_{i\in \Ag}, \val{\cdot}\right\rangle $ be a PES-model (see \Cref{def:Prob epis state model Alexandru}) and let   
		$\mb{M}^+ =  \left( \P S, (\lozenge_i)_{i\in \Ag}, (\Box_i)_{i\in \Ag}, (P^+_i)_{i\in \Ag} \right) $ be its complex algebra.
		$\mb{M}^+$ is an APE-structure if its support is an epistemic Heyting algebra and if  each $P^+_i$ is an $i$-measure over $\left\langle S, (\sim_i)_{i\in \Ag}, (P_i)_{i\in \Ag}\right\rangle$.
		Clearly, $\left( \P S ,  (\lozenge_i)_{i\in \Ag}, (\Box_i)_{i\in \Ag} \right)$ is an epistemic Heyting algebra (see \Cref{def:epist-Heyting-algebra}), since $\sim_i$ is an equivalence relation and $ \P S$ is a boolean algebra.
To finish the proof we need to show that each $P^+_i$ is an $i$-measure on $\support (\mb{M}^+)$. Hence, for every $i\in \Ag$, we need to prove the following properties:
\begin{enumerate}[(a)]
	    \item 
	    $\mathsf{dom}(P_i^+)=\mathsf{Min}_i(\support (\mb{M}^+)){\downarrow}$;
		\item 
$P_i^+$ is order-preserving; 
		\item 
for every $i$-minimal element $X\in \P S$ and all $Y_1, Y_2\in X{\downarrow}$,  we have $$P_i^+(Y_1\cup Y_2) = P_i^+(Y_1)+ P_i^+(Y_2)-P_i^+(Y_1\cap Y_2);$$
		\item   $P_i^+(\varnothing)=0$ if $\mathsf{dom}(P_i^+)\neq\varnothing$;
	\item for every $i$-minimal element $X\in \P S$, we have $P_i^+(X) = 1$.
		\item for every $i$-minimal element $X\in \P S$ and all $Y_1, Y_2\in X{\downarrow}$ such that $Y_1 \subset Y_2$, it holds that $$P_i^+(b)<P_i^+(c).$$

\end{enumerate}		
	
	Fix $i \in \Ag$.

\medskip
		
		\texttt{Proof of (a).} 
		By definition,
		$\mathsf{dom}(P^+_i)  =  \left\{X\in\P S \mid \exists y \ \forall x \: (x\in X\implies x\sim_i y) \right\}$.
		Notice that 
		$$ \left\{X\in\P S \mid \exists y \ \forall x \: (x\in X\implies x\sim_i y) \right\} = \left\{X \mid X \subseteq [s] \text{ and } s \in S \right\}.$$
		By \Cref{lem:charact i minimal elements complex algebra}, we deduce that $\mathsf{dom}(P^+_i)  = \mathsf{Min}_i(\support (\mb{M}^+)){\downarrow}$.

\medskip
		
		\texttt{Proof of (b).} 	Since $P_i(s) \geq 0$ for all $s\in S$, the maps $P_i^+$ are monotone.

\medskip
		
		\texttt{Proof of (c).} By Lemma \ref{lem:charact i minimal elements complex algebra}, if $X$ is an $i$-minimal element of $\mb{M}^+$, then $X = [s]$ for some $s\in S$. If $Y_1, Y_2\in X{\downarrow}$, then $Y_1\cup Y_2\subseteq [s]$. Hence, 
		\begin{align*}
		P^+_i(Y_1\cup Y_2) & = \sum_{x\in Y_1\cup Y_2} P_i(x) 
		\tag{Definition of $P_i^+$}
		\\
		& = \sum_{x\in Y_1} P_i(x) + \sum_{x\in Y_2} P_i(x) -\sum_{x\in Y_1\cap Y_2}P_i(x)
		\\
		& = P^+_i(Y_1) + P^+_i(Y_2) - P^+_i(Y_1\cap Y_2).
		\tag{Definition of $P_i^+$}
		\end{align*}
		
\medskip

		\texttt{Proof of (d).} 
		By definition, $P_i^+(\varnothing)=0$.  
		
\medskip

		\texttt{Proof of (e).} Let $X\in \P S$ be an $i$-minimal element. By Lemma \ref{lem:charact i minimal elements complex algebra}, there exists an $s\in S$  such that $[s]= X$. Hence, using the definition  of $P_i$ (see Definition \ref{def: complex algebra}), we have:
		\[
		P_i^+(X) =\sum_{x\in [s]} P_i(x) = 1.
		\]

\medskip
		
		\texttt{Proof of (f).} Let $X\in \P S$ be $i$-minimal element
		and  $Y_1, Y_2\in X{\downarrow}$ such that $Y_1 \subset Y_2$. 
		By definition, we have that 
		\begin{align*}
		P_i^+(Y_2) & = \sum_{x\in Y_2} P_i(x)
		 = \sum_{x\in Y_1} P_i(x) + \sum_{x\in Y_2 \smallsetminus Y_1} P_i(x)
		 = P_i^+(Y_1) + \sum_{x\in Y_2 \smallsetminus Y_1} P_i(x).
		\end{align*}
		Since $Y_1 \subset Y_2$, there exists $s \in Y_2 \smallsetminus Y_1$. Since $P_i : S \rightarrow \ ]0,1]$, we have $P_i(s)>0$ for all $s \in Y_2 \smallsetminus Y_1$.
		Hence $\sum_{x\in Y_2 \smallsetminus Y_1} P_i(x)>0$ and 
		$P_i^+(Y_1)<P_i^+(Y_2)$.
\end{proof}
		
\subsection*{Proof of Proposition \ref{prop: charact i minimal}}
\label{app:section4:2}

\paragraph{Proposition \ref{prop: charact i minimal}}
for every epistemic Heyting algebra  $\mb{A}$ and every agent   $i \in \Ag$, 
$$\mathsf{Min}_i(\mb{A}') = \{f_{e, a}\mid e\in E \mbox{ and } a\in \mathsf{Min}_i(\mb{A})\},$$
where for any $e\in E$ and  $a\in \mathsf{Min}_i(\mb{A})$, the map $f_{e,a}$ is defined as follows:
\begin{align*}
f_{e,a} : E &\rightarrow \mb{A}
\\
e'  &\mapsto  \left\{ \begin{array}{ll}
 a & \textrm{if $e' \sim_i e$}\\
 \bot & \textrm{otherwise.}
  \end{array} \right.
\end{align*}

\begin{proof}
Recall that $f\in \mb{A}'$ is an $i$-minimal element (see \autoref{def:i-minimal}) if it satisfies the following conditions:
\eqref{def:i-minimal:item:bot} $f\neq \bot$, 
\eqref{def:i-minimal:item:fixpoint} $\lozenge_i f = f $ and
\eqref{def:i-minimal:item:minimal} if $g \in \mb{A}$, $g < f$ and $\lozenge_i g = g $, then $g = \bot$.

Let us first  prove that any map $f_{e,a}$ as above is an $i$-minimal element of $\mb{A}'$. By definition, $f_{e, a}(e) = a\neq \bot_\mb{A}$. Hence $f_{e, a}\neq \bot_{{\mb{A}'}}$. As to showing that $\blacklozenge'_i f_{e,a} = f_{e,a}$, fix $e'\in E$, and let us show that $(\blacklozenge'_i f_{e,a}) (e') = f_{e,a}(e')$. By definition,
$$\blacklozenge'_i f_{e,a}(e') =  \bigvee \{ \blacklozenge_i f_{e,a}(e'') \mid e'' \sim_i e' \}.$$

We proceed by cases: (a) If $e' \sim_i e$, then:
%
\begin{align*}
\blacklozenge'_i f_{e,a}(e') & =  \bigvee \{ \blacklozenge_i f_{e,a}(e'') \mid e'' \sim_i e' \} \tag{by definition} \\
& = \bigvee \{ \blacklozenge_i a \mid e'' \sim_i e' \} \tag{ $f_{e,a}(e'')=a$, since  $e\sim_i e'$ and $\sim_i$ symmetric and transitive} \\
& =  \blacklozenge_i a  \tag{the join is nonempty since $\sim_i$ is reflexive}\\
& = a \tag{$a$ is $i$-minimal, hence is a fixed point of $\lozenge_i$}\\
& = f_{e,a}(e'). \tag{definition of $f_{e,a}$ and $e' \sim_i e$}
\end{align*}
(b) If $e'\nsim e$, then:
\begin{align*}
\blacklozenge'_i f_{e,a}(e') & =  \bigvee \{ \blacklozenge_i f_{e,a}(e'') \mid e'' \sim_i e' \} \tag{by definition}\\
& = \bigvee \{ \blacklozenge_i \bot \mid e'' \sim_i e' \} \tag{$e\nsim_i e'$}\\
& =  \blacklozenge_i \bot  \\
& = \bot \tag{$\lozenge_i \bot = \bot$}\\
& = f_{e,a}(e').
\end{align*}
Finally, we need to show that $f_{e, a}$ is a minimal non-bottom fixed-point of $\blacklozenge'_i$. Notice preliminarily that if $g: E\to \mb{A}$  is a fixed point for $\blacklozenge'_i$ then
\begin{equation}
\label{eq:fixed points property}
 g(e) = g(e') \mbox{ whenever } e\sim_i e'.
 \end{equation}
 Indeed,
\[g(e) =(\blacklozenge'_i g)(e) = \bigvee \{ \blacklozenge_i g(e'')  \mid e'' \sim_i e \} = \bigvee \{ \blacklozenge_i g(e'')  \mid e'' \sim_i e' \} = (\blacklozenge'_i g)(e') = g(e').\]
Given that $\sim_i$ is reflexive, this  implies in particular that, for every $e'\in E$, \begin{equation}\label{eq:two diamonds}(\blacklozenge'_i g)(e') = \blacklozenge_i g(e').\end{equation}

Let $g$ be as above, assume that  $\bot \neq g\leq f_{e, a}$,  and let us show that $g = f_{e, a}$.  Clearly, the assumption $g\leq f_{e, a}$ implies that $g(e') = \bot$ for every $e'\in E$ such that $e'\not\sim_i e$. Let $e'\in E$ such that $g(e')\neq \bot$. Together with the assumption that $g\leq f_{e, a}$, this implies that $f_{e, a}(e')\neq \bot$, hence $e'\sim_i e$ and $\bot \neq g(e')\leq a$.
To prove that $g(e) = a$, by the $i$-minimality of $a$ it suffices to show that $g(e')$ is a fixed point of $\blacklozenge_i$.
Indeed, by \eqref{eq:two diamonds}:
$$\blacklozenge_i g(e') = (\blacklozenge'_i g)(e') = g(e'),$$
as required. Finally, the fact above and the preliminary observation \eqref{eq:fixed points property} imply that $g(e') = a$ for every $e'\in E$ such that $e'\sim_i e$.

This finishes the proof that $f_{e,a}$ is $i$-minimal.

\bigskip

Conversely, let $g: E\to \mb{A}$ be $i$-minimal in $\mb{A}'$, and let us show that $g = f_{e,a}$ for some $e\in E$ and some $i$-minimal element $a\in \mb{A}$. The assumption that $g\neq \bot$ implies that $g(e)\neq \bot$ for some $e\in E$. Let $g(e) = a\in \mb{A}$. Then, the assumption that $g = \blacklozenge'_i g$ and the observation \eqref{eq:fixed points property} imply that $g(e') = a$ for every $e'\in E$ such that $e'\sim_i e$. Then, the proof is finished if we show that $a$ is $i$-minimal in $\mb{A}$. Indeed, then, by construction we would have $\bot\neq f_{e, a}\leq g$, hence the minimality of $g$ would yield  $f_{e, a} =  g$.

By definition, we have that $a = g(e')\neq \bot$. By observation \eqref{eq:two diamonds}, \[\blacklozenge_i a = \blacklozenge_i g(e) = (\blacklozenge'_i g)(e) = g(e) = a,\]
which shows that $a$ is a fixed point of $\blacklozenge_i$. Finally, let $\bot\neq b\leq a$ such that $\blacklozenge_i b = b$. Then, with an argument analogous to the one given above,  the map $f_{e, b}: E\to \mb{A}$ would be proven to be a non-bottom fixed-point of $\blacklozenge'_i$. Moreover, $f_{e, b}\leq g$, and hence  the $i$-minimality of $g$ would yield $f_{e, b} = g$, hence $a = b$.
\end{proof}

\subsection*{Proof of Proposition \ref{prop: muai properties}}
\label{app:section4:3}

\paragraph{Proposition \ref{prop: muai properties}.}
For every APE-structure  $\mathcal{F}=\left( \mb{A}, (\mu_i)_{i\in \Ag} \right) $ and every event structure $\mb{E}$ over $\mb{A}$, 
$\mu^a_i$ is an $i$-premeasure over $\mb{A}$. Furthermore,  if $a\leq y$ then $\mu^a_i(x)=\mu^a_i(x\land y)$.

\begin{proof}
For every $a\in\rmPhi$ and every $i\in \Ag$, we want to prove that $\mu_i^a$ is an $i$-premeasure over $\mb{A}$, 
hence we need to prove that $\mu_i^a$ is a partial function $\mb{A}\rightarrow \mb{R}^+$ that satisfies items (\ref{def:epAlg:two:domain} - \ref{def:epAlg:two:bot}) of Definition \ref{def:measures}. 
Fix $a\in\rmPhi$ and  $i\in \Ag$.

\medskip

\texttt{Proof of item \ref{def:epAlg:two:domain}.}
We want to prove that $\mathsf{dom}(\mu)=\mathsf{Min}_i(\mb{A}){\downarrow}$.
The map $\mu_i$ is an $i$-premeasure,
hence $\mathsf{dom}(\mu_i)=\mathsf{Min}_i(\mb{A}){\downarrow}$. Therefore the map $\mu_i^a$ is defined on every $x\in \mathsf{Min}_i(\mb{A}){\downarrow}$ 
and we can restrict its domain as follows: $\mathsf{dom}(\mu_i^a):=\mathsf{Min}_i(\mb{A}){\downarrow}$.

\medskip

\texttt{Proof that $\mu_i^a$ is well-defined.}
We need to prove that
$\mu^a_i(x)\geq 0$ for all $x\in \mathsf{Min}_i(\mb{A}){\downarrow}$. 
Recall that
$\bfPhi $ is a finite ordered multiset of elements of $\mb{A}$ such that,  for all distinct $b,c\in \rmPhi$, either
$b\wedge c = \bot  $ or $  b < c  $ or $  c < b$
(see Definition \ref{def:algebraic event} and Remark
\ref{rk:Phi:forest}).
Hence, for every  $b,c\in\mathrm{mb}(a)$ we have $b\land c=\bot$. Indeed, by item 2 of Definition \ref{def:ordered:multiset} and what was mentioned above, if $b\land c\neq\bot$, then either $b\prec c$ or $c\prec b$. Hence, they cannot both be maximal.

\smallskip

Fix $x\in \mathsf{Min}_i(\mb{A}){\downarrow}$.  
Let us prove by induction on the size of $S$ that 
for any  $S\subseteq \mathrm{mb}(a)$,
\begin{align}
\mu_i \left( \bigvee_{b\in S}x\land b \right) = \sum_{b\in S}\mu_i(x\land b).
\label{eq:proof:mu-i-a:1}
\end{align}

\texttt{Base case : $|S|=0$.}
Assume that $S=\varnothing$. Then, we trivially have that 
\begin{align*}
\mu_i(\bigvee_{b\in S}x\land b)=\mu_i(\bot)=0=\sum_{b\in S}\mu_i(x\land b).
\tag{$\mathtt{ IH_0}$}
\end{align*}

\texttt{Induction step : $\mathtt{IH_n} \Rightarrow \mathtt{IH_{n+1}}$.}
Assume that, for any set $S'$ that contains exactly $n$ elements, we have
\begin{align*}
\mu_i(\bigvee_{b'\in S'}x\land b') = \sum_{b'\in S'}\mu_i(x\land b').
\tag{$\mathtt{ IH_n}$}
\label{align:proof:muia:IHn}
\end{align*} 
 
Let $S$ contain exactly $n+1$ elements, 
$S' \subset S$  contain exactly $n$ elements, and
$S = S' \cup \{c\}$.
Let us prove $\mathtt{IH}_{n+1}$:
 \begin{align*}
 & \quad \; \mu_i \left( \bigvee_{b\in S}x\land b \right)
 \\
 & = \mu_i \left( (x \land c) \vee \bigvee_{b'\in S'}(x\land b') \right)
\tag{$S = S' \cup \{c\}$} 
 \\
 & = \mu_i(x \land c) + \mu_i \left( \bigvee_{b'\in S'}x\land b' \right) - \mu_i \left( (x \land c) \wedge \bigvee_{b'\in S'}(x\land b') \right)
 \tag{$\mu_i$ is an $i$-premeasure}
\\
 & = \mu_i(x \land c) + \mu_i \left( \bigvee_{b'\in S'}x\land b' \right) - \mu_i\left( \bigvee_{b'\in S'} x \land c \wedge x\land b' \right)
 \tag{$\wedge$ distributes over $\vee$}
\\
 & = \mu_i(x \land c) + \mu_i \left( \bigvee_{b'\in S'}x\land b' \right) - \mu_i( \bot )
 \tag{$c\neq b'$ implies $c\wedge b'= \bot$}
\\
 & = \mu_i(x \land c) + \sum_{b'\in S'}\mu_i(x\land b')
 \tag{$\mu_i( \bot )=0$ and \eqref{align:proof:muia:IHn}}
 \\
 & = \sum_{b\in S}\mu_i(x\land b)
 \tag{$S = S' \cup \{c\}$}
 \end{align*}
By induction, for any $x\in \mathsf{Min}_i(\mb{A}){\downarrow}$, 
we have
$
\mu_i\left(\bigvee_{b\in\mathrm{mb}(a)}x\land b \right)  =  \sum_{b\in\mathrm{mb}(a)}\mu_i(x\land b).
$

\medskip

Since $\mathrm{mb}(a)$ denotes the set of the $\prec$-maximal elements of $(\bfPhi\ \cap \downarrow\!\! a)\setminus\{a\}$,
we have that $\bigvee_{b\in\mathrm{mb}(a)}x\land b\leq x\land a$. 
By monotonicity of $\mu_i$, we get that 
$$\sum_{b\in\mathrm{mb}(a)}\mu_i(x\land b) =  \mu_i\left(\bigvee_{b\in\mathrm{mb}(a)}x\land b \right) \leq \mu_i(x\land a).$$ 
Hence, $\mu^a_i(x)\geq 0$ for any $x\in \mathsf{Min}_i(\mb{A}){\downarrow}$ as required.

\medskip

\texttt{Proof of item \ref{def:epAlg:two:monotone}.}
We want to  show that $\mu^a_i$ is order-preserving. 
Using $\eqref{eq:proof:mu-i-a:1}$ and the fact that $\wedge$ distributes over $\vee$, we get that: for any $x\in \mathsf{Min}_i(\mb{A}){\downarrow}$,
\begin{align}
\sum_{b\in\mathrm{mb}(a)}\mu_i \left( x\land b \right) =
\mu_i\left(\bigvee_{b\in\mathrm{mb}(a)}x\land b \right) =
\mu_i\left(x\land\bigvee_{b\in\mathrm{mb}(a)}b \right).
\label{eq:proof:mu-i-a:2}
\end{align}
Fix $x,y  \in \mathsf{Min}_i(\mb{A}){\downarrow}$
such that $x\leq y$. 
Notice that 
$\bigvee_{b\in\mathrm{mb}(a)}b \leq a$ and
$x\land a\land y =x$. 
Furthermore, $x\land a\leq y\land a$ and $y\land(\bigvee_{b\in\mathrm{mb}(a)} b)\leq y\land a$. Hence $(x\land a)\lor(y\land(\bigvee_{b\in\mathrm{mb}(a)} b))\leq y\land a$.  
From this we can deduce that: 
\begin{align*}
& \quad \quad \;\; (x\land a)\lor \left(y\land\left(\bigvee_{b\in\mathrm{mb}(a)} b\right) \right) \leq y\land a
\\
& \Rightarrow \quad \mu_i \left((x\land a)\lor \left( y\land\bigvee_{b\in\mathrm{mb}(a)} b \right) \right) \leq\mu_i(y\land a) 
\tag{$\mu_i$ is order-preserving}
\\
& \Leftrightarrow \quad
\mu_i(x\land a)+
\mu_i \left( y\land\bigvee_{b\in\mathrm{mb}(a)} b \right)
-\mu_i \left(x\land a\land y\land\bigvee_{b\in\mathrm{mb}(a)}  b\right)\leq\mu_i(y\land a) 
\tag{$\mu_i$ is an $i$-premeasure}
\\
& \Leftrightarrow\quad
\mu_i(x\land a)+
\mu_i \left( y\land\bigvee_{b\in\mathrm{mb}(a)} b \right)
-\mu_i \left( x \land\bigvee_{b\in\mathrm{mb}(a)} b \right)
\leq\mu_i(y\land a) 
\tag{$x\land a\land y=x$}
\\
& \Leftrightarrow\quad
\mu_i(x\land a)-
\mu_i \left( x \land\bigvee_{b\in\mathrm{mb}(a)} b \right)
\leq\mu_i(y\land a)-
\mu_i \left( y\land\bigvee_{b\in\mathrm{mb}(a)} b \right)
\\
& \Leftrightarrow\quad
\mu_i(x\land a)-
\sum_{b\in\mathrm{mb}(a)}\mu_i(x\land b) 
\leq\mu_i(y\land a)-\sum_{b\in\mathrm{mb}(a)}\mu_i(y\land b) 
\tag{by \eqref{eq:proof:mu-i-a:2}}
\\
& \Leftrightarrow\quad
\mu^a_i(x)\leq\mu^a_i(y) .
\end{align*}

\medskip

\texttt{Proof of item \ref{def:epAlg:two:join}.}
We need to show that  $\mu^a_i(x\lor y)=\mu^a_i(x)+\mu^a_i(y)-\mu^a_i(x\land y)$
for all $x,y\in \mathsf{Min}_i(\mb{A}){\downarrow}$. We have:

\begin{align*}
\mu^a_i(x\lor y) & =  \mu_i((x\lor y)\land a)-\sum_{b\in\mathrm{mb}(a)}\mu_i((x\lor y)\land b) \\
& =  \mu_i((x\land a)\lor(y\land a))-\sum_{b\in\mathrm{mb}(a)}\mu_i((x\land b)\lor(y\land b)) \tag{distributivity}\\
& =  (\mu_i(x\land a)+\mu_i(y\land a)-\mu_i(x\land y\land a))-\sum_{b\in\mathrm{mb}(a)}(\mu_i(x\land b)+\mu_i(y\land b)-\mu_i(x\land y\land b)) \tag{$\mu_i$ is an $i$-measure}\\
& = \mu^a_i(x)+\mu^a_i(y)-\mu^a_i(x\land y).
\end{align*}

\medskip

\texttt{Proof of item \ref{def:epAlg:two:bot}.}
If $\mathsf{Min}_i(\mb{A}){\downarrow} \neq \varnothing$, it follows from $\mu_i(\bot)=0$ (because $\mu_i$ is a $i$-premeasure) that
$\mu_i^a(\bot)=0$.
\end{proof}

\subsection*{Proof of Proposition \ref{prop: plus of coprod same as prod of plusses}}
\label{app:section4:4}

\paragraph{Proposition \ref{prop: plus of coprod same as prod of plusses}.}
For every PES-model $\mb{M}$ and any event structure $\mathcal{E}$ over $\mathcal{L}$, \[(\coprod_{\mathcal{E}}\mb{M})^+\cong \prod_{\mb{E}_{\mathcal{E}}}\mb{M}^+.\]

\begin{proof}
The proof that the supports of the two APE-structures (\autoref{def: alg probab epist structure}) can be identified is essentially the same as that of \cite[Fact 23.3]{KP13}, and is omitted. 
Recall that the basic identification between  $\mathcal{P}(\coprod_{|E|}S)$ and $\prod_{|E|}\mathcal{P}(S)$ associates every subset $X\subseteq \coprod_{|E|}S$ with the map 
\begin{align*}
g: E & \to \mathcal{P}(S)\\
e & \mapsto X_e: = \{s\in S\mid (s, e)\in X\}.
\end{align*}
Let us prove that this identification induces an identification between the maps\footnote{Refer to Definitions \ref{def: coproduct PES model} and \ref{def: complex algebra} for the definitions of the intermediate structure $\coprod_{\mathcal{E}}\mb{M}$ and of the complex algebra associated to a model.} 
\begin{align*}
(P^+_i)': \prod_{|E|} & \mathcal{P}(S)\to [0, 1] & \text{and} &&
(P_i^{\coprod})^+: \mathcal{P}(\coprod_{|E|}S)\to [0, 1].
\end{align*}

In what follows, we fix a subset $X\subseteq \coprod_{|E|}S$ in the domain of $P_i^{\coprod}$ and let $g\in \prod_{|E|}\mathcal{P}(S)$ be defined as its counterpart as discussed above. 
Recall that 
for any $s\in S$ and $e\in E$,
$\pre(e\mid s)$ denotes the value $\pre(e\mid \phi)$ for the unique $\phi\in\rmPhi$ such that $\mb{M},s\Vdash\phi$ (see Notation \ref{note:pre(e/s)}).
Then, we have:
\begin{align*}
(P_i^{\coprod})^+(X) &  = \sum_{(s, e)\in X} P_i^{\coprod}((s, e)) 
\tag{Definition \ref{def: complex algebra} on $P_i^{\coprod}$} 
\\
& = \sum_{(s, e)\in X} P_i(s)\cdot P_i( e)\cdot\pre(e\mid s) 
\tag{Definition \ref{def: coproduct PES model} } 
\\
& = \sum_{ e\in E} \sum_{s\in X_e} P_i(s)\cdot P_i( e)\cdot\pre(e\mid s) 
\tag{$X_e: = \{s\in S\mid (s, e)\in X\}$}  
\\
& = \sum_{ e\in E} P_i( e)\cdot \left( 
\sum_{s\in X_e} P_i(s)\cdot \pre(e\mid s) 
\right)
\\
& = \sum_{ e\in E} P_i( e)\cdot 
\sum_{\phi\in \rmPhi}
\left( 
\sum_{s\in X_e \cap \val{\phi}} P_i(s)\cdot \pre(e\mid s) 
\right)
\tag{$\rmPhi$ provides a partition of $\{ s\in S \mid \pre(e\mid s) \neq 0\}$}
\\
& = \sum_{ e\in E} P_i( e)\cdot \left(
\sum_{\phi\in \rmPhi}
\left( 
\sum_{s\in X_e \cap \val{\phi}} P_i(s)
\right) \cdot \pre(e\mid \phi)  
\right)
\tag{Notation \ref{note:pre(e/s)}}
\\
& = \sum_{ e\in E} P_i( e)\cdot \left(
\sum_{\phi\in \rmPhi}
 P^+_i( X_e \cap \val{\phi})
\cdot \overline{\pre}_\mb{M}(e\mid \val{\phi})  
\right)
\tag{Definition \ref{def: complex algebra}} 
\\
& = \sum_{ e\in E} P_i( e)\cdot \left(
\sum_{\phi\in \rmPhi}
 (P_i^+)^{\val{\phi}}( X_e )
\cdot \overline{\pre}_\mb{M}(e\mid \val{\phi})  
\right)
\tag{Remark \ref{rk:mb:classic:empty} : $\mathrm{mb}(\val{\phi})=\varnothing$}
\\
& = \sum_{ e\in E} \sum_{\phi\in \rmPhi} P_i( e)\cdot   (P_i^+)^{\val{\phi}} ( X_e )
\cdot \overline{\pre}_\mb{M}(e\mid \val{\phi})  
\\
& = \sum_{ e\in E} \sum_{\phi\in \rmPhi} P_i( e)\cdot   (P_i^+)^{\val{\phi}} ( g(e) )
\cdot \overline{\pre}_\mb{M}(e\mid \val{\phi})  
\\
& =  (P_i^+)' (g) \tag{Definition \ref{def: prod F over E} on $\mb{M}^+$}
\end{align*}
\end{proof}

\subsection*{Proof of Lemma \ref{lem:epistHA:i-minimal-2}} 
\label{app:section4:4.2}

\paragraph{Lemma \ref{lem:epistHA:i-minimal-2}.}
For any  epistemic Heyting algebra  $\mb{A}$ and any $a\in \mb{A}$, if $[b]\in \mathsf{Min}_i(\mb{A}^a)$, then  $\blacklozenge_i(b\wedge a)$ is the unique $i$-minimal element of $\mb{A}$ which belongs to $[b]$.

\begin{proof}
Let us first prove that $\blacklozenge_i(b\wedge a) \in [b]$. 
By assumption, $[b] \in \mathsf{Min}_i(\mb{A}^a)$,
hence
$ [b] = \blacklozenge_i^a[b] = b \wedge a = 
 \blacklozenge_i (b \wedge a ) \wedge a$.
This implies that $\blacklozenge_i (b \wedge a ) \in [b]$.

\medskip

Now, we need to show  that $\blacklozenge_i(b\wedge a)$ is an $i$-minimal element of $\mb{A}$. 
Hence, we need to prove that $\blacklozenge_i(b\wedge a)$ satisfies items \ref{def:i-minimal:item:bot}, \ref{def:i-minimal:item:fixpoint}, and \ref{def:i-minimal:item:minimal} of 
Definition \ref{def:i-minimal}.\\

\texttt{Proof of item \ref{def:i-minimal:item:bot}.}
By assumption, $[b] \in \mathsf{Min}_i(\mb{A}^a)$, hence $[b]\neq \bot$ and $b\wedge a \neq \bot$.
Since $\blacklozenge_i$ is reflexive (Definition \ref{def: epist algebra}, axiom \eqref{axiom:epist-alg:refl}),
$ \bot \neq b\wedge a\leq \blacklozenge_i(b\wedge a)$, which shows that $\blacklozenge_i(b\wedge a)\neq \bot$ as required.\\

\texttt{Proof of item \ref{def:i-minimal:item:fixpoint}.}
Since
$\blacklozenge_i$ is transitive (Definition \ref{def: epist algebra}, axiom \eqref{axiom:epist-alg:trans}), we have that
$\blacklozenge_i(b\wedge a) = \blacklozenge_i \blacklozenge_i(b\wedge a)$ as required. \\

\texttt{Proof of item \ref{def:i-minimal:item:minimal}.}
Let $c\in \mathsf{Min}_i(\mb{A})$ and $c\leq \blacklozenge_i(b\wedge a)$. 
We need to  prove that $c= \blacklozenge_i(b\wedge a)$.
To do so, we follow the following steps: 
\begin{enumerate}[(i)]
\item we prove that $[b]=[c]$,
\item we show that $c\wedge a \neq \bot$, 
\item we prove that $\blacklozenge_i(b\wedge a)$.
\end{enumerate}

\texttt{Step (i).} From the assumptions that $c\leq \blacklozenge_i(b\wedge a)$ and that $[b] = \blacklozenge_i^a[b]$, we get that $c\wedge a\leq \blacklozenge_i(b\wedge a)\wedge a = b\wedge a$, which proves that $[c]\leq [b]$. 

\smallskip

\texttt{Step (ii).}  Since $c\leq\lozenge_i(b\land a)$, we have that $c\leq\lozenge_i a$, that is $c=c\land\lozenge_i a$. This gives the following chain of equalities: 
$$c= c\land\lozenge_i a=\lozenge_ic\land\lozenge_ia=\lozenge_i(\lozenge_i c\land a).$$ 
The last equality is true in all monadic Heyting algebras (see e.g.\ \cite[Definition 1]{bezhanishvili1998varieties}). Now, since $\lozenge_i c=c$, we get that $c=\lozenge_i(c\land a)$, which implies
$\lozenge_i(c\land a)\neq\bot$ and $c\land a\neq\bot$. 

\smallskip

\texttt{Step (iii).}  By Lemma \ref{lem:epistHA:i-minimal-1},
$[c]\in \mathsf{Min}_i(\mb{A}^a)$.
By the $i$-minimality of $[b]$, we get $[b] = [c]$, that is $b\wedge a = c\wedge a$. Hence $\blacklozenge_i(b\wedge a) = \blacklozenge_i(c\wedge a)\leq \blacklozenge_i(c) = c$, which, together with the assumption that $c\leq \blacklozenge_i(b\wedge a)$, proves that $\blacklozenge_i(b\wedge a) = c$, as required. This finishes the proof that $\blacklozenge_i(b\wedge a)$ is an $i$-minimal element of $\mb{A}$. 

\bigskip

To show the uniqueness, let $c_1, c_2\in [b]$ and assume that both $c_1$ and $c_2$ are $i$-minimal elements of $\mb{A}$. Then $c_1\wedge a = c_2\wedge a$, and hence $\blacklozenge_i(c_1\wedge a) = \blacklozenge_i(c_2\wedge a)$. Reasoning as above, one can show that $\bot\neq \blacklozenge_i(c_j\wedge a)\leq c_j$ and $\blacklozenge_i(c_j\wedge a)$ is a fixed point of $\blacklozenge_i$ for $1\leq j\leq 2$. Hence,  the $i$-minimality of $c_j$ implies that $\blacklozenge_i(c_j\wedge a) =  c_j$. Thus, the following chain of identities holds:\[c_1 = \blacklozenge_i(c_1\wedge a) = \blacklozenge_i(c_2\wedge a) = c_2.\]
\end{proof}

\subsection*{Proof of Proposition \ref{prop:APEstruct-EoverF}} 
\label{app:section4:5}

\paragraph{Proposition \ref{prop:APEstruct-EoverF}.}
For any APE-structure $\mathcal{F}$ and any event structure $\mb{E}$ over the support of $\mathcal{F}$, the tuple
$\mathcal{F}^{\mb{E}}= (\mb{A}^{\mb{E}}, (\mu_i^{\mb{E}})_{i\in \Ag})$ is an APE-structure.

\begin{proof}
Let $\mb{E} = (E, (\sim_i)_{i\in\Ag}, (P_i)_{i\in \Ag}, \bfPhi, \overline{\pre})$ be an event structure and 
$ \mathcal{F}:=  \left( \mb{A}, (\mu_i)_{i\in \Ag} \right) $ be an APE-structure.
To prove that $\mathcal{F}^{\mb{E}}$ is an APE-structure (see Definition \ref{def: alg probab epist structure}), we need to prove that 
$\mb{A}^{\mb{E}}$ is an  epistemic Heyting algebra (see Definition
\ref{def:epist-Heyting-algebra}), and that
each map $\mu_i^\mb{E}$ is an $i$-measure on $\mb{A}^\mb{E}$. 
By Proposition \ref{prop:pseudoisep}, $\mb{A}^{\mb{E}}$ is an epistemic Heyting algebra.
Hence, it remains to prove that, for each $i\in \Ag$, the map $\mu_i^\mb{E}$ is an $i$-measure (see Definition \ref{def:measures}), i.e.\ we need to prove that:
\begin{enumerate}
	    \item \label{proof:APEstruct-EoverF:epAlg:two:domain}
	    $\mathsf{dom}(\mu_i^\mb{E})=\mathsf{Min}_i(\mb{A}^\mb{E}){\downarrow}$;
		\item \label{proof:APEstruct-EoverF:epAlg:two:monotone}
$\mu_i^\mb{E}$ is order-preserving; 
		\item \label{proof:APEstruct-EoverF:epAlg:two:join}
for every $a\in \mathsf{Min}_i(\mb{A}^\mb{E})$ and all $b, c\in a{\downarrow}$, it holds that $\mu_i^\mb{E}(b\vee c) = \mu_i^\mb{E}(b)+ \mu_i^\mb{E}(c) - \mu_i^\mb{E}(b\wedge c)$;
		\item \label{proof:APEstruct-EoverF:epAlg:two:bot}  $\mu_i^\mb{E}(\bot)=0$ if $\mathsf{dom}(\mu_i^\mb{E})\neq\varnothing$;
		\item \label{proof:APEstruct-EoverF:epAlg:two:fixedpoints}
$\mu_i^\mb{E}(a) = 1$ for every $a\in \mathsf{Min}_i(\mb{A}^\mb{E})$;
		\item \label{proof:APEstruct-EoverF:epAlg:two:nonzero} for every $a\in \mathsf{Min}_i(\mb{A}^\mb{E})$ and all $b, c\in a{\downarrow}$ such that $b<c$, it holds that $\mu_i^\mb{E}(b)<\mu_i^\mb{E}(c)$.
	\end{enumerate}

\texttt{Proof of \eqref{proof:APEstruct-EoverF:epAlg:two:domain}.}
This condition is satisfied by definition.

\medskip

The remaining items, are trivially satisfied if 
the domain of $\mu_i^\mb{E}$ is empty. For the remaining of the proof,
let us assume that the domain of $\mu_i^\mb{E}$ is non-empty.

\medskip

\texttt{Proof of item \eqref{proof:APEstruct-EoverF:epAlg:two:monotone}.}
The definition of $\mu'_i$ (see Definition \ref{def: prod F over E}), the Proposition \ref{prop: muai properties} and the fact that, if $\overline{\pre}(e\mid a)\neq 0$, then $a\leq\overline{pre}(e)$ (see Definition of $\overline{pre}$ \eqref{def:overlinePRE}), imply that  $\mu'_i(g)=\mu'_i(g\land\overline{pre})$.
Assume that $[g_1]\leq[g_2]\leq[f_{e,a}]$. This means  that $g_1\land\overline{pre}\leq g_2\land \overline{pre}$. Since $\mu'_i$ is
an $i$-premeasure (\autoref{prop:intermediate-ha:i-premeasure}), it 
is monotone. Hence,   $\mu'_i(g_1)=\mu'_i(g_1\land\overline{pre})\leq\mu'_i(g_2\land \overline{pre})=\mu'_i(g_2)$. This implies that $$\frac{\mu'_i(g_1)}{\mu'_i(f_{e,a})}\leq\frac{\mu'_i(g_2)}{\mu'_i(f_{e,a})}$$ that is, $\mu_i^{\mb{E}}([g_1])\leq\mu_i^{\mb{E}}([g_2])$. 

\medskip

\texttt{Proof of item \eqref{proof:APEstruct-EoverF:epAlg:two:join}.}
Let $[g_1]$ and $[g_2]$ in $\mathcal{F}^{\mb{E}}$ such that $[g_1] \leq [f_{e,a}]$ and $[g_2] \leq [f_{e,a}]$. We have:
\begin{align*}
& \quad \;\: \mu_i^{\mb{E}}([g_1]\lor[g_2]) \\
&=\frac{\mu'_i((g_1\land \overline{pre})\lor(g_2\land \overline{pre}))}{\mu'_i(f_{e,a})}\\
& = \frac{\mu'_i(g_1\land \overline{pre})+\mu'_i(g_2\land \overline{pre})-\mu'_i((g_1\land g_2)\land\overline{pre})}{\mu'_i(f_{e,a})}
\tag{Proposition \ref{prop:intermediate-ha:i-premeasure}. $\mu_i'$ is an $i$-premeasure}
\\
& = \frac{\mu'_i(g_1\land \overline{pre})}{\mu'_i(f_{e,a})}+\frac{\mu'_i(g_2\land \overline{pre})}{\mu'_i(f_{e,a})}-\frac{\mu'_i((g_1\land g_2)\land \overline{pre})}{\mu'_i(f_{e,a})}\\
& = \frac{\mu'_i(g_1)}{\mu'_i(f_{e,a})}+\frac{\mu'_i(g_2)}{\mu'_i(f_{e,a})}-\frac{\mu'_i(g_1\land g_2)}{\mu'_i(f_{e,a})}\\
& = \mu_i^{\mb{E}}([g_1])+\mu_i^{\mb{E}}([g_2])-\mu_i^{\mb{E}}([g_1\land g_2]).
\end{align*}

\medskip

\texttt{Proof of Items \eqref{proof:APEstruct-EoverF:epAlg:two:bot} and
\eqref{proof:APEstruct-EoverF:epAlg:two:fixedpoints}.}
Trivial.

\medskip

\texttt{Proof of item \eqref{proof:APEstruct-EoverF:epAlg:two:nonzero}.}
Recall that,  if $[g]\neq\bot$, then $\mu_i^\mb{E}([g])>0$ (see Claim in Lemma \ref{lemma:pseudomuwelldefined}). Let $\bot\neq[g]<[h]$. 
The monotonicity of  the $\mu^a_i$ guarantees that, for all $e\in E$ and $a\in\rmPhi$, we have  
$$P_i(e)\cdot\mu^a_i(g(e))\cdot\overline{\pre}(e | a) \quad \leq \quad  P_i(e)\cdot\mu^a_i(h(e))\cdot\overline{\pre}(e | a).$$ 
Furthermore, since $[g]<[h]$, there  exists an $e\in E$ such that the set $$\{\ a\in\rmPhi\quad \mid\quad \overline{\pre}(e | a)>0\text{ and }g(e)\land a< h(e)\land a\ \}$$ is non-empty. 
Since $\rmPhi$ is finite, the order $\prec$ is well-founded and the aforementioned set contains at least one minimal element. Let $a_0$ be such a minimal element. From Definition \ref{def:algebraic event}, we have that, $\overline{\pre}(e | b)>0$ for all $b\in\rmPhi$ with  $b \prec a_0$. 
By the minimality of $a_0$, we have that $g(e)\land b=h(e)\land b$  for all such $b\prec a_0$. 
Hence, 
$$\sum_{b\in\mathrm{mb}(a_0)}\mu_i(g(e)\land b)  = \sum_{b\in\mathrm{mb}(a_0)}\mu_i(h(e)\land b) $$
where  $\mathrm{mb}(a)$ denotes the multiset of the $\prec$-maximal elements of $\bfPhi$ $\prec$-below $a$ (see \autoref{def:mua-mba}).
Since  $\mathcal{F}$ is an APE-structure, $\mu_i$ is strictly monotone. 
Hence, 
$g(e)\land a_0 < h(e)\land a_0$ implies that
\begin{align*}
\mu_i^{a_0}(g(e))  & =  \mu_i( g(e) \land a_0)-\sum_{b\in\mathrm{mb}(a_0)}\mu_i(g(e)\land b)
\\
& < \mu_i( h(e) \land a_0)-\sum_{b\in\mathrm{mb}(a_0)}\mu_i(h(e)\land b)\\
 &  = 
\mu_i^{a_0}(h(e)) .
\end{align*}
Hence, for some $e\in E$ and $a\in\rmPhi$, we have  
$$P_i(e)\cdot\mu^a_i(g(e))\cdot\overline{\pre}(e | a)< P_i(e)\cdot\mu^a_i(h(e))\cdot\overline{\pre}(e | a).$$ 
The inequality above, the definition of $\mu_i'$ (see \autoref{def: prod F over E}) and the monotonicity of $\mu_i'$ (see \autoref{prop:intermediate-ha:i-premeasure}) imply that 
$\mu'_i([g])<\mu'_i([h])$, which in turn implies that $\mu_i^\mb{E}([g])<\mu_i^\mb{E}([h])$.
\end{proof}

\subsection*{Proof of Lemma \ref{lem:duality:probabilities}}
\label{app:section4:6}

\paragraph{Lemma \ref{lem:duality:probabilities}.}
For any PES-model $\mb{M}$ and any event structure $\mathcal{E}$ over $\mathcal{L}$, $$(P_i^+)^{\mb{E}_\mathcal{E}} \cong (P_i^\mathcal{E})^+.$$

\begin{proof}
Using Definitions \ref{def: update PES model}
and \ref{def: complex algebra}, we get that:
for any $X \in \mathsf{Min}_i((\mb{M}^\mathcal{E})^+){\downarrow} $,
$$(P_i^\mathcal{E})^+ (X) =
\sum_{(s,e)\in X}
\frac{P_i(e) \cdot P_i(s) \cdot \pre(e\mid s)}{
\sum_{(s',e') \sim_i (s,e)} 
P_i(e') \cdot P_i(s') \cdot \pre(e'\mid s')}.
$$
By using Definitions \ref{def: complex algebra} and 
\ref{def: updated APE structure F of E}, we get that:
for any $[g]\in \mathsf{Min}_i((\mb{M}^+)^\mb{E}){\downarrow}$ ,
$$(P_i^+)^{\mb{E}_\mathcal{E}} ( [g] )
= \frac{\sum_{e\in E} \sum_{\phi \in \rmPhi} P_i(e) \cdot 
(P_i^+)^{\val{\phi}}(g(e)) \cdot \overline{\pre}(e\mid \val{\phi})}{
\sum_{e\in E} \sum_{\phi\in \rmPhi} P_i(e) \cdot 
(P_i^+)^{\val{\phi}}(f(e)) \cdot \overline{\pre}(e\mid \val{\phi})}.
$$
Let $X \in \mathsf{Min}_i((\mb{M}^\mathcal{E})^+){\downarrow} $.
Following the notation introduced in the proof of \autoref{prop: plus of coprod same as prod of plusses}, let 
$[g]\in \mathsf{Min}_i((\mb{M}^+)^\mb{E}){\downarrow}$ be the map  such that
\begin{align*}
g: E & \to \mathcal{P}(S)\\
e & \mapsto X_e: = \{s\in S\mid (s, e)\in X\}.
\end{align*} 
Notice that $X $ is a subset of one of the $i$-equivalence classes of $(\mb{M}^\mathcal{E})^+$,
hence $g = g \wedge \overline{pre}$ and $[g] \leq [f]$ for some $[f] \in \mathsf{Min}_i((\mb{M}^+)^\mb{E}){\downarrow}$.
Let 
$$[X]_i := \{ (s,e) \mid \exists (s',e')\in X, \ (s,e)\sim_i (s',e')\}.$$ 
We can easily see that $([X]_i)_e = f(e)$ where $f$ is the canonical representative of $[f]$.
We have:
\begin{align*}
& \quad \; \: (P_i^\mathcal{E})^+ (X) 
\\
& =
\sum_{(s,e)\in X}
\frac{P_i(e) \cdot P_i(s) \cdot \pre(e\mid s)}{
\sum_{(s',e') \sim_i (s,e)} 
P_i(e') \cdot P_i(s') \cdot \pre(e'\mid s')}
\\
& =
\frac{\sum_{(s,e)\in X} P_i(e) \cdot P_i(s) \cdot \pre(e\mid s)}{
\sum_{(s',e') \in [X]_i} 
P_i(e') \cdot P_i(s') \cdot \pre(e'\mid s')}
\tag{$X $ is a subset of the equivalence classes $ [X]_i$}
\\
& = 
\frac{\sum_{e\in E}  P_i(e) \cdot \sum_{s\in X_e} P_i(s) \cdot \pre(e\mid s)}{
\sum_{e'\in E} P_i(e') \cdot 
\sum_{s' \in f(e')} P_i(s') 
\cdot \pre(e'\mid s')}
\tag{$([X]_i)_e = f(e)$}
\\
& = \frac{\sum_{e\in E} P_i(e) \cdot
\sum_{\phi \in \rmPhi} \pre(e\mid \phi) \cdot
\sum_{s\in g(e) \cap \val{\phi}}   P_i(s)  }{
\sum_{e'\in E} P_i(e') \cdot 
\sum_{\phi\in\rmPhi} \pre(e'\mid \phi) \cdot
\sum_{s' \in f(e') \cap \val{\phi}} 
 P_i(s')}
 \tag{In the classical case, $\rmPhi$ gives a partition of $S^\mathcal{E}$}
\\
& = \frac{\sum_{e\in E} P_i(e) \cdot
\sum_{\phi \in \rmPhi} \pre(e\mid \phi) \cdot 
(P_i^+)(g(e)\cap\val{\phi})  }{
\sum_{e'\in E} P_i(e') \cdot 
\sum_{\phi\in\rmPhi} \pre(e'\mid \phi) \cdot
(P_i^+)(f(e) \cap\val{\phi})}
\tag{\autoref{def:Intuitionist complex algebra}}
\\
& = \frac{\sum_{e\in E} P_i(e) \cdot
\sum_{\phi \in \rmPhi} \pre(e\mid \phi) \cdot (P_i^+)^{\val{\phi}}(g(e))  }{
\sum_{e'\in E} P_i(e') \cdot 
\sum_{\phi\in\rmPhi} \pre(e'\mid \phi) \cdot
(P_i^+)^{\val{\phi}}(f(e) )}
\tag{Remark \ref{rk:mb:classic:empty} : $\mathrm{mb}(\val{\phi})=\varnothing$}
\\
& = \frac{\sum_{e\in E} \sum_{\phi \in \rmPhi} P_i(e) \cdot 
(P_i^+)^{\val{\phi}}(g(e)) \cdot \overline{\pre}(e\mid \val{\phi})}{
\sum_{e\in E} \sum_{\phi\in \rmPhi} P_i(e) \cdot 
(P_i^+)^{\val{\phi}}(f(e)) \cdot \overline{\pre}(e\mid \val{\phi})}
\\
& = 
(P_i^+)^{\mb{E}_\mathcal{E}} ( [g] ).
\end{align*}

\end{proof}

\section{Proof of the Soundness of IPDEL}
\label{Appendix:soundness}
\begin{proposition}[Soundness]
\label{lem:soundness-IPDEL}
The axiomatization for IPDEL given in Table \ref{table:IPDEL} is sound w.r.t.\ APE-models.
\end{proposition}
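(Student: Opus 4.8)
The plan is to show that each axiom of Table~\ref{table:IPDEL} is interpreted as $\top^{\mb{A}}$ in every APE-model $\mathcal{M}$ and that each inference rule preserves this property; by the recursive definition of $\val{\cdot}_{\mathcal{M}}$ this is exactly soundness. The axioms fall into blocks that I would treat separately. H1--H9 are valid because $\mb{A}$ is a Heyting algebra. The static modal axioms M1--M7 and E are valid because the support $\mb{A}$ of an APE-structure is an epistemic Heyting algebra (Definition~\ref{def: epist algebra}), so each of them is just the algebraic inequality or identity asserted by one of the defining conditions \eqref{axiom:epist-alg:refl}--\eqref{axiom:epist-alg:boolean}. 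Among the rules, MP is handled by the Heyting order, Nec$_i$ by $\top\leq\Box_i\top$ and monotonicity of $\Box_i$, Sub$_\mu$ by order-preservation of each $\mu_i$ on $\mathsf{Min}_i(\mb{A}){\downarrow}$, and Nec$_\alpha$ by the observation that $(\pi_e\circ\iota)(\top)=\overline{\pre}(e)$ together with the clause $\val{[\mathcal{E},e]\varphi}_{\mathcal{M}}=\val{\overline{\pre}(e)}_{\mathcal{M}}\rightarrow^{\mb{A}}(\pi_e\circ\iota)(\val{\varphi}_{\mathcal{M}^{\mb{E}_\mathcal{E}}})$. The rule SubEq follows once one verifies that the interpretation is compositional; this is routine but needs a little care, since $\iota$ is not a homomorphism, what is used being that $\pi_e\circ\iota$ commutes with the Heyting operations on the elements it is applied to, which rests on the identity $(b\to c)\wedge a=((b\wedge a)\to(c\wedge a))\wedge a$ valid in every Heyting algebra. (As flagged by the footnotes, the interpretation of formulas and the associated event structures $\mb{E}_\mathcal{E}$ are really defined by simultaneous induction; this does not affect the argument.)

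For the blocks governing probabilities I would first isolate the following fact: for an $i$-probability formula $\theta$, the element $\val{\theta}_{\mathcal{M}}$ is a join of $i$-minimal elements, hence lies in the Boolean subalgebra $\lozenge_i\mb{A}$ (Fact~\ref{fact:basicEHA}), and $\Box_i$ restricts to the identity on $\lozenge_i\mb{A}$ (from M2 and M5). This makes P5 immediate --- the implication is in fact an equivalence --- and, since $\mathsf{Min}_i(\mb{A})$ is the set of atoms of $\lozenge_i\mb{A}$ with $\bigvee\mathsf{Min}_i(\mb{A})=\top$, it shows that an $i$-probability formula denotes $\top$ precisely when the real inequality it expresses holds with $\mu_i(\chi)$ replaced by $\mu_i(\val{\chi}_{\mathcal{M}}\wedge a)$ for every $a\in\mathsf{Min}_i(\mb{A})$. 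In this cellwise form N0--N6 are elementary arithmetic of reals (using also that joins of cell-sets compute disjunctions), and P1, P2, P3 are exactly the cellwise instances of $\mu_i(\bot)=0$, $\mu_i(a)=1$ and inclusion--exclusion, all from Definition~\ref{def:measures}. The one genuinely intuitionistic point is P4: unwinding both sides at a cell $a\in\mathsf{Min}_i(\mb{A})$, one must show that $a\leq\Box_i(\val{\phi}_{\mathcal{M}}\to\val{\psi}_{\mathcal{M}})$ together with $\mu_i(\val{\phi}_{\mathcal{M}}\wedge a)=\mu_i(\val{\psi}_{\mathcal{M}}\wedge a)$ is equivalent to $a\leq\Box_i(\val{\phi}_{\mathcal{M}}\leftrightarrow\val{\psi}_{\mathcal{M}})$; the hard direction uses the strict monotonicity of $i$-measures on $a{\downarrow}$ (item~\ref{def:epAlg:two:nonzero} of Definition~\ref{def:measures}) to turn equality of measures under the given entailment into equality of the $\Box_i$-projections.

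The core of the argument is the block of reduction axioms I1--I18. Writing $\mb{E}=\mb{E}_{\mathcal{E}}$ and using $\val{[\mathcal{E},e]\varphi}_{\mathcal{M}}=\val{\overline{\pre}(e)}_{\mathcal{M}}\rightarrow^{\mb{A}}(\pi_e\circ\iota)(\val{\varphi}_{\mathcal{M}^{\mb{E}}})$ together with the dual clause for $\langle\mathcal{E},e\rangle$, it suffices to compute $(\pi_e\circ\iota)(\val{\varphi}_{\mathcal{M}^{\mb{E}}})$ by induction on $\varphi$ and to match it with the right-hand sides. For $\varphi=p$ this unpacks the definition of $v^{\mathcal{E}}$; for $\bot,\top$ and the propositional connectives it uses $\iota([g])=g\wedge\overline{\pre}$ and the congruence identity above, so that $\pi_e\circ\iota$ distributes over $\wedge$ and $\vee$ and transforms $\to$ as I11 and I12 demand; for $\lozenge_i$ and $\Box_i$ it uses the formulas for $\lozenge'_i,\Box'_i$ on $\prod_{\mb{E}}\mb{A}$ (Definition~\ref{def:support intermediate APE}) and runs parallel to the corresponding step in \cite{KP13,MPS14}.

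The decisive case --- and the step I expect to be the main obstacle --- is the $i$-probability formula, i.e.\ axioms I17 and I18. Here one expands $\val{\alpha\mu_i(\psi)\geq\beta}_{\mathcal{M}^{\mb{E}}}$ as a join over the $i$-minimal elements $[f_{e',a}]$ of $\mb{A}^{\mb{E}}$, substitutes $\mu_i^{\mb{E}}([g])=\mu'_i(g)/\mu'_i(f_{e',a})$ (Definition~\ref{def: updated APE structure F of E}) and the definition of $\mu'_i$ (Definition~\ref{def: prod F over E}), evaluates $\mu'_i(f_{e',a})$ using that $f_{e',a}(e'')=a$ for $e''\sim_i e'$ and $\bot$ otherwise together with the arithmetic of the maps $\mu_i^a$ (Definition~\ref{def:mua-mba}, Proposition~\ref{prop: muai properties}), and clears the common denominator, which turns the cellwise inequality into exactly $C+D\geq0$ with $C$ and $D$ as stated. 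This is the APE-structure-level analogue of Lemma~\ref{lem:duality:probabilities} and of the chain of identities proving Proposition~\ref{prop: plus of coprod same as prod of plusses}; the full bookkeeping is deferred to Appendix~\ref{Appendix-soundness}.
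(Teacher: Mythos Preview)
Your proposal is correct and follows essentially the same approach as the paper: the static axioms and the probability axioms are handled via the cellwise argument you describe (using $\bigvee\mathsf{Min}_i(\mb{A})=\top$, Fact~\ref{fact:basicEHA}, and strict monotonicity for P4), and the reduction axioms via the inductive computation of $(\pi_e\circ\iota)(\val{\varphi}_{\mathcal{M}^{\mb{E}}})$ that you sketch. The only organizational difference is that the paper packages this last induction by first introducing explicit syntactic auxiliary maps $f(\psi,e)$ and $F(\psi)$ (Definitions~\ref{definition: fonestep} and~\ref{def:the map F}) and proving Lemmas~\ref{lemma: newf(psi)} and~\ref{lemma: newf(psi) second property}, after which each of I1--I18 is a short unfolding; your direct semantic induction amounts to the same computation.
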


By definition, the underlying structure of an APE-structures is an epistemic Heyting algebra. Hence, it satisfies the axioms of intuitionistic propositional logic and the axioms M1 -- M7 and E for static modalities.

\subsection*{Axioms for inequalities.} As discussed in Remark \ref{rk:basicEHA}, it is the case that $\bigvee\mathsf{Min}_i(\mb{A})=\top$ for every epistemic Heyting algebra $\mb{A}$. This implies that axioms N0 and N5 are satisfied in every APE-model. Axioms N1, N2, N3, N4 and N6 are also satisfied because if the valuation of their antecedent is above any $i$-minimal element $a$ then so will be the valuation of their succedent. \\

\subsection*{Axioms for probabilities.} The fact that axioms P1-P3 are satisfied in every  APE-model is shown similarly as axiom N0. Since $\lozenge_i\mb{A}$ is a subalgebra of $\mathbb{A}$ for every epistemic Heyting algebra $\mathbb{A}$, it is the case that $\val{\varphi}_\mathcal{M}\in\lozenge_i\mathbb{A}$ for every $i$-probability formula $\varphi$ and every APE-model based on $\mathbb{A}$. Hence,  \autoref{lem:epist-Hey-alg:diamond-A} implies the satisfiability of P5. 

Finally, let us show that P4 is satisfied in every APE-model based on $\mathbb{A}$. For the right to left direction, as discussed in Remark \ref{rk:basicEHA}, every element of $\lozenge_i\mathbb{A}$ can be written as a union of $i$-minimal elements and therefore $\val{\Box_i(\varphi\leftrightarrow\psi)}=\bigvee\{a\in\mathsf{Min}_i(\mb{A})\mid a\land\val{\varphi}=a\land\val{\psi}\}$. This of course implies that $\bigvee\{a\in\mathsf{Min}_i(\mb{A})\mid a\land\val{\varphi}=a\land\val{\psi}\}\leq\val{\mu_i(\varphi)=\mu_i(\psi)}$. 
As for the left to right direction, we have that $\val{\Box_i(\varphi\to\psi)\land(\mu(\varphi)=\mu(\psi))}=\bigvee\{a\in\mathsf{Min}_i(\mb{A})\mid a\land\val{\varphi}\leq a\land\val{\psi}\textrm{ and }\mu_i(a\land\val{\varphi})=\mu_i(a\land\val{\psi})\}$. 
By the strict monotonicity of the $i$-measure $\mu_i$, the following holds, as required. $$\val{\Box_i(\varphi\to\psi)\land(\mu(\varphi)=\mu(\psi))}\leq\bigvee\{a\in\mathsf{Min}_i(\mb{A})\mid a\land\val{\varphi}=a\land\val{\psi}\}=\val{\Box_i(\varphi\leftrightarrow\psi)}.$$

\subsection*{Reduction axioms}

In this section, we aim at proving the soundness of the reduction axioms as stated in 
Lemma \ref{lem:soundness-IPDEL}. 
To do so we need to define two maps $F$ and $f$ as follows.

\subsubsection*{Preliminary results}

Throughout this section, we let  $\mb{A}$ denote the complex algebra of a model $\mc{M}$ and  $\mathcal{E}$ denote an event structure. Recall the definition of the event structure $\mathbb{E}_\mathcal{E}$ (cf.\  \Cref{def:induced:event:structure}). Then we define
a map $F : \mc{L} \rightarrow \prod_{\mathbb{E}_\mathcal{E}}\mb{A}$ that associates  an element in $\prod_{\mathbb{E}_\mathcal{E}}\mb{A}$ to each formula.
We want $F$ (Definition \ref{def:the map F}) to be the map such that 
\[\val{\psi}_{\mathcal{M}^{\mb{E}_\mathcal{E}}} = [F(\psi)].\]
$\val{\psi}_{\mathcal{M}^{\mb{E}_\mathcal{E}}} $ is the evaluation of the formula $\psi$ in the updated algebra $\mb{A}^{\mb{E}_\mathcal{E}}$ corresponding to the updated model $\mathcal{M}^{\mb{E}_\mathcal{E}}$.
Hence, $F(\psi)$ is a representative of the equivalence class 
$\val{\psi}_{\mathcal{M}^{\mb{E}_\mathcal{E}}} $ in the product algebra $\mb{A}^\Pi$.

Since $F(\psi) \in \mb{A}^\Pi$, $F(\psi)$ is a tuple of elements of the algebra $\mb{A}$. To aid the computation, we define  the map 
$f : \mc{L} \times E \rightarrow \mc{L}$ (see Definition \ref{definition: fonestep}) such that $F(\psi)(e)=\val{f(\psi,e)}_\mathcal{M}$.
This means that $f(\psi,e)$ is a formula such that its evaluation 
$\val{f(\psi,e)}_\mathcal{M}$ in the algebra $\mb{A}$ is equal to the $e^{th}$ coordinate of the tuple $F(\psi)$.
We first prove that the maps $F$ and $f$ have the desired properties in Lemma \ref{lemma: newf(psi)}.
Then we prove the key lemma \ref{lemma: newf(psi) second property} that we will use to prove the reduction axioms (see Section \ref{ssec:Appendix:soundness}).

\begin{definition}\label{definition: fonestep}
The map $f \ : \ \mathcal{L} \times E \rightarrow \mathcal{L}$ is defined by recursion as follows: for every $\psi\in \mathcal{L}$ and $e\in E$, 
%
\begin{align*}
f(p,e)  = \;& \sub(e,p),
\\
f(\bot,e)  = \;& \bot,
\\
f(\top,e) = \;& \top,
\\
f(\psi_1\wedge \psi_2,e) = \;& f(\psi_1,e)\wedge f(\psi_2,e),
\\
f(\psi_1\vee \psi_2,e) = \;& f(\psi_1,e)\vee f(\psi_2,e),
\\
f(\psi_1\rightarrow \psi_2,e) = \;& f(\psi_1,e)\rightarrow f(\psi_2,e),
\\
f(\lozenge_i \psi,e) = \;& \bigvee_{e'\sim_i e}\lozenge_i(f(\psi,e')\wedge pre(e')),
\\
f(\Box_i\psi,e) = \;&\bigwedge_{e'\sim_i e}\Box_i(pre(e')\to f(\psi,e')),
\\
f(\langle\mathcal{E}', e'\rangle \psi,e) = \;& f(pre(e')\land f(\psi,e'),e),
\\
f(\left[\mathcal{E}',e'\right]\psi,e) = \;& f(pre(e')\rightarrow f(\psi,e'),e),
\\
f(\alpha\mu_i(\psi)\geq \beta,e) = \;& \alpha\sum_{
	\begin{smallmatrix}
        e'\sim_i e\\
		\phi \in \rmPhi
	\end{smallmatrix}
}
\mu^\phi_i(f(\psi,e'))\cdot P_i(e')\cdot \pre(e'\mid \phi)+\sum_{
	\begin{smallmatrix}
        e'\sim_i e\\
		\phi \in \rmPhi
	\end{smallmatrix}
}
-\beta\mu^\phi_i(\top)P_i(e') \pre(e'\mid \phi) \geq 0.
\end{align*}
\end{definition}

\begin{definition}
\label{def:the map F}
Let us define the map 
$F_{\mb{E}_\mathcal{E}} : \mc{L} \rightarrow \mb{A}^\Pi$ such that 
for every $e\in E$, the $e^{th}$ coordinate of $F_{\mb{E}_\mathcal{E}}(\psi)$ is 
equal to $\val{f(\psi,e)}_{\mc{M}}$.

For the sake readability, we will omit the subscript when it causes no confusion.
\end{definition}

\begin{lemma}
\label{lemma: newf(psi)}
For $\mathcal{M}$ and $\mathcal{E}$ as above, \[\val{\psi}_{\mathcal{M}^{\mb{E}_\mathcal{E}}} = [F(\psi)]\] where $F(\psi)(e)=\val{f(\psi,e)}_\mathcal{M}$.
\end{lemma}
\begin{proof}
The proof is by induction on the complexity of $\psi$ with $(\mathtt{IH}_\psi) \; : \;  \val{\psi}_{\mathcal{M}^{\mb{E}_\mathcal{E}}} = [F(\psi)]$. 
The statement is trivially true in the base cases and if the main connective are $\wedge$, $\vee$ or  $\rightarrow$.\\ 
If $\psi = \lozenge_i \psi'$, then 
\begin{align*}
\val{\lozenge_i\psi'}_{\mathcal{M}^{\mb{E}_\mathcal{E}}} & = \lozenge^{\mb{E}_\mathcal{E}}\val{\psi'}_{\mathcal{M}^{\mb{E}_\mathcal{E}}}\\
 & = \lozenge_i^{\mb{E}_\mathcal{E}} [F(\psi')]\tag{$\mathtt{IH}_{\psi'}$}\\
 & = [\lozenge_i^{\prod}(F(\psi')\wedge \overline{pre}_\mathcal{M})]
\end{align*}
and
\begin{align*}
\lozenge_i^{\prod}(F(\psi')\wedge \overline{pre}_\mathcal{M})(e) & =\bigvee_{e'\sim_ie}\{\lozenge_i(F(\psi')(e')\land\overline{pre}(e'))\}\tag{\autoref{def:support intermediate APE}}\\
 & = \bigvee_{e'\sim_ie}\{\lozenge_i(\val{f(\psi',e')}_{\mathcal{M}}\land\overline{pre}(e'))\}\\
 & = \val{\bigvee_{e'\sim_ie}\lozenge_i(f(\psi',e')\land pre(e'))}_\mathcal{M}\\
 & = \val{f(\lozenge_i\psi',e)}_\mathcal{M}\tag{Definition \ref{definition: fonestep}}\\
 & = F(\lozenge_i\psi')(e)
\end{align*}
If $\psi=\Box_i\psi'$, then
\begin{align*}
	\val{\Box_i\psi'}_{\mathcal{M}^{\mb{E}_\mathcal{E}}} & = \Box_i^{\mb{E}_\mathcal{E}}\val{\psi'}_{\mathcal{M}^{\mb{E}_\mathcal{E}}}\\
	& = \Box_i^{\mb{E}_\mathcal{E}} [F(\psi')]\tag{$\mathtt{IH}_{\psi'}$}\\
	& = [\Box_i^{\prod}(\overline{pre}_\mathcal{M}\to F(\psi'))]
\end{align*}
and
\begin{align*}
	\Box_i^{\prod}(\overline{pre}_\mathcal{M}\to F(\psi'))(e) & =\bigwedge_{e'\sim_ie}\{\Box_i(\overline{pre}(e')\to F(\psi')(e'))\}\tag{\autoref{def:support intermediate APE}}\\
	& = \bigwedge_{e'\sim_ie}\{\Box_i(\overline{pre}(e')\to \val{f(\psi',e')}_{\mathcal{M}})\}\\
	& = \val{\bigwedge_{e'\sim_ie}\Box_i(pre(e')\to f(\psi',e'))}_\mathcal{M}\\
	& = \val{f(\Box_i\psi',e)}_\mathcal{M}\tag{Definition \ref{definition: fonestep}}\\
	& = F(\Box_i\psi')(e).
\end{align*}
If $\psi = \alpha\mu_i( \psi')\geq \beta$, then 
\begin{align*}
& \val{\alpha\mu_i( \psi')\geq \beta}_{\mathcal{M}^{\mb{E}_\mathcal{E}}} \\
& = \bigvee \left\{[f_{e, a}] \; \middle\vert \; \alpha \mu_i^{\mb{E}_\mathcal{E}}(\val{\psi'}_{\mathcal{M}^{\mb{E}_\mathcal{E}}}\wedge [f_{e, a}])\geq \beta\right\}\\
& = \bigvee \left\{[f_{e, a}] \; \middle\vert \; \alpha \mu_i^{\mb{E}_\mathcal{E}}([F(\psi')]\wedge [f_{e, a}])\geq \beta\right\}\tag{$\mathtt{IH}_{\psi'}$}\\
& = \bigvee \left\{[f_{e, a}] \; \middle\vert \; \alpha\frac{\mu'_i(F(\psi')\wedge f_{e,a})}{\mu'_i(f_{e,a})}\geq \beta\right\}\tag{\autoref{def: updated APE structure F of E}}\\
& = \left[\bigvee \left\{f_{e, a} \; \middle\vert \; \alpha\frac{\mu'_i(F(\psi')\wedge f_{e,a})}{\mu'_i(f_{e,a})} \geq \beta\right\}\right]\\
& = \left[\bigvee \left\{f_{e, a} \; \middle\vert \; \alpha\mu'_i(F(\psi')\wedge f_{e,a})-\beta\mu'_i(f_{e,a}) \geq 0\right\}\right]\\
& = \left[\bigvee \left\{f_{e, a} \; \middle\vert \; \alpha\sum_{
	\begin{smallmatrix}
        e'\sim_i e\\
		\phi \in \rmPhi
	\end{smallmatrix}
}
\mu^\phi_i(F(\psi')(e')\wedge a)\cdot P_i(e')\cdot \pre(e'\mid\phi) +\sum_{
	\begin{smallmatrix}
        e'\sim_i e\\
		\phi \in \rmPhi
	\end{smallmatrix}
}
-\beta\mu^\phi_i(a)\cdot P_i(e')\cdot \pre(e'\mid\phi)\geq 0\right\}\right]
\end{align*}
and
\begin{align*}
& \left(\bigvee \left\{f_{e, a} \; \middle\vert \;  \alpha\sum_{
	\begin{smallmatrix}
        e'\sim_i e\\
		\phi \in \rmPhi
	\end{smallmatrix}
}
\mu^\phi_i(F(\psi')(e')\wedge a) P_i(e') \pre(e'\mid\phi) +\sum_{
	\begin{smallmatrix}
        e'\sim_i e\\
		\phi \in \rmPhi
	\end{smallmatrix}
}
-\beta\mu^\phi_i(a) P_i(e') \pre(e'\mid\phi)\geq 0 \right\}\right)(d)\\
& = \left( \bigvee \left\{f_{e, a} \; \middle\vert \;  \alpha\sum_{
	\begin{smallmatrix}
        e'\sim_i e\\
		\phi \in \rmPhi
	\end{smallmatrix}
}
\mu^\phi_i( \val{f(\psi',e')}_{\mathcal{M}}\wedge a)P_i(e') \pre(e'\mid\phi)+\sum_{
	\begin{smallmatrix}
        e'\sim_i e\\
		\phi \in \rmPhi
	\end{smallmatrix}
}
-\beta\mu^\phi_i(a)P_i(e') \pre(e'\mid\phi)\geq 0 \right\}\right)(d)\\
& = \bigvee \left\{a  \; \middle\vert \;  \alpha\sum_{
	\begin{smallmatrix}
        e'\sim_i d\\
		\phi \in \rmPhi
	\end{smallmatrix}
}
\mu^\phi_i(\val{f(\psi',e')}_{\mathcal{M}}\wedge a)P_i(e') \pre(e'\mid\phi)+\sum_{
	\begin{smallmatrix}
        e'\sim_i d\\
		\phi \in \rmPhi
	\end{smallmatrix}
}
-\beta\mu^\phi_i(a)P_i(e') \pre(e'\mid\phi)\geq 0 \right\}\\
& = \val{\alpha\sum_{
	\begin{smallmatrix}
        e'\sim_i d\\
		\phi \in \rmPhi
	\end{smallmatrix}
}
\mu^\phi_i(f(\psi',e'))P_i(e') \pre(e'\mid\phi)+\sum_{
	\begin{smallmatrix}
        e'\sim_i d\\
		\phi \in \rmPhi
	\end{smallmatrix}
}
-\beta\mu^\phi_i(\top)P_i(e') \pre(e'\mid\phi)\geq 0}_\mathcal{M}\\
& = \val{f(\alpha\mu_i(\psi')\geq\beta,d)}_\mathcal{M}\tag{Definition \ref{definition: fonestep}}\\
& = F(\alpha\mu_i(\psi')\geq\beta)(d).
\end{align*}
If $\psi = \langle\mathcal{E}',e'\rangle\psi'$ and $\mathcal{N}=\mathcal{M}^{\mb{E}_\mathcal{E}}$, then 
\begin{align*}
\val{\langle\mathcal{E}',e'\rangle\psi'}_\mathcal{N} & =\val{pre(e')}_{\mathcal{N}}\land\pi_{e'}\circ i'(\val{\psi'}_{\mathcal{N}^{\mb{E}_{\mathcal{E}'}}})\\
& = \val{pre(e')}_{\mathcal{N}}\land\pi_{e'}\circ i'([F(\psi')])\tag{$\mathtt{IH}_{\psi'}$}\\
& = \val{pre(e')}_{\mathcal{N}}\land\pi_{e'}(F(\psi')\land\overline{pre})\\
& = \val{pre(e')}_{\mathcal{N}}\land F(\psi')(e')\land\val{pre(e')}_\mathcal{N}\\
& = \val{pre(e')}_{\mathcal{N}}\land\val{f(\psi',e')}_{\mathcal{N}}\land\val{pre(e')}_\mathcal{N}\\
& = \val{pre(e')}_{\mathcal{N}}\land \val{f(\psi',e')}_{\mathcal{N}}\\
& = \val{pre(e')\land f(\psi',e')}_{\mathcal{N}}\\
& = [F(pre(e')\land f(\psi',e'))]
\tag{$\mathtt{IH}_{{pre(e')\land f(\psi',e')}_{\mathcal{N}}}$}
\end{align*}
and
\begin{align*}
F(pre(e')\land f(\psi',e'))(e) & =\val{f(pre(e')\land f(\psi',e'),e)}_\mathcal{M}\\
& =\val{f(\langle\mathcal{E}',e'\rangle\psi',e)}_\mathcal{M}\tag{Definition \ref{definition: fonestep}}\\
& = F(\langle\mathcal{E}',e'\rangle\psi')(e).
\end{align*}
Finally, if $\psi = \left[\mathcal{E}',e'\right]\psi'$ and $\mathcal{N}=\mathcal{M}^{\mb{E}_\mathcal{E}}$, then
\begin{align*}
\val{\left[\mathcal{E}',e'\right]\psi'}_\mathcal{N} & =\val{pre(e')}_{\mathcal{N}}\rightarrow\pi_{e'}\circ i'(\val{\psi'}_{\mathcal{N}^{\mb{E}_{\mathcal{E}'}}})\\
& = \val{pre(e')}_{\mathcal{N}}\rightarrow\pi_{e'}\circ i'([F(\psi')])
\tag{$\mathtt{IH}_{\psi'}$}\\
& = \val{pre(e')}_{\mathcal{N}}\rightarrow\pi_{e'}(F(\psi')\land\overline{pre})\\
& = \val{pre(e')}_{\mathcal{N}}\rightarrow F(\psi')(e')\land\val{pre(e')}_\mathcal{N}\\
& = \val{pre(e')}_{\mathcal{N}}\rightarrow\val{f(\psi',e')}_{\mathcal{N}}\land\val{pre(e')}_\mathcal{N}\\
& = \val{pre(e')}_{\mathcal{N}}\rightarrow\val{f(\psi',e')}_{\mathcal{N}}\tag{$a\to(a\land b)=a\to b$}\\
& = \val{pre(e')\rightarrow f(\psi',e')}_{\mathcal{N}}\\
& = [F(pre(e')\rightarrow f(\psi',e'))]
\tag{$\mathtt{IH}_{pre(e')\rightarrow f(\psi',e')}$}
\end{align*}
and
\begin{align*}
F(pre(e')\rightarrow f(\psi',e'))(e) & =\val{f(pre(e')\rightarrow f(\psi',e'),e)}_\mathcal{M}\\
& =\val{f(\left[\mathcal{E}',e'\right]\psi',e)}_\mathcal{M}\tag{Definition \ref{definition: fonestep}}\\
& = F(\left[\mathcal{E}',e'\right]\psi')(e).
\end{align*}
\end{proof}
\begin{lemma}
\label{lemma: newf(psi) second property}
For every $\mathcal{M}$, $\mathcal{E}$, $e$ and $\psi$, \[\val{\langle\mathcal{E}, e\rangle \psi}_{\mathcal{M}} = \overline{pre}_{\mathcal{M}}(e)\wedge\val{f(\psi,e)}_{\mathcal{M}}
\quad \quad \text{and} \quad\quad
\val{\left[\mathcal{E}, e\right] \psi}_{\mathcal{M}} =\overline{pre}_{\mathcal{M}}(e)\rightarrow\val{f(\psi,e)}_{\mathcal{M}}.\]
\end{lemma}
\begin{proof}
We have
\begin{align*}
\val{\langle\mathcal{E}, e\rangle\psi}_{\mathcal{M}} &  = \val{pre(e)}_{\mathcal{M}}\wedge \pi_e\circ i'(\val{\psi}_{\mathcal{M}^{\mb{E}_\mathcal{E}}})\\&  =\overline{pre}_{\mathcal{M}}(e)\wedge \pi_e\circ i'([F(\psi)])\tag{Lemma \ref{lemma: newf(psi)}}\\
&  =\overline{pre}_{\mathcal{M}}(e)\wedge \pi_e(F(\psi)\land\overline{pre}_{\mathcal{M}})\\
&  =\overline{pre}_{\mathcal{M}}(e)\wedge F(\psi)(e)\land\overline{pre}_{\mathcal{M}}(e)\\
&  =\overline{pre}_{\mathcal{M}}(e)\wedge\val{f(\psi,e)}_{\mathcal{M}}
\end{align*}
and 
\begin{align*}
\val{\left[\mathcal{E}, e\right]\psi}_{\mathcal{M}} &  = \val{pre(e)}_{\mathcal{M}}\rightarrow \pi_e\circ i'(\val{\psi}_{\mathcal{M}^{\mb{E}_\mathcal{E}}})\\&  =\overline{pre}_{\mathcal{M}}(e)\rightarrow \pi_e\circ i'([F(\psi)])\tag{Lemma \ref{lemma: newf(psi)}}\\
&  =\overline{pre}_{\mathcal{M}}(e)\rightarrow \pi_e(F(\psi)\land\overline{pre}_{\mathcal{M}})\\
&  =\overline{pre}_{\mathcal{M}}(e)\rightarrow F(\psi)(e)\land\overline{pre}_{\mathcal{M}}(e)\\
&  =\overline{pre}_{\mathcal{M}}(e)\rightarrow\val{f(\psi,e)}_{\mathcal{M}}\land\overline{pre}_\mathcal{M}(e)\\
&  =\overline{pre}_{\mathcal{M}}(e)\rightarrow\val{f(\psi,e)}_{\mathcal{M}}.\tag{$a\to(a\land b)=a\to b$}
\end{align*}
\end{proof}


\subsubsection*{Proof of the soundness of the reduction axioms}~
\label{ssec:Appendix:soundness}

\bigskip


\textbf{Axiom I1.} 
$\left[\mathcal{E},e\right] p  = pre(e)
\rightarrow \sub(e,p).$
\begin{align*}
\val{\left[\mathcal{E},e\right] p}_\mathcal{M} & = \overline{pre}_\mathcal{M}(e)\rightarrow\val{f(p,e)}_\mathcal{M}
\tag{Lemma \ref{lemma: newf(psi) second property}}\\
& = \overline{pre}_\mathcal{M}(e)\rightarrow\val{\sub(e,p)}_\mathcal{M}.
\end{align*}

\noindent
$\rule{146.2mm}{0.5pt}$

\textbf{Axiom I2.} $\langle\mathcal{E},e\rangle p  = pre(e)
\land \sub(e,p).$
\begin{align*}
\val{\langle\mathcal{E},e\rangle p}_\mathcal{M} & = \overline{pre}_\mathcal{M}(e)\land\val{f(p,e)}_\mathcal{M}
\tag{Lemma \ref{lemma: newf(psi) second property}}\\
& = \overline{pre}_\mathcal{M}(e)\land\val{\sub(e,p)}_\mathcal{M}.
\end{align*}
\noindent
$\rule{146.2mm}{0.5pt}$

\textbf{Axiom I3.} 
$\left[\mathcal{E},e\right] \top= \top.$
\begin{align*}
\val{\left[\mathcal{E},e\right] \top}_\mathcal{M} & = \overline{pre}_\mathcal{M}(e)\rightarrow\val{f(\top,e)}_\mathcal{M}
\tag{Lemma \ref{lemma: newf(psi) second property}}\\
& = \overline{pre}_\mathcal{M}(e)\rightarrow\val{\top}_\mathcal{M}\\
& = \val{\top}_\mathcal{M}.
\end{align*}

\noindent
$\rule{146.2mm}{0.5pt}$

\textbf{Axiom I4.} $\langle\mathcal{E},e\rangle \top  = pre(e).$
\begin{align*}
\val{\langle\mathcal{E},e\rangle \top}_\mathcal{M} & = \overline{pre}_\mathcal{M}(e)\land\val{f(\top,e)}_\mathcal{M}
\tag{Lemma \ref{lemma: newf(psi) second property}}\\
& = \overline{pre}_\mathcal{M}(e)\land\val{\top}_\mathcal{M}\\
& = \overline{pre}_\mathcal{M}(e).
\end{align*}
\noindent
$\rule{146.2mm}{0.5pt}$

\textbf{Axiom I5.} 
$\left[\mathcal{E},e\right] \bot  
= \lnot pre(e).$
\begin{align*}
\val{\left[\mathcal{E},e\right] \bot}_\mathcal{M} & = \overline{pre}_\mathcal{M}(e)\rightarrow\val{f(\bot,e)}_\mathcal{M}
\tag{Lemma \ref{lemma: newf(psi) second property}}\\
& = \overline{pre}_\mathcal{M}(e)\rightarrow\val{\bot}_\mathcal{M}\\
& = \val{\lnot pre(e)}_\mathcal{M}.
\end{align*}

\noindent
$\rule{146.2mm}{0.5pt}$

\textbf{Axiom I6.} $\langle\mathcal{E},e\rangle \bot = \bot.$
\begin{align*}
\val{\langle\mathcal{E},e\rangle \bot}_\mathcal{M} & = \overline{pre}_\mathcal{M}(e)\land\val{f(\bot,e)}_\mathcal{M}
\tag{Lemma \ref{lemma: newf(psi) second property}}\\
& = \overline{pre}_\mathcal{M}(e)\land\val{\bot}_\mathcal{M}\\
& = \bot.
\end{align*}
\noindent
$\rule{146.2mm}{0.5pt}$

\textbf{Axiom I7.} 
$\left[\mathcal{E},e\right](\psi_1\land\psi_2)
= \left[\mathcal{E},e\right]\psi_1
\land \left[\mathcal{E},e\right]\psi_2.$
\begin{align*}
\val{\left[\mathcal{E},e\right](\psi_1\land\psi_2)}_\mathcal{M} & =\overline{pre}_\mathcal{M}(e)\rightarrow\val{f(\psi_1\land\psi_2,e)}_\mathcal{M}
\tag{Lemma \ref{lemma: newf(psi) second property}}\\
& = \overline{pre}_\mathcal{M}(e)\rightarrow\val{f(\psi_1,e)\land f(\psi_2,e)}_\mathcal{M}\tag{Definition \ref{definition: fonestep}}\\
& = \overline{pre}_\mathcal{M}(e)\rightarrow\val{f(\psi_1,e)}_\mathcal{M}\land\val{f(\psi_2,e)}_\mathcal{M}\\
& = (\overline{pre}_\mathcal{M}(e)\rightarrow\val{f(\psi_1,e)}_\mathcal{M})\land(\overline{pre}_\mathcal{M}(e)\rightarrow\val{f(\psi_2,e)}_\mathcal{M})\tag{$a\to b\land c=(a\to b)\land(a\to c)$}\\
& = \val{\left[\mathcal{E},e\right]\psi_1}_\mathcal{M}\land\val{\left[\mathcal{E},e\right]\psi_2}_\mathcal{M}\tag{Lemma \ref{lemma: newf(psi) second property}}
\end{align*}

\noindent
$\rule{146.2mm}{0.5pt}$

\textbf{Axiom I8.} $\langle\mathcal{E},e\rangle
(\psi_1\land\psi_2) = \langle\mathcal{E},e\rangle\psi_1
\land \langle\mathcal{E},e\rangle\psi_2.$
\begin{align*}
\val{\langle\mathcal{E},e\rangle(\psi_1\land\psi_2)}_\mathcal{M} & =\overline{pre}_\mathcal{M}(e)\land\val{f(\psi_1\land\psi_2,e)}_\mathcal{M}
\tag{Lemma \ref{lemma: newf(psi) second property}}\\
& = \overline{pre}_\mathcal{M}(e)\land\val{f(\psi_1,e)\land f(\psi_2,e)}_\mathcal{M}\tag{Definition \ref{definition: fonestep}}\\
& = \overline{pre}_\mathcal{M}(e)\land\val{f(\psi_1,e)}_\mathcal{M}\land\overline{pre}_\mathcal{M}(e)\land \val{f(\psi_2,e)}_\mathcal{M}\\
& = \val{\langle\mathcal{E},e\rangle\psi_1}_\mathcal{M}\land\val{\langle\mathcal{E},e\rangle\psi_2}_\mathcal{M}\tag{Lemma \ref{lemma: newf(psi) second property}}
\end{align*}

\noindent
$\rule{146.2mm}{0.5pt}$

\textbf{Axiom I9.} 
$\left[\mathcal{E},e\right](\psi_1\lor\psi_2)
 =
pre(e)\rightarrow
\langle\mathcal{E},e\rangle\psi_1
\lor \langle\mathcal{E},e\rangle\psi_2.
$
\begin{align*}
\val{\left[\mathcal{E},e\right](\psi_1\lor\psi_2)}_\mathcal{M} & =\overline{pre}_\mathcal{M}(e)\rightarrow\val{f(\psi_1\lor\psi_2,e)}_\mathcal{M}
\tag{Lemma \ref{lemma: newf(psi) second property}}\\
& = \overline{pre}_\mathcal{M}(e)\rightarrow\val{f(\psi_1,e)\lor f(\psi_2,e)}_\mathcal{M}\tag{Definition \ref{definition: fonestep}}\\
& = \overline{pre}_\mathcal{M}(e)\rightarrow\overline{pre}_\mathcal{M}(e)\land(\val{f(\psi_1,e)}_\mathcal{M}\lor\val{f(\psi_2,e)}_\mathcal{M})\tag{$a\to b=a\to a\land b$}\\
& = \overline{pre}_\mathcal{M}(e)\rightarrow(\overline{pre}_\mathcal{M}\land\val{f(\psi_1,e)}_\mathcal{M})\lor(\overline{pre}_\mathcal{M}(e)\land\val{f(\psi_2,e)}_\mathcal{M})\tag{distributivity}\\
& = \overline{pre}_\mathcal{M}(e)\rightarrow\val{\langle\mathcal{E},e\rangle\psi_1}_\mathcal{M}\lor\val{\langle\mathcal{E},e\rangle\psi_2}_\mathcal{M}\tag{Lemma \ref{lemma: newf(psi) second property}}
\end{align*}

\noindent
$\rule{146.2mm}{0.5pt}$

\textbf{Axiom I10.} $\langle\mathcal{E},e\rangle(\psi_1\lor\psi_2) = \langle\mathcal{E},e\rangle\psi_1
\lor \langle\mathcal{E},e\rangle\psi_2.$
\begin{align*}
\val{\langle\mathcal{E},e\rangle(\psi_1\lor\psi_2)}_\mathcal{M} & =\overline{pre}_\mathcal{M}(e)\land\val{f(\psi_1\lor\psi_2,e)}_\mathcal{M}
\tag{Lemma \ref{lemma: newf(psi) second property}}\\
& = \overline{pre}_\mathcal{M}(e)\land\val{f(\psi_1,e)\lor f(\psi_2,e)}_\mathcal{M}\tag{Definition \ref{definition: fonestep}}\\
& = (\overline{pre}_\mathcal{M}(e)\land\val{f(\psi_1,e)}_\mathcal{M})\lor(\overline{pre}_\mathcal{M}(e)\land \val{f(\psi_2,e)}_\mathcal{M})\tag{distributivity}\\
& = \val{\langle\mathcal{E},e\rangle\psi_1}_\mathcal{M}\lor\val{\langle\mathcal{E},e\rangle\psi_2}_\mathcal{M}\tag{Lemma \ref{lemma: newf(psi) second property}}
\end{align*}

\noindent
$\rule{146.2mm}{0.5pt}$

\textbf{Axiom I11.} 
$\left[\mathcal{E},e\right](\psi_1\rightarrow\psi_2) = \langle\mathcal{E},e\rangle\psi_1
\rightarrow \langle\mathcal{E},e\rangle\psi_2.
$
\begin{align*}
\val{\left[\mathcal{E},e\right](\psi_1\rightarrow\psi_2)}_\mathcal{M} & =\overline{pre}_\mathcal{M}(e)\rightarrow\val{f(\psi_1\rightarrow\psi_2,e)}_\mathcal{M}
\tag{Lemma \ref{lemma: newf(psi) second property}}\\
& = \overline{pre}_\mathcal{M}(e)\rightarrow\val{f(\psi_1,e)\rightarrow f(\psi_2,e)}_\mathcal{M}\tag{Definition \ref{definition: fonestep}}\\
& = \overline{pre}_\mathcal{M}(e)\rightarrow(\val{f(\psi_1,e)}_\mathcal{M}\rightarrow\val{f(\psi_2,e)}_\mathcal{M})\\
& = \overline{pre}_\mathcal{M}(e)\land\val{f(\psi_1,e)}_\mathcal{M}\rightarrow\val{f(\psi_2,e)}_\mathcal{M}\tag{$a\to(b\to c)=a\land b\to c$}\\
& = \overline{pre}_\mathcal{M}(e)\land\val{f(\psi_1,e)}_\mathcal{M}\rightarrow\overline{pre}_\mathcal{M}(e)\land\val{f(\psi_1,e)}_\mathcal{M}\land\val{f(\psi_2,e)}_\mathcal{M}\tag{$b\to c=b\to b\land c)$}\\
& = \val{\langle\mathcal{E},e\rangle\psi_1}_\mathcal{M}\rightarrow\val{\langle\mathcal{E},e\rangle\psi_1}_\mathcal{M}\land\val{\langle\mathcal{E},e\rangle\psi_2}_\mathcal{M}\tag{Lemma \ref{lemma: newf(psi) second property}}\\
& = \val{\langle\mathcal{E},e\rangle\psi_1}_\mathcal{M}\rightarrow\val{\langle\mathcal{E},e\rangle\psi_2}_\mathcal{M}\tag{$b\to c=b\to b\land c)$}.
\end{align*}

\noindent
$\rule{146.2mm}{0.5pt}$

\textbf{Axiom I12.} 
$\langle\mathcal{E},e\rangle
(\psi_1\rightarrow\psi_2)= pre(e)
\land( \langle\mathcal{E},e\rangle\psi_1
\rightarrow
\langle\mathcal{E},e\rangle\psi_2).$

\begin{align*}
\val{\langle\mathcal{E},e\rangle(\psi_1\rightarrow\psi_2)}_\mathcal{M} & =\overline{pre}_\mathcal{M}(e)\land\val{f(\psi_1\rightarrow\psi_2,e)}_\mathcal{M}
\tag{Lemma \ref{lemma: newf(psi) second property}}\\
& = \overline{pre}_\mathcal{M}(e)\land\val{f(\psi_1,e)\rightarrow f(\psi_2,e)}_\mathcal{M}\tag{Definition \ref{definition: fonestep}}\\
& = \overline{pre}_\mathcal{M}(e)\land(\val{f(\psi_1,e)}_\mathcal{M}\rightarrow\val{f(\psi_2,e)}_\mathcal{M})\\
& = \overline{pre}_\mathcal{M}(e)\land(\overline{pre}_\mathcal{M}(e)\land\val{f(\psi_1,e)}_\mathcal{M}\rightarrow\val{f(\psi_2,e)}_\mathcal{M})\tag{$a\land(b\to c)=a\land(a\land b\to c)$}\\
& = \overline{pre}_\mathcal{M}(e)\land(\overline{pre}_\mathcal{M}(e)\land\val{f(\psi_1,e)}_\mathcal{M}\rightarrow\val{f(\psi_2,e)}_\mathcal{M}\land\overline{pre}_\mathcal{M}(e)\land\val{f(\psi_1,e)}_\mathcal{M})\tag{$b\to c=b\to b\land c)$}\\
& = \overline{pre}_\mathcal{M}(e)\land(\val{\langle\mathcal{E},e\rangle\psi_1}_\mathcal{M}\rightarrow\val{\langle\mathcal{E},e\rangle\psi_1}_\mathcal{M}\land\val{\langle\mathcal{E},e\rangle\psi_2}_\mathcal{M})\tag{Lemma \ref{lemma: newf(psi) second property}}\\
& = \overline{pre}_\mathcal{M}(e)\land(\val{\langle\mathcal{E},e\rangle\psi_1}_\mathcal{M}\rightarrow\val{\langle\mathcal{E},e\rangle\psi_2}_\mathcal{M})\tag{$b\to c=b\to b\land c)$}.
\end{align*}

\noindent
$\rule{146.2mm}{0.5pt}$

\textbf{Axiom I13} 
$\left[\mathcal{E},e\right]\lozenge_i\psi = pre(e)\rightarrow \bigvee_{e'\sim_i e}\lozenge_i(\langle\mathcal{E},e'\rangle\psi).
$
\begin{align*}
\val{\left[\mathcal{E},e\right]\lozenge_i\psi}_\mathcal{M} & =\overline{pre}_\mathcal{M}(e)\rightarrow\val{f(\lozenge_i\psi,e)}_\mathcal{M}
\tag{Lemma \ref{lemma: newf(psi) second property}}\\
& = \overline{pre}_\mathcal{M}(e)\rightarrow\val{\bigvee_{e'\sim_i e}\lozenge_i (f(\psi,e')\land pre(e'))}_\mathcal{M}\tag{Definition \ref{definition: fonestep}}\\
& = \overline{pre}_\mathcal{M}(e)\rightarrow\bigvee_{e'\sim_i e}\lozenge_i(\val{f(\psi,e')}_\mathcal{M}\land\overline{pre}_\mathcal{M}(e'))\\
& = \overline{pre}_\mathcal{M}(e)\rightarrow\bigvee_{e'\sim_i e}\lozenge_i(\val{\langle\mathcal{E},e'\rangle\psi}_\mathcal{M})\tag{Lemma \ref{lemma: newf(psi) second property}}\\
& = \overline{pre}_\mathcal{M}(e)\rightarrow\val{\bigvee_{e'\sim_i e}\lozenge_i(\langle\mathcal{E},e'\rangle\psi)}_\mathcal{M}.
\end{align*}

\noindent
$\rule{146.2mm}{0.5pt}$

\textbf{Axiom I14.} 
$\langle\mathcal{E},e\rangle\lozenge_i\psi = pre(e)
\land \bigvee_{e'\sim_i e}\lozenge_i(\langle\mathcal{E},e'
\rangle\psi).
$
\begin{align*}
\val{\langle\mathcal{E},e\rangle\lozenge_i\psi}_\mathcal{M} & =\overline{pre}_\mathcal{M}(e)\land\val{f(\lozenge_i\psi,e)}_\mathcal{M}
\tag{Lemma \ref{lemma: newf(psi) second property}}\\
& = \overline{pre}_\mathcal{M}(e)\land\val{\bigvee_{e'\sim_i e}\lozenge_i (f(\psi,e')\land pre(e'))}_\mathcal{M}\tag{Definition \ref{definition: fonestep}}\\
& = \overline{pre}_\mathcal{M}(e)\land\bigvee_{e'\sim_i e}\lozenge_i(\val{f(\psi,e')}_\mathcal{M}\land\overline{pre}_\mathcal{M}(e'))\\
& = \overline{pre}_\mathcal{M}(e)\land\bigvee_{e'\sim_i e}\lozenge_i(\val{\langle\mathcal{E},e'\rangle\psi}_\mathcal{M})\tag{Lemma \ref{lemma: newf(psi) second property}}\\
& = \overline{pre}_\mathcal{M}(e)\land\val{\bigvee_{e'\sim_i e}\lozenge_i(\langle\mathcal{E},e'\rangle\psi)}_\mathcal{M}.
\end{align*}

\noindent
$\rule{146.2mm}{0.5pt}$

\textbf{Axiom I15.} $\left[\mathcal{E},e\right]\Box_i\psi = pre(e)\rightarrow \bigwedge_{e'\sim_i e}\Box_i([\mathcal{E},e']\psi).$

\begin{align*}
	\val{\left[\mathcal{E},e\right]\Box_i\psi}_\mathcal{M} & =\overline{pre}_\mathcal{M}(e)\rightarrow\val{f(\Box_i\psi,e)}_\mathcal{M}
	\tag{Lemma \ref{lemma: newf(psi) second property}}\\
	& = \overline{pre}_\mathcal{M}(e)\rightarrow\val{\bigwedge_{e'\sim_i e}\Box_i(pre(e')\to f(\psi,e'))}_\mathcal{M}\tag{Definition \ref{definition: fonestep}}\\
	& = \overline{pre}_\mathcal{M}(e)\rightarrow\bigwedge_{e'\sim_i e}\Box_i(\overline{pre}_\mathcal{M}(e')\to \val{f(\psi,e')}_\mathcal{M})\\
	& = \overline{pre}_\mathcal{M}(e)\rightarrow\bigwedge_{e'\sim_i e}\Box_i(\val{\left[\mathcal{E},e'\right]\psi}_\mathcal{M})\tag{Lemma \ref{lemma: newf(psi) second property}}\\
	& = \overline{pre}_\mathcal{M}(e)\rightarrow\val{\bigwedge_{e'\sim_i e}\Box_i(\left[\mathcal{E},e'\right]\psi)}_\mathcal{M}.
\end{align*}

\noindent
$\rule{146.2mm}{0.5pt}$

\textbf{Axiom I16.} $\langle\mathcal{E},e\rangle\Box_i\psi = pre(e)
\land \bigwedge_{e'\sim_i e}\Box_i([\mathcal{E},e'
]\psi).$

\begin{align*}
	\val{\langle\mathcal{E},e\rangle\Box_i\psi}_\mathcal{M} & =\overline{pre}_\mathcal{M}(e)\land\val{f(\Box_i\psi,e)}_\mathcal{M}
	\tag{Lemma \ref{lemma: newf(psi) second property}}\\
	& = \overline{pre}_\mathcal{M}(e)\land\val{\bigwedge_{e'\sim_i e}\Box_i(pre(e')\to f(\psi,e'))}_\mathcal{M}\tag{Definition \ref{definition: fonestep}}\\
	& = \overline{pre}_\mathcal{M}(e)\land\bigwedge_{e'\sim_i e}\Box_i(\overline{pre}_\mathcal{M}(e')\to\val{f(\psi,e')}_\mathcal{M})\\
	& = \overline{pre}_\mathcal{M}(e)\land\bigwedge_{e'\sim_i e}\Box_i(\val{\left[\mathcal{E},e'\right]\psi}_\mathcal{M})\tag{Lemma \ref{lemma: newf(psi) second property}}\\
	& = \overline{pre}_\mathcal{M}(e)\land\val{\bigwedge_{e'\sim_i e}\Box_i(\left[\mathcal{E},e'\right]\psi)}_\mathcal{M}.
\end{align*}

\noindent
$\rule{146.2mm}{0.5pt}$

\begin{landscape}
\textbf{Axiom I17.} 
$\left[\mathcal{E},e\right](\alpha\mu_i(\psi)\geq\beta)
= pre(e)\rightarrow\sum_{
	\begin{smallmatrix}
        e'\sim_i e\\
		\phi \in \rmPhi
	\end{smallmatrix}
}
\alpha P_i(e') \pre(e'\mid\phi)\mu^\phi_i(\left[ \mathcal{E}, e'\right] \psi) + \sum_{
	\begin{smallmatrix}
        e'\sim_i e\\
		\phi \in \rmPhi
	\end{smallmatrix}
}
-\beta P_i(e') \pre(e'\mid\phi) \mu^\phi_i(\top)\geq 0$
\begin{align*}
& \val{\left[\mathcal{E},e\right](\alpha\mu_i(\psi)\geq\beta)}_\mathcal{M}  
 = \overline{pre}_\mathcal{M}(e)\rightarrow\val{f(\alpha\mu_i(\psi)\geq\beta,e)}_\mathcal{M}  \\
& = \overline{pre}_\mathcal{M}(e)\rightarrow\val{\alpha\sum_{
	\begin{smallmatrix}
        e'\sim_i e\\
		\phi \in \rmPhi
	\end{smallmatrix}
}
\mu^\phi_i(f(\psi,e'))P_i(e') \pre(e'\mid \phi) - \beta\sum_{
	\begin{smallmatrix}
        e'\sim_i e\\
		\phi \in \rmPhi
	\end{smallmatrix}
}
\mu^\phi_i(\top)P_i(e') \pre(e'\mid \phi)\geq 0}_\mathcal{M}\tag{Definition \ref{definition: fonestep}}  \\
& = \overline{pre}_\mathcal{M}(e)\rightarrow\bigvee\left\{a   \; \middle\vert \;   \sum_{
	\begin{smallmatrix}
        e'\sim_i e\\
		\phi \in \rmPhi
	\end{smallmatrix}
}
\alpha P_i(e') \pre(e'\mid\phi)\mu^\phi_i( \val{f(\psi,e')}_{\mathcal{M}}\wedge a) + \sum_{
	\begin{smallmatrix}
        e'\sim_i e\\
		\phi \in \rmPhi
	\end{smallmatrix}
}
-\beta P_i(e') \pre(e'\mid\phi) \mu^\phi_i(a)\geq 0\right\} \\
& = \overline{pre}_\mathcal{M}(e)\rightarrow\bigvee\left\{a  \; \middle\vert \;   \sum_{
	\begin{smallmatrix}
        e'\sim_i e\\
		\phi \in \rmPhi
	\end{smallmatrix}
}
\alpha P_i(e') \pre(e'\mid\phi)\mu^\phi_i( \val{f(\psi,e')}_{\mathcal{M}}\land\overline{pre}_\mathcal{M}(e')\wedge a) + \sum_{
	\begin{smallmatrix}
        e'\sim_i e\\
		\phi \in \rmPhi
	\end{smallmatrix}
}
-\beta P_i(e') \pre(e'\mid\phi) \mu^\phi_i(a)\geq 0\right\}\tag{ $\val{\varphi}_\mathcal{M}\leq\overline{pre}_\mathcal{M}(e')$ if $\pre(e'\mid\varphi)\neq 0$, cf.\ Proposition \ref{prop: muai properties}}\\
& = \overline{pre}_\mathcal{M}(e)\rightarrow\bigvee\left\{a  \; \middle\vert \;   \sum_{
	\begin{smallmatrix}
        e'\sim_i e\\
		\phi \in \rmPhi
	\end{smallmatrix}
}
\alpha P_i(e') \pre(e'\mid\phi)\mu^\phi_i((\overline{pre}_\mathcal{M}(e')\rightarrow\val{f(\psi,e')}_{\mathcal{M}})\land\overline{pre}_\mathcal{M}(e')\wedge a) + \sum_{
	\begin{smallmatrix}
        e'\sim_i e\\
		\phi \in \rmPhi
	\end{smallmatrix}
}
-\beta P_i(e') \pre(e'\mid\phi) \mu^\phi_i(a)\geq 0\right\}\tag{ $a\land(a\to b)=a\land b$}\\
& = \overline{pre}_\mathcal{M}(e)\rightarrow\bigvee\left\{a  \; \middle\vert \;   \sum_{
	\begin{smallmatrix}
        e'\sim_i e\\
		\phi \in \rmPhi
	\end{smallmatrix}
}
\alpha P_i(e') \pre(e'\mid\phi)\mu^\phi_i(\val{ \left[\mathcal{E},e'\right]\psi}_{\mathcal{M}}\land\overline{pre}_\mathcal{M}(e')\wedge a) + \sum_{
	\begin{smallmatrix}
        e'\sim_i e\\
		\phi \in \rmPhi
	\end{smallmatrix}
}
-\beta P_i(e') \pre(e'\mid\phi) \mu^\phi_i(a)\geq 0\right\}\tag{Lemma \ref{lemma: newf(psi) second property}}\\
& = \overline{pre}_\mathcal{M}(e)\rightarrow\bigvee\left\{a  \; \middle\vert \;   \sum_{
	\begin{smallmatrix}
        e'\sim_i e\\
		\phi \in \rmPhi
	\end{smallmatrix}
}
\alpha P_i(e') \pre(e'\mid\phi)\mu^\phi_i(\val{ \left[\mathcal{E},e'\right]\psi}_{\mathcal{M}}\wedge a) + \sum_{
	\begin{smallmatrix}
        e'\sim_i e\\
		\phi \in \rmPhi
	\end{smallmatrix}
}
-\beta P_i(e') \pre(e'\mid\phi) \mu^\phi_i(a)\geq 0\right\}\tag{ $\val{\varphi}_\mathcal{M}\leq\overline{pre}_\mathcal{M}(e')$ if $\pre(e'\mid\varphi)\neq 0$}\\
& = \overline{pre}_\mathcal{M}(e)\rightarrow\val{ \sum_{
	\begin{smallmatrix}
        e'\sim_i e\\
		\phi \in \rmPhi
	\end{smallmatrix}
}
\alpha P_i(e') \pre(e'\mid\phi)\mu^\phi_i(\left[ \mathcal{E}, e'\right] \psi) + \sum_{
	\begin{smallmatrix}
        e'\sim_i e\\
		\phi \in \rmPhi
	\end{smallmatrix}
}
-\beta P_i(e') \pre(e'\mid\phi) \mu^\phi_i(\top)\geq 0}_{\mathcal{M}}\\
\end{align*}	

\end{landscape}

\begin{landscape}
\textbf{Axiom I18.} 
$\langle\mathcal{E},e\rangle(\alpha\mu_i(\psi)
\geq\beta)
= pre(e)\land \sum_{
	\begin{smallmatrix}
        e'\sim_i e\\
		\phi \in \rmPhi
	\end{smallmatrix}
}
\alpha P_i(e') \pre(e'\mid\phi)\mu^\phi_i(\langle \mathcal{E}, e'\rangle \psi) + \sum_{
	\begin{smallmatrix}
        e'\sim_i e\\
		\phi \in \rmPhi
	\end{smallmatrix}
}
-\beta P_i(e') \pre(e'\mid\phi) \mu^\phi_i(\top)\geq 0
$
\begin{align*}
& \val{\langle\mathcal{E},e\rangle(\alpha\mu_i(\psi)\geq\beta)}_\mathcal{M}  \\
 & = \overline{pre}_\mathcal{M}(e)\land\val{f(\alpha\mu_i(\psi)\geq\beta,e)}_\mathcal{M}  \\
 & = \overline{pre}_\mathcal{M}(e)\land\val{\alpha\sum_{
	\begin{smallmatrix}
        e'\sim_i e\\
		\phi \in \rmPhi
	\end{smallmatrix}
}
\mu^\phi_i(f(\psi,e'))P_i(e') \pre(e'\mid \phi) - \beta\sum_{
	\begin{smallmatrix}
        e'\sim_i e\\
		\phi \in \rmPhi
	\end{smallmatrix}
}
\mu^\phi_i(\top)P_i(e') \pre(e'\mid \phi)\geq 0}_\mathcal{M}\tag{Definition \ref{definition: fonestep}}  \\
 & = \overline{pre}_\mathcal{M}(e)\land\bigvee\left\{a  \; \middle\vert \;   \sum_{
	\begin{smallmatrix}
        e'\sim_i e\\
		\phi \in \rmPhi
	\end{smallmatrix}
}
\alpha P_i(e') \pre(e'\mid\phi)\mu^\phi_i(\val{f(\psi,e')}_{\mathcal{M}}\wedge a) + \sum_{
	\begin{smallmatrix}
        e'\sim_i e\\
		\phi \in \rmPhi
	\end{smallmatrix}
}
-\beta P_i(e') \pre(e'\mid\phi) \mu^\phi_i(a)\geq 0\right\} \\
& = \overline{pre}_\mathcal{M}(e)\land\bigvee\left\{a  \; \middle\vert \;   \sum_{
	\begin{smallmatrix}
        e'\sim_i e\\
		\phi \in \rmPhi
	\end{smallmatrix}
}
\alpha P_i(e') \pre(e'\mid\phi)\mu^\phi_i(\val{f(\psi,e')}_{\mathcal{M}}\land\overline{pre}_\mathcal{M}(e')\wedge a) + \sum_{
	\begin{smallmatrix}
        e'\sim_i e\\
		\phi \in \rmPhi
	\end{smallmatrix}
}
-\beta P_i(e') \pre(e'\mid\phi) \mu^\phi_i(a)\geq 0\right\}\tag{ $\val{\varphi}_\mathcal{M}\leq\overline{pre}_\mathcal{M}(e')$ if $\pre(e'\mid\varphi)\neq 0$, cf.\ Proposition \ref{prop: muai properties}}\\
& = \overline{pre}_\mathcal{M}(e)\land\bigvee\left\{a  \; \middle\vert \;   \sum_{
	\begin{smallmatrix}
        e'\sim_i e\\
		\phi \in \rmPhi
	\end{smallmatrix}
}
\alpha P_i(e') \pre(e'\mid\phi)\mu^\phi_i(\val{ \langle\mathcal{E},e'\rangle\psi}_{\mathcal{M}}\wedge a) + \sum_{
	\begin{smallmatrix}
        e'\sim_i e\\
		\phi \in \rmPhi
	\end{smallmatrix}
}
-\beta P_i(e') \pre(e'\mid\phi) \mu^\phi_i(a)\geq 0\right\}\tag{Lemma \ref{lemma: newf(psi) second property}}
\\
& = \overline{pre}_\mathcal{M}(e)\land\val{ \sum_{
	\begin{smallmatrix}
        e'\sim_i e\\
		\phi \in \rmPhi
	\end{smallmatrix}
}
\alpha P_i(e') \pre(e'\mid\phi)\mu^\phi_i(\langle \mathcal{E}, e'\rangle \psi) + \sum_{
	\begin{smallmatrix}
        e'\sim_i e\\
		\phi \in \rmPhi
	\end{smallmatrix}
}
-\beta P_i(e') \pre(e'\mid\phi) \mu^\phi_i(\top)\geq 0}_{\mathcal{M}} \\
\end{align*}

\noindent
$\rule{240mm}{0.5pt}$

\end{landscape}

\section{Proof of the Completeness of IPDEL}
\label{Appendix:completeness}

In the present section, we  prove the weak completeness of IPDEL w.r.t.\ APE-models. 
Recall that a calculus is \textit{weakly complete} w.r.t.\ a semantics if it provides a proof for every validity, namely, for any formula $\phi$, if $\models \phi$ then $\vdash \phi$.
Similarly to akin logical systems (cf.\ \cite{BMS,KP13,MPS14,BEK06} \cite{cabrer2016lukasiewicz,bakhtiarinoodeh2015epistemic,Achimescu14}), the proof relies on a reduction procedure  of IPDEL-formulas to formulas of the static fragment of IPDEL (referred to in what follows as IPEL), which preserves provable equivalence. 
This reduction procedure is effected using the interaction axioms and the rule of substitution of equivalent formulas.  We omit the details since this procedure is standard (see for instance \cite{baltag2004logics,BMS,WanCao13} for details). 
In the reminder of the present section, we prove the weak completeness of IPEL w.r.t.\ APE-models, i.e., we show that every APE-validity in the language of IPEL is a theorem of IPEL. 
By contraposition, this is equivalent to proving that for any IPEL-formula $\varphi$ which is not an IPEL-theorem, 
there exists an APE-model $\mathcal{M}$ that does not satisfy $\varphi$ in the sense that $\val{\varphi}_\mathcal{M}\neq\top$. 

The proof will proceed as follows.
In \autoref{ssec:cpt:epist:HA}, we extract a finite sublattice of the Lindenbaum-Tarski algebra of the logic that contains $\varphi$ and we  prove that it is an Epistemic Heyting Algebra satisfying certain properties akin to those described in \cite{fischer1978finite}.
Then, in \autoref{ssec:cpt:measure:Astar}, following  ideas from \cite{fagin1990logic} adapted to the algebraic setting, we  define appropriate $i$-measures over the finite Epistemic Heyting Algebra to turn it into an APE-model that does not satisfy $\varphi$.


\subsection*{The epistemic Heyting algebra $\mathbb{A}^\star_\varphi$}
\label{ssec:cpt:epist:HA}
In this subsection, we construct the finite epistemic Heyting algebra on which the counter-model for $\varphi$ is based. The construction consists of a number of steps, starting with the Lindenbaum-Tarski algebra of $\mathcal{L}$ and restricting it accordingly. 

Henceforth, we let 
\begin{equation}
\label{eq:linden}
\mathbb{A} = \left( A,\top_\mathbb{A},\bot_\mathbb{A},\lor_\mathbb{A},\land_\mathbb{A},\to_\mathbb{A},(\lozenge_i)_{i\in\Ag},(\Box_i)_{i\in\Ag}\right)
\end{equation} 
denote the Lindenbaum-Tarski algebra of IPEL. 
We will use $\lnot_\mathbb{A}(\bullet)$ as shorthand for $\bullet\to_\mathbb{A}\bot_\mathbb{A}$. 
For any agent $i$,  we define: 
$$\lozenge_i\mathbb{A}
:= \{\lozenge_i a\in\mathbb{A}\mid a\in\mathbb{A}\}.$$ 
For any formula $\sigma\in\mathcal{L}_{IPEL}$, we let $\sigma^{\mathbb{A}}\in\mathbb{A}$ denote the equivalence class  of $\sigma$ modulo provable equivalence in IPEL. 
Let $$\mathbb{B}:=\left( B,\top_\mathbb{B},\bot_\mathbb{B},\lor_\mathbb{B},\land_\mathbb{B},\lnot_\mathbb{B}\right)$$ 
be the Boolean Extension of the Heyting algebra reduct of $\mathbb{A}$ (see \cite[Section 13, page 450]{macneille1937partially}).\footnote{The Boolean extension of $\mathbb{A}$ can be identified with the algebra of clopens of the Esakia space dual to $\mathbb{A}$. Notice that this is exactly the same construction semantically underlying the G\"odel-Tarski translation (cf.\ \cite[Section 3]{CPZ:Trans} for an expanded discussion).} 
To enhance readability, we identify $\mathbb{A}$ with its image through the embedding $A\hookrightarrow B$. 
Recall that  $\mathbb{A}$ is a  sublattice of $\mathbb{B}$. 
Henceforth, we will use $\lor$ and $\land$ and $\top$ and $\bot$ ambiguously to denote the operations on both algebras. 
Since $\lozenge_i\mathbb{A}$ is a Boolean algebra (see \autoref{lem:epist-Hey-alg:diamond-A}) and, in every Boolean algebra, negation is unique, we have that $\lnot_\mathbb{A}a=\lnot_\mathbb{B}a$ for every $a\in\lozenge_i\mathbb{A}$ and for every agent $i\in\Ag$.

Let $\varphi$  be an IPEL-formula that is not  a theorem. 
Let $$S_\varphi:=\{\sigma^\mathbb{A}\mid \sigma\text{ is a subformula of }\varphi\},$$ 
let $\Ag_\varphi$ be the set of agents that appear in $\varphi$ and let $S_\varphi^\lozenge\supseteq S_\varphi$ be 
$$S_\varphi^\lozenge
:=S_\varphi\cup\{(\lozenge_i\sigma)^\mathbb{A},(\lnot\lozenge_i\sigma)^\mathbb{A}\mid \sigma\in S_\varphi\text{ and }i\in\Ag_\varphi\}.$$ 
Notice that the sets $S_\varphi$ and $S^\lozenge_\varphi$
are finite. 
Now, let $\mathbb{B}_\varphi\subseteq\mathbb{B}$ be the Boolean subalgebra of $\mathbb{B}$ generated by $S^\lozenge_\varphi$. 
Since $S_\varphi^\lozenge$ is finite, 
so will be the domain of $\mathbb{B}_\varphi$ (which we denote with $B_\varphi$). In addition, since $\mathbb{B}_\varphi$ is a sub-lattice of the non-trivial boolean algebra $\mathbb{B}$, it contains at least two elements and has at least one atom.
Plus, it follows that $\mathbb{B}_\varphi$ is generated by its atoms. In view of what will follow, let us endow $\mathbb{B}_\varphi$ with a measure $\mu_\mathbb{B}$ as follows. 
Let $n \geq 1$ be the number of atoms of $\mathbb{B}_\varphi$. 
For every $a\in\mathbb{B}_\varphi$ that is above exactly $m$ atoms, let 
\begin{align}
\mu_\mathbb{B}(a)=\frac{m}{n}.
\label{align:muB}
\end{align}
Now, let 
$$\mathbb{A}_\varphi := \left( A_\varphi, \top, \bot, \land,\lor \right)$$
with $A_\varphi := A\cap B_\varphi$.
Notice that, since both $\mathbb{A}$ and $\mathbb{B}_\varphi$ are distributive lattices, so is  $\mathbb{A}_\varphi$.  For every agent $i\in\Ag_\varphi$, we define 
$$A^{\lozenge_i}_\varphi
:= \{ a\in\mathbb{A}_\varphi\mid \text{ there exists }\sigma\in\mathcal{L}\text{ such that }\lozenge_i\sigma\in a\}=A_\varphi\cap\lozenge_i\mathbb{A}.$$ 
Notice that, if $a\in A^{\lozenge_i}_\varphi$, then $\lnot_\mathbb{A} a\in A^{\lozenge_i}_\varphi$ as well (since $\lnot_\mathbb{B}a\in B_\varphi$ and $\lnot_\mathbb{B}a=\lnot_\mathbb{A}a$). 
Hence, for every agent $i\in\Ag_\varphi$, $\left( A^{\lozenge_i}_\varphi,\top,\bot,\land,\lor,\lnot_\mathbb{A}\right)$ is a Boolean subalgebra of $\mathbb{A}_\varphi$. We are now ready to endow $\mathbb{A}_\varphi$ with an epistemic Heyting algebra structure.

\begin{definition}
Let $$\mathbb{A}^\star_\varphi
:= \left( \mathbb{A}_\varphi,\to^\star,(\lozenge^\star_i)_{i\in\Ag},(\Box^\star_i)_{i\in\Ag}\right)$$ 
where, for all $a,b \in \mathbb{A}_\varphi$,
$$a\to^\star b := \bigvee\{c\in \mathbb{A}_\varphi\mid c\leq a\to_\mathbb{A} b\}=\bigvee\{c\in\mathbb{A}_\varphi\mid c\land a\leq b\},$$
for all  $i\in\Ag_\varphi$ and $a \in \mathbb{A}_\varphi$,
\begin{align*}
\lozenge^\star_i a  := \bigwedge\{ b\in A^{\lozenge_i}_\varphi \mid a\leq b\} && \text{and} &  & \Box^\star_i a := \bigvee\{b\in A^{\lozenge_i}_\varphi \mid b\leq a\} ,
\end{align*}
for all  $i\notin\Ag_\varphi$ and $a \in \mathbb{A}_\varphi$,
\begin{align*}
\lozenge^\star_i a :=
\begin{cases}
\top & \text{if } a\neq\bot,\\
\bot \quad\quad & \text{if } a=\bot,
\end{cases}
&& \text{and}
&&
\Box^\star_i a :=
\begin{cases}
\bot & \text{if } a\neq\top,\\
\top \quad\quad & \text{if } a=\top.
\end{cases}
\end{align*}


The operations above are well-defined since $\mathbb{A}_\varphi$ is a finite distributive lattice and hence all the joins and meets exist. 
\end{definition}

\begin{lemma}\label{lemma1:completeness}
For every $i\in\Ag_\varphi$, the algebra $\mathbb{A}^\star_\varphi$	satisfies the following properties:
\begin{enumerate}
	\item $\lozenge^\star_i\mathbb{A}^\star_\varphi=\{\lozenge^\star_ia\mid a\in\mathbb{A}^\star_\varphi\}\subseteq A^{\lozenge_i}_\varphi$;
	\item $\lozenge^\star_i\mathbb{A}^\star_\varphi=\Box^\star_i\mathbb{A}^\star_\varphi$;
	\item for all $a\in A^{\lozenge_i}_\varphi$, it holds that $\lozenge^\star_ia=a$ and $\Box^\star_ia=a$;
	\item for all $a,b\in \mathbb{A}^\star_\varphi$, if $a\to_\mathbb{A}b\in\mathbb{A}^\star_\varphi$, then $a\to^\star b=a\to_\mathbb{A}b$;
	\item for all $a\in \mathbb{A}^\star_\varphi$, if $\lozenge_ia\in\mathbb{A}^\star_\varphi$ (resp.\ $\Box_ia\in\mathbb{A}^\star_\varphi$), then $\lozenge_i^\star a=\lozenge_i a$ (resp.\ $\Box_i^\star a=\Box_i a$);
	\item for all formulas $\psi, \varphi \in \mc{L}$,
	if  $(\lozenge_i\psi)^\mathbb{A} \in S^\lozenge_\varphi$ (resp.\ $(\Box_i\psi)^\mathbb{A} \in S^\lozenge_\varphi$ or $(\psi\to\chi)^\mathbb{A}\in S^\lozenge_\varphi$), then $\lozenge_i^\star\psi^\bbA=\lozenge_i\psi^\bbA$ (resp.\ $\Box_i^\star\psi^\bbA=\Box_i\psi^\bbA$ or $\psi^\bbA\to^\star\chi^\bbA=\psi^\bbA\to_\mathbb{A}\chi^\bbA$).
	\item $\lozenge^\star_i\mathbb{A}^\star_\varphi=\{\lozenge^\star_ia\mid a\in\mathbb{A}^\star_\varphi\} = A^{\lozenge_i}_\varphi$;
\end{enumerate}  
\end{lemma}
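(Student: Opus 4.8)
The six items all follow from two structural observations, which the plan is to record first. Observation (i): $A^{\lozenge_i}_\varphi = A_\varphi\cap\lozenge_i\mathbb{A}$ is a finite Boolean subalgebra — in particular a $\{\top,\bot,\land,\lor\}$-subalgebra — of the finite lattice $\mathbb{A}_\varphi$ that underlies $\mathbb{A}^\star_\varphi$. Observation (ii): on $\mathbb{A}$ the operation $\lozenge_i$ is a closure operator and $\Box_i$ an interior operator, so that $\lozenge_i\mathbb{A}$ (resp.\ $\Box_i\mathbb{A}$) is exactly the set of $\lozenge_i$-fixed (resp.\ $\Box_i$-fixed) points, and moreover $\lozenge_i\mathbb{A}=\Box_i\mathbb{A}=\{b\in\mathbb{A}\mid \lozenge_i b = b = \Box_i b\}$. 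For (ii) I would argue purely from the static axioms $a\leq\lozenge_i a$, $\Box_i a\leq a$, $\lozenge_i a\leq\Box_i\lozenge_i a$ and $\lozenge_i\Box_i a\leq\Box_i a$ (M1, M2, M5, M6): these give $\lozenge_i a=\Box_i\lozenge_i a$ and $\Box_i a=\lozenge_i\Box_i a$, whence idempotency of both operators and the stated description of $\lozenge_i\mathbb{A}=\Box_i\mathbb{A}$ (this is the content of Fact~\ref{fact:basicEHA}, whose argument goes through verbatim on the Lindenbaum algebra for these equalities).

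With (i) and (ii) available, items 1--4 are immediate. For item 1, the family $\{b\in A^{\lozenge_i}_\varphi\mid a\leq b\}$ is finite and nonempty (it contains $\top$), so its meet computed in $\mathbb{A}^\star_\varphi$ lies in the sublattice $A^{\lozenge_i}_\varphi$; hence $\lozenge^\star_i a\in A^{\lozenge_i}_\varphi$, and dually $\Box^\star_i a\in A^{\lozenge_i}_\varphi$. For item 3, if $a\in A^{\lozenge_i}_\varphi$ then $a$ is the least element of $\{b\in A^{\lozenge_i}_\varphi\mid a\leq b\}$ and the greatest element of $\{b\in A^{\lozenge_i}_\varphi\mid b\leq a\}$, so $\lozenge^\star_i a=a=\Box^\star_i a$. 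Item 2 then follows by combining items 1 and 3: $\lozenge^\star_i\mathbb{A}^\star_\varphi\subseteq A^{\lozenge_i}_\varphi$ while item 3 gives $A^{\lozenge_i}_\varphi\subseteq\lozenge^\star_i\mathbb{A}^\star_\varphi$, and likewise for $\Box^\star_i$, so $\lozenge^\star_i\mathbb{A}^\star_\varphi=A^{\lozenge_i}_\varphi=\Box^\star_i\mathbb{A}^\star_\varphi$. For item 4, if $a\to_\mathbb{A}b\in\mathbb{A}^\star_\varphi$ then it is the greatest element of $\{c\in\mathbb{A}^\star_\varphi\mid c\leq a\to_\mathbb{A}b\}$, so the join defining $a\to^\star b$ equals $a\to_\mathbb{A}b$.

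For item 5, suppose $a\in\mathbb{A}^\star_\varphi$ and $\lozenge_i a\in\mathbb{A}^\star_\varphi$. Since $\lozenge_i a\in\lozenge_i\mathbb{A}$ and $\lozenge_i a\in A_\varphi$, we get $\lozenge_i a\in A^{\lozenge_i}_\varphi$; by M1, $a\leq\lozenge_i a$; and for any $b\in A^{\lozenge_i}_\varphi$ with $a\leq b$, observation (ii) gives $\lozenge_i b=b$, hence $\lozenge_i a\leq\lozenge_i b=b$. Thus $\lozenge_i a$ is the least element of $\{b\in A^{\lozenge_i}_\varphi\mid a\leq b\}$, i.e.\ $\lozenge^\star_i a=\lozenge_i a$. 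The argument for $\Box_i$ is dual, using $\Box_i a\leq a$ (M2), $\Box_i\mathbb{A}=\lozenge_i\mathbb{A}$, and that every element of $A^{\lozenge_i}_\varphi$ is $\Box_i$-fixed.

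Finally, item 6 is the syntactic translation of items 5 and 4. Whenever $\lozenge^\star_i\psi^{\mathbb{A}}$ is formed we have $\psi^{\mathbb{A}}\in\mathbb{A}^\star_\varphi$, and the hypothesis $(\lozenge_i\psi)^{\mathbb{A}}\in S^\lozenge_\varphi$ gives $\lozenge_i\psi^{\mathbb{A}}=(\lozenge_i\psi)^{\mathbb{A}}\in S^\lozenge_\varphi\subseteq B_\varphi$, whence $\lozenge_i\psi^{\mathbb{A}}\in A\cap B_\varphi=\mathbb{A}_\varphi=\mathbb{A}^\star_\varphi$; item 5 then yields $\lozenge^\star_i\psi^{\mathbb{A}}=\lozenge_i\psi^{\mathbb{A}}$. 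The cases $(\Box_i\psi)^{\mathbb{A}}\in S^\lozenge_\varphi$ and $(\psi\to\chi)^{\mathbb{A}}\in S^\lozenge_\varphi$ are analogous, the latter applying item 4 with $a=\psi^{\mathbb{A}}$, $b=\chi^{\mathbb{A}}$ and $a\to_\mathbb{A}b=(\psi\to\chi)^{\mathbb{A}}\in\mathbb{A}^\star_\varphi$. The proof is essentially routine bookkeeping; the only points needing a little care are verifying observation (ii) on the infinite Lindenbaum algebra using only M1, M2, M5, M6, and checking in item 6 that the relevant equivalence classes genuinely lie inside $\mathbb{A}_\varphi$.
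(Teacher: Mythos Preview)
Your argument is correct and follows the same approach as the paper, which simply asserts that items 1--5 are immediate from the definitions of $\lozenge^\star_i$ and $\Box^\star_i$ and that item 6 is an application of items 4 and 5. You have spelled out the bookkeeping the paper leaves implicit, including the only nontrivial point: that observation (ii) holds on the (infinite) Lindenbaum--Tarski algebra because axioms M1, M2, M5, M6 alone suffice for $\lozenge_i\mathbb{A}=\Box_i\mathbb{A}$ and for every element of $\lozenge_i\mathbb{A}$ being a fixed point of both operators.
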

\begin{proof}
The first five items follow immediately from the definition of $\lozenge_i^\star$ and $\Box_i^\star$. Item 6 is an application of items 4 and 5. Item 7 follows from items 1 and 3.
\end{proof}

\begin{lemma}\label{lemma2:completeness}
The algebra $\mathbb{A}^\star_\varphi$ is an epistemic Heyting algebra.
\end{lemma}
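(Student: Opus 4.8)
The plan is to verify, clause by clause, that $\mathbb{A}^\star_\varphi$ satisfies Definition \ref{def: epist algebra}, i.e.\ that it is a \emph{finite} monadic Heyting algebra which in addition validates axiom (E) for every agent $i$. Finiteness is immediate, since $\mathbb{A}_\varphi = A\cap B_\varphi$ is a subset of the finite set $B_\varphi$; and because $\mathbb{A}_\varphi$ is a finite distributive lattice it is a Heyting algebra, so the first thing to observe is that $a\to^\star b = \bigvee\{c\in\mathbb{A}_\varphi\mid c\land a\leq b\}$ is indeed its relative pseudocomplement, which follows from distributivity via $(a\to^\star b)\land a = \bigvee\{c\land a\mid c\land a\leq b\}\leq b$. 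After that, what remains is to check monotonicity of each $\lozenge^\star_i,\Box^\star_i$ and axioms M1--M9 and (E), and I would carry this out in two cases, according to whether $i\in\Ag_\varphi$.

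For $i\notin\Ag_\varphi$ the argument is purely computational: here $\lozenge^\star_i$ sends every non-$\bot$ element to $\top$ and $\Box^\star_i$ is its order-dual, so both are monotone and $\lozenge^\star_i\mathbb{A}^\star_\varphi = \Box^\star_i\mathbb{A}^\star_\varphi = \{\bot,\top\}$ (using that $\mathbb{A}^\star_\varphi$ is nontrivial, as $\varphi$ is not a theorem). One then obtains M1--M9 and (E) by distinguishing whether the elements occurring in each axiom equal $\bot$ or $\top$. The only clause that needs a word is M7: if $b\neq\bot$ it holds trivially because $\lozenge^\star_i b=\top$, while if $b=\bot$ and $a\neq\bot$ one observes that $a\to^\star\bot\neq\top$ (being the Heyting implication, $a\to^\star\bot=\top$ would force $a=\bot$), hence $\Box^\star_i(a\to^\star\bot)=\bot$ and the inequality holds again.

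For $i\in\Ag_\varphi$ I would exploit Lemma \ref{lemma1:completeness}(1)--(3), which identify $\lozenge^\star_i a$ as the least element of the Boolean subalgebra $A^{\lozenge_i}_\varphi$ above $a$ and $\Box^\star_i a$ as the greatest below $a$, and state that both operations fix $A^{\lozenge_i}_\varphi$ pointwise. Monotonicity is then clear, and M1, M2, M8, M9 hold at once since $\bot,\top\in A^{\lozenge_i}_\varphi$. For M3 note that $\lozenge^\star_i a\lor\lozenge^\star_i b\in A^{\lozenge_i}_\varphi$ is an upper bound of $a\lor b$; dually $\Box^\star_i$ preserves binary meets, so, M9 being available, M4 follows through its equivalent normality form $\Box^\star_i a\land\Box^\star_i b\leq\Box^\star_i(a\land b)$ (cf.\ \cite[Lemma 2]{bezhanishvili1998varieties}). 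Axioms M5, M6 reduce to $\Box^\star_i\lozenge^\star_i a=\lozenge^\star_i a$ and $\lozenge^\star_i\Box^\star_i a=\Box^\star_i a$, which are instances of Lemma \ref{lemma1:completeness}(3) because $\lozenge^\star_i a,\Box^\star_i a\in A^{\lozenge_i}_\varphi$. For M7, setting $c:=\Box^\star_i(a\to^\star b)\in A^{\lozenge_i}_\varphi$ one has $c\land a\leq b\leq\lozenge^\star_i b$; since $A^{\lozenge_i}_\varphi$ is closed under $\lnot_\mathbb{A}$ and $c\lor\lnot_\mathbb{A}c=\top$, it follows that $a\leq\lnot_\mathbb{A}c\lor(c\land a)\leq\lnot_\mathbb{A}c\lor\lozenge^\star_i b\in A^{\lozenge_i}_\varphi$, so $\lozenge^\star_i a\leq\lnot_\mathbb{A}c\lor\lozenge^\star_i b$; distributing the meet with $c$ through this join yields $c\land\lozenge^\star_i a\leq\lozenge^\star_i b$, which is exactly M7. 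Finally, for (E): $\lozenge^\star_i a\in A^{\lozenge_i}_\varphi$ has a Boolean complement $\lnot_\mathbb{A}\lozenge^\star_i a$ lying again in $A^{\lozenge_i}_\varphi\subseteq\mathbb{A}^\star_\varphi$, so Lemma \ref{lemma1:completeness}(4) gives $\lozenge^\star_i a\to^\star\bot = \lnot_\mathbb{A}\lozenge^\star_i a$, and hence $\lozenge^\star_i a\lor(\lozenge^\star_i a\to^\star\bot)=\top$.

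The main obstacle is expected to be M4 and, above all, M7, since these are the only axioms that genuinely entangle the modalities with the Heyting implication $\to^\star$. The difficulty in M4 is defused by moving to its normality form (legitimate because M9 holds), while for M7 the essential point is that $A^{\lozenge_i}_\varphi$ is a \emph{Boolean} subalgebra of $\mathbb{A}^\star_\varphi$: complementing inside it and then using the distributivity of $\mathbb{A}^\star_\varphi$ is what makes the inequality go through. All the remaining verifications are routine.
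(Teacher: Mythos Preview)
Your proof is correct and largely parallels the paper's, with one genuine point of divergence: the treatment of axiom M7 for $i\in\Ag_\varphi$. The paper does not verify M7 directly; instead it appeals to \cite[Lemma 2]{bezhanishvili1998varieties} to replace M7 (given M1--M6) by the equivalent condition $\lozenge_i(\lozenge_i p\to\lozenge_i q)\leq(\lozenge_i p\to\lozenge_i q)$, and then observes that for $a,b\in\mathbb{A}^\star_\varphi$ one has $\lozenge^\star_ia\to_\mathbb{A}\lozenge^\star_ib=\lnot_\mathbb{A}\lozenge^\star_ia\lor\lozenge^\star_ib\in A^{\lozenge_i}_\varphi$, so that $\lozenge^\star_ia\to^\star\lozenge^\star_ib=\lozenge^\star_ia\to_\mathbb{A}\lozenge^\star_ib$ by Lemma \ref{lemma1:completeness}(4), and hence this element is fixed by $\lozenge^\star_i$ via Lemma \ref{lemma1:completeness}(3). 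Your argument is instead a direct computation: you complement $c=\Box^\star_i(a\to^\star b)$ inside the Boolean algebra $A^{\lozenge_i}_\varphi$ and push $a$ under $\lnot_\mathbb{A}c\lor\lozenge^\star_ib$ to conclude. Both routes hinge on the same structural fact---that $A^{\lozenge_i}_\varphi$ is a Boolean subalgebra---but the paper's detour has the side benefit of establishing $\lozenge^\star_ia\to^\star\lozenge^\star_ib=\lozenge^\star_ia\to_\mathbb{A}\lozenge^\star_ib$ explicitly, which it then reuses for axiom E; your argument is more self-contained for M7 and avoids invoking the Bezhanishvili equivalence a second time. Your handling of the remaining axioms (and of the $i\notin\Ag_\varphi$ case, where you spell out M7) is essentially the same as the paper's, only slightly more explicit about M8 and M9.
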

\begin{proof}
	As mentioned early on, $\mathbb{A}_\varphi$ is a distributive lattice. Moreover, by definition, $\to^\star$ is the right residual of $\land$ in $\mathbb{A}_\varphi$. This shows that $\mathbb{A}^\star_\varphi$ is a Heyting algebra. 
To prove that $\mathbb{A}^\star_\varphi$ is an epistemic Heyting algebra, it remains to show that $\mathbb{A}^\star_\varphi$ satisfies the following axioms 
(c.f.\ \autoref{def: epist algebra} and \autoref{def:epist-Heyting-algebra}): 
\begin{align}
& a\leq\lozenge_ia
\tag{M1}
\label{proof:axiom:epist-alg:refl}
\\
& \Box_ia\leq a
\tag{M2}
\label{proof:axiom:epist-alg:refl2}
\\
& \lozenge_i(a\lor b)\leq\lozenge_ia\lor\lozenge_ib
\tag{M3}
\label{proof:axiom:epist-alg:distribd1}
\\
& \Box_i(a\to b)\leq\Box_ia\to\Box_ib
\tag{M4}
\label{proof:axiom:epist-alg:distribb1}
\\
& \lozenge_i a \leq \Box_i\lozenge_i a
\tag{M5}
\label{proof:axiom:epist-alg:sym}
\\
& \lozenge_i\Box_ia\leq\Box_ia
\tag{M6}
\label{proof:axiom:epist-alg:trans}
\\
& \Box_i(a\to b)\leq\lozenge_ia\to\lozenge_ib
\tag{M7}
\label{proof:axiom:epist-alg:subalgebra}
\\
& \lozenge_i\bot\leq\bot
\tag{M8}
\label{proof:axiom:epist-alg:distribd2}
\\
& \top\leq\Box_i\top
\tag{M9}
\label{proof:axiom:epist-alg:distribb2}
\\
& \lozenge_ia\lor\lnot\lozenge_ia=\top.
\tag{E}
\label{proof:axiom:epist-alg:boolean}
\end{align}
	
Let $i\in\Ag_\varphi$. By definition, it immediately follows that $\lozenge_i^\star$ and $\Box_i^\star$ verify axioms \ref{proof:axiom:epist-alg:refl} and \ref{proof:axiom:epist-alg:refl2}. Axiom \ref{proof:axiom:epist-alg:distribd1} holds because $\lozenge^\star_ia\lor\lozenge^\star_ib\in\lozenge_i\mathbb{A}_\varphi$ and $a\lor b\leq\lozenge^\star_ia\lor\lozenge^\star_ib$ (and similarly for axiom \ref{proof:axiom:epist-alg:distribb1}). 
	
	\medskip
	
	As for axioms \ref{proof:axiom:epist-alg:sym} and \ref{proof:axiom:epist-alg:trans}, since $\lozenge^\star_ia,\Box^\star_ia\in\lozenge_i\mathbb{A}_\varphi$, by item 3 of Lemma \ref{lemma1:completeness}, we obtain that $\lozenge^\star_i\Box_i^\star a=\Box^\star_i a$ and $\lozenge^\star_ia=\Box_i^\star\lozenge_i^\star a$, which imply the axioms.
	
	\medskip
	
	In the context of axioms \ref{proof:axiom:epist-alg:refl} through \ref{proof:axiom:epist-alg:trans}, axiom \ref{proof:axiom:epist-alg:subalgebra} is equivalent to $\lozenge_i(\lozenge_i p\to\lozenge_i q)\rightarrow(\lozenge_i p\to\lozenge_i q)$ (see \cite[Lemma 2]{bezhanishvili1998varieties}), so let us show that $\mathbb{A}^\star_\varphi$ satisfies $\lozenge_i(\lozenge_i p\to\lozenge_i q)\rightarrow(\lozenge_i p\to\lozenge_i q)$. 
	Observe that for all $a,b\in\mathbb{A}^\star_\varphi$, since $\lozenge^\star_ia,\lozenge_i^\star b\in A^{\lozenge_i}_\varphi$ and $A^{\lozenge_i}_\varphi$ is a Boolean algebra (and hence contains $\lnot_\mathbb{A}\lozenge_i^\star a$), we have that 
	$$\lozenge^\star_ia\to_\mathbb{A} \lozenge_i^\star b=\lnot_\mathbb{A}\lozenge^\star_ia\lor\lozenge_i^\star b \;\ \in \; A^{\lozenge_i}_\varphi$$ 
	 which implies by item 4 of Lemma \ref{lemma1:completeness} that \begin{align}
	 \lozenge^\star_ia\to^\star\lozenge^\star_i b=\lozenge^\star_ia\to_\mathbb{A}\lozenge^\star_ib.\label{equations:completenessequation1}\end{align}
	 Now, by item 3 of Lemma \ref{lemma1:completeness}, we have that  
	 $$\lozenge^\star_i(\lozenge^\star_ia\to_\mathbb{A}\lozenge^\star_ib)=\lozenge^\star_ia\to_\mathbb{A}\lozenge^\star_ib$$ 
	 which by the equation \eqref{equations:completenessequation1} is equivalent to
	 $$\lozenge^\star_i(\lozenge^\star_ia\to^\star\lozenge^\star_ib)=\lozenge^\star_ia\to^\star\lozenge^\star_ib,$$ that is, $\mathbb{A}^\star_\varphi$ satisfies $\lozenge_i(\lozenge_i p\to\lozenge_i q)\rightarrow(\lozenge_i p\to\lozenge_i q)$.

\medskip

Axioms \ref{proof:axiom:epist-alg:distribd2} and \ref{proof:axiom:epist-alg:distribb2} follow from the fact that $\top,\bot\in A_\varphi\cap\lozenge_i\bbA$ and item 3 of Lemma \ref{lemma1:completeness}.

\medskip
	
	Finally, axiom \ref{proof:axiom:epist-alg:boolean} follows immediately from item 4 of Lemma \ref{lemma1:completeness} and from the fact that $A^{\lozenge_i}_\varphi$ is a Boolean algebra. Hence if $a\in A^{\lozenge_i}_\varphi$ then $(a\to_\mathbb{A}\bot_\mathbb{A})\in A^{\lozenge_i}_\varphi$.
\end{proof}

\subsection*{Measures on $\mathbb{A}^\star_\varphi$}
\label{ssec:cpt:measure:Astar}
In this section, for each agent $i\in\Ag_\varphi$, we will define an $i$-measure on the algebra $\mathbb{A}^\star_\varphi$ and a valuation on $\mathbb{A}^\star_\varphi$, so as to define an APE-model $\mathcal{M}_\varphi$ 
such that $\val{\sigma}_{\mathcal{M}_\varphi}=\sigma^\mathbb{A}$ for every subformula $\sigma$ of $\varphi$.
Before defining the measures, we will state some auxiliary results.

\begin{lemma}\label{lemma:measures:basic}
	The system IPEL proves all classical truths about linear inequalities.
\end{lemma}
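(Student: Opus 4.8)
The plan is to observe that the inequality axioms N0--N6 of IPEL (Table \ref{table:IPDEL}), together with the intuitionistic propositional base H1--H9 and the rule MP, already form a complete deductive system for the (classical) first-order theory of ordered fields restricted to linear arithmetic — essentially the theory that governs the ``real number logic'' component alluded to in Section \ref{sec:completeness}. The key point is that, even though the ambient propositional logic is only intuitionistic, the terms $t$ occurring in $i$-probability formulas are built from rational constants and the symbols $\mu_i(\psi)$, and every atomic statement $t \geq \beta$ is governed by the linearity axiom N5, $(t \geq \beta) \vee (\beta \geq t)$. This axiom forces the relevant subalgebra of truth values generated by inequality formulas to be Boolean, so classical reasoning about them is available inside IPEL.

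Concretely, the steps I would carry out are as follows. First, treat each term $\mu_i(\psi_k)$ as a fresh real variable $x_k$, so that a ``classical truth about linear inequalities'' becomes a valid sentence of the additive theory of the ordered field of reals (or equivalently of the ordered $\mathbb{Q}$-vector space $\mathbb{R}$) in the variables $x_k$ with rational coefficients. Second, recall Fourier--Motzkin elimination / the fact that this theory admits quantifier elimination and is axiomatized by finitely many schemas: reflexivity (N0), the neutrality of a zero summand (N1), permutation invariance of sums (N2), additivity of inequalities (N3), scaling by nonnegatives (N4), linearity/totality of the order (N5), and transitivity (N6). Third, check that each of these schemas is literally one of the IPEL axioms N0--N6, so that any derivation in the classical linear-inequality calculus can be replayed line by line in IPEL, with the only subtlety being the use of classical propositional steps (case splits, double-negation elimination) on formulas all of whose atomic subformulas are inequalities. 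Fourth, discharge that subtlety by noting that N5 yields excluded middle for every atomic inequality, $(t \geq \beta)\vee\lnot(t\geq\beta)$ — since $\lnot(t\geq\beta)$ is provably equivalent to $(\beta \geq t)$ strictly, using N6 and the strict-order consequence $t \geq \beta \rightarrow t > \gamma$ for $\gamma < \beta$ — and hence excluded middle and De Morgan laws hold for the whole Boolean-closed set of formulas generated by inequalities; thus classical propositional manipulations among such formulas are admissible in IPEL.

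The main obstacle I anticipate is precisely the interface between the classical meta-reasoning about $\mathbb{R}$ and the intuitionistic object logic: one must be careful that every appeal to a classical tautology in the imagined ``classical linear-inequality proof'' is an instance involving only formulas built by $\wedge,\vee,\rightarrow,\lnot$ from inequality atoms, so that the excluded-middle fact just established applies and no genuinely intuitionistic failure (e.g.\ some formula whose truth value lies properly between $\bot$ and $\top$ in the Heyting algebra) can intrude. A secondary, more bookkeeping-type obstacle is matching the strict inequalities $t > \gamma$ — which appear in N6 and in the definition of $i$-measure via clause (5) of Definition \ref{def:measures} — against the non-strict primitives; this is handled by the standard translation $t > \gamma \equiv \bigvee$-type reformulation, or more simply by taking $t>\gamma$ as definitional shorthand justified by N6 as stated in Table \ref{table:IPDEL}. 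Once these points are settled, the lemma follows by a routine induction on the length of the classical derivation, replaying each step in IPEL, and I would not spell out that induction in detail.
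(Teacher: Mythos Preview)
Your proposal is correct and follows essentially the same approach as the paper: cite the known completeness of axioms N0--N6 for linear inequalities (the paper simply refers to \cite{fagin1990logic}; you invoke Fourier--Motzkin elimination, which is the same content), and then observe that axiom N5 renders the inequality fragment classical so that the classical completeness argument transfers to the intuitionistic base. The paper's own proof is considerably terser---two sentences---but your more explicit treatment of the classical/intuitionistic interface and the excluded-middle argument for inequality atoms is exactly the point the paper gestures at with its remark on N5.
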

\begin{proof}
	See \cite{fagin1990logic} for an explanation of why axioms N0 to N6 are enough. 
	Notice that, even though the result is proven for classical logic, it still holds for IPEL. Indeed, the fragment of the logic involving inequalities is classical because of the axiom N5: $(\tau\geq\beta)\lor(\lnot\tau\geq\beta)$.
\end{proof}

\begin{lemma}\label{lemma:measuers:newaxiom}
The formulas 
\begin{equation}
\label{eq:lemma:measuers:newaxiom:1}
\left( 
\lozenge_i\psi \land \left( \sum_m\alpha_m\mu_i(\phi_m)\geq\beta\right)\right)
\to \left( \sum_m\alpha_m\mu_i(\phi_m\land\lozenge_i\psi)\geq\beta\right)
\end{equation}
and 

\begin{equation}
\label{eq:lemma:measuers:newaxiom:2}
\left( \lozenge_i\psi
\land \left( \sum_m\alpha_m\mu_i(\phi_m)<\beta \right)\right) 
\to \left( \sum_m\alpha_m\mu_i(\phi_m\land\lozenge_i\psi)<\beta \right)
\end{equation}
are provable in IPEL.
\end{lemma}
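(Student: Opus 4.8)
The plan is to exploit that $\lozenge_i\psi$ is \emph{epistemically transparent} for agent $i$: by axiom M5 the formula $\lozenge_i\psi\to\Box_i\lozenge_i\psi$ is a theorem, so whenever $\lozenge_i\psi$ holds it holds throughout the whole $i$-equivalence cell, and hence conjoining $\lozenge_i\psi$ with any formula does not change the $i$-probability of that formula. Accordingly, I would first reduce both statements to the single claim that, for every $m$,
\[
(\ast)\qquad \vdash\ \lozenge_i\psi\ \to\ \bigl(\mu_i(\phi_m)=\mu_i(\phi_m\land\lozenge_i\psi)\bigr),
\]
where, as usual, $t=t'$ abbreviates $(t\geq t')\land(t'\geq t)$.

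To prove $(\ast)$, set $\chi:=\lozenge_i\psi$ and instantiate axiom P4 with $A:=\phi_m\land\chi$ and $B:=\phi_m$, which gives
$$[\Box_i((\phi_m\land\chi)\to\phi_m)\land(\mu_i(\phi_m\land\chi)=\mu_i(\phi_m))]\leftrightarrow\Box_i(\phi_m\leftrightarrow(\phi_m\land\chi)).$$
Since $(\phi_m\land\chi)\to\phi_m$ is an instance of H5 and hence a theorem, $\Box_i((\phi_m\land\chi)\to\phi_m)$ is a theorem by Nec$_i$; and since $\phi_m\leftrightarrow(\phi_m\land\chi)$ is provably equivalent in IPL to $\phi_m\to\chi$, using SubEq the instance above simplifies to $(\mu_i(\phi_m\land\chi)=\mu_i(\phi_m))\leftrightarrow\Box_i(\phi_m\to\chi)$. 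In particular $\vdash\Box_i(\phi_m\to\chi)\to(\mu_i(\phi_m)=\mu_i(\phi_m\land\chi))$. It then suffices to show $\vdash\chi\to\Box_i(\phi_m\to\chi)$: from $\chi\to(\phi_m\to\chi)$ (axiom H1), applying Nec$_i$ and then axiom M4 yields $\vdash\Box_i\chi\to\Box_i(\phi_m\to\chi)$, and composing this with the M5-instance $\vdash\chi\to\Box_i\chi$ gives $\vdash\chi\to\Box_i(\phi_m\to\chi)$. Chaining the three implications yields $(\ast)$.

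Finally, for the first formula, assume $\lozenge_i\psi$ together with $\sum_m\alpha_m\mu_i(\phi_m)\geq\beta$; by $(\ast)$ we obtain $\mu_i(\phi_m)=\mu_i(\phi_m\land\lozenge_i\psi)$ for every $m$, and substituting these equalities into the given linear inequality yields $\sum_m\alpha_m\mu_i(\phi_m\land\lozenge_i\psi)\geq\beta$. This last step is an instance of the principle that substitution of provably equal real terms preserves a weak linear inequality, which is a classical truth about linear inequalities and hence provable in IPEL by Lemma \ref{lemma:measures:basic}; combining it with $(\ast)$ by propositional reasoning gives the claim. The second formula is proved identically, replacing $\geq$ by $<$ and using that substitution of equal terms likewise preserves strict inequalities. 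The only points requiring care are the bookkeeping in the P4 instance (choosing $A,B$ so that one $\Box_i$-conjunct collapses to a theorem and the other to $\Box_i(\phi_m\to\chi)$) and the appeal to the classicality of the inequality fragment, which is exactly what Lemma \ref{lemma:measures:basic} supplies; no further semantic argument is needed.
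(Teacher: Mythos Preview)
Your argument is correct. Both your proof and the paper's isolate the same pivot, namely
\[
(\ast)\qquad \vdash\ \lozenge_i\psi\ \to\ \bigl(\mu_i(\phi_m)=\mu_i(\phi_m\land\lozenge_i\psi)\bigr),
\]
and then finish by an appeal to Lemma~\ref{lemma:measures:basic} to substitute equal terms inside the linear (in)equality. The difference lies in how $(\ast)$ is obtained. The paper first derives $\lozenge_i\psi\leftrightarrow(\mu_i(\lozenge_i\psi)=1)$ from M5 together with the consequence $\Box_i\varphi\leftrightarrow(\mu_i(\varphi)=1)$ of P4, then expands $\mu_i(\phi_m)$ via the inclusion--exclusion axiom P3 as $\mu_i(\phi_m\lor\lozenge_i\psi)+\mu_i(\phi_m\land\lozenge_i\psi)-\mu_i(\lozenge_i\psi)$ and cancels using $\mu_i(\lozenge_i\psi)=1$ and $\mu_i(\phi_m\lor\lozenge_i\psi)=1$. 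You instead instantiate P4 directly with $A=\phi_m\land\lozenge_i\psi$ and $B=\phi_m$, so that one $\Box_i$-conjunct collapses to a theorem and the biconditional reduces (by IPL and SubEq) to $\bigl(\mu_i(\phi_m)=\mu_i(\phi_m\land\lozenge_i\psi)\bigr)\leftrightarrow\Box_i(\phi_m\to\lozenge_i\psi)$; the remaining hypothesis $\Box_i(\phi_m\to\lozenge_i\psi)$ you get from M5, H1, Nec$_i$ and M4 without touching P3. Your route is shorter and makes explicit that the only modal fact needed is the $i$-transparency of $\lozenge_i\psi$; the paper's route is a more additive, ``measure-theoretic'' calculation that would still go through under a weaker form of P4 (only the $\Box_i\varphi\leftrightarrow(\mu_i(\varphi)=1)$ instance), at the price of invoking P3.
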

\begin{proof}
We only prove \eqref{eq:lemma:measuers:newaxiom:1}, the proof of   \eqref{eq:lemma:measuers:newaxiom:2} being almost verbatim. Early on we observed  (see  Lemma \ref{lemma:axiomreplacement}) that axiom P4 implies the validity of $\Box_i \varphi \leftrightarrow (\mu_i(\varphi)=1)$. This and axiom M5 
(i.e.\ $\lozenge_i\psi\leftrightarrow\Box_i\lozenge_i\psi$) imply 
\begin{equation}
\label{eq:completeness:1}
\vdash_{\text{IPEL}}\lozenge_i\psi\leftrightarrow(\mu_i(\lozenge_i\psi)=1).
\end{equation}
Since $\vdash_{\text{IPEL}}\lozenge_i\psi\to (\lozenge_i\psi\lor\phi_m)$ for every $\phi_m \in \mc{L}$, by rule Sub$_\mu$ we obtain
\begin{equation}
\label{eq:completeness:2}
\vdash_{\text{IPEL}}\mu_i(\lozenge_i\psi)\leq \mu_i(\lozenge_i\psi \lor \phi_m).
\end{equation}
From \eqref{eq:completeness:2} and Lemma \ref{lemma:measures:basic},
we deduce that 
\begin{equation}
\label{eq:completeness:3}
\vdash_{\text{IPEL}}\mu_i(\lozenge_i\psi)=1\to\mu_i(\phi_m\lor\lozenge_i\psi)=1.
\end{equation}
Lemma \ref{lemma:measures:basic} and axiom P3 
(i.e.\ $\mu_i(\phi_m) = \mu_i(\phi_m \lor \lozenge_i\psi) + \mu_i(\phi_m \land \lozenge_i\psi )- \mu_i(\lozenge_i\psi) $) entail
\begin{equation}
\label{eq:completeness:4}
\vdash_{\text{IPEL}}
\left( \sum_m
\alpha_m\mu_i(\phi_m)\geq\beta
\right)
\leftrightarrow
\left( \sum_m \alpha_m\Big(\mu_i(\phi_m\land\lozenge_i\psi)+\mu_i(\phi_m\lor\lozenge_i\psi)-\mu_i(\lozenge_i\psi)\Big)\geq\beta \right).
\end{equation}
Combining \eqref{eq:completeness:1}, \eqref{eq:completeness:3} and \eqref{eq:completeness:4}, we obtain 
\begin{equation}
\label{eq:completeness:5}
\vdash_{\text{IPEL}}
\Big( \lozenge_i\psi\land A \Big)
\to  \Big( 
\big( \mu_i(\lozenge_i\psi)=1 \big) \land \bigwedge_{m}\big( \mu_i(\phi_m\lor\lozenge_i\psi)=1 \big) 
  \land B  \Big)
\end{equation}
with 
$$ A := \sum_m \alpha_m\mu_i(\phi_m)\geq\beta,$$
and 
$$ B := \sum_m\alpha_m(\mu_i(\phi_m\land\lozenge_i\psi)+\mu_i(\phi_m\lor\lozenge_i\psi)-\mu_i(\lozenge_i\psi))\geq\beta.$$
Again, by using Lemma \ref{lemma:measures:basic}, we obtain that 
\begin{equation}
\label{eq:completeness:6}
\vdash_{\text{IPEL}}
\left((\mu_i(\lozenge_i\psi)=1)
\land\bigwedge_m(\mu_i(\phi_m\lor\lozenge_i\psi)=1)
\land
B\right)
\to D
\end{equation}
with
$$ D:=\sum_m\alpha_m\mu_i(\phi_m\land\lozenge_i\psi)\geq\beta .$$

Putting \eqref{eq:completeness:5} and \eqref{eq:completeness:6}  together, we finally get: 
$$\vdash_{\text{IPEL}}
\left( \lozenge_i\psi\land \sum_m\alpha_m\mu_i(\phi_m)\geq\beta
\right)
\to \left( \sum_m\alpha_m\mu_i(\phi_m\land\lozenge_i\psi)\geq\beta \right)$$ 
as desired.\end{proof}

Observe that for any agent $i\in\Ag_\varphi$, since $\mathbb{A}^\star_\varphi$ is finite and $\lozenge^\star_i\mathbb{A}^\star_\varphi=A^{\lozenge_i}_\varphi$ is a Boolean algebra, it is the case that the $i$-minimal elements are the atoms of this Boolean algebra and every element of $\mathbb{A}^{\lozenge_i}_\varphi$ can be written as the union of some of these $i$-minimal elements. 
Let $n_i$ be the number of $i$-minimal elements of $\mathbb{A}^\star_\varphi$. 
Let us call $a_k^i$, for $1 \leq k\leq n_i$, the $i$-minimal elements of $\mathbb{A}^\star_\varphi$. 
Now, for each $i$-probability formula $\sigma$ with $\sigma^\mathbb{A}\in S^\lozenge_\varphi$, we have that  $\sigma^\mathbb{A}\in A^{\lozenge_i}_\varphi$. Hence, we have that $(\lnot\sigma)^\mathbb{A}\in A^{\lozenge_i}_\varphi$.  
This implies  that there exists a function $f_\sigma: \{1, 2, \dots, n_i\}\to \{0,1\}$ 
such that 
$$\sigma^\mathbb{A}=\bigvee_{f_\sigma(k)=1}a_k^i \quad \text{and} \quad (\lnot\sigma)^\mathbb{A}=\bigvee_{f_\sigma(k)=0}a_k^i.$$ 
It should be stressed that since $\lor$ and $\land$ in $\mathbb{A}^\star_\varphi$ are inherited by $\mathbb{A}$,  these equalities hold in $\mathbb{A}$ as well.

Now, let us fix $i\in\Ag_\varphi$. For every $k\in n_i$, we  define a system of equations $E_{a^i_k}$, with variables $x_b$ for every $b\leq a^i_k$ as follows\footnote{The sums in system of equations $E_{a^i_k}$ range over $m$.}:

$$E_{a^i_k} := \left( \begin{array}{l l}
\sum\alpha_m \cdot x^i_{\psi_m^\mathbb{A}\land a^i_k}\geq \beta, & \text{for all $\sigma:=(\sum\alpha_m \cdot \mu_i(\psi_m)\geq \beta)$ with $\sigma^\mathbb{A}\in S^\lozenge_\varphi$ and $f_\sigma(k)=1$ }\\
\sum\alpha_m \cdot x^i_{\psi_m^\mathbb{A}\land a^i_k}< \beta, & \text{for all $\sigma:=(\sum\alpha_m \cdot \mu_i(\psi_m)\geq \beta)$ with $\sigma^\mathbb{A}\in S^\lozenge_\varphi$ and $f_\sigma(k)=0$ }\\
x^i_{b} \geq 0 \text{ and } x_{b}\leq 1, & \text{for all $b\in\mathbb{A}^\star_\varphi$  with $b\leq a^i_k$}\\
x^i_{b} + x^i_{c} = x^i_{b\land c}+x^i_{b \lor c},\ & \text{for all $b,c\in\mathbb{A}^\star_\varphi$ with $b,c\leq a^i_k$}\\
x^i_{b}\leq x^i_{c}, & \text{for all $b,c\in\mathbb{A}^\star_\varphi$  with $b\leq c\leq a^i_k$}\\
x^i_\bot=0\\
x^i_{a^i_k} = 1\\
\end{array} \right). $$

For a solution $s$ of the above system, we denote with $(x^i_b)^s$ the solution according to $s$ of $x^i_b$.

Notice that the system is designed in such a way that any particular solution (cf.\ Lemma \ref{lemma:measures:key3}) provides an $i$-measure on $\mathbb{A}^\star_\varphi$ that  guarantees that the valuation of an $i$-probability formula $\sigma$ is $\sigma^\mathbb{A}$. 
Indeed, the first two types of inequalities in the system will guarantee that exactly the $i$-minimal elements of $\mathbb{A}^\star_\varphi$ below $\sigma^\mathbb{A}$ will constitute $\val{\sigma}$ (see Definition \ref{def:semantics:IPDEL}). The rest of the inequalities will guarantee that the solution satisfies the basic properties of $i$-measures. 

Observe that, for every $b\leq a^i_k$, there exists a formula $\tau_b$ such that $b=\tau_b^\mathbb{A}$ and  if $b\leq c$ then $\vdash_{\text{IPEL}}\tau_b\to\tau_c$. 
Let $E_{a^i_k}^\tau$ be the system of equations where each $x^i_b$ is replaced by $\mu_i(\tau_b)$. 
Since $a^i_k$ is $i$-minimal, we can assume without loss of generality that $\tau_{a^i_k}$ is of the form $\lozenge_i\tau'$. 
Furthermore, let $PS_i\subseteq S^\lozenge_\varphi$ be the set of $i$-probability formulas that are subformulas of $\varphi$.  
For every $\sigma^\mathbb{A}\in PS_i$ such that $\sigma:=(\sum\alpha_m \cdot \mu_i(\psi_m)\geq \beta)$, let $\sigma[a_k^i]$ be the formula $\sum\alpha_m \cdot \mu_i(\psi_m\land\tau_{a^i_k})\geq \beta$.

\begin{lemma}\label{lemma:measures:key1}
	For every $k\in n_i$, the system $E_{a^i_k}$ has a solution.
\end{lemma}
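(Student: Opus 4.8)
The plan is to argue by contraposition, turning a hypothetical infeasibility of $E_{a^i_k}$ into the IPEL-theoremhood of $\lnot\lozenge_i\tau'$, where $\lozenge_i\tau'=\tau_{a^i_k}$; this is absurd, since $a^i_k$, being an atom of the nontrivial finite Boolean algebra $A^{\lozenge_i}_\varphi$, satisfies $a^i_k\neq\bot_\mathbb{A}$, equivalently $\lnot\lozenge_i\tau'$ is not an IPEL-theorem. The link between the numerical system and the logic is the substitution $x^i_b\mapsto\mu_i(\tau_b)$, which transforms $E_{a^i_k}$ into the system of formulas $E^\tau_{a^i_k}$ described before the statement. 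By Lemma \ref{lemma:measures:basic}, IPEL reasons soundly about finite systems of linear equalities, weak inequalities and strict inequalities over $\mathbb{Q}$, and the usual theorem of the alternative for such mixed systems applies: the system either has a rational solution or admits a derivable infeasibility certificate. Hence, if $E_{a^i_k}$ has no solution, IPEL proves the negation of the conjunction of the formulas in $E^\tau_{a^i_k}$. It therefore suffices to prove that every formula in $E^\tau_{a^i_k}$ is derivable in IPEL from the single hypothesis $\lozenge_i\tau'$; combining this with the previous sentence yields $\vdash_{\text{IPEL}}\lozenge_i\tau'\to\bot$, i.e.\ $\vdash_{\text{IPEL}}\lnot\lozenge_i\tau'$, the desired contradiction.

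I would then run through the constraint types. The structural constraints are IPEL-theorems outright: each equality $\mu_i(\tau_b)+\mu_i(\tau_c)=\mu_i(\tau_{b\land c})+\mu_i(\tau_{b\lor c})$ is an instance of axiom P3, using that $\land,\lor$ in $\mathbb{A}^\star_\varphi$ are those of $\mathbb{A}$ (so $\vdash_{\text{IPEL}}\tau_{b\land c}\leftrightarrow\tau_b\land\tau_c$ and likewise for $\lor$) together with rule SubEq; each monotonicity constraint $\mu_i(\tau_b)\leq\mu_i(\tau_c)$ for $b\leq c$ follows from $\vdash_{\text{IPEL}}\tau_b\to\tau_c$ and rule Sub$_\mu$; the bounds $0\leq\mu_i(\tau_b)\leq 1$ follow from Sub$_\mu$ applied to $\bot\to\tau_b$ and $\tau_b\to\top$ together with P1 and P2; and $\mu_i(\tau_\bot)=0$ is P1 (via $\vdash_{\text{IPEL}}\tau_\bot\leftrightarrow\bot$ and SubEq). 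The only structural constraint requiring the hypothesis is $\mu_i(\tau_{a^i_k})=1$, i.e.\ $\mu_i(\lozenge_i\tau')=1$, which is exactly $\vdash_{\text{IPEL}}\lozenge_i\tau'\leftrightarrow(\mu_i(\lozenge_i\tau')=1)$, derived from axioms P4 (cf.\ Lemma \ref{lemma:axiomreplacement}) and M5 as in the proof of Lemma \ref{lemma:measuers:newaxiom}.

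For the probability-formula constraints, fix $\sigma:=(\sum_m\alpha_m\mu_i(\psi_m)\geq\beta)$ with $\sigma^\mathbb{A}\in S^\lozenge_\varphi$. If $f_\sigma(k)=1$, then $a^i_k\leq\sigma^\mathbb{A}$, i.e.\ $\vdash_{\text{IPEL}}\lozenge_i\tau'\to\sigma$; feeding this into Lemma \ref{lemma:measuers:newaxiom}, formula \eqref{eq:lemma:measuers:newaxiom:1}, yields $\vdash_{\text{IPEL}}\lozenge_i\tau'\to(\sum_m\alpha_m\mu_i(\psi_m\land\lozenge_i\tau')\geq\beta)$, which, since $(\psi_m\land\lozenge_i\tau')^\mathbb{A}=\psi_m^\mathbb{A}\land a^i_k$ (so that $\mu_i(\psi_m\land\lozenge_i\tau')$ and $\mu_i(\tau_{\psi_m^\mathbb{A}\land a^i_k})$ are provably equal), is precisely the relevant formula of $E^\tau_{a^i_k}$ under the hypothesis $\lozenge_i\tau'$. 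If $f_\sigma(k)=0$, then $a^i_k\leq(\lnot\sigma)^\mathbb{A}$, i.e.\ $\vdash_{\text{IPEL}}\lozenge_i\tau'\to\lnot\sigma$; since the inequality fragment is classical (axiom N5, cf.\ Lemma \ref{lemma:measures:basic}) this gives $\vdash_{\text{IPEL}}\lozenge_i\tau'\to(\sum_m\alpha_m\mu_i(\psi_m)<\beta)$, and formula \eqref{eq:lemma:measuers:newaxiom:2} of Lemma \ref{lemma:measuers:newaxiom} then gives $\vdash_{\text{IPEL}}\lozenge_i\tau'\to(\sum_m\alpha_m\mu_i(\psi_m\land\lozenge_i\tau')<\beta)$, the remaining formula of $E^\tau_{a^i_k}$. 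All constraints of $E^\tau_{a^i_k}$ being accounted for, the contradiction follows. The only genuinely delicate point is the appeal to the theorem of the alternative for mixed linear systems and its proof-theoretic counterpart; but this is exactly the content delegated to \cite{fagin1990logic} and recorded in Lemma \ref{lemma:measures:basic}, so what remains is the routine translation bookkeeping between $\mathbb{A}^\star_\varphi$ and the representative formulas $\tau_b$.
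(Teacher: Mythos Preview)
Your proof is correct and follows essentially the same strategy as the paper: assume infeasibility, invoke Lemma~\ref{lemma:measures:basic} to turn this into an IPEL-derivable negation of a conjunction, then use Lemma~\ref{lemma:measuers:newaxiom} together with $a^i_k\leq\sigma^{\mathbb A}$ (resp.\ $a^i_k\leq(\lnot\sigma)^{\mathbb A}$) to show that $\tau_{a^i_k}$ entails that conjunction, yielding $\vdash_{\text{IPEL}}\lnot\tau_{a^i_k}$ and hence a contradiction. The only organisational difference is that the paper first strips off the structural constraints (declaring them provable) and reduces the infeasibility to the negation of the conjunction of the $\sigma[a^i_k]$'s alone, whereas you keep all constraints and instead verify that each is derivable from the hypothesis $\lozenge_i\tau'$; your treatment is in fact slightly more explicit about the constraint $x^i_{a^i_k}=1$, which is not an outright theorem but does follow from $\lozenge_i\tau'$ via P4 and M5.
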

\begin{proof}
	Notice that all but the first two types of inequalities in  $E_{a^i_k}^\tau$ are provable in IPEL as they are immediate consequences of axioms P1, P2, P3 and the rule Sub$_\mu$. 
	Heading towards a contradiction, let us first assume that $E_{a^i_k}$ does not have a solution at all. 
	This is a truth about linear inequalities of rational numbers, hence, by Lemma \ref{lemma:measures:basic}, it is provable in IPDEL. As mentioned above, since some inequalities are provable this is tantamount to saying that 
\begin{align}
\vdash_{\text{IPEL}}\lnot  
\left( \left( \bigwedge_{\begin{smallmatrix}
		\sigma^\mathbb{A} \in PS_i\\
		f_\sigma(k)=1
		\end{smallmatrix}}\sigma[a_k^i] \right)\land \left( \bigwedge_{\begin{smallmatrix}
		\sigma^\mathbb{A} \in PS_i\\
		f_\sigma(k)=0
		\end{smallmatrix}}\lnot\sigma[a_k^i] \right) \right).
		\label{proof:cpt:align:Eak:1}
\end{align}	 
		Notice that, by Lemma \ref{lemma:measuers:newaxiom},  we have: for every $\sigma^\mathbb{A}\in PS_i$,  
		$$\vdash_{\text{IPEL}} \left(\sigma\land\tau_{a^i_k} \right) \to\sigma[a^i_k]$$ 
		and 
		$$\vdash_{\text{IPEL}} \left( \lnot \sigma\land\tau_{a^i_k} \right)\to\lnot\sigma[a^i_k].$$ 
		
		Therefore,
			\begin{align}\vdash_{\text{IPEL}}
	\left( \left(  \left(   
	\bigwedge_{\begin{smallmatrix}
		\sigma^\mathbb{A} \in PS_i\\
		f_\sigma(k)=1
		\end{smallmatrix}}\sigma \right)
		\land \left(\bigwedge_{\begin{smallmatrix}
		\sigma^\mathbb{A} \in PS_i\\
		f_\sigma(k)=0
		\end{smallmatrix}}\lnot\sigma
		\right) \right) \land\tau_{a^i_k} \right) \rightarrow \left( \left( \bigwedge_{\begin{smallmatrix}
		\sigma^\mathbb{A} \in PS_i\\
		f_\sigma(k)=1
		\end{smallmatrix}}\sigma[a_k^i] \right)\land \left( \bigwedge_{\begin{smallmatrix}
		\sigma^\mathbb{A} \in PS_i\\
		f_\sigma(k)=0
		\end{smallmatrix}}\lnot\sigma[a_k^i] \right) \right).
		\label{proof:firsttouselater}
	\end{align} 
	
	Since one direction of contraposition is provable in intiontionistic logic we obtain that: 
\begin{align}
\vdash_{\text{IPEL}}
\left( \lnot \left( \left( 
\bigwedge_{\begin{smallmatrix}
		\sigma^\mathbb{A} \in PS_i\\
		f_\sigma(k)=1
		\end{smallmatrix}}
		\sigma[a_k^i] \right) 
		\land \left( \bigwedge_{\begin{smallmatrix}
		\sigma^\mathbb{A} \in PS_i\\
		f_\sigma(k)=0
		\end{smallmatrix}}
		\lnot\sigma[a_k^i] \right) \right) \right) \to\lnot
		\left( \left( \left( \bigwedge_{\begin{smallmatrix}
		\sigma^\mathbb{A} \in PS_i\\
		f_\sigma(k)=1
		\end{smallmatrix}}\sigma \right) \land \left( \bigwedge_{\begin{smallmatrix}
		\sigma^\mathbb{A} \in PS_i\\
		f_\sigma(k)=0
		\end{smallmatrix}}\lnot\sigma
		\right)   \right) \land\tau_{a^i_k} \right).
		\label{proof:cpt:align:Eak:2}
\end{align}	

	
\eqref{proof:cpt:align:Eak:1} and \eqref{proof:cpt:align:Eak:2} imply that
	\begin{align}
	\vdash_{\text{IPEL}} 
	\lnot\left( \left( \left( \bigwedge_{\begin{smallmatrix}
		\sigma^\mathbb{A} \in PS_i\\
		f_\sigma(k)=1
		\end{smallmatrix}}\sigma \right)\land \left( \bigwedge_{\begin{smallmatrix}
		\sigma^\mathbb{A} \in PS_i\\
		f_\sigma(k)=0
		\end{smallmatrix}}\lnot\sigma
		\right) \right) \land\tau_{a^i_k} \right).
		\label{proof:cpt:align:Eak:2,5}
	\end{align}
In addition, $\mathbb{A}^\star_\varphi$ inherits the order from $\mathbb{A}$ and by construction $a_k^i\leq\sigma^\mathbb{A}$ when  $f_\sigma(k)=1$ and $a_k^i\leq(\lnot\sigma)^\mathbb{A}$ when $f_\sigma(k)=0$. Hence,
we have that, for all $\sigma\in PS_i$, if $f_\sigma(k)=1$ then $\vdash_{\text{IPEL}}\tau_{a^i_k}\to\sigma$ and if $f_\sigma(k)=0$ then $\vdash_{\text{IPEL}}\tau_{a^i_k}\to\lnot\sigma$. 
Therefore, we have 
$$\vdash_{\text{IPEL}} \tau_{a^i_k} \rightarrow 
\left( \left( \bigwedge_{\begin{smallmatrix}
		\sigma^\mathbb{A} \in PS_i\\
		f_\sigma(k)=1
		\end{smallmatrix}}\sigma \right)\land \left( \bigwedge_{\begin{smallmatrix}
		\sigma^\mathbb{A} \in PS_i\\
		f_\sigma(k)=0
		\end{smallmatrix}}\lnot\sigma \right)  \right).$$
Hence,
$$\vdash_{\text{IPEL}}\lnot\tau_{a^i_k} \leftrightarrow 
\lnot
\left( \left( \left( \bigwedge_{\begin{smallmatrix}
		\sigma^\mathbb{A} \in PS_i\\
		f_\sigma(k)=1
		\end{smallmatrix}}\sigma \right) \land \left( \bigwedge_{\begin{smallmatrix}
		\sigma^\mathbb{A} \in PS_i\\
		f_\sigma(k)=0
		\end{smallmatrix}}\lnot\sigma \right) \right) \land\tau_{a^i_k}\right)$$ 
and by \eqref{proof:cpt:align:Eak:2,5}
$$\vdash_{\text{IPEL}}\lnot\tau_{a^i_k}.$$ 
We have reached a contradiction because $a_k^i$ is an element of $\mathbb{A}$ different from $\bot$ and hence each formula corresponding to it is consistent. Therefore $E_{a^i_k}$ has a solution.
\end{proof}

\begin{lemma}\label{lemma:measures:key2}
	 For every $k\in n_i$ and every $b<c\leq a_k^i$, the system $E_{a^i_k}$ has a solution $s_{b,c}$ such that $(x^i_b)^{s_{b,c}}<(x^i_c)^{s_{b,c}}$.
\end{lemma}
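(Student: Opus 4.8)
The plan is to argue by contradiction, in close parallel with the proof of Lemma~\ref{lemma:measures:key1}. Fix $k\in n_i$ and $b<c\leq a^i_k$, and suppose that no solution $s$ of $E_{a^i_k}$ satisfies $(x^i_b)^s<(x^i_c)^s$. Since $b\leq c\leq a^i_k$, the inequality $x^i_b\leq x^i_c$ is already a constraint of $E_{a^i_k}$, so every solution in fact satisfies $(x^i_b)^s=(x^i_c)^s$; equivalently, the system obtained from $E_{a^i_k}$ by adjoining the strict inequality $x^i_b<x^i_c$ is unsolvable over $\mathbb{Q}$. This unsolvability is a truth about linear inequalities of rationals, so, substituting $\mu_i(\tau_d)$ for each variable $x^i_d$ and invoking Lemma~\ref{lemma:measures:basic}, IPEL proves the negation of the conjunction of all the translated constraints. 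As in Lemma~\ref{lemma:measures:key1}, every such constraint other than the two $\sigma$-families and the equation $x^i_{a^i_k}=1$ becomes an IPEL-theorem (the bounds via P1, P2 and Sub$_\mu$; the modularity equations via P3 together with $\vdash\tau_{d\land d'}\leftrightarrow\tau_d\land\tau_{d'}$ and $\vdash\tau_{d\lor d'}\leftrightarrow\tau_d\lor\tau_{d'}$; the monotonicity inequalities and $x^i_\bot=0$ via Sub$_\mu$ and P1), while $x^i_{a^i_k}=1$ translates to $\mu_i(\tau_{a^i_k})=1$, which by \eqref{eq:completeness:1} is provably equivalent to $\tau_{a^i_k}$ (recall $\tau_{a^i_k}$ may be taken of the form $\lozenge_i\tau'$). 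Folding in the provable constraints, we obtain
$$\vdash_{\text{IPEL}}\lnot\Big(\tau_{a^i_k}\land\bigwedge_{\begin{smallmatrix}\sigma^\mathbb{A}\in PS_i\\ f_\sigma(k)=1\end{smallmatrix}}\sigma[a^i_k]\land\bigwedge_{\begin{smallmatrix}\sigma^\mathbb{A}\in PS_i\\ f_\sigma(k)=0\end{smallmatrix}}\lnot\sigma[a^i_k]\land\big(\mu_i(\tau_b)<\mu_i(\tau_c)\big)\Big).$$

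Next I would eliminate the $\sigma[a^i_k]$-conjuncts exactly as in Lemma~\ref{lemma:measures:key1}. By Lemma~\ref{lemma:measuers:newaxiom} we have $\vdash\sigma\land\tau_{a^i_k}\to\sigma[a^i_k]$ and $\vdash\lnot\sigma\land\tau_{a^i_k}\to\lnot\sigma[a^i_k]$; and since by construction $a^i_k\leq\sigma^\mathbb{A}$ when $f_\sigma(k)=1$ and $a^i_k\leq(\lnot\sigma)^\mathbb{A}$ when $f_\sigma(k)=0$, also $\vdash\tau_{a^i_k}\to\sigma$ respectively $\vdash\tau_{a^i_k}\to\lnot\sigma$. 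Hence $\tau_{a^i_k}$ provably implies the whole conjunction of the $\sigma[a^i_k]$ and $\lnot\sigma[a^i_k]$, and the displayed formula collapses to $\vdash_{\text{IPEL}}\lnot\big(\tau_{a^i_k}\land(\mu_i(\tau_b)<\mu_i(\tau_c))\big)$. Since the inequality fragment is classical (axiom N5 and Lemma~\ref{lemma:measures:basic}), $\lnot(\mu_i(\tau_b)<\mu_i(\tau_c))$ is provably equivalent to $\mu_i(\tau_c)\leq\mu_i(\tau_b)$, and combining this with $\vdash\mu_i(\tau_b)\leq\mu_i(\tau_c)$ — which follows from $\vdash\tau_b\to\tau_c$ (valid as $b\leq c$) via Sub$_\mu$ — we conclude $\vdash_{\text{IPEL}}\tau_{a^i_k}\to(\mu_i(\tau_b)=\mu_i(\tau_c))$.

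Finally I would extract the contradiction from $b<c$. From $\vdash\tau_b\to\tau_c$ and Nec$_i$ we get $\vdash\Box_i(\tau_b\to\tau_c)$, so the instance $[\Box_i(\tau_b\to\tau_c)\land(\mu_i(\tau_b)=\mu_i(\tau_c))]\leftrightarrow\Box_i(\tau_c\leftrightarrow\tau_b)$ of axiom P4 gives $\vdash(\mu_i(\tau_b)=\mu_i(\tau_c))\leftrightarrow\Box_i(\tau_b\leftrightarrow\tau_c)$, whence by M2 $\vdash(\mu_i(\tau_b)=\mu_i(\tau_c))\to(\tau_b\leftrightarrow\tau_c)$. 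Chaining with the previous paragraph, $\vdash\tau_{a^i_k}\to(\tau_b\leftrightarrow\tau_c)$; passing to $\mathbb{A}$ and using $b\leq c$, so that $(\tau_b\leftrightarrow\tau_c)^\mathbb{A}=c\to_\mathbb{A}b$, this reads $a^i_k\land c\leq b$, hence $c=a^i_k\land c\leq b$ (as $c\leq a^i_k$), contradicting $b<c$. This refutes the assumption and produces the required solution $s_{b,c}$. I expect the main obstacle to lie in the first paragraph: carefully passing from the unsolvability of the augmented linear system to the IPEL-provable negated conjunction, separating the IPEL-provable constraints from the two $\sigma$-families, and treating $x^i_{a^i_k}=1$ via \eqref{eq:completeness:1} rather than as an outright theorem (since $\lozenge_i\tau'$ need not be provable); once this is set up, the Lemma~\ref{lemma:measuers:newaxiom} manipulation and the P4-and-M2 step are essentially forced.
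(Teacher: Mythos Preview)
Your proof is correct and follows essentially the same route as the paper's: argue by contradiction, use Lemma~\ref{lemma:measures:basic} to transport the linear-inequality fact into IPEL, eliminate the $\sigma[a^i_k]$-conjuncts via Lemma~\ref{lemma:measuers:newaxiom} and the order relation $a^i_k\leq\sigma^\mathbb{A}$, and finish with P4 and M2 to force $c\leq b$ in $\mathbb{A}$. The only cosmetic differences are that the paper phrases the linear-inequality fact directly as an implication (``constraints $\to$ $x^i_b=x^i_c$'') rather than as the unsolvability of an augmented system, and applies P4 before rather than after folding in $\tau_{a^i_k}$; your more explicit bookkeeping of the constraint $x^i_{a^i_k}=1$ via \eqref{eq:completeness:1} is if anything an improvement in rigor over the paper's presentation.
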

\begin{proof}
	Heading towards a contradiction, let $b < c\leq a_k^i$ such that,  for every solution $s$ of  $E_{a^i_k}$, we have $(x^i_b)^s=(x^i_c)^s$. This is a fact of inequalities of real numbers and therefore, by Lemma \ref{lemma:measures:basic}, it is provable in IPEL. 
Since all but the first two types of inequalities in $E_{a^i_k}$ are provable in IPEL, we have that  	
	$$\vdash_{\text{IPEL}}
	\left( \left( \bigwedge_{\begin{smallmatrix}
		\sigma^\mathbb{A} \in PS_i\\
		f_\sigma(k)=1
		\end{smallmatrix}}\sigma[a_k^i] \right) \land \left( \bigwedge_{\begin{smallmatrix}
		\sigma^\mathbb{A} \in PS_i\\
		f_\sigma(k)=0
		\end{smallmatrix}}\lnot\sigma[a_k^i] \right) \right) \to\mu_i(\tau_b)=\mu_i(\tau_c).$$ 
	
	Since $\vdash_{\text{IPEL}}\tau_b\to\tau_c$, necessitation implies $\vdash_{\text{IPEL}}\Box_i(\tau_b\to\tau_c)$. Using axiom P4
	$$\Big( \big(\Box_i(\phi\to\psi) \big) \land \big( \mu_i(\phi)=\mu_i(\psi) \big) \Big) \leftrightarrow\Box_i(\psi\leftrightarrow\phi),$$ we obtain that 
	\begin{align}
	\vdash_{\text{IPEL}}
	\left( \left( \bigwedge_{\begin{smallmatrix}
		\sigma^\mathbb{A} \in PS_i\\
		f_\sigma(k)=1
		\end{smallmatrix}}\sigma[a_k^i] \right) \land \left( \bigwedge_{\begin{smallmatrix}
		\sigma^\mathbb{A} \in PS_i\\
		f_\sigma(k)=0
		\end{smallmatrix}}\lnot\sigma[a_k^i]
		\right) \right)\to\Box_i(\tau_c\to\tau_b).
		\label{proof:cpl:align:tau:1}
	\end{align}
Recall that\footnote{see proof of \autoref{lemma:measures:key1}.}
\begin{align}
\vdash_{\text{IPEL}} \tau_{a^i_k} \rightarrow 
\left(\left( \bigwedge_{\begin{smallmatrix}
		\sigma^\mathbb{A} \in PS_i\\
		f_\sigma(k)=1
		\end{smallmatrix}}\sigma \right) \land \left( \bigwedge_{\begin{smallmatrix}
		\sigma^\mathbb{A} \in PS_i\\
		f_\sigma(k)=0
		\end{smallmatrix}}\lnot\sigma \right) \right).
\label{proof:cpl:align:tau:2}
\end{align}
		 Using Lemma \ref{lemma:measuers:newaxiom} and \eqref{proof:cpl:align:tau:2} (cf.\ \eqref{proof:firsttouselater}), we get that 
		 \begin{align}
		 		 \vdash_{\text{IPEL}}\tau_{a^i_k}\to\left( \left( \bigwedge_{\begin{smallmatrix}
		\sigma^\mathbb{A} \in PS_i\\
		f_\sigma(k)=1
		\end{smallmatrix}}\sigma[a_k^i] \right) \land \left( \bigwedge_{\begin{smallmatrix}
		\sigma^\mathbb{A} \in PS_i\\
		f_\sigma(k)=0
		\end{smallmatrix}}\lnot\sigma[a_k^i] \right) \right).
	\label{eq:proof:oneforme}
	\end{align}
 
From \eqref{proof:cpl:align:tau:1} and \eqref{eq:proof:oneforme}, we deduce that 
	$$\vdash_{\text{IPEL}}\tau_{a^i_k}\to\Box_i(\tau_c\to\tau_b).$$ 
	
	By axiom M2 ($\Box_i p\to p$), we have 
	
	$$\vdash_{\text{IPEL}}\tau_{a^i_k}\to(\tau_c\to\tau_b),$$ 
	 which is equivalent to 
	 $$\vdash_{\text{IPEL}}(\tau_{a^i_k}\land\tau_c)\to\tau_b.$$ 
	 Since $\vdash_{\text{IPEL}}\tau_c\to\tau_{a^i_k}$, the equation above implies that $$\vdash_{\text{IPEL}}\tau_c\to\tau_b.$$
This last equation is	 
	   a contradiction since in $\mathbb{A}$, the Lindenbaum-Tarski algebra of IPEL, we have that $c\nleq b$. Therefore, for every such pair $b<c\leq a^i_k$, there exists a solution $s_{b,c}$ of  $E_{a^i_k}$ such that $(x^i_b)^{s_{b,c}}<(x^i_c)^{s_{b,c}}$.
\end{proof}

\begin{lemma}\label{lemma:measures:key3}
 For every $k\in n_i$, the system $E_{a^i_k}$ has a solution $s$, such that $(x^i_b)^s<(x^i_c)^s$ for all $b,c\leq a_k^i$ with $b<c$.
\end{lemma}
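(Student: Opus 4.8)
The plan is to obtain the desired single solution of $E_{a^i_k}$ by taking a convex combination of the finitely many solutions provided by Lemma \ref{lemma:measures:key2}. First I would observe that the solution set of $E_{a^i_k}$ is a convex subset of $\mathbb{Q}^N$ (where $N$ counts the variables $x^i_b$ for $b\leq a^i_k$): indeed all the constraints defining $E_{a^i_k}$ are either linear equalities or non-strict/strict linear inequalities, and the solution set of any such system is closed under convex combinations with positive rational coefficients, because a convex combination of solutions satisfies every equality, every non-strict inequality, and --- crucially --- every strict inequality that is satisfied by \emph{at least one} of the combined solutions (and here every strict inequality is satisfied by every solution, since the $\sigma[a^i_k]$ constraints are the same strict/non-strict inequalities for all solutions).

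Concretely, for each pair $b<c\leq a^i_k$ let $s_{b,c}$ be the solution given by Lemma \ref{lemma:measures:key2}, so that $(x^i_b)^{s_{b,c}}<(x^i_c)^{s_{b,c}}$; there are finitely many such pairs, say $(b_1,c_1),\ldots,(b_M,c_M)$. I would then set $s := \frac{1}{M}\sum_{j=1}^M s_{b_j,c_j}$. By the convexity observation, $s$ is again a solution of $E_{a^i_k}$ (it lies in $\mathbb{Q}^N$ since each $s_{b_j,c_j}$ is rational and $M\in\mathbb{N}$). Now fix any $b<c\leq a^i_k$; this pair equals some $(b_j,c_j)$. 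For every index $\ell$ we have $(x^i_b)^{s_{b_\ell,c_\ell}}\leq (x^i_c)^{s_{b_\ell,c_\ell}}$ because the constraint $x^i_b\leq x^i_c$ (valid since $b\leq c$) belongs to $E_{a^i_k}$ and $s_{b_\ell,c_\ell}$ solves it; and for $\ell=j$ the inequality is strict. Averaging,
\[
(x^i_b)^s = \frac{1}{M}\sum_{\ell=1}^M (x^i_b)^{s_{b_\ell,c_\ell}} < \frac{1}{M}\sum_{\ell=1}^M (x^i_c)^{s_{b_\ell,c_\ell}} = (x^i_c)^s,
\]
so $s$ witnesses the strict inequality for the pair $(b,c)$. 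Since $b<c$ was arbitrary, $s$ has the required property.

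I do not anticipate a serious obstacle here: the only thing to be careful about is the bookkeeping that all constraints of $E_{a^i_k}$ are preserved under positive convex combinations --- in particular that the strict inequalities coming from the $i$-probability formulas (the $\sigma[a^i_k]$ and $\lnot\sigma[a^i_k]$ rows) survive, which they do because they are identical across all solutions, and that the strict ordering constraints we want are preserved because each is non-strict for every solution and strict for at least one. A minor point worth a line is that rationality is maintained, which matters if $E_{a^i_k}$ is implicitly required to be solved over $\mathbb{Q}$ (as the $\mu_i$ take values that must cohere with rational-coefficient probability formulas); dividing a rational solution by the integer $M$ keeps us in $\mathbb{Q}$. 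So the proof is essentially a one-paragraph averaging argument built on Lemmas \ref{lemma:measures:key1} and \ref{lemma:measures:key2}.
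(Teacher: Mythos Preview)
Your proposal is correct and follows essentially the same approach as the paper: both take the uniform convex combination of the finitely many solutions $s_{b,c}$ supplied by Lemma~\ref{lemma:measures:key2}, invoke convexity of the solution set to see the average is still a solution, and then use that the system's built-in constraint $x^i_b\leq x^i_c$ makes every summand weakly ordered while $s_{b,c}$ contributes a strict gap for the pair $(b,c)$. Your extra remarks on preservation of the strict probability-formula constraints and on rationality are not in the paper (which works over $\mathbb{R}^l$ and cites a linear-algebra reference for convexity), but they do no harm.
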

\begin{proof}
By Lemma \ref{lemma:measures:key2}, for every pair $b<c\leq a^i_k$ there exists a solution $s_{b,c}$ of  $E_{a^i_k}$ such that $(x^i_b)^{s_{b,c}}<(x^i_c)^{s_{b,c}}$. 
Notice that the solution space of $E_{a^i_k}$ is a convex subspace of $\mathbb{R}^l$, for some natural number $l$. 
Indeed, it is immediate that the solutions of each linear inequality define a convex space and the intersection of convex spaces is a convex space (cf.\ \cite[Chapter 12]{lang2013linear}). Let $n$ be the number of aforementioned solutions. Then it is the case that $\sum_{b<c\leq a^i_k}\frac{1}{n}s_{b,c}$ is also a solution of $E_{a^i_k}$ (see e.g.\ \cite[Chapter 12, Theorem 1.2]{lang2013linear}). Let us call this solution $s$
and show that if $d<e$ then $(x^i_{d})^s<(x^i_{e})^s$. 

Let $d<e$.
Notice that, for every $s_{b,c}$, it is the case that $(x^i_{d})^{s_{b,c}}\leq (x^i_{e})^{s_{b,c}}$ by the restraints of the system $E_{a^i_k}$. Moreover, we have $(x^i_{d})^{s_{d,e}}< (x^i_{e})^{s_{d,e}}$. Hence, $$(x^i_d)^s=\sum_{b<c}\frac{1}{n}(x^i_d)^{s_{b,c}}<\sum_{b<c}\frac{1}{n}(x^i_e)^{s_{b,c}}=(x^i_e)^s.$$ 
Therefore, we have that, for every pair $d<e \leq a^i_k$, we have $(x^i_{d})^s<(x^i_{e})^s$  as required.
\end{proof}

For every agent $i\in\Ag_\varphi$ and every system $E_{a^i_k}$, pick a solution $s$ satisfying the conditions of Lemma \ref{lemma:measures:key3} and define $\mu_i(b)=(x^i_b)^s$, for every $b\in {\mathsf{Min}(\mathbb{A}^\star_\varphi)}{\downarrow}$. For agents $j\notin\Ag_\varphi$, let $\mu_j(b)=\mu_\mathbb{B}(b)$ (see \eqref{align:muB}).  Now, we define an APE-model 
\begin{equation}\label{eq:modelintheend}
\mathcal{M}_\varphi=\langle\mathbb{A}^\star_\varphi,(\mu_i)_{i\in\Ag}, v\rangle
\end{equation}
such that, for every $p\in AtProp\cap S^\lozenge_\varphi$, it holds that $v(p)=p^\mathbb{A}$. 

\begin{lemma}\label{lemma:completenes:model}
The model $\mathcal{M}_\varphi$ is an APE-model.
\end{lemma}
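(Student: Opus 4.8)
The plan is to verify the three ingredients of an APE-model (Definition~\ref{def: alg probab epist model}): that $\mathbb{A}^\star_\varphi$ is an epistemic Heyting algebra, that each $\mu_i$ is an $i$-measure on it, and that $v$ extends to a map $\mathsf{AtProp}\to\mathbb{A}^\star_\varphi$. The first of these is exactly Lemma~\ref{lemma2:completeness}, and the last is immediate: $v$ is prescribed on $\mathsf{AtProp}\cap S^\lozenge_\varphi$ and can be set to $\top$ (say) on the remaining, irrelevant proposition letters. So the actual work is to check, agent by agent, that $\mu_i$ satisfies items \ref{def:epAlg:two:domain}--\ref{def:epAlg:two:fixedpoints} of Definition~\ref{def:measures}, and I would split the argument according to whether $i\in\Ag_\varphi$.

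For $i\notin\Ag_\varphi$ I would first observe that $\lozenge^\star_i$ is the discrete diamond, so its only nonzero fixed point is $\top$; hence $\mathsf{Min}_i(\mathbb{A}^\star_\varphi)=\{\top\}$ and $\mathsf{Min}_i(\mathbb{A}^\star_\varphi){\downarrow}=\mathbb{A}^\star_\varphi$, on which $\mu_i$ is the restriction of $\mu_\mathbb{B}$. Since $\mathbb{A}_\varphi=A\cap B_\varphi$ is a sublattice of $\mathbb{B}_\varphi$ (the lattice operations coincide throughout $\mathbb{A}$, $\mathbb{A}_\varphi$, $\mathbb{B}_\varphi$, $\mathbb{B}$) and $\mu_\mathbb{B}$ is the normalised atom-counting measure on the finite Boolean algebra $\mathbb{B}_\varphi$, the restriction inherits order-preservation, the modular identity $\mu_i(b\lor c)=\mu_i(b)+\mu_i(c)-\mu_i(b\land c)$ (in particular for $b,c\leq\top$), strict monotonicity (a strictly larger element dominates strictly more atoms), $\mu_i(\bot)=0$ and $\mu_i(\top)=1$. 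That covers all the clauses for such $i$.

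For $i\in\Ag_\varphi$ the plan is as follows. Using Fact~\ref{fact:basicEHA} (applied to $\mathbb{A}^\star_\varphi$, which is an epistemic Heyting algebra by Lemma~\ref{lemma2:completeness}) together with finiteness, I would identify $\mathsf{Min}_i(\mathbb{A}^\star_\varphi)$ with the atoms $a^i_1,\dots,a^i_{n_i}$ of the Boolean algebra $A^{\lozenge_i}_\varphi$, and recall that every nonzero element of $\mathbb{A}^\star_\varphi$ sits below a \emph{unique} such atom (cf.\ the remark after Definition~\ref{def: epist alebra}). Hence $\mathsf{Min}_i(\mathbb{A}^\star_\varphi){\downarrow}=\bigcup_k (a^i_k){\downarrow}$ and $\mu_i$ is unambiguously determined there: for nonzero $b$ the value is read off the single system $E_{a^i_k}$ with $b\leq a^i_k$, and $x^i_\bot=0$ occurs in every system. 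This gives item~\ref{def:epAlg:two:domain}, and the clauses $0\leq x^i_b\leq 1$ give the codomain constraint. I would then match each remaining clause of an $i$-measure to a clause of the systems, using the solution $s$ furnished by Lemma~\ref{lemma:measures:key3}: order-preservation (item~\ref{def:epAlg:two:monotone}) to the inequalities $x^i_b\leq x^i_c$ (observing that $b\leq c\leq a^i_k$ forces $b\leq a^i_k$, the case $b=\bot$ being handled separately since $\mu_i(\bot)=0$); the modular identity (item~\ref{def:epAlg:two:join}) to the equations $x^i_b+x^i_c=x^i_{b\land c}+x^i_{b\lor c}$ (valid because $b,c\leq a$ implies $b\lor c, b\land c\leq a$); $\mu_i(\bot)=0$ (item~\ref{def:epAlg:two:bot}) to $x^i_\bot=0$; $\mu_i(a^i_k)=1$ (item~\ref{def:epAlg:two:fixedpoints}) to $x^i_{a^i_k}=1$; and strict monotonicity on each $a^i_k{\downarrow}$ (item~\ref{def:epAlg:two:nonzero}) to exactly the extra property of $s$ supplied by Lemma~\ref{lemma:measures:key3}.

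The hard part is not a single deep step but the correctness of the bookkeeping: one must be sure that the variable sets of the systems $\{E_{a^i_k}\}_k$ cover $\mathsf{Min}_i(\mathbb{A}^\star_\varphi){\downarrow}$ coherently (no conflicting assignments on shared elements), and that whenever two elements of this downset are comparable—or form a join/meet with both below a common $i$-minimal element—they actually lie in one of the downsets $a^i_k{\downarrow}$ where the relevant constraint of $E_{a^i_k}$ lives. The substantive analytic content, namely that a single solution exists which is strictly monotone on each downset, has already been isolated in Lemmas~\ref{lemma:measures:key1}--\ref{lemma:measures:key3}, so once that is in hand the proof is an assembly job.
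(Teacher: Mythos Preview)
Your proposal is correct and follows exactly the approach the paper intends: its proof is a one-line pointer to the system of inequalities and Lemma~\ref{lemma:measures:key3}, and you have simply unpacked that pointer, matching each clause of Definition~\ref{def:measures} to a constraint of $E_{a^i_k}$ (and to properties of $\mu_\mathbb{B}$ for $i\notin\Ag_\varphi$). One small slip to tidy: it is not true that \emph{every} nonzero element of $\mathbb{A}^\star_\varphi$ lies below some $i$-minimal atom (e.g.\ $\top$ need not when $n_i>1$); what you actually need and use---that every nonzero element of $\mathsf{Min}_i(\mathbb{A}^\star_\varphi){\downarrow}$ lies below a \emph{unique} $a^i_k$, so the solutions to the different systems cannot disagree except at $\bot$---is exactly what the remark after Definition~\ref{def: epist alebra} gives.
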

\begin{proof} 
For any $i\in\Ag_\varphi$,
 the restrictions imposed by the systems of inequalities and the conditions of Lemma \ref{lemma:measures:key3} immediately yield that $\mu_i$ is an $i$-measure. 
 For $j\notin\Ag_\varphi$, the only $j$-minimal element is $\top$. Furthermore, $\mu_\mathbb{B}$ is satisfies the restrictions of $j$-measures by definition. Hence, each $\mu_i$ is an $i$-measure, and by Lemma \ref{lemma2:completeness} and Definition \ref{def: alg probab epist structure} we have that $\mathcal{M}_\varphi$ is an APE-model.
\end{proof}
\begin{lemma}[Truth Lemma]\label{lemma:measures:truth}
	For every $\psi\in\mathcal{L}$ such that $\psi^\mathbb{A}\in S^\lozenge_\varphi$, it is the case that $$\val{\psi}_{\mathcal{M}_\varphi}=\psi^\mathbb{A}.$$

\end{lemma}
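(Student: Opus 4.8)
The plan is to prove the Truth Lemma by induction on the complexity of $\psi$, noting at the outset that we only need to reason about subformulas appearing in $S^\lozenge_\varphi$, so that all the relevant instances of the claim concern elements that already live in $\mathbb{A}_\varphi$. The base case is $\psi = p$ atomic: here $\val{p}_{\mathcal{M}_\varphi} = v(p) = p^\mathbb{A}$ by definition of the valuation, and $\psi = \bot$ is immediate. The cases $\psi = \psi_1 \wedge \psi_2$ and $\psi = \psi_1 \vee \psi_2$ use that $\wedge$ and $\vee$ in $\mathbb{A}^\star_\varphi$ are inherited from $\mathbb{A}$, together with the fact that $S^\lozenge_\varphi$ is closed under taking subformulas (so the inductive hypothesis applies to $\psi_1,\psi_2$). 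For $\psi = \psi_1 \to \psi_2$, I would invoke item 6 of Lemma~\ref{lemma1:completeness} (equivalently items 4--5): since $(\psi_1\to\psi_2)^\mathbb{A}\in S^\lozenge_\varphi$, we get $\psi_1^\mathbb{A}\to^\star\psi_2^\mathbb{A} = \psi_1^\mathbb{A}\to_\mathbb{A}\psi_2^\mathbb{A}$, and then conclude with the inductive hypothesis. The modal cases $\psi = \lozenge_i\chi$ and $\psi = \Box_i\chi$ are handled the same way: $(\lozenge_i\chi)^\mathbb{A}\in S^\lozenge_\varphi$ (this is precisely why $S^\lozenge_\varphi$ was defined to contain these boxed/diamonded subformulas) forces $\lozenge_i^\star\chi^\mathbb{A} = \lozenge_i\chi^\mathbb{A}$ by Lemma~\ref{lemma1:completeness}(6), and then the inductive hypothesis on $\chi$ finishes the argument. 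Here one also uses that the $i$-minimal elements of $\mathbb{A}^\star_\varphi$ are the atoms of $A^{\lozenge_i}_\varphi$, so the semantic clauses for $\lozenge_i,\Box_i$ match the algebraic operations.

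The genuinely nontrivial case is the $i$-probability formula $\psi = \big(\sum_k\alpha_k\cdot\mu_i(\chi_k)\big)\geq\beta$. By the semantic clause, $\val{\psi}_{\mathcal{M}_\varphi} = \bigvee\{a\in\mathsf{Min}_i(\mathbb{A}^\star_\varphi)\mid \sum_k\alpha_k\mu_i(\val{\chi_k}_{\mathcal{M}_\varphi}\wedge a)\geq\beta\}$, and by the inductive hypothesis $\val{\chi_k}_{\mathcal{M}_\varphi} = \chi_k^\mathbb{A}$, so this equals $\bigvee\{a^i_k\mid \sum_m\alpha_m\mu_i(\chi_m^\mathbb{A}\wedge a^i_k)\geq\beta\}$. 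On the other hand $\psi^\mathbb{A} = \bigvee_{f_\psi(k)=1}a^i_k$, where $f_\psi$ is the characteristic function obtained from writing $\psi^\mathbb{A}$ as a join of the $i$-minimal atoms. So the task is to show that for each $i$-minimal atom $a^i_k$ we have $a^i_k\leq\psi^\mathbb{A}$ (i.e.\ $f_\psi(k)=1$) if and only if $\sum_m\alpha_m\mu_i(\chi_m^\mathbb{A}\wedge a^i_k)\geq\beta$. This is exactly where the construction of the measures via the systems $E_{a^i_k}$ pays off: by construction, the solution $s$ chosen for $E_{a^i_k}$ satisfies the inequality $\sum_m\alpha_m\cdot x^i_{\chi_m^\mathbb{A}\wedge a^i_k}\geq\beta$ precisely when $f_\psi(k)=1$ (this is one of the defining families of (in)equalities of the system), and $\mu_i$ on the downset of $a^i_k$ was defined to be that solution. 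Hence $\sum_m\alpha_m\mu_i(\chi_m^\mathbb{A}\wedge a^i_k)\geq\beta \iff f_\psi(k)=1 \iff a^i_k\leq\psi^\mathbb{A}$, which gives the equality of the two joins. One should also note $\psi^\mathbb{A}\in A^{\lozenge_i}_\varphi$ (since $i$-probability formulas are, up to provable equivalence, of the form $\lozenge_i$-something, by axiom P5 and Fact~\ref{fact:basicEHA}), so $\psi^\mathbb{A}$ really is a join of $i$-minimal atoms and the matching makes sense. The negated probability formulas $\lnot\psi$ are covered automatically since $A^{\lozenge_i}_\varphi$ is Boolean and $(\lnot\psi)^\mathbb{A} = \bigvee_{f_\psi(k)=0}a^i_k$.

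I expect the main obstacle to be bookkeeping rather than conceptual: one must be careful that the inductive hypothesis is only ever invoked on formulas whose equivalence class lies in $S^\lozenge_\varphi$ (which holds because $S^\lozenge_\varphi\supseteq S_\varphi$ is closed under subformulas and contains the needed $\lozenge_i$-prefixed classes), and one must correctly track that $\mu_i$ was defined on $\mathsf{Min}_i(\mathbb{A}^\star_\varphi){\downarrow}$ via the chosen solutions of the $E_{a^i_k}$ and that $\chi_m^\mathbb{A}\wedge a^i_k$ indeed lies in that downset (it does, since $\chi_m^\mathbb{A}\in\mathbb{A}^\star_\varphi$ and meets are inherited from $\mathbb{A}$). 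The agents $j\notin\Ag_\varphi$ never occur in $\varphi$ or any of its subformulas, so no probability subformula indexed by such a $j$ is in $S^\lozenge_\varphi$ and the definition $\mu_j = \mu_\mathbb{B}$ is only there to make $\mathcal{M}_\varphi$ a bona fide APE-model (Lemma~\ref{lemma:completenes:model}); it plays no role in the Truth Lemma. Once the Truth Lemma is established, weak completeness follows immediately: if $\varphi$ is not a theorem then $\varphi^\mathbb{A}\neq\top_\mathbb{A}$, hence $\val{\varphi}_{\mathcal{M}_\varphi} = \varphi^\mathbb{A}\neq\top$, so $\mathcal{M}_\varphi$ refutes $\varphi$.
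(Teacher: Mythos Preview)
Your proof is correct and follows essentially the same approach as the paper: induction on the complexity of $\psi$, using closure of $S^\lozenge_\varphi$ under subformulas, the fact that $\land,\lor$ are inherited from $\mathbb{A}$, item~6 of Lemma~\ref{lemma1:completeness} for $\to,\lozenge_i,\Box_i$, and the defining (in)equalities of the systems $E_{a^i_k}$ for the probability case. You actually unpack the probability case in more detail than the paper does (the paper simply says ``by the choice of $\mu_i$ exactly the $i$-minimal elements below $\sigma^\mathbb{A}$ satisfy $\sigma$''), but the argument is the same.
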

\begin{proof}
	By definition, $ S^\lozenge_\varphi$ is closed under subformulas. The proof proceeds by induction on the complexity of $\psi$. For the atomic variables, this follows immediately from the definition of $v$. For formulas of the form $\psi\land\tau$ and $\psi\lor\tau$ this follows from the fact that $\mathbb{A}^\star_\varphi$ inherits $\lor$ and $\land$ from $\mathbb{A}$. For formulas of the form $\psi\to\tau$, $\lozenge_i\psi$ and $\Box_i\psi$ it follows from item 6 of Lemma \ref{lemma1:completeness}. 
	Finally, for probability formulas of the form $\sigma:=\sum\alpha_m\mu_i(\psi_m)\geq\beta$, notice that, by the choice of $\mu_i$ as particular solutions of the systems $E_{a^i_k}$, exactly the $i$-minimal elements $a^i_k\leq\sigma^\mathbb{A}$ are such that $\sum\alpha_m\mu_i(\val{\psi_m}_{\mathcal{M}_\varphi}\land a^i_k)\leq\beta$. Hence, $\val{\sigma}_{\mathcal{M}_\varphi}=\sigma^\mathbb{A}$ by definition (cf.\ Definition \ref{def:semantics:IPDEL}). This concludes the proof.
\end{proof}

\begin{proposition}[Completeness]
	The axiomatisation for IPDEL given in Table \ref{table:IPDEL} is weakly complete w.r.t.\ APE-models.
\end{proposition}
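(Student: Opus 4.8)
The plan is to reduce the completeness of full IPDEL to that of its static fragment IPEL, and then to assemble the machinery established in the previous subsections. First I would observe that the reduction axioms I1--I18, together with the congruence rule SubEq, allow any IPDEL-formula $\chi$ to be rewritten step by step, pushing the dynamic modalities $\langle\mathcal{E},e\rangle$ and $[\mathcal{E},e]$ inwards past the static connectives until they disappear (the base cases being I1--I6), into a provably equivalent IPEL-formula $\chi^\ast$. Since each rewriting step replaces a subformula by a provably equivalent one, $\vdash_{\text{IPDEL}}\chi\leftrightarrow\chi^\ast$; in particular $\chi$ is an IPDEL-theorem iff $\chi^\ast$ is an IPEL-theorem, and by soundness (Proposition \ref{lem:soundness-IPDEL}) $\chi$ and $\chi^\ast$ receive the same value in every APE-model. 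Hence it suffices to establish weak completeness of IPEL, which I would do by contraposition.

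So let $\varphi$ be an IPEL-formula that is not a theorem. Running the construction of Section \ref{sec:completeness}, I take the Lindenbaum--Tarski algebra $\mathbb{A}$ of IPEL, its Boolean extension $\mathbb{B}$, the finite Boolean subalgebra $\mathbb{B}_\varphi$ generated by $S^\lozenge_\varphi$, and the finite distributive lattice $\mathbb{A}_\varphi = A\cap B_\varphi$, which I endow with $\to^\star,\lozenge^\star_i,\Box^\star_i$ to obtain $\mathbb{A}^\star_\varphi$; by Lemma \ref{lemma2:completeness} this is an epistemic Heyting algebra. Using Lemmas \ref{lemma:measures:key1}--\ref{lemma:measures:key3} I pick, for every agent $i\in\Ag_\varphi$ and every $i$-minimal element $a^i_k$, a strictly monotone solution of the system $E_{a^i_k}$ and define $\mu_i$ on $\mathsf{Min}_i(\mathbb{A}^\star_\varphi){\downarrow}$ accordingly (and $\mu_j=\mu_\mathbb{B}$ for $j\notin\Ag_\varphi$), together with the valuation $v(p)=p^{\mathbb{A}}$ for $p\in\mathsf{AtProp}\cap S^\lozenge_\varphi$. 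By Lemma \ref{lemma:completenes:model} the resulting tuple $\mathcal{M}_\varphi$ is an APE-model. Finally, by the Truth Lemma (Lemma \ref{lemma:measures:truth}), since $\varphi^{\mathbb{A}}\in S^\lozenge_\varphi$ we get $\val{\varphi}_{\mathcal{M}_\varphi}=\varphi^{\mathbb{A}}$; as $\varphi$ is not an IPEL-theorem, $\varphi^{\mathbb{A}}\neq\top$ in $\mathbb{A}$, and since $\mathbb{A}^\star_\varphi$ inherits $\top$ from $\mathbb{A}$, also $\val{\varphi}_{\mathcal{M}_\varphi}\neq\top$. Thus $\mathcal{M}_\varphi$ refutes $\varphi$, contradicting APE-validity of $\varphi$, and the proof is complete.

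The final assembly above is essentially bookkeeping; the genuine obstacles lie upstream, in the two ingredients I am allowed to cite. One is that the reduction to IPEL terminates and is faithful, which requires a suitable complexity measure that strictly decreases under the reduction axioms I1--I18 (analogous to the arguments for the dynamic epistemic logics cited at the start of Section \ref{sec:completeness}); the other is that passing from $\mathbb{A}$ to the finite sublattice $\mathbb{A}^\star_\varphi$ genuinely produces an epistemic Heyting algebra and that the $\mu_i$ obtained from the linear systems $E_{a^i_k}$ satisfy all of the $i$-measure clauses of Definition \ref{def:measures} — in particular the strict monotonicity, which is exactly what Lemmas \ref{lemma:measures:key2} and \ref{lemma:measures:key3} secure via the convexity of the solution space and the provability in IPEL of all true facts about rational linear inequalities (Lemma \ref{lemma:measures:basic}) and of the auxiliary formulas of Lemma \ref{lemma:measuers:newaxiom}. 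Granting these, the Proposition follows immediately.
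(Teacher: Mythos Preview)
Your proposal is correct and follows essentially the same route as the paper: reduce IPDEL to IPEL via the reduction axioms and SubEq, then for a non-theorem $\varphi$ build the finite epistemic Heyting algebra $\mathbb{A}^\star_\varphi$ (Lemma \ref{lemma2:completeness}), equip it with $i$-measures via Lemmas \ref{lemma:measures:key1}--\ref{lemma:measures:key3} to obtain the APE-model $\mathcal{M}_\varphi$ (Lemma \ref{lemma:completenes:model}), and invoke the Truth Lemma (Lemma \ref{lemma:measures:truth}) together with $\top^{\mathbb{A}^\star_\varphi}=\top^\mathbb{A}$ to conclude $\val{\varphi}_{\mathcal{M}_\varphi}\neq\top$. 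Your write-up is in fact more explicit than the paper's own proof about the reduction step and about where the real work sits.
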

\begin{proof}
	As discussed in the beginning of this section, the problem is reduced to proving the weak completeness of IPEL. Let $\varphi$ be an IPEL formula that is not a theorem. This means that $\varphi^\mathbb{A}\neq\top^\mathbb{A}$, where $\mathbb{A}$ is the Lindembaum-Tarski algebra of IPEL (see \eqref{eq:linden}). By Lemma \ref{lemma:completenes:model}, the model $\mathcal{M}_\varphi$ based on the algebra $\mathbb{A}^\star_\varphi$ defined in \eqref{eq:modelintheend} is an APE-model. By Lemma \ref{lemma:measures:truth},  $\val{\varphi}_{\mathcal{M}_\varphi}=\varphi^\mathbb{A}$. Since $\top^{\mathbb{A}^\star_\varphi}=\top^\mathbb{A}$, this shows that $\val{\varphi}_{\mathcal{M}_\varphi}\neq\top^{\mathbb{A}^\star_\varphi}$, which means that $\mathcal{M}_\varphi$ does not satisfies $\varphi$   as required.
\end{proof}

%
%

\end{document}